\documentclass[a4paper,12pt,reqno]{amsart}
\usepackage[margin=3cm]{geometry}
\usepackage{setspace}
\usepackage{mathtools}
\usepackage{amssymb, amsfonts, mathrsfs}

\usepackage{aliascnt}
\usepackage{booktabs}
\usepackage{ragged2e}
\usepackage{xltabular}
\newcolumntype{C}{>{\tiny}c}
\newcolumntype{R}{>{\tiny}r}
\newcolumntype{B}{>{\tiny\RaggedRight\arraybackslash}X}
\usepackage{listings}
\usepackage{caption}
\newtheorem{theorem}{Theorem}[section]
\newaliascnt{lemma}{theorem}

\aliascntresetthe{lemma}
\newaliascnt{proposition}{theorem}
\newtheorem{proposition}[proposition]{Proposition}
\aliascntresetthe{proposition}
\newaliascnt{corollary}{theorem}
\newtheorem{corollary}[corollary]{Corollary}
\aliascntresetthe{corollary}
\newaliascnt{conjecture}{theorem}

\aliascntresetthe{conjecture}

\theoremstyle{definition}
\newaliascnt{definition}{theorem}
\newtheorem{definition}[definition]{Definition}
\aliascntresetthe{definition}
\newaliascnt{claim}{theorem}

\aliascntresetthe{claim}
\newaliascnt{example}{theorem}
\newtheorem{example}[example]{Example}
\aliascntresetthe{example}
\newaliascnt{notation}{theorem}

\aliascntresetthe{notation}
\newaliascnt{question}{theorem}
\newtheorem{question}[question]{Question}
\aliascntresetthe{question}

\theoremstyle{remark}
\newaliascnt{remark}{theorem}
\newtheorem{remark}[remark]{Remark}
\aliascntresetthe{remark}

\numberwithin{equation}{section}
\numberwithin{table}{section}
\AtBeginDocument{\numberwithin{lstlisting}{section}}

\usepackage[hidelinks]{hyperref} 
\usepackage[capitalize]{cleveref}

\newcommand{\bC}{{\mathbb C}}
\newcommand{\bQ}{{\mathbb Q}}
\newcommand{\bP}{{\mathbb P}}

\newcommand\OO{{\mathcal{O}}}
\newcommand\OX{{\mathcal{O}_X}}

\newcommand\bB{\mathbb{B}}

\newcommand\rank{{\text{\rm rank}}}

\newcommand{\lsgeq}{\succcurlyeq}
\newcommand{\lsleq}{\preccurlyeq}

\newcommand{\lsg}{\succ}

\newcommand{\Mov}{\operatorname{Mov}}

\newcommand{\Supp}{\operatorname{Supp}}
\newcommand{\Exc}{\operatorname{Exc}}

\title[On generic finiteness of threefolds]{On generic finiteness of pluricanonical maps of threefolds of general type}

\author{Tianyue Zhang}
\address{School of Mathematical Sciences, Fudan University, Shanghai 200433, China}
\email{22110180056@m.fudan.edu.cn}

\subjclass[2020]{14J30, 14E05, 14D06}
\keywords{generic finiteness, pluricanonical maps, minimal threefolds}

\begin{document}

\begin{abstract}
    We prove that $|6K_X|$ defines a generically finite map for all minimal 3-folds $X$ of general type with $P_2(X)\geq 2$, which is optimal. We also prove that $|nK_X|$ defines a generically finite map for all minimal 3-folds $X$ of general type when $n\geq 38$. The essential technical ingredients of this paper are a new generic finiteness criterion for surfaces and an effective comparison inequality under a special resolution.
\end{abstract}

\maketitle
\tableofcontents

\section{Introduction}

Classifying varieties is one of the most fundamental problems in algebraic geometry. By criteria for embedding a polarized variety into a weighted projective space in \cite{Noether2025}, generic finiteness of pluricanonical maps plays an important role in constructing the moduli spaces of designated classes of 3-folds. Thus the study of generic finiteness of pluricanonical maps is of great value for classifying varieties.

There are many classical works in this aspect. For a minimal surface $S$ of general type, Enrico Bombieri, Yoichi Miyaoka and Fabrizio Catanese proved that $|nK_S|$ defines a generically finite map when $n\geq 3$ (\cite[Main Theorem]{Bombieri}, \cite[Theorem 3]{Miyaoka1976}, \cite{BombieriCatanese1978}, \cite{Catanese1981}); Gang Xiao proved that $|2K_S|$ defines a generically finite map if and only if $S$ is not a $(1,0)$-surface (\cite[Theorem 1]{X2}). For a minimal 3-fold $X$ of general type, Kan Wu proved that $|3K_X|$ defines a generically finite map when $p_g(X)\geq 4$ (\cite[Theorem 1.1]{Wu2019}); Meng Chen, Chen Jiang and Jianshi Yan proved that $|2K_X|$ is not composed with a pencil when $p_g(X)\geq 202$ (\cite[Theorem 1.2]{CJY2026}).

There are many classical works about birationality in parallel. For a minimal surface $S$ of general type, Enrico Bombieri proved that $|nK_S|$ defines a birational map when $n\geq 5$ (\cite[Main Theorem]{Bombieri}). For a minimal 3-fold $X$ of general type, Meng Chen proved that $|nK_X|$ defines a birational map when $P_m(X)\geq 2$ and $n\geq 5m+6$ (\cite[Theorem 0.1]{Chen2004}); Meng Chen and Jungkai A. Chen proved that $|nK_X|$ defines a birational map when $P_2(X)\geq 2$ and $n\geq 11$ (\cite[Theorem 1.8]{EXP3}); Meng Chen and Jungkai A. Chen proved that $|nK_X|$ defines a birational map when $n\geq 57$ (\cite[Theorem 1.1]{Chen2018}, \cite[Theorem 6.2]{EXP3}).

Our main results are the following three theorems.

\begin{theorem}\label{thm6K}
    Let $X$ be a minimal 3-fold of general type with $P_2(X)\geq 2$. Then $|6K_X|$ defines a generically finite map.
\end{theorem}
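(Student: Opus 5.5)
The plan is to follow Chen's fibration method for pluricanonical maps of 3-folds. Start from the pencil (or larger system) $|2K_X|$. Since $P_2(X)\ge 2$, the movable part of $|2K_X|$ defines a nontrivial rational map. Take a resolution $\pi\colon X'\to X$ such that $\Mov|2K_{X'}|$ is base point free, and let $f\colon X'\to \Gamma$ be the induced map. Write $\pi^*(2K_X)\ge M_2$, where $M_2$ is the movable part. Then split into cases according to $\dim\Gamma$.

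If $\dim\Gamma\ge 2$, the result follows directly. Indeed, $|6K_X|\supseteq |2K_X|+|4K_X|$ and $|4K_X|\neq\emptyset$. So the map given by $|6K_{X'}|$ separates a general member $S$ of $|M_2|$, or a general curve in it. It is then enough to show generic finiteness on the general fiber of the restricted system. For $\dim\Gamma=3$ there is nothing to prove. For $\dim\Gamma=2$, the general fiber is a curve $C$ with $g(C)\ge 2$, and we must show that $|6K_{X'}|$ restricted to $C$ is nonconstant. Via Kawamata--Viehweg vanishing,
\[
|K_{X'}+\lceil 5\pi^*K_X\rceil|\big|_C\supseteq |K_C+D|,
\]
with $\deg D>0$, once $(\pi^*K_X\cdot C)>0$. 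Then $|K_C+D|$ has positive degree at least $2g-2>0$, so it is nonconstant.

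The main case is $\dim\Gamma=1$, where $f$ is a fibration onto a curve and its general fiber $F$ is a smooth surface of general type. Here we use the standard restriction
\[
|6K_{X'}|\big|_F\supseteq |K_F+\lceil L\rceil|, \qquad L\equiv (4\pi^*K_X - \tfrac{1}{\beta}\, \text{stuff})|_F,
\]
obtained by Kawamata--Viehweg vanishing after subtracting a fiber. For this we use $M_2\ge \beta F$ numerically, where $\beta=1/2$ if $\Gamma\cong\bP^1$ and is larger otherwise. The expected bound is
\[
\pi^*K_X|_F\ge \tfrac{\beta}{\beta+1}\sigma^*K_{F_0}
\]
(Chen's canonical restriction inequality), where $\sigma\colon F\to F_0$ is the minimal model. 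This gives a nef and big $\bQ$-divisor $L$ on $F$ that is numerically larger than a fixed positive multiple of $\sigma^*K_{F_0}$. For instance, with $\beta=1/2$ we get $\pi^*K_X|_F\ge \tfrac13\sigma^*K_{F_0}$, so $L\ge(5-2)\cdot\tfrac13\sigma^*K_{F_0}=\sigma^*K_{F_0}$, roughly.

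Next we need a generic finiteness criterion for surfaces: if $L$ is nef and big with $L\ge \sigma^*K_{F_0}$ numerically (plus a fractional part), then $|K_F+\lceil L\rceil|$ is generically finite, except in the $(1,1)$- or $(1,2)$-surface cases, which require care. This is presumably the paper's new "generic finiteness criterion for surfaces." It should be proved by restricting further to a general curve $C$ in a movable system on $F$, again using vanishing and Riemann--Roch on $C$. One needs $\deg(\lceil L\rceil|_C)>2$, or $>0$ when $|K_C+\cdot|$ already separates generically.

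The main obstacle is the borderline surfaces $F$ with $K_{F_0}^2=1$ and $p_g(F)\le 2$, together with the case $\Gamma\cong\bP^1$ where $\beta=1/2$ gives only the coefficient $1/3$. There, the crude restriction inequality yields degree exactly at the boundary. I would first try to improve it using the "effective comparison inequality under a special resolution." That is, I would choose $\pi$ so that $\pi^*K_X|_F-\frac13\sigma^*K_{F_0}$ has a controlled positive correction, or use $|4K_X|$ or $|3K_X|$ information to raise $\beta$. In the truly extremal $(1,2)$-surface case, I would use Xiao's description of the pencil $|K_F|$ and the fact that $|2K_F|$ or $|K_F+\lceil L\rceil|$ restricted to a genus 2 curve of $|\sigma^*K_{F_0}|$ has degree $\ge 3$, which suffices for generic finiteness.
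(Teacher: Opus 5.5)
\textbf{There is a genuine gap: the $(1,0)$-fibre case is not handled.} Your reduction is fine: a fibration from the pencil in $|2K_X|$, Tankeev's principle using $P_4(X)>0$, and $|6K_{X'}||_F\lsgeq|K_F+\lceil 3\pi^*K_X|_F\rceil|$. (The paper avoids your $\dim\Gamma\ge 2$ case altogether by using a $1$-dimensional subsystem of $|2K_X|$.)

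The problem is the surface criterion you treat as a black box. It is false in exactly the case that decides the theorem. Suppose the general fibre $F$ is a $(1,0)$-surface and $L=\sigma^*K_{F_0}$. Then $|K_F+\lceil L\rceil|=\sigma^*|2K_{F_0}|+(K_F-\sigma^*K_{F_0})$. Since $P_2(F_0)=K_{F_0}^2+\chi(\OO_{F_0})=2$, this system is a pencil and not generically finite. When $\Gamma\cong\bP^1$ and $P_2(X)=2$, all you know is $3\pi^*K_X|_F\ge_\bQ\sigma^*K_{F_0}$ with no slack. So no bound of this numerical type (self-intersection, degrees on curves, restriction to a moving curve on $F$) can close the case.

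You have also misplaced the obstacle. $(1,1)$- and $(1,2)$-fibres are harmless, because $|2K_{F_0}|$ is generically finite for them by Xiao's theorem. What you need there is an honest inequality $\pi^*(D)|_F\ge\sigma^*D_0$ with $D\sim 3K_X$ and $D_0\sim K_{F_0}$, not the $\ge_\bQ$ version you quote, which does not let you pass from $\lceil L\rceil$ to a member of $|K_F+\sigma^*K_{F_0}|$. Supplying this is exactly the job of the special resolution in \cref{compareIneq}. You name that tool but never say what it must deliver.

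For $(1,0)$-fibres, two further ideas are needed, and neither appears in your plan. Raising $\beta$ via $|3K_X|$ or $|4K_X|$ gives no guaranteed gain under the hypotheses.
\begin{itemize}
\item \emph{Case $\chi(\OX)\neq1$.} A $(1,0)$-fibre forces $h^2(\OX)\le p_g(F)=0$, by Fujita semipositivity plus Koll\'ar torsion-freeness (\cref{propH2OX}). It also forces $p_g(X)=0$. Hence $q(X)=1-\chi(\OX)\neq0$, and irregular 3-folds are handled by \cref{thmIrr5}.
\item \emph{Case $\chi(\OX)=1$.} Reid's Riemann--Roch gives $P_3(X)-P_2(X)>-2\chi(\OX)=-2$, so $P_3(X)>0$. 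Here $|3K_X|$ enters, but not to improve $\beta$. Take $D\in|3K_X|$. The honest comparison inequality gives $\lceil\pi^*(D)|_F\rceil=\sigma^*D_0+B$ with $B\ge0$ and $\sigma^*D_0+B$ effective. Since $p_g(F)=0$, this effective divisor genuinely exceeds $\sigma^*K_{F_0}$.
\end{itemize}

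The surface statement you are missing is \cref{thmMain1}: $|K_F+\sigma^*K_{F_0}+B|$ is generically finite whenever $B\ge0$ and $\sigma^*K_{F_0}+B$ is effective. Its proof is not a restriction-to-curves argument. It uses Riemann--Roch to get $h^0\ge2$, then rules out a pencil using Hodge index together with Xiao's vanishing $h^1(\theta)=0$ for torsion $\theta$. This ends in the contradiction that $\sigma^*K_{F_0}$ would be effective.
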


\begin{theorem}\label{thm3m2}
    Let $X$ be a minimal 3-fold of general type with $P_m(X)\geq 2$. Then $|nK_X|$ defines a generically finite map when $n\geq 3m+2$.
\end{theorem}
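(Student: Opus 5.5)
The plan is to use Chen's standard fibration method together with a generic finiteness criterion for surfaces. I would first reduce to showing that $\Phi_{|nK_X|}$ is generically finite when restricted to a generic irreducible element $S$ of the movable part of $|mK_X|$. Take a resolution $\pi\colon X'\to X$ such that the movable part $|M_m|$ of $|\lfloor\pi^*(mK_X)\rfloor|$ is base point free. Let $f\colon X'\to\Gamma$ be the Stein factorization of $\Phi_{|M_m|}$, and let $S$ be a generic irreducible component of an element of $|M_m|$. Then $S$ is a smooth projective surface of general type, and $\pi^*(mK_X)\geq M_m\geq S$. Generic finiteness of $\Phi_{|nK_X|}$ follows from two facts:
\begin{enumerate}
\item $|nK_{X'}|$ distinguishes generic elements $S$. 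This is automatic, since $|nK_X|\supseteq|mK_X|+|(n-m)K_X|$ and $P_{n-m}>0$.
\item $\Phi_{|nK_{X'}|}|_S$ is generically finite.
\end{enumerate}

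For (2), I would use Kawamata--Viehweg vanishing. Write
\[
nK_{X'}\geq K_{X'}+\lceil (n-1)\pi^*K_X - \tfrac{1}{m}E_m\rceil + S,
\]
where $\pi^*(mK_X)=M_m+E_m$. The divisor $(n-1)\pi^*K_X-\frac1m E_m-S$ is nef and big up to the $\bQ$-divisor $(n-1-m)\pi^*K_X+\frac1m(\ldots)$, provided $n-1-m>0$, which holds. Vanishing of $H^1$ then gives a surjection
\[
H^0(nK_{X'})\to H^0\bigl(S, K_S+\lceil L\rceil\bigr),\qquad L=\bigl((n-1)\pi^*K_X-\tfrac1m E_m-S\bigr)|_S,
\]
so $L\equiv (n-1-m)\pi^*K_X|_S$ up to an effective fractional part. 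This reduces everything to the following surface statement: on $S$, the system $|K_S+\lceil L\rceil|$ is generically finite whenever $L$ is nef and big with controlled numerics. Here I would invoke the new generic finiteness criterion for surfaces announced in the abstract. Its expected shape is: if $L$ is a nef and big $\bQ$-divisor on a surface of general type with $L^2$ and $L\cdot C$ bounded below on a suitable covering family of curves $C$, then $|K_S+\lceil L\rceil|$ gives a generically finite map. When $f$ is a pencil, $S$ is a fibre and I would apply the criterion to $\sigma\colon S\to S_0$ its minimal model. To estimate $L$ I would use the comparison $\pi^*K_X|_S\geq \frac{1}{m+1}\sigma^*K_{S_0}$ (or the sharper version). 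This is where the "effective comparison inequality under a special resolution" enters. With $n-1-m\geq 2m+1$, the coefficient $(n-1-m)/(m+1)$ times $\sigma^*K_{S_0}$ should exceed the threshold of the criterion, presumably something like $L\equiv \beta\sigma^*K_{S_0}$ with $\beta>1$ plus extra positivity. When $\dim\Gamma\geq 2$, the numerics are easier: $S|_S$ moves in a family of curves, and a curve-level argument suffices. Namely, restrict further to a generic curve $C$ and use $\deg(K_C+\lceil L|_C\rceil)\geq 2g+1$-type bounds, or simply note that $|S|_S|$ already separates curves.

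The main obstacle is the numerical step. One must show $L$ is positive enough relative to $\sigma^*K_{S_0}$ in the pencil case, especially when $S_0$ is a $(1,1)$ or $(1,2)$ surface, where $K_{S_0}^2$ is tiny. I would first try the comparison $\pi^*K_X|_S\geq \frac{m}{m+1}\cdot\frac1m\sigma^*K_{S_0}$ from Kawamata's extension, giving $L\geq \frac{2m+1}{m+1}\sigma^*K_{S_0}>\sigma^*K_{S_0}$. I would then check that this strict inequality, $\beta>1$, is exactly what the surface criterion needs. If it is not enough for $(1,2)$-surfaces, I would restrict to the canonical pencil curve $C$ on $S$ and argue with $\deg\lceil L|_C\rceil>2$ so that $|K_C+\lceil L|_C\rceil|$ is nonconstant on $C$. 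This would yield generic finiteness in both the curve and pencil directions, which is why $n=3m+2$ is the threshold.
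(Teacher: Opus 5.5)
There is a genuine gap, and it sits in what you call the numerical step.

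The $\bQ$-linear comparison $\pi^*K_X|_S\geq_\bQ\frac{1}{m+1}\sigma^*K_{S_0}$ only gives $L\geq_\bQ\frac{2m+1}{m+1}\sigma^*K_{S_0}$ at $n=3m+2$. So when $K_{S_0}^2\le 2$ you cannot guarantee $L^2>8$, and no Reider/Ma\c{s}ek-type criterion applies; for $K_{S_0}^2=1$ the paper can invoke \cref{thmMasek} only when $n>(2\sqrt2+1)(m+1)$ (\cref{prop2sqrt2}).

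The critical case is the $(1,0)$-surface, not the $(1,2)$-surface, because for a $(1,0)$-surface $|2K_{S_0}|$ is itself a pencil (\cref{thmXG1}). There, knowing $L\equiv\beta\sigma^*K_{S_0}$ with $\beta>1$ says nothing about $|K_S+\lceil L\rceil|$, since the round-up is not controlled by $\bQ$-linear equivalence. What you need is an \emph{integral} statement: a member of $|\lceil L\rceil|$ of the form $\sigma^*D_0+B$ with $D_0\sim K_{S_0}$, $B\ge0$ and $\sigma^*D_0+B\ge0$.

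The paper gets this as follows.
\begin{itemize}
\item It proves $f^*(D)|_S\ge\sigma^*D_0$ for every $D\sim(m+1)K_X$, using the special resolution (\cref{compareIneq}, \cref{propCompareBirG}); here $f$ is your $\pi$.
\item It then applies \cref{thmMain1}. That criterion is not numerical: it says $|K_S+\sigma^*K_{S_0}+B|$ is generically finite whenever $B\ge0$ and $\sigma^*K_{S_0}+B\ge0$.
\item Effectivity of $\sigma^*D_0+B$ needs an effective $D\in|(m+1)K_X|$, which comes from global input. A $(1,0)$ fibre forces $h^2(\OO_W)=0$ (\cref{propH2OX}) and $p_g(X)=0$. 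After \cref{thmIrr5} one may take $q(X)=0$, so $\chi(\OX)=1$, and then $P_{m+1}>0$ by \cref{propPmPm1}.
\end{itemize}

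Your step (1) is also not automatic. $P_{n-m}>0$ is a non-vanishing theorem, the paper's \cref{thmNV}. At $n=3m+2$ the paper avoids it altogether by using $|(3m+2)K_W|\lsgeq|K_W+\lceil(m+1)f^*K_X\rceil+2S|$ together with \cref{coroTankeev2}.

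Second, nothing in your sketch handles $n>3m+2$. Generic finiteness of $|n'K_X|$ passes to $|nK_X|$ only if $P_{n-n'}>0$. Working directly on $S$ does not help either. After peeling off copies of $\sigma^*D_0$ from $\lceil(n-m-1)f^*K_X|_S\rceil$, the leftover $\lceil tf^*K_X|_S\rceil$ need not be effective, and for $K_{S_0}^2=1$ the Ma\c{s}ek range starts only at $n>(2\sqrt2+1)(m+1)$.

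This is why the paper replaces $m$ by $\delta(X)$ and argues case by case:
\begin{itemize}
\item $n=3\delta+2$ and $n=3\delta+3$ are handled by \cref{prop3m2} and \cref{prop3m3}.
\item For $n\ge3\delta+4$ and $\chi(\OX)=1$, it uses \cref{thm2m2Chi1} plus non-vanishing of $P_{n-2\delta-2}$.
\item For $n\ge3\delta+4$, $\chi\ne1$ and $\delta\le12$, it needs the computer-assisted basket enumeration of \cref{prop101112}.
\item For $\delta\ge13$, it uses the classification behind \cref{propDelta38}.
\end{itemize}

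As written, your proposal establishes neither the threshold case $n=3m+2$ nor the full range $n\ge 3m+2$.
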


\begin{theorem}\label{thm38}
    Let $X$ be a minimal 3-fold of general type. Then $|nK_X|$ defines a generically finite map when $n\geq 38$.
\end{theorem}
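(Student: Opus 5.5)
The plan is to reduce Theorem~\ref{thm38} to Theorem~\ref{thm3m2}, and to handle the small-plurigenera cases using the Chen--Chen classification.

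\textbf{The main case.} By the results of Chen and Chen on minimal 3-folds of general type (\cite{Chen2018}, \cite{EXP3}), there is a small integer $m_0$ with $P_{m_0}(X)\geq 2$. Concretely, one always has $P_{m}(X)\geq 2$ for some $m\leq 18$. Whenever we can take such an $m$ with $3m+2\leq 38$, that is $m\leq 12$, Theorem~\ref{thm3m2} gives generic finiteness of $|nK_X|$ for all $n\geq 38$ at once.

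\textbf{The exceptional cases.} What remains are the 3-folds with $P_m(X)\leq 1$ for all $m\leq 12$. These belong to a finite list of possible baskets and Reid-type data $(\chi(\OX),\text{basket})$, which Chen--Chen enumerate via Reid's plurigenus formula. For these we argue directly with the method behind Theorem~\ref{thm3m2}:
\begin{enumerate}
\item Choose the smallest $m_1$ with $P_{m_1}(X)\geq 2$, where $13\leq m_1\leq 18$.
\item Take a resolution on which $|m_1K_X|$ induces a fibration, either by surfaces or by curves, with generic irreducible element $S$.
\item Use the paper's generic finiteness criterion for surfaces, or its analogue for curves. To feed it, we need the comparison inequality $\pi^*K_X|_S\geq \beta\,\sigma^*K_{S_0}$ together with a lower bound for $\xi=(\pi^*K_X\cdot C)$.
\item In these cases the data are explicit, so the volume $K_X^3$ and the vanishing $P_m$ give good bounds on $\beta$ and $\xi$. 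Since $P_{m_1}$ is small, the map is usually a pencil, and then $\beta\geq 1/m_1$ or better.
\end{enumerate}
Generic finiteness of $|nK_X|$ then follows from Kawamata--Viehweg vanishing. The restriction $|nK_{X'}|\,|_S$ must contain a system of the form $|K_S+\lceil (n-1-m_1)\pi^*K_X|_S\rceil|$, and we need its restriction to be generically finite on $S$. The criterion reduces this to the numerical inequality $(n-1-m_1)\beta>$ a threshold (about $2$ for $(1,2)$-surfaces, smaller otherwise), plus a curve-level inequality involving $\xi$. With $m_1\leq 18$ and $\beta$ bounded below, this should hold for $n\geq 38$.

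\textbf{Expected obstacle.} The main difficulty is the borderline baskets where $m_1$ is close to $18$ and $S$ is a $(1,2)$-surface, so that $\beta$ is only about $1/m_1$. In these cases the crude bound fails, and we need the paper's improved comparison inequality under the special resolution, giving a better $\beta$ or $\xi$. The first thing to try is to use a second pluricanonical system, $|m_2K_X|$ with $m_2>m_1$ and $P_{m_2}$ large, to improve $\xi$ on the genus-$2$ curve family. Any remaining case would have to be checked basket by basket against the Chen--Chen table.
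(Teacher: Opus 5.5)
\textbf{There is a genuine gap in the case $\delta(X)\ge 13$.}

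Your reduction for $\delta(X)\le 12$ is fine: \cref{thm3m2} with $m=\delta(X)$ gives $3m+2\le 38$. The paper reaches the same conclusion via \cref{thm29Chi1} and \cref{prop101112}. The case $\delta(X)\ge 13$, however, carries the whole content of the bound $38$, and there your central numerical claim is false.

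First, for a pencil with $p=1$ the comparison constant is $\beta=1/(m_1+1)$ (\cref{propQCompare}, \cref{compareIneq}), not $1/m_1$. Now take type 2a, where $m_1=18$. For $39\le n\le 49$ the multiplier $(n-1-m_1)\beta=(n-19)/19$ lies in $[20/19,30/19]$. That is well below what any surface criterion in the paper needs: about $2$ for \cref{propSurfL2B} or for reaching $\sigma^*|3K_{S_0}|$ as in \cref{prop3m3}, and $2\sqrt2$ for \cref{thmMasek} when $K_{S_0}^2$ is small. So the crude argument fails for every type of fibre $S$, not only for borderline $(1,2)$-fibrations.

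Note also that $S$ is never a $(1,0)$-surface here. Indeed $\chi(\OX)\ge 2$ and $q=p_g=0$ give $h^2(\OX)>0$, so \cref{propH2OX} applies and \cref{thmMain1} plays no role in this range. A multiplier of $1$ does suffice at $n=2m_1+2$, where \cref{compareIneq} yields $\sigma^*|2K_{S_0}|$. This is how the paper handles $n=38$ (\cref{prop2m2} with $m=18$ and $P_{20}>0$ from \cref{coroDelta}), but it does not cover $39\le n\le 49$.

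\textbf{The missing mechanism.} Your instinct to bring in a second system with large plurigenus is right, but in the paper its role is not to improve $\beta$ or $\xi$. It is the dichotomy of \cref{propDeltaM2N3}:
\begin{itemize}
\item Restrict $|nK_X|$ to a fibre $S$ of a $1$-dimensional subsystem of $|18K_X|$ (or of $|14K_X|$ in some type 7a cases), and let $V$ be the image.
\item If $\dim V\ge 2$, then \cref{propSurfPencil} applies with $B=\lfloor nf^*K_X\rfloor|_S\le bL$ and $b=n/(n-m-1)<2\le\dim V$.
\item If $\dim V\le 1$, then $|nK_X|$ contains the pencil $|pS|$ with $p=\lfloor (P_n(X)-1)/2\rfloor$. \cref{propK3Pencil} then forces $K_X^3\ge 4p^3/(n(n+p)(3n+4p))$.
\item This contradicts the tiny volumes in \cref{tableDelta13}, once the basket computations give $P_n(X)\ge 9$, $11$, $13$ or $15$ according to $n$ and the size of $K_X^3$ (\cref{propDeltaPn15} through \cref{propDeltaPn9}).
\end{itemize}
So the explicit volume is used as an upper bound that rules out large pencils, not as a source of lower bounds on $\beta$.

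The remaining cases also need separate arguments:
\begin{itemize}
\item The type 7a basket needs the variant \cref{propDeltaM2L3} with $l\in\{30,32,34\}$, including the subcase settled by \cref{propSurfLB}.
\item The cases $P_{18}(X)\ge 4$ and $n\ge 50$ are handled through \cref{propD2} and \cref{propK3Pencil} applied to $|18K_X|$ and $|30K_X|$.
\end{itemize}
Without this volume-versus-pencil mechanism, ``check basket by basket'' has nothing concrete to check. As written, the proposal does not prove the theorem for $\delta(X)\ge 13$.
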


The following two examples show that both the integer 6 and the integer 2 are optimal in \cref{thm6K}.

\begin{example}[{\cite[II.5.4]{IF2000}}]
    $X_{6,18}$ in $\bP(2,2,3,3,4,9)$ is a minimal 3-fold of general type such that $P_2=2$ and $|nK|$ does not define a generically finite map for all $n\leq 5$.
\end{example}

\begin{example}[{\cite[II.5.1]{IF2000}}]
    $X_{30}$ in $\bP(2,3,4,5,15)$ is a minimal 3-fold of general type such that $P_2=1$ and $|nK|$ does not define a generically finite map for all $n\leq 7$.
\end{example}

There are many challenges in the study of the $P_2\geq 2$ case since we no longer have $p_g>0$. Moreover, the failure of generic finiteness of $\Phi_{|2K|}$ of $(1,0)$-surfaces causes many difficulties. To characterize generic finiteness precisely, we establish a new generic finiteness criterion for surfaces, and we prove an effective comparison inequality using a special resolution which is a central new technique in \cite{Noether2025}.

When studying generic finiteness of 3-folds with large pluricanonical section index, we establish a better non-vanishing theorem than \cite[Theorem 1.1 (1)]{EXP2}, and we prove several criteria to trace the relationship between $K^3$ and $P_n$ carefully. Moreover, we introduce a modification of baskets that helps us focus on a finite number of possibilities.

The structure of this paper is as follows. In \cref{secP}, we gather some basic definitions and theorems that will be used. \cref{secSurf}, \cref{secCompare} and \cref{secThm6K} are devoted to proving \cref{thm6K}. \cref{secNonV}, \cref{secCriteria3}, \cref{secDelta13} and \cref{secThm38} are devoted to proving \cref{thm3m2} and \cref{thm38}.

\section{Preliminaries}\label{secP}

Throughout this paper, we work over the complex number field $\bC$, and we will freely use standard definitions and theorems in \cite{KMM, KM1998}.

A \textit{variety} means an integral separated scheme of finite type over $\bC$. A \textit{minimal} variety means a $\bQ$-factorial terminal projective variety with nef canonical divisor.

A \textit{divisor} means a Weil divisor. A $\bQ$-divisor means a $\bQ$-linear combination of divisors. For $\bQ$-divisors $D_1$ and $D_2$, we write $D_1\geq D_2$ if $D_1-D_2$ is an effective $\bQ$-divisor; we write $D_1\geq_\bQ D_2$ if $D_1-D_2$ is $\bQ$-linearly equivalent to an effective $\bQ$-divisor.

For linear systems $\Lambda_1$ and $\Lambda_2$, we write $\Lambda_1\lsgeq \Lambda_2$ if $\Lambda_1\supseteq \Lambda_2+Z$ where $Z$ is an effective divisor.

An $(a,b)$-surface means a smooth projective surface $S$ of general type with $K_{S_0}^2=a$ and $p_g(S)=b$ where $S_0$ is the minimal model of $S$.

For a linear system $\Lambda$ on a normal projective variety, $\Phi_\Lambda$ means the rational map defined by $\Lambda$.

The following two theorems are classical results by Gang Xiao.

\begin{theorem}[{\cite[Theorem 1]{X2}, \cite[Theorem 0.1]{CW}}]\label{thmXG1}
    Let $S$ be a minimal surface of general type. Then $|2K_S|$ defines a generically finite map if and only if $S$ is not a $(1,0)$-surface.
\end{theorem}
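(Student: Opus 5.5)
The plan is to prove the two directions of \cref{thmXG1} separately. Throughout, $S$ is minimal of general type, so $K_S$ is nef and big and $K_S^2\geq 1$.

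\emph{Failure for $(1,0)$-surfaces.} Let $S$ be a $(1,0)$-surface. Since $p_g=0$, $q=0$, so $\chi(\OO_S)=1$. Riemann--Roch together with Kawamata--Viehweg (or Mumford) vanishing gives $P_2=\chi(\OO_S)+K_S^2=2$. Hence $|2K_S|$ is a pencil, and $\Phi_{|2K_S|}$ maps $S$ onto $\bP^1$, which is not generically finite.

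\emph{Generic finiteness otherwise.} Suppose $S$ is not a $(1,0)$-surface. Then $P_2=\chi(\OO_S)+K_S^2\geq 3$ unless $\chi(\OO_S)\leq 0$, which cannot happen for a minimal surface of general type. Equality $P_2=2$ would force $\chi=K^2=1$, that is, a $(1,0)$-surface. The argument proceeds by contradiction: assume $\Phi_{|2K_S|}$ is composed with a pencil. Write $|2K_S|=|M|+Z$ with $M\equiv aF$, where $F$ is a general fibre (after resolving the base points) and $a\geq P_2-1\geq 2$ in the rational pencil case. Then $2K_S^2\geq aK_S\cdot F\geq 2K_S\cdot F$. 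Moreover $K_S\cdot F\geq 1$, and parity from adjunction constrains $(K_S+F)\cdot F$. These numerics yield $K_S\cdot F$ small and $F$ a curve of genus $2$ with $F^2\leq 1$, or similar. To finish, compare with $|K_S+F|$ or restrict $|2K_S|$ to $F$. Using the vanishing $H^1(S,K_S+(2K_S-F-\text{stuff}))=0$ via Kawamata--Viehweg, the restriction map $H^0(2K_S)\to H^0(F,2K_S|_F)$ is shown to be nonzero of rank $\geq 2$, which contradicts $F$ being contracted. Small cases, namely $p_g\leq 2$ with $K^2\leq 2$, are handled via the known classification of surfaces with $K^2=1,2$ and the structure of their bicanonical systems. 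The Reider-type step is: if the bicanonical map fails to separate general points of $F$, Reider's theorem on $|K_S+L|$ with $L=K_S$, $L^2\geq 5$ gives a curve $E$ with $L\cdot E\leq 1$, $E^2\leq 0$, etc. For $K^2\geq 5$ this gives a contradiction directly, since the fibres would have to be such curves, impossible for a moving family with $K\cdot F\geq 1$, $F^2\geq 0$.

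\emph{Main obstacle.} The hard part is $K_S^2\leq 4$, where Reider's theorem does not apply. There I would first try the restriction argument onto a general member $C\in|K_S|$ when $p_g\geq 1$, using that $|2K_S||_C\supseteq |K_C|$ via the surjectivity from $q=0$ (or the analysis of the irregular case separately). When $p_g=0$, one has $\chi=1$ and $K^2\in\{2,3,4\}$, so $P_2\geq 3$. Here I would invoke Xiao's genus-$2$ fibration analysis to exclude a bicanonical pencil; this is the delicate case-by-case part, done in \cite{X2} and \cite{CW}.
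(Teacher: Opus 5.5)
The paper does not prove this statement; it quotes it from Xiao's work (with the second reference given alongside) and uses it as a black box. Your ``only if'' direction is correct: for a $(1,0)$-surface, $\chi(\OO_S)>0$ forces $q=0$, so $P_2=\chi(\OO_S)+K_S^2=2$ and $\Phi_{|2K_S|}$ maps onto $\bP^1$.

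The converse is not proved, and this is a genuine gap. Your last step hands the case $p_g=0$, $2\le K_S^2\le 4$ to ``Xiao's genus-$2$ fibration analysis'', which is the theorem itself. The irregular case is only announced. There is also no ``known classification'' to appeal to for $p_g=0$, $K_S^2=2$, since numerical Campedelli surfaces are not classified.

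The steps you do spell out also fail. Reider's theorem for $|K_S+L|$ with $L=K_S$ and $L^2\ge 5$ only concerns base points. If $\Phi_{|2K_S|}$ is not generically finite, then $|2K_S|$ fails to separate a tangent direction at a general point. The length-two version of Reider that controls this needs $K_S^2\ge 9$ (compare \cref{thmMasek}, which needs $L^2>8$). Even then, its conclusion allows an effective $E$ through a general point with $K_S\cdot E=2$ and $E^2=0$, i.e.\ a genus-$2$ pencil. That is a moving family with $K_S\cdot F\ge 1$ and $F^2\ge 0$, so there is no ``direct contradiction'' for large $K_S^2$: genus-$2$ fibrations occur with $K_S^2$ arbitrarily large. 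You would still have to show that $|2K_S|$ does not contract their fibres.

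That is exactly the vanishing you leave as ``$H^1(K_S+(2K_S-F-\text{stuff}))=0$''. The cokernel of $H^0(2K_S)\to H^0(F,2K_S|_F)$ injects into $H^1(2K_S-F)=H^1(K_S+(K_S-F))$. In general $K_S-F$ is not nef and big; for $K_S\cdot F=2$, $F^2=0$ one has $(K_S-F)^2=K_S^2-4$. So Kawamata--Viehweg gives nothing here.

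Your numerics, carried to the end, show where the real work lies. You have $2K_S^2\ge K_S\cdot M=a\,(K_S\cdot F)$ and $a\ge P_2-1=\chi(\OO_S)+K_S^2-1$.
If $K_S\cdot F\ge 2$, these force $\chi(\OO_S)=1$, $K_S\cdot F=2$, $a=K_S^2$ and $K_S\cdot Z=0$.
If $K_S\cdot F=1$, parity and the Hodge index theorem give $K_S^2=F^2=1$, $F\equiv K_S$ and $\chi(\OO_S)\le 2$. Since $K_S^2=1$ forces $q=0$, $S$ is then a $(1,0)$- or a $(1,1)$-surface.

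Apart from the excluded $(1,0)$ case, two cases remain:
(i) $(1,1)$-surfaces, where you must rule out that $F-K_S$ is a non-zero torsion class $\theta$. Riemann--Roch together with \cref{thmXG2} gives $h^0(K_S+\theta)=2$ there, so no contradiction comes for free.
(ii) Surfaces with $p_g=q$ (so $p_g=q=0$ with $2\le K_S^2\le 9$, or irregular ones) whose bicanonical system would be composed with a pencil with $K_S\cdot F=2$.

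Excluding (i) and (ii) is the actual content of Xiao's theorem, and your proposal contains no argument for it. Either cite the result, as the paper does, or supply this analysis.
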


\begin{theorem}[{\cite[Lemma 8]{X2}, \cite[Lemma 1.3]{CW}}]\label{thmXG2}
    Let $S$ be a minimal surface of general type with $q(S)=0$ and $K_{S}^2\leq 2$. Let $\theta$ be a torsion line bundle on $S$. Then $h^1(S,\theta)=0$.
\end{theorem}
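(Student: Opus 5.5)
The statement immediately above is \cref{thmXG2}: for a minimal surface $S$ of general type with $q(S)=0$ and $K_S^2\le 2$, every torsion line bundle $\theta$ has $h^1(S,\theta)=0$. We may assume $\theta\not\cong\OO_S$, since $h^1(\OO_S)=q(S)=0$. The plan is to pass to the étale cyclic cover defined by $\theta$ and bound its irregularity. Let $d$ be the order of $\theta$ and $\pi\colon\tilde S\to S$ the associated connected étale cyclic cover of degree $d$. Then $\pi_*\OO_{\tilde S}=\bigoplus_{i=0}^{d-1}\theta^{i}$, so
\[
q(\tilde S)=\sum_{i=1}^{d-1}h^1(S,\theta^{i}),
\]
and it suffices to show $q(\tilde S)=0$, or at least that the $\theta$-eigenspace of $H^1(\OO_{\tilde S})$ vanishes. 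The cover $\tilde S$ is minimal of general type with $K_{\tilde S}^2=dK_S^2\le 2d$ and $\chi(\OO_{\tilde S})=d\chi(\OO_S)$.

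Next we get numerical constraints on $S$. From $q=0$ we have $\chi(\OO_S)=1+p_g(S)\ge 1$. Noether's inequality $K_S^2\ge 2p_g-4$ gives $p_g\le 3$, and since $K^2\le 2$ the case $p_g=3$ (which needs $K^2\ge 3$ for $q=0$) is excluded, so $p_g\le 2$. Suppose $q(\tilde S)>0$. Then Debarre's inequality $K^2\ge 2p_g$ for irregular surfaces applies to $\tilde S$: $2d\ge dK_S^2\ge 2p_g(\tilde S)$. On the other hand $p_g(\tilde S)=\chi(\OO_{\tilde S})-1+q(\tilde S)\ge d(1+p_g(S))-1+q(\tilde S)$. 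Combining these,
\[
d\ge d(1+p_g(S))-1+q(\tilde S),
\]
which forces $p_g(S)=0$ and $q(\tilde S)\le 1$. Together with the hypothesis $K_S^2\le 2$, this leaves only the cases where $S$ is a $(1,0)$- or $(2,0)$-surface and $q(\tilde S)=1$.

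In these residual cases we use the Albanese map $\tilde S\to E$ onto an elliptic curve. The cyclic group $\bZ/d$ acts on $E$, the action descends to $S$, and we obtain either a fibration of $S$ or of a quotient over $E/G\cong\bP^1$. The $\theta^{i}$-eigenspaces correspond to characters acting on $H^1(\OO_E)$. Since $H^1(\OO_E)$ is one-dimensional, exactly one character $\theta^{i}$ has $h^1=1$. That character must be invariant under the Galois (complex conjugation/inverse) pairing, because $h^1(\theta^{i})=h^1(\theta^{-i})$ by Hodge symmetry ($\overline{H^{0,1}}=H^{1,0}$, combined with Serre duality). This forces $d=2$ and $\theta^2=\OO_S$.

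The hard step is then to rule out an étale double cover $\tilde S$ of a $(1,0)$- or $(2,0)$-surface with $q(\tilde S)=1$. Here $K_{\tilde S}^2\le 4$ and $\chi(\OO_{\tilde S})=2$, so $p_g(\tilde S)=2$. The Albanese fibration has fibres of genus $g$, and the slope inequality $K^2\ge 4\frac{g-1}{g}\chi$ gives $K_{\tilde S}^2\ge 8(g-1)/g$, so $g=2$ and $K_{\tilde S}^2=4$, i.e.\ $K_S^2=2$. I expect this subcase to be the main obstacle. The plan for it is to use the structure of genus-2 fibrations over an elliptic curve with $K^2=2\chi$ (Xiao's classification of surfaces on the slope line) and show that the involution cannot act freely. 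Concretely, the fixed points of the induced involution on $E$, combined with the hyperelliptic involution on the fibres, should produce a fixed point, contradicting that the cover is étale. Alternatively, one can compare torsion counts of $\mathrm{Tors}\,H^2(S,\mathbb Z)$ for $(2,0)$-surfaces, as Xiao does.
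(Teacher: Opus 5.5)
Note first that the paper gives no proof of this statement; it simply cites Xiao and CW. Judged on its own, your argument has a genuine gap at the reduction to $d=2$. The identity $h^1(\theta^i)=h^1(\theta^{-i})$ does not follow from Hodge symmetry and Serre duality.

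Since $G$ acts on $H^1(\tilde S,\bC)$ by real operators, complex conjugation sends the $(0,1)$-part of one isotypic component to the $(1,0)$-part of the conjugate component. What this gives is $h^1(S,\theta^{i})=h^0(S,\Omega^1_S\otimes\theta^{-i})$. Serre duality gives $h^1(\theta^i)=h^1(K_S\otimes\theta^{-i})$. Neither identity relates $h^1(\theta^i)$ to $h^1(\theta^{-i})$, and in general the two differ. For example, take $(E\times C)/(\mathbb{Z}/3)$, with $\mathbb{Z}/3$ acting by complex multiplication on an elliptic curve $E$ with $j=0$ and freely on a curve $C$.

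In your situation, write the action of a generator of $G$ on $E$ as $x\mapsto\alpha(x)+c$, with $\alpha$ a group automorphism. Then $G$ acts on the line $H^{0,1}(\tilde S)=H^{0,1}(E)$ through a nontrivial character of the same order as $\alpha$, namely $2$, $3$, $4$ or $6$. Hodge theory cannot exclude the orders $3,4,6$ (CM curves). So ``this forces $d=2$'' is unjustified. In addition, your final step covers only $d=2$ and is only a plan (``should produce a fixed point'').

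The gap closes easily, and $d=2$ need not be singled out.

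\textbf{Debarre, for every $d$.} Your computation works for all $d$ and gives more than you used: $p_g(S)=0$, $q(\tilde S)=1$ and $p_g(\tilde S)=d$. Hence $dK_S^2\geq 2d$, so $K_S^2=2$, which already excludes $(1,0)$-surfaces.

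\textbf{Fibre genus.} The slope inequality for the Albanese fibration $a\colon\tilde S\to E$ gives $2d\geq\frac{4(g-1)}{g}d$, so the fibre genus is $g=2$.

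\textbf{Fixed point.} If $\alpha=1$, then $G$ acts on $E$ by translations and $S$ maps onto the elliptic curve $E/G$, contradicting $q(S)=0$. So $1-\alpha$ is a nonzero isogeny, hence surjective, and the generator fixes some $x_0\in E$.

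\textbf{Contradiction.} $G$ preserves the scheme-theoretic fibre $F_0=a^{-1}(x_0)$ and acts freely on it. So $F_0\to F_0/G$ is \'etale of degree $d$, and $-1=1-g=\chi(\OO_{F_0})=d\,\chi(\OO_{F_0/G})$, which is impossible for $d\geq 2$.

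No hyperelliptic involution, classification of genus-$2$ fibrations, or torsion count is needed.

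A minor point: your exclusion of $p_g(S)=3$ is wrong. Double planes branched along an octic have $K^2=2$, $p_g=3$, $q=0$. This is harmless, because Debarre's inequality forces $p_g(S)=0$ anyway.
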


The following theorem is a classical result by Enrico Bombieri, Yoichi Miyaoka and Fabrizio Catanese.

\begin{theorem}[{\cite[Main Theorem]{Bombieri}, \cite[Theorem 3]{Miyaoka1976}, \cite{BombieriCatanese1978}, \cite{Catanese1981}}]\label{thmSurf3K}
    Let $S$ be a minimal surface of general type. Then $|nK_S|$ defines a generically finite map when $n\geq 3$.
\end{theorem}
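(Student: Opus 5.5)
This statement is the classical Bombieri--Miyaoka--Catanese theorem, quoted from the literature, so the paper uses it as a black box. If I had to prove it, I would argue as follows. First replace $|nK_S|$ by its mobile part and reduce to showing that $\Phi_{|nK_S|}$ is not composed with a pencil, i.e.\ its image is not a curve. By Riemann--Roch and Kawamata--Viehweg (or Mumford) vanishing, $h^0(S,nK_S)=\chi(\OO_S)+\binom{n}{2}K_S^2\geq 2$ for $n\geq 2$, so the map is at least defined. For $n\geq 5$ the statement follows from Bombieri's birationality theorem. The cases $n=3,4$ need a direct argument.

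The key step is a restriction argument. Suppose, for contradiction, that $\Phi_{|nK_S|}$ is composed with a pencil. Let $\pi\colon S'\to S$ resolve the base locus, and let $C$ be a general fibre of the induced fibration $S'\to\Gamma$. Then $\pi^*(nK_S)\geq \Mov|nK_{S'}|\equiv aC$ with $a\geq 1$. Since $\pi^*K_S\cdot C>0$ ($K_S$ is big and nef and $C$ moves in a pencil covering $S$), the adjunction formula gives
\[
2g(C)-2=(K_{S'}+C)\cdot C\leq \Bigl(1+\tfrac1n\Bigr)\pi^*(nK_S)\cdot C.
\]
On the other hand, $nK_S=K_S+(n-1)K_S$ with $(n-1)K_S$ nef and big. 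Vanishing then gives surjectivity of the restriction
\[
H^0(S',K_{S'}+\lceil \pi^*((n-1)K_S)\rceil)\to H^0(C,K_C+D)
\]
for a suitable divisor $D$ on $C$ of degree $\geq 2$. So $|nK_S|$ restricted to $C$ contains a linear system of degree $\geq 2g(C)$, which separates points on $C$. In particular it is not constant on $C$, contradicting that $C$ is contracted by $\Phi$.

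The main obstacle is controlling the degree of $D$ when $n=3$ and $K_S^2$ is small. In that case $\pi^*K_S\cdot C$ may be $1$, and the fixed part of $|(n-1)K_S|$ can eat the positivity. I would handle this with a separate treatment of curves with $K_S\cdot C=1$. The plan is to use the Hodge index theorem to show $C^2\leq 1$, and then to apply Reider's method directly on the minimal model, as in Catanese's treatment: a base point or non-separation of $|3K_S|$ along $C$ yields a destabilising curve $E$ with $K_S\cdot E\leq 1$ and $E^2\leq 0$, which is excluded by parity and index considerations.
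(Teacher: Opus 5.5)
The paper gives no proof of this statement; it quotes it from the literature, where generic finiteness falls out of the much harder birationality results. Your skeleton (pencil, general fibre $C$, vanishing, restriction) is the right mechanism; it is the same one the paper uses in \cref{propSurfPencil}. As written, though, it does not close, because the vanishing step is asserted without the input that makes it true.

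\textbf{The missing input: a lower bound on $a$.} To land in $|nK_{S'}|$ and restrict to $C$, you must split $C$ off $\pi^*((n-1)K_S)$. Write $\pi^*(nK_S)=M+Z$ with $M\equiv aC$, and set $Q=\pi^*((n-1)K_S)-C-\tfrac1a Z$. Then $Q\equiv(n-1-\tfrac na)\pi^*K_S$, and $h^1(K_{S'}+\lceil Q\rceil)=0$ needs $a>\tfrac n{n-1}$. With only $a\geq 1$, as you wrote, $Q\equiv-\pi^*K_S$ and nothing vanishes. This is also why the same argument cannot work for $n=2$; compare $(1,0)$-surfaces and \cref{thmXG1}. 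The fix is $a\geq\dim|nK_S|=P_n(S)-1\geq\binom n2\geq n$, which your own Riemann--Roch computation gives but you never use.

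\textbf{The obstacle you identify is not the real one.} With $a\geq n$ you get $\deg\lceil Q\rceil|_C\geq(n-1-\tfrac na)\,\pi^*K_S\cdot C\geq 1$. This can equal $1$ (for instance $n=3$, $a=3$, $\pi^*K_S\cdot C=1$), so your claim $\deg D\geq 2$ is unsupported. But generic finiteness never needs $\deg D\geq2$ or separation of points. Since $S$ is of general type, $g(C)\geq 2$, so $h^0(K_C+D)=g(C)-1+\deg D\geq 2$. Hence $|nK_{S'}||_C\lsgeq|K_C+D|$ is non-constant on $C$, contradicting that $\Phi$ contracts $C$. This settles every $n\geq3$ at once, with no appeal to Bombieri.

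\textbf{The Reider fallback fails where you use it.} With $L=2K_S$, Reider's criterion needs $L^2=4K_S^2\geq 5$ even for base-point-freeness and $L^2\geq 10$ for separating points. So it says nothing in the range $K_S^2\leq 2$, which is exactly where you invoke it. Also, Reider curves with $K_S\cdot E=0$, $E^2=-2$ survive the parity check; they are harmless only because they miss general points, and you would need to say so.

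With these two repairs your argument becomes a self-contained proof of the statement. It is considerably more elementary than the cited route through the birationality theorems and the delicate analysis of $K_S^2\leq 2$.
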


The following theorem is a classical result by Meng Chen, Jheng-Jie Chen, Jungkai A. Chen, Zhi Jiang and Christopher D. Hacon.

\begin{theorem}[{\cite[Theorem 1.3]{CCCJ}, \cite[Theorem 1.1]{CCJ}, \cite[Proposition 2.9]{CH}}]\label{thmIrr5}
    Let $X$ be an irregular minimal 3-fold of general type. Then $|nK_X|$ defines a birational map when $n\geq 5$.
\end{theorem}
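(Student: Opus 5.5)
This result is cited from the literature (\cite[Theorem 1.3]{CCCJ}, \cite[Theorem 1.1]{CCJ}, \cite[Proposition 2.9]{CH}), so the plan is to reconstruct the standard argument from those references rather than to prove something new. The approach is through the Albanese map. Since $X$ is irregular, $q(X)>0$, so the Albanese map $a\colon X\to A=\mathrm{Alb}(X)$ is nonconstant. I would split into cases according to $\dim a(X)\in\{1,2,3\}$.

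In each case the main tool is generic vanishing together with the Chen--Hacon continuity technique. For $n\geq 2$, the sheaves $a_*\OO_X(nK_X)$ are IT$^0$ or at least GV. As a consequence, for a general $P\in \mathrm{Pic}^0(A)$ one gets
\[
h^0\bigl(X, nK_X\otimes P\bigr)=\chi\bigl(X, nK_X\bigr)>0 .
\]
This holds by Kawamata--Viehweg vanishing on the $\bQ$-factorial terminal minimal model, using $K_X$ nef and big and $n\geq2$. To separate two general points $x,y$, I would use the standard trick: write
\[
|nK_X| \supseteq |(n-k)K_X+P| + |kK_X-P| \quad\text{for varying } P .
\]
By continuity, these sums are dense in $|nK_X|$ over the locus where both summands are nonempty. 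It then suffices to show that $|kK_X+P|$ separates $x,y$ (or the fibers of $a$) for general $P$, while $|(n-k)K_X-P|$ has a member avoiding $x,y$.

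Concretely, I would proceed case by case.
\begin{itemize}
\item If $a$ is generically finite, the twisted systems $|K_X+P|$ already separate points on fibers of $a$ up to the degree. Combining $2K_X+P$ with $3K_X-P$ then gives birationality of $|5K_X|$. For $A$ simple, or for $a(X)$ not fibered by abelian subvarieties, this is Chen--Hacon's argument.
\item If $\dim a(X)\leq 2$, the general fiber $F$ of the Stein factorization is a curve of genus $\geq 2$ or a surface of general type. Here I would apply the standard birationality criterion: $|nK_X|$ is birational if it separates distinct fibers and its restriction to a general $F$ is birational. Separating fibers comes from the twisted $\mathrm{Pic}^0$ sections pulled back from $A$. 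For the restriction, I would use the extension result $|nK_X|\big|_F\supseteq |n K_F|$-type inclusions, up to a nef correction, together with the known birationality of $|5K_F|$ (Bombieri) for surfaces and of $|3K_C|$ for curves.
\end{itemize}

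The main obstacle is the case where $a(X)$ is a curve, or a surface fibered by elliptic curves, and the fiber $F$ is a surface with small invariants, such as $(1,0)$- or $(1,1)$-surfaces. In that case $|nK_X|\big|_F$ may be strictly smaller than $|nK_F|$, and the restriction must be controlled through the relative canonical class. I would first use weak positivity of $f_*\omega_{X/B}^{\otimes k}$ to get $K_X|_F\geq K_F$ up to a nef part. Then I would handle the residual low-invariant surfaces by the finer analysis of \cite{CCJ}, which checks that $|5K_X|$ still restricts to a birational system using $q(X)>0$ to add a degree-zero twist.
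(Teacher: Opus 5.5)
The paper gives no proof of this statement; it is quoted from \cite{CCCJ}, \cite{CCJ} and \cite{CH}. Read as a reconstruction, your outline has a genuine gap exactly at the case you call the main obstacle, and the $\operatorname{Pic}^0$-splitting trick you rely on provably cannot close it.

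Take $q(X)=1$, so the Albanese map is a fibration $f\colon X\to C$ onto an elliptic curve, and suppose the general fibre $F$ is a $(1,2)$-surface. Its canonical model is the hypersurface $z^2=f_{10}(x_0,x_1,y)$ in $\bP(1,1,2,5)$. For $a\le 4$ every section of $aK_F$ is a polynomial in $x_0,x_1,y$ alone, so every divisor in $|aK_F|$ is invariant under the covering involution $\iota$ ($z\mapsto -z$). Since $K_X|_F=K_F$ and every $P\in\operatorname{Pic}^0(X)=f^*\operatorname{Pic}^0(C)$ is trivial on $F$, each member of $|aK_X+P|+|(5-a)K_X-P|$ with $1\le a\le 4$ either contains $F$ or restricts to an $\iota$-invariant divisor. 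Hence no splitting and no twist separates a general $x\in F$ from $\iota(x)$.

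What must be proved is that the image of $H^0(X,5K_X)\to H^0(F,5K_F)$ is not contained in the $\iota$-invariant part, i.e.\ that it contains a section with nonzero $z$-coefficient. Your outline offers no argument for this. ``Using $q(X)>0$ to add a degree-zero twist'' cannot supply it, for the reason just given. Delegating it to \cite{CCJ} re-cites the theorem rather than reconstructing it.

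Several of the steps you do write down are also empty or false.
\begin{itemize}
\item The Albanese map is a morphism on the minimal model, and a general fibre misses the isolated singularities. So adjunction gives $K_X|_F=K_F$ exactly, and ``$K_X|_F\ge K_F$ up to a nef part'' says nothing. The real issue is the image of the restriction map.
\item The useful form of weak positivity is this. If $g(C)\ge 2$, then $f_*\mathcal{O}_X(nK_X)=f_*\mathcal{O}_X(nK_{X/C})\otimes\omega_C^{\otimes n}$ is globally generated for $n\ge 2$ and separates fibres for $n\ge 3$. Then $|nK_X||_F=|nK_F|$, and Bombieri's theorem settles that case outright.
\item If $g(C)=1$, then $f_*\mathcal{O}_X(nK_X)$ is only IT$^0$ (equivalently, ample), and an indecomposable summand of slope $<1$ has fewer sections than its rank. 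So nothing makes $H^0(X,nK_X)\to H^0(F,nK_F)$ surjective. Your inclusion $|nK_X||_F\supseteq|nK_F|$ is unjustified precisely where it is needed.
\item In the generically finite case, generic vanishing gives $h^0(K_X+P)=\chi(\omega_X)$ for general $P$. This vanishes for the Ein--Lazarsfeld threefold of general type and maximal Albanese dimension, so $|K_X+P|$ can be empty. That case rests on the separate theory of pluricanonical maps of varieties of maximal Albanese dimension.
\item For $2\le k\le n-2$, the sums $|(n-k)K_X+P|+|kK_X-P|$ form a family of dimension at most $q+P_{n-k}+P_k-2$. This is in general less than $P_n-1$, so they are not dense in $|nK_X|$. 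The continuity argument does not need density anyway. It only needs a general $P$ for which one summand separates $x,y$ and the other has a member avoiding both.
\end{itemize}
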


\begin{definition}[{\cite[Remark 3.2]{Chen2001}}]
    Let $X$ be a normal projective variety on which a linear system $\Lambda$ is of dimension $p\geq 1$. Let $f:W\to X$ be a resolution of singularities such that $\Mov f^*\Lambda$ is base point free. Let $W\to B\to \bP^p$ be the Stein factorization of $\Phi_{\Mov f^*\Lambda}$.

    \begin{itemize}
        \item If $\Lambda$ is not composed with a pencil (namely, $\dim\overline{\Phi_\Lambda(X)} \geq 2$), then a \textit{generic irreducible element} of $f^*\Lambda$ means a general element in $\Mov f^*\Lambda$ which is smooth and irreducible by Bertini's theorem.

        \item If $\Lambda$ is composed with a pencil, then a \textit{generic irreducible element} of $f^*\Lambda$ means a general fiber $F$ of $W\to B$ which is smooth and irreducible. By definition of $\Phi_{\Mov f^*\Lambda}$, $F$ is an irreducible component of a general element $M\in \Mov f^*\Lambda$ and $M\equiv aF$ where $a\geq p$. If the pencil is a rational pencil (namely, $B\cong \bP^1$), then $M\sim aF$.
    \end{itemize}
\end{definition}

The following theorem is Tankeev's principle (\cite[Lemma 2]{Tankeev1971}) for generic finiteness.

\begin{theorem}\label{thmTankeev}
    Let $X$ be a smooth projective variety on which $Y$ is a prime divisor. Let $V$ be a linear system on $X$. If $V|_Y$ defines a generically finite map and $Y\leq D$ for some $D\in V$, then $V$ defines a generically finite map.
\end{theorem}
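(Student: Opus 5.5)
We argue by contradiction using fiber dimensions. Let $\Phi=\Phi_V\colon X\dashrightarrow \bP^N$ be the rational map defined by $V$. We may assume $\dim V\geq 1$; otherwise $V|_Y$ is a single point and cannot be generically finite unless $\dim Y=0$, which is excluded since $Y$ is a divisor of the positive-dimensional $X$. Write $V=\Mov V+\operatorname{Fix}V$. The map $\Phi_V$ depends only on the moving part. Likewise $\Phi_{V|_Y}$ depends only on the moving part of $V|_Y$, which is the image of $\Mov V$ under restriction, possibly with some further fixed components. The hypothesis $Y\leq D$ for some $D\in V$ guarantees that $Y$ is not contained in the base locus of $\Mov V$ in a way that makes the restriction meaningless. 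Indeed $Y$ is a prime divisor and $Y\not\subseteq \operatorname{Bs}(\Mov V)$, since the base locus of the moving part has codimension $\geq 2$. Hence the restriction map $\Phi|_Y\colon Y\dashrightarrow \bP^N$ is a well-defined rational map, and it coincides with $\Phi_{V|_Y}$ after composing with a linear projection. More precisely, $V|_Y$ is the linear system consisting of restrictions $D'|_Y$ for $D'\in V$ not containing $Y$. Its associated map is $\Phi|_Y$ followed by the projection from the linear subspace corresponding to the divisors of $V$ containing $Y$. The condition $Y\leq D$ says exactly that this subspace is nonempty, so $V|_Y$ has dimension $\dim V-1$ or less. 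In any case $\Phi_{V|_Y}=\pi\circ(\Phi|_Y)$ for a linear projection $\pi$.

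The key step is then a dimension count. Suppose $\Phi_V$ is not generically finite, so $\dim\overline{\Phi(X)}\leq \dim X-1=\dim Y$. Since $\Phi_{V|_Y}=\pi\circ \Phi|_Y$ is generically finite on $Y$, we get $\dim \overline{\Phi(Y)}\geq \dim\overline{\Phi_{V|_Y}(Y)}=\dim Y$. Therefore $\overline{\Phi(Y)}\subseteq\overline{\Phi(X)}$ have the same dimension. Both are irreducible, so they are equal. Now use $Y\leq D$ with $D\in V$: the divisor $D$ corresponds to a hyperplane $H\subset\bP^N$, and $D=\Phi^*H$ plus fixed part. Since $Y$ is a component of $D$, we may arrange, after replacing $D$ by its moving part when $Y\not\leq\operatorname{Fix}V$, that $\Phi(Y)\subseteq H$. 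Then $\overline{\Phi(X)}=\overline{\Phi(Y)}\subseteq H$, which contradicts the fact that the image of $\Phi_V$ is nondegenerate, i.e.\ not contained in any hyperplane.

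The main obstacle is the case where $Y$ is contained in $\operatorname{Fix}V$ rather than in the moving part of $D$. In that case every element of $V$ contains $Y$, and $V|_Y$ must be interpreted as $(V-Y)|_Y$, or as the restriction of $\Mov V$. I would handle this by interpreting the restriction through the moving part: $\Phi_{V|_Y}$ is then $\Phi|_Y$ itself, with no projection. Generic finiteness of $\Phi|_Y$ then gives $\dim\overline{\Phi(X)}\geq\dim Y+1$ via a different argument. A general fiber of $\Phi$ through a general point of $Y$ meets $Y$ in finitely many points. Since $Y$ is a divisor, a positive-dimensional fiber cannot meet $Y$ in only finitely many points unless the fiber is a curve, and curves still give $\dim\Phi(X)=\dim X-1$. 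So this route fails, and I would instead rely on the first argument: ensure $Y\leq D$ with $Y\not\leq\operatorname{Fix}V$, and deduce $\Phi(Y)\subseteq H$. With the $Y\le D$ hypothesis read as $Y$ lying in a member of the moving part, the contradiction above completes the proof.
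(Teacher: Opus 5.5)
Your proof is correct and is essentially the paper's argument: both show that $\overline{\Phi_V(Y)}$ has dimension $\dim X-1$ and lies in the hyperplane section coming from $D$, which is a proper closed subset of the nondegenerate image $\overline{\Phi_V(X)}$, and this forces $\dim\overline{\Phi_V(X)}\geq\dim X$. Your worry about $Y\subseteq\operatorname{Fix}V$ is unnecessary, because under your own definition of $V|_Y$ that system would be empty, so the hypothesis already rules this case out. A minor correction: $\Phi_V|_Y$ is really $\Phi_{V|_Y}$ followed by a linear inclusion, not a projection, and this turns your dimension inequality into an equality.
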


\begin{proof}
    Since $V|_Y$ defines a generically finite map, we have
    \begin{align*}
        \dim \overline{\Phi_V(Y)}=\dim Y=\dim X-1.
    \end{align*}

    Since $Y\leq D$, we have
    \[
        \overline{\Phi_V(Y)}\subseteq \overline{\Phi_V(D)}.
    \]

    Since $D\in V$, we have
    \[
        \overline{\Phi_V(D)}\subsetneq \overline{\Phi_V(X)}.
    \]

    Thus
    \[
        \dim \overline{\Phi_V(X)}\geq \dim \overline{\Phi_V(D)}+1\geq \dim X.
    \]
\end{proof}

\begin{corollary}\label{coroTankeev1}
    Let $X$ be a smooth projective variety on which $Y$ is a prime divisor. Let $B$ be a divisor on $X$. If $|B||_Y$ defines a generically finite map and $h^0(B-Y)>0$, then $|B|$ defines a generically finite map. \qed
\end{corollary}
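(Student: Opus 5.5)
The plan is to reduce the corollary directly to \cref{thmTankeev}, taking $V=|B|$. Two hypotheses of that theorem need checking. The first is that $V|_Y$ defines a generically finite map, and this is exactly the assumption that $|B||_Y$ does so. The second is that there exists $D\in|B|$ with $Y\leq D$; this is the only thing left to produce.

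Since $h^0(B-Y)>0$, there is an effective divisor $E\in|B-Y|$. I set $D:=E+Y$. Then $D\sim B$, and $D$ is effective because both $E$ and $Y$ are, so $D\in|B|$. Since $E\geq 0$, we get $D-Y=E\geq 0$, that is, $Y\leq D$. Applying \cref{thmTankeev} then shows that $|B|$ defines a generically finite map.

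One technical point needs attention. \cref{thmTankeev} implicitly requires $\Phi_V$ to be defined generically along $Y$, which is what makes $V|_Y$ meaningful. This is already built into the hypothesis that $|B||_Y$ defines a generically finite map, since that hypothesis presupposes $|B||_Y$ is a nonempty system not containing $Y$ in its base locus. So no extra argument is needed here.

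There is no genuine obstacle: the only step is the construction $D=E+Y$, and it is immediate.
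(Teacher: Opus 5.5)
Your proof is correct and is exactly the intended argument. The paper marks this corollary with \qed as an immediate consequence of \cref{thmTankeev} applied to $V=|B|$, and the needed $D\in|B|$ with $Y\leq D$ is $E+Y$ for some $E\in|B-Y|$, as in your proof.
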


\begin{corollary}\label{coroTankeev2}
    Let $X$ be a smooth projective variety on which a base point free linear system $\Lambda$ is composed with a pencil. Let $Y$ be a generic irreducible element of $\Lambda$. Let $B$ be a divisor on $X$. If $|B+2Y||_Y$ defines a generically finite map and $h^1(B)=0$, then $|B+2Y|$ defines a generically finite map.
\end{corollary}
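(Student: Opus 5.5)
The idea is to reduce to \cref{coroTankeev1} applied with the divisor $B+2Y$ and the prime divisor $Y$. We need $h^0(B+2Y-Y)=h^0(B+Y)>0$. That is not immediate from the hypotheses, so I would instead use the restriction sequence to show that $|B+2Y|$ restricts onto the full complete system on $Y$. Then I would invoke \cref{thmTankeev} with a suitable element of $|B+2Y|$ containing $Y$.

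Since $\Lambda$ is base point free and composed with a pencil, a general element $M\in\Lambda$ satisfies $M\sim aF$ or $M\equiv aF$, with $Y=F$ a general fiber of the Stein factorization $X\to C$. Fibers of $X\to C$ are algebraically equivalent, and $\OO_Y(Y)\cong\OO_Y$ because $Y$ is a fiber. Consider the exact sequence
\[
0\to \OO_X(B+Y)\to \OO_X(B+2Y)\to \OO_Y(B+2Y)\to 0 .
\]
Surjectivity of $H^0(B+2Y)\to H^0(Y,(B+2Y)|_Y)$ would follow from $h^1(B+Y)=0$, which we do not have directly. So I would first take the sequence $0\to\OO_X(B)\to\OO_X(B+Y)\to\OO_Y(B+Y)\to0$. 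With $h^1(B)=0$ it gives a surjection $H^0(B+Y)\to H^0(Y,B|_Y)$.

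I think a cleaner route is to pick two distinct general fibers $Y,Y'$. Take $Y'$ algebraically equivalent to $Y$; if the pencil is irrational one may need instead to take $Y'$ a fiber with $Y+Y'$ sufficiently positive. Then use $0\to\OO_X(B)\to\OO_X(B+Y+Y')\to\OO_{Y}(B+Y+Y')\oplus\OO_{Y'}(B+Y+Y')\to0$, noting that $Y$ and $Y'$ are disjoint. Since $h^1(B)=0$, sections on $Y\sqcup Y'$ lift. In particular the section that is $0$ on $Y$ and nonzero on $Y'$ lifts, giving a divisor $D\in|B+Y+Y'|$ with $Y\le D$.

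Moreover $|B+Y+Y'||_Y$ is the complete system $|(B+2Y)|_Y|$. This uses $\OO_Y(Y')\cong\OO_Y\cong \OO_Y(Y)$, which holds because $Y'\cap Y=\emptyset$ and $Y|_Y$ is trivial. By hypothesis $|B+2Y||_Y$, a subsystem of this complete system, is generically finite, so the complete system is generically finite too. \cref{thmTankeev} then shows $|B+Y+Y'|$ is generically finite.

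The remaining step, which I expect to be the main obstacle, is to pass from $B+Y+Y'$ back to $B+2Y$. In the rational pencil case $Y'\sim Y$ and there is nothing to do. In general I would avoid the issue by applying the argument directly with $Y'$ chosen as $Y$ moved in $|2Y|$ or in $\Lambda$. Alternatively, I would run the disjoint-fiber argument for $|B+2Y|$ itself, using a general element of $|2Y|$ restricted appropriately. If that fails, I would argue that generic finiteness holds for a general $Y'$ in the family and hence for $2Y$ by semicontinuity of the dimension of the image.
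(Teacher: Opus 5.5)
Your argument only closes when the pencil is rational. The two-fibre construction shows that $|B+Y+Y'|$ is generically finite. To identify this system with $|B+2Y|$ you need $Y'\sim Y$, but the statement makes no rationality assumption; the paper later applies it, e.g.\ in \cref{prop3m2}, on possibly irregular varieties.

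For a pencil $\phi\colon X\to C$ with $g(C)\geq 1$ this identification is impossible. Since $\phi_*\OO_X=\OO_C$, pull-back $\phi^*$ is injective on line bundles, so $\phi^*p'\sim\phi^*p$ forces $p'\sim p$ on $C$, hence $p'=p$. Thus no other fibre is linearly equivalent to $Y$. Members of $|2Y|$ or of $\Lambda$ are sums of several fibres, which does not supply the $Y'$ you need. The semicontinuity fallback does not help either: generic finiteness can be lost under specialization, so knowing it for a general $Y'$ gives nothing at the special value $Y'=Y$.

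The missing idea is a step you already wrote down and then abandoned. From $0\to\OO_X(B)\to\OO_X(B+Y)\to\OO_Y(B+Y)\to0$ and $h^1(B)=0$ you obtained the surjection $H^0(X,B+Y)\to H^0(Y,B|_Y)$, using $\OO_Y(Y)\cong\OO_Y$. Now use the hypothesis. Since $|B+2Y||_Y$ defines a generically finite map, it is in particular nonempty. So some member of $|B+2Y|$ does not contain $Y$, and its restriction is an effective divisor in $|(B+2Y)|_Y|=|B|_Y|$. Hence $h^0(Y,B|_Y)>0$, and therefore $h^0(B+Y)>0$. Applying \cref{coroTankeev1} to the divisor $B+2Y$ and the prime divisor $Y$ then gives the claim immediately. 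No second fibre and no rational/irrational case distinction are needed.

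This is exactly the paper's proof. In the rational case your two-fibre argument is a roundabout version of it, since $D-Y\in|B+Y'|=|B+Y|$.
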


\begin{proof}
    Since $h^1(B)=0$, we have
    \[
        h^0(B+Y)\geq h^0(B|_Y).
    \]

    Since $|B+2Y||_Y$ is non-empty, we have $h^0(B|_Y)>0$. Thus $h^0(B+Y)>0$.

    By \cref{coroTankeev1}, $|B+2Y|$ defines a generically finite map.
\end{proof}

The following proposition can be used to prove effective comparison inequalities.

\begin{proposition}[{\cite[Proposition 3.1]{Noether2025}}]\label{propCompareBirG}
    Let $X$ be a $\bQ$-factorial normal projective variety on which a linear system $\Lambda$ is composed with a pencil. Then there exists a $\bQ$-factorial terminal projective variety $X'$ with a birational morphism $g:X'\to X$ having the following properties:
    \begin{enumerate}
        \item $\Mov g^*\Lambda$ is base point free, and $\Phi_{\Mov g^*\Lambda}=\Phi_{\Lambda}\circ g$;
        \item for a general fiber $F$ of $X'\to B$ where $X'\to B\to \bP^{\dim\Lambda}$ is the Stein factorization of $\Phi_{\Mov g^*\Lambda}$, we have that $K_{X'}+F$ is $g$-nef and
        \[
            g^*(K_X+g_*F)-K_{X'}-F
        \]
        is an effective $g$-exceptional $\bQ$-divisor.
    \end{enumerate}
\end{proposition}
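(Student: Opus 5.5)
This is a relative MMP construction: resolve, make the pencil base point free, then run an MMP over $X$ for $K+F$ and read off the two properties.

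\emph{Step 1 (resolution).} Take a log resolution $h:W\to X$ such that $\Mov h^*\Lambda$ is base point free. Let $W\to B\to\bP^{\dim\Lambda}$ be the Stein factorization of $\Phi_{\Mov h^*\Lambda}$, and let $F_W$ be a general fiber of $W\to B$. Since $\Lambda$ is composed with a pencil, $F_W$ is smooth and irreducible. It is the strict transform of $g_*F$, and it moves in a base point free family.

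\emph{Step 2 (relative MMP).} Run a $(K_W+F_W)$-MMP over $X$, with scaling of an ample divisor. Because $F_W$ is a general member of a base point free family, the pair $(W,F_W)$ is plt, and even canonical away from $F_W$. To keep the ambient variety terminal, the cleaner route is to run instead a $K_W$-MMP over $B\times X$, or over $X$ relative to the fibration. The point is that $F_W$ is a pullback from $B$: it is linearly trivial over $B$ up to the choice of fiber, so $K_W+F_W\equiv_{B} K_W$. Every step of this MMP is therefore $F_W$-trivial, and it is both a $K$-MMP and a $(K+F)$-MMP. Each step preserves $\bQ$-factoriality and terminality.

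More concretely, take $Z$ to be the normalization of the closure of the graph of $X\dashrightarrow B$, so that there are morphisms $Z\to X$ and $Z\to B$. Run a $K_W$-MMP over $Z$. By \cite{KMM, KM1998} it terminates in a relatively minimal model $X'\to Z$, because $W\to Z$ is birational. On $X'$ the map to $B$ is a morphism, hence $\Mov g^*\Lambda$ is base point free and (1) holds. The divisor $K_{X'}$ is nef over $Z$, and $F$ is pulled back from $B$. The remaining task is to get nefness over $X$, not only over $Z$. I would continue with a $(K_{X'}+F)$-MMP over $X$. Contracted curves lie in fibers of $X'\to X$, and $F\cdot C\ge0$ for such curves, since $F$ is the pullback of a point on $B$ through a morphism. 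A $(K+F)$-negative extremal ray is therefore $K$-negative, so the steps are terminal $K$-MMP steps. The remaining issue is that a flip or contraction might destroy base point freeness of the pencil. This is the main obstacle. It is handled by noting that a $(K+F)$-negative ray $R$ with $F\cdot R>0$ would let $F$ vary. I would show that such rays can be avoided by running the MMP over $B$ as well, i.e. over the image $Z$. Once $K_{X'}$ is nef over $Z$, the only possibility left is $K_{X'}+F$ negative on curves dominating $B$ within fibers of $X'\to X$. Such curves exist only over the finitely many points of $X$ where $\Lambda$ had base locus. For these I would use the negativity lemma directly rather than further MMP steps.

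\emph{Step 3 (effectivity).} On the final model, set
\[E=g^*(K_X+g_*F)-K_{X'}-F.\]
It is $g$-exceptional because $g_*F$ is the pushforward. Also $-E\equiv_X K_{X'}+F$ is $g$-nef. By the negativity lemma \cite[Lemma 3.39]{KM1998}, $E$ is effective. This gives (2).
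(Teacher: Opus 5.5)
\textbf{There is a genuine gap: the proposal never proves that $K_{X'}+F$ is $g$-nef.} Your $K_W$-MMP over $Z$ is sound. It gives a $\bQ$-factorial terminal $X'$ with a morphism to $B$, which is property (1), and it makes $K_{X'}$, hence $K_{X'}+F$, nef over $Z$. Property (2), however, needs nefness on every curve in a fibre of $X'\to X$, including the curves not contracted over $Z$.

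Such a curve $C$ dominates $B$, because $Z\to X\times B$ is finite. So $F\cdot C\ge 1$, but nothing in your argument bounds $K_{X'}\cdot C$ from below. Your proposed treatment of these curves fails for two reasons.
\begin{enumerate}
\item The base locus of a pencil on a $\bQ$-factorial threefold is not finite. Two members are $\bQ$-Cartier, so their intersection is empty or of pure dimension one. Moreover, over every indeterminacy point the fibre of $Z\to X$ is all of $B$, so $F$-positive curves occur over whole curves of $X$.
\item The negativity lemma takes $g$-nefness of $-E$ as a hypothesis and cannot produce it.
\end{enumerate}
So your Step 3, which uses ``$-E\equiv_X K_{X'}+F$ is $g$-nef'', rests on an unproved claim. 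Your worry that a further $(K+F)$-MMP over $X$ might destroy the morphism to $B$ is also left unresolved.

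\textbf{The missing idea is a lemma showing that $F$-positive rays are never $(K+F)$-negative.} Let $Y$ be a $\bQ$-factorial terminal threefold with a birational morphism to $X$ and a morphism to $B$ whose fibre $F$ is Cartier and nef. If $R$ is a $(K_Y+F)$-negative extremal ray of $\overline{NE}(Y/X)$, then $F\cdot R=0$. Here is the argument.
\begin{enumerate}
\item Since $F$ is nef, $K_Y\cdot R<0$.
\item Suppose $F\cdot R>0$. Then $F\cdot C\ge 1$ for every curve $C$ in $R$, and $F$ is ample over the contraction $\phi_R$.
\item A divisor contracted to a point is impossible: two distinct fibres would restrict to disjoint ample divisors on it.
\item For a divisor contracted to a curve, a general fibre $l$ lies in the smooth locus over a smooth point of the target. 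Cutting by a general pulled-back very ample divisor $H$ gives $K_Y\cdot l=K_H\cdot l\ge -1$, since exceptional curves of a birational morphism of smooth surfaces satisfy $K\cdot l\ge -1$.
\item For a flipping contraction, use the known bound $-K_Y\cdot C<1$ for flipping curves on terminal threefolds (Mori).
\item In every case $(K_Y+F)\cdot R\ge 0$, which contradicts the choice of $R$.
\end{enumerate}

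With this lemma your $X'$ already works, and no second MMP is needed. If $K_{X'}+F$ were not $g$-nef, the cone theorem would give a $K$-negative, $(K+F)$-negative extremal ray over $X$. By the lemma it is $F$-trivial, so its curves are contracted over $X\times B$, hence over $Z$. That contradicts the $Z$-nefness of $K_{X'}$.

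Equivalently, the $(K_W+F_W)$-MMP over $X$ is automatically a $K$-MMP over $B$, so it preserves terminality and the morphism to $B$. Once $g$-nefness is established, your negativity-lemma step goes through.
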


\begin{definition}
    Let $X$ be a $\bQ$-factorial normal projective variety on which a linear system $\Lambda$ is of positive dimension. Let $f:W\to X$ be a resolution of singularities such that the exceptional set $\Exc(f)$ is a simple normal crossing divisor.

    \begin{itemize}
        \item If $\Lambda$ is not composed with a pencil, we define $f$ to be a \textit{Chen resolution} with respect to $\Lambda$ if $\Mov f^*\Lambda$ is base point free.

        \item If $\Lambda$ is composed with a pencil, keep the notation in \cref{propCompareBirG}, we define $f$ to be a \textit{Chen resolution} with respect to $\Lambda$ if $f$ factors through $g$.
    \end{itemize}

    Notice that in each case, Chen resolution always exists and $\Mov f^*\Lambda$ is base point free.
\end{definition}

The following theorem is a Reider type birationality criterion by Vladimir Ma\c{s}ek and Adrian Langer.

\begin{theorem}[{\cite{Masek1999}, \cite[Theorem 0.1]{Langer2001}, \cite[Theorem 2.3]{CCCJ}}]\label{thmMasek}
    Let $S$ be a smooth projective surface. Let $L$ be a nef $\bQ$-divisor on $S$. If $L^2>8$ and
    \[
        (L\cdot C)> \frac{4}{1+\sqrt{1-\frac{8}{L^2}}}
    \]
    for any curve $C$ on $S$ passing through two very general points of $S$, then $|K_S+\lceil L\rceil|$ defines a birational map.
\end{theorem}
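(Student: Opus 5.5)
This is a cited result (Ma\c{s}ek, Langer), and I would prove it by Reider's vector bundle method. The idea is to produce, from a failure of birationality, a curve through two general points whose $L$-degree is bounded by the displayed threshold. I write $N=\lceil L\rceil$ and $\Delta=N-L$, an effective $\bQ$-divisor with coefficients in $[0,1)$.

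\textbf{Step 1 (setting up the extension).} Suppose $|K_S+N|$ does not define a birational map. Then for two very general points $x,y$ it fails to separate $Z=\{x,y\}$. Restriction to $Z$ is then not surjective, so $H^1(S,\mathcal{I}_Z(K_S+N))\neq 0$. Since $x,y$ are very general, the Cayley--Bacharach property holds for $Z$ with respect to $K_S+N$: any section vanishing at one of the points vanishes at the other. Serre's construction then gives a nonsplit extension
\[
0\to \OO_S\to E\to \mathcal{I}_Z(N)\to 0
\]
with $c_1(E)=N$ and $c_2(E)=\deg Z=2$.

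\textbf{Step 2 (instability, the main obstacle).} In the integral case, $L^2>8$ gives $c_1^2-4c_2>0$, so Bogomolov's inequality makes $E$ unstable. The point is that we only control $L^2$, not $N^2$, so the Chern class computation must be twisted by the fractional part $-\Delta$. My first approach is Langer's version of Bogomolov's inequality for sheaves twisted by $\bQ$-divisors. This can be done via parabolic bundles, or via a Kawamata cover $\pi:S'\to S$ on which $\pi^*\Delta$ becomes integral, where one checks that the discriminant of the twisted sheaf is $L^2-8>0$. Either route yields a saturated destabilizing subsheaf $\OO_S(M)\subset E$ whose $\bQ$-class satisfies the following:
\begin{itemize}
\item $M-(L-M)$ lies in the positive cone;
\item $C:=N-M$ is represented by an effective curve through $x$ and $y$. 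This holds because $\OO(M)\to\mathcal{I}_Z(N)$ is nonzero.
\end{itemize}
Justifying this $\bQ$-twisted Bogomolov step is the technical heart of the proof. Once it is available, the rest is numerical.

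\textbf{Step 3 (numerics).} From the destabilizing sequence $0\to\OO(M)\to E\to \mathcal{I}_W(N-M)\to 0$, the second Chern class gives, after the fractional twist,
\[
(L-C)\cdot C\le 2.
\]
Set $a=(L\cdot C)$ and $c=C^2$. Then $c\ge a-2$. The Hodge index theorem gives $a^2\ge L^2c\ge L^2(a-2)$, that is,
\[
a^2-L^2a+2L^2\ge 0.
\]
Positivity of $L-2C$ (the first property in Step 2) gives $(L-2C)\cdot L>0$, so $a<L^2/2$. Hence $a$ is at most the smaller root of the quadratic:
\[
a\le \frac{L^2-\sqrt{L^4-8L^2}}{2}=\frac{4}{1+\sqrt{1-\frac{8}{L^2}}}.
\]
Since $C$ passes through the two very general points $x,y$, this contradicts the hypothesis. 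Therefore $|K_S+\lceil L\rceil|$ defines a birational map.
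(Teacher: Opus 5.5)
\textbf{Gap in Step 2.} The paper gives no proof of this statement; it is quoted from Ma\v{s}ek and Langer (via CCCJ). So your argument has to stand on its own, and Step 2 does not: its central assertion is false as written.

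For any rank-2 sheaf and any $\bQ$-divisor $D$ one has $(c_1+2D)^2-4(c_2+c_1\cdot D+D^2)=c_1^2-4c_2$. Hence every $\bQ$-twist of your $E$ has discriminant $N^2-8$, and pulling $E$ back to a Kawamata cover only multiplies this by $\deg\pi$. No twist or pullback of $E$ has discriminant $L^2-8$.

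This matters because $N^2-L^2=\Delta\cdot(2L+\Delta)$, and $\Delta^2$ can be negative. For example, take $\Delta=\frac12E$ with $E$ a $(-2)$-curve, $(L\cdot E)=0$ and $(N\cdot E)=-1$; this is realized by pulling back a nef divisor through an $A_1$ point. Then $N^2=L^2-\frac12\le 8<L^2$ is possible, and $N$ is not nef. In that case $E$ need not be Bogomolov unstable, and there is no destabilizing $\OO_S(M)\subset E$ at all.

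Even granting some $M$, your bullets do not give the inputs of Step 3. With $C=N-M$, what lies in the positive cone is $2M-L=L+2\Delta-2C$, not $L-2C$. Also $c_2(E)=(M\cdot(N-M))+\deg W$ gives only $((L-C)\cdot C)\le 2-(\Delta\cdot C)$, and $(\Delta\cdot C)<0$ is possible when $C$ and $\Delta$ share components. So neither $((L-C)\cdot C)\le 2$ nor $(L\cdot C)<L^2/2$ is established. The whole difficulty of the $\bQ$-divisor case has been assumed away.

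\textbf{What is needed.} You must replace the bundle, not twist it, so that its first Chern class is $L$ itself up to pullback. One way to do this:
\begin{enumerate}
\item Pass to a log resolution making $\Supp\Delta$ snc. This is harmless, since $\mu_*\OO(K_{\tilde S}+\lceil\mu^*L\rceil)\subseteq\OO(K_S+\lceil L\rceil)$.
\item Take a Galois Kawamata cover $\pi\colon S'\to S$ with group $G$, \'etale over $x,y$, such that $\pi^*L$ is integral. A local computation along each prime divisor gives $(\pi_*\omega_{S'}(\pi^*L))^G=\omega_S(\lceil L\rceil)$.
\item Your nonzero class in $H^1(\mathcal I_Z(K_S+N))$ then yields a $G$-invariant extension $0\to\OO_{S'}\to E'\to\mathcal I_{\pi^{-1}Z}(\pi^*L)\to 0$. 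It is locally free: by $G$-invariance its local components at the points over $x$ (resp.\ $y$) are all equal, and their sum is the nonzero local component of your original class. Its discriminant is $c_1(E')^2-4c_2(E')=\deg\pi\,(L^2-8)>0$.
\item The Bogomolov destabilizing line subbundle $\OO(M')$ is unique, hence $G$-invariant. So $\pi^*L-M'$ descends to an effective $\bQ$-divisor $\Gamma$ through $x,y$ with $((L-\Gamma)\cdot\Gamma)\le 2$ and $L-2\Gamma$ in the positive cone. Your Step 3 numerics then apply verbatim to $\Gamma$.
\end{enumerate}

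Two smaller points remain. First, $\Gamma$ (like your $C$) is only an effective divisor, so you must pass to a component $\Gamma_1\ni x$. Its coefficient is a positive integer since $x$ is off the branch locus, and $(L\cdot\Gamma_1)\le(L\cdot\Gamma)$ by nefness. A curve through a very general point contains further very general points, so the hypothesis applies to $\Gamma_1$. Second, in Step 1, non-birationality gives, for very general $x$, some $y$ in the same fibre, not a very general pair. This is fine if ``very general'' is read as ``lying in a fixed very general subset'', which is how the hypothesis must be read anyway.
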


\begin{proposition}[{\cite[Lemma 2.5]{EXP3}}]\label{propGeneralKC2}
    Let $S$ be a smooth projective surface of general type. Let $\sigma$ be the morphism from $S$ to its minimal model $S_0$. Let $C$ be a curve on $S$ passing through a very general point of $S$. If $S$ is not a $(1,2)$-surface, then $(\sigma^*K_{S_0}\cdot C)\geq 2$.
\end{proposition}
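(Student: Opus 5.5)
The plan is to push everything down to the minimal model and rule out the value $1$ (and $0$) for the intersection number by a Hodge index argument, leaving the $(1,2)$-surface as the only exception.

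\textbf{Reduction to $S_0$ and the value $0$.} Put $C_0=\sigma_*C$. Since $C$ passes through a very general point, it is not contracted by $\sigma$, so $C_0$ is a curve with $(\sigma^*K_{S_0}\cdot C)=(K_{S_0}\cdot C_0)$ by the projection formula. A curve through a very general point belongs to a family of curves covering $S_0$: there are only countably many components of the Hilbert scheme, and very general points avoid the loci swept out by the non-covering ones. Hence $C_0$ moves, and $C_0^2\geq 0$. If $(K_{S_0}\cdot C_0)=0$, then $C_0$ is a $(-2)$-curve. There are only finitely many such curves, which contradicts moving. So $(K_{S_0}\cdot C_0)\geq 1$.

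\textbf{The value $1$ forces $C_0\equiv K_{S_0}$ and $K_{S_0}^2=1$.} Suppose $(K_{S_0}\cdot C_0)=1$. The Hodge index theorem gives $K_{S_0}^2\,C_0^2\leq 1$, so either $C_0^2=0$ or $K_{S_0}^2=C_0^2=1$. Adjunction says $(K_{S_0}\cdot C_0)+C_0^2$ is even, which excludes $C_0^2=0$. Therefore $K_{S_0}^2=C_0^2=1$. Equality in Hodge index then gives $C_0\equiv K_{S_0}$, that is, $C_0\sim K_{S_0}+\tau$ with $\tau$ a torsion line bundle. Since $K_{S_0}^2=1$, we have $q=0$ (Bombieri), so $p_g\leq 2$ by Noether's inequality.

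\textbf{Counting sections of $K_{S_0}+\tau$.} Because $q=0$ and the torsion group is finite, the covering family of curves numerically equivalent to $K_{S_0}$ lies in finitely many linear systems $|K_{S_0}+\tau|$. Hence one of them satisfies $h^0(K_{S_0}+\tau)\geq 2$.
\begin{itemize}
\item If $\tau$ is nontrivial, then $h^2(K_{S_0}+\tau)=h^0(-\tau)=0$, and $h^1(K_{S_0}+\tau)=h^1(-\tau)=0$ by \cref{thmXG2}. Riemann--Roch then gives $h^0(K_{S_0}+\tau)=\chi(\OO_{S_0})=1+p_g$. This is at least $2$ only if $p_g\geq 1$. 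But numerical Godeaux surfaces ($p_g=0$) give $h^0=1$, and surfaces with $K^2=1$ and $p_g\geq 1$ have trivial torsion by the known classification (Bombieri, Catanese). So this case cannot occur.
\item Hence $\tau=0$ and $h^0(K_{S_0})=p_g\geq 2$. Thus $p_g=2$ and $S$ is a $(1,2)$-surface, contrary to the hypothesis.
\end{itemize}
This contradiction gives $(\sigma^*K_{S_0}\cdot C)\geq 2$.

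\textbf{Expected main obstacle.} The delicate step is the torsion case: it relies on the external fact that the torsion of $(1,1)$-surfaces is trivial. If I wanted to avoid this input, I would instead try to argue directly that a pencil in $|K_{S_0}+\tau|$ with nontrivial $\tau$ would produce an étale cover contradicting Noether's inequality, and I would attempt that first.
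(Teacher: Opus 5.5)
Your argument is correct. The paper does not prove this statement itself; it quotes it from \cite[Lemma 2.5]{EXP3}. Your route is, however, the same one the paper runs inside the proof of \cref{thmMain1}. There, Hodge index plus the parity of $(K_{S_0}\cdot C_0)+C_0^2$ force $K_{S_0}^2=C_0^2=1$ and $C_0\equiv K_{S_0}$, and then \cref{thmXG2} with Riemann--Roch gives $h^0(K_{S_0}+\tau)=\chi(\OO_{S_0})$ for torsion $\tau\neq 0$. In \cref{thmMain1} only $(1,0)$-surfaces occur, so $\chi=1$ ends the argument. Here the one genuinely extra case is $p_g(S)=1$ with $\tau\neq 0$. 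You close it with the known theorem that $(1,1)$-surfaces have trivial torsion (indeed they are simply connected), which is legitimate. The case $p_g=2$ needs no such input, since it is excluded by hypothesis whatever $\tau$ is.

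Your suggested fallback would not suffice on its own. If $d$ is the order of $\tau$, the associated \'etale cover has $K^2=d$ and $p_g\geq 2d-1$. Noether's inequality then only gives $d\leq 2$, and $d=2$ lands exactly on the Noether line $(K^2,p_g)=(2,3)$.

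If you want to avoid the external theorem, use \cref{thmXG1} instead, as follows.
\begin{itemize}
\item For $\tau\neq 0$ the same computation gives $h^0(K_{S_0}-\tau)=2$, while $h^0(2K_{S_0})=K_{S_0}^2+\chi(\OO_{S_0})=3$.
\item By Hopf's lemma the multiplication map $H^0(K_{S_0}+\tau)\otimes H^0(K_{S_0}-\tau)\to H^0(2K_{S_0})$ is surjective. Hence $\Phi_{|2K_{S_0}|}$ factors through the map $S_0\dashrightarrow \bP^1\times\bP^1$ given by the two pencils.
\item By \cref{thmXG1} this map is dominant.
\item Your Hodge index argument shows both pencils have no fixed part and have members numerically equivalent to $K_{S_0}$. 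So the degree of the map is at most $K_{S_0}^2=1$, and $S_0$ would be rational, a contradiction.
\end{itemize}
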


\begin{proposition}[{\cite[Lemma 2.1]{EXP3}}]\label{propQCompare}
    Let $X$ be a minimal 3-fold of general type on which a linear system $\Lambda\lsleq |mK_X|$ is composed with a pencil. Let $p=\dim\Lambda$. Let $f:W\to X$ be a Chen resolution with respect to $\Lambda$. Let $S$ be a generic irreducible element of $f^*\Lambda$. Let $\sigma$ be the morphism from $S$ to its minimal model $S_0$. Then
    \[
        f^*(K_X)|_S\geq_\bQ \frac{p}{m+p} \sigma^*K_{S_0}.
    \]
\end{proposition}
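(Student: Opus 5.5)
The plan is the standard canonical restriction argument of Chen--Chen, applied to the generic irreducible element $S$.

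First I fix notation. Write $\pi\colon W\to B$ for the fibration with general fiber $S$ given by the Stein factorization of $\Phi_{\Mov f^*\Lambda}$. Since $\Lambda\lsleq |mK_X|$, we have
\[
mf^*K_X\geq \Mov f^*\Lambda\equiv aS
\]
with $a\geq p$. Hence $mf^*K_X\geq_\bQ pS+E'$ for an effective $\bQ$-divisor $E'$. This is exact linear equivalence when $B\cong\bP^1$. When $g(B)>0$, I first show that $X$ is irregular. In that case $q(X)>0$, and the fibration yields the bound directly, or at least the numerical version suffices once we pass through nefness.

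Next I compare $K_W$ with $f^*K_X$ and restrict to $S$. Since $X$ has terminal singularities, $K_W=f^*K_X+E_f$ with $E_f\geq0$ exceptional. Adjunction gives $K_S=(K_W+S)|_S$. Combining this with the previous step gives, up to effective divisors,
\[
\Bigl(1+\frac mp\Bigr)f^*K_X\geq_\bQ K_W+S-E_f+\text{(effective)},
\]
and hence $\bigl((m+p)/p\bigr)f^*K_X|_S\geq_\bQ K_S-E_f|_S$ plus an effective divisor. The issue is that $E_f|_S$ enters with a negative sign. This is where nefness helps. The divisor $f^*K_X|_S$ is nef on $S$, so I take the $\bQ$-divisor
\[
D=\frac{m+p}{p}\,f^*K_X|_S-\text{(effective part)},
\]
which is numerically at least $K_S-E_f|_S$.

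To remove $E_f$, I use the standard Kawamata--Viehweg argument. For each $n$, consider $|K_W+\lceil (n-1)f^*K_X-\tfrac1p E'\rceil+S|$ and restrict it to $S$. Vanishing gives surjectivity of the restriction, so the restricted system lands in the $\bQ$-linear class of $K_S$ plus a nef part. Passing to the limit, or using the Zariski decomposition on $S$, shows that the nef part $f^*K_X|_S$ dominates the positive part of $\frac{p}{m+p}K_S$, which is $\frac{p}{m+p}\sigma^*K_{S_0}$.

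Concretely, the key point is the following. If $N$ is nef on $S$ and $N+F\geq_\bQ \lambda K_S$ with $F$ effective, this is not enough by itself; what we need is that $N-\lambda\sigma^*K_{S_0}$ is $\bQ$-effective. I would obtain this by the iterative trick: show that $|\lfloor n(m+p)f^*K_X|_S\rfloor|$ contains $\mov|npK_S|=\mov|np\sigma^*K_{S_0}|$ for $n$ divisible, using the surjectivity coming from vanishing.

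The main obstacle is this step: making the surjectivity precise so that the movable part of $|npK_S|$ is realized inside the restriction of $f^*K_X$ multiples. I would follow Chen's original proof of the canonical restriction inequality, which uses $K_W+\lceil\cdot\rceil+S$ and vanishing on $W$. Once that is in place, the effectivity $f^*(K_X)|_S\geq_\bQ \frac{p}{m+p}\sigma^*K_{S_0}$ follows, since the movable part of $|np\sigma^*K_{S_0}|$ is base point free for large divisible $n$.
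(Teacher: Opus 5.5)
Your outline has the right overall shape. You want to show that the restriction to $S$ of the movable part of $|n(m+p)K_W|$ dominates $\Mov|npK_S|=\sigma^*|npK_{S_0}|$, and then use $|n(m+p)K_W|=f^*|n(m+p)K_X|+n(m+p)E_f$ to bound that movable part by $n(m+p)f^*K_X$. But the step that carries all the content is exactly the one you leave open as the ``main obstacle'', and the tool you propose for it cannot deliver it.

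Kawamata--Viehweg vanishing for $K_W+\lceil L\rceil+S$ with $L$ nef and big only gives surjectivity onto $H^0(S,K_S+\lceil L|_S\rceil)$. That is one copy of $K_S$ plus the round-up of a restricted nef divisor. To capture $|npK_S|$ this way you would need a nef and big $L$ on $W$, dominated by $n(m+p)K_W-K_W-S$, whose restriction rounds up to at least $(np-1)K_S$. The existence of such an $L$ is essentially the inequality you are trying to prove. A bootstrapping or limiting argument built from one-step restrictions would at best show that $f^*K_X|_S-\frac{p}{m+p}\sigma^*K_{S_0}$ is pseudo-effective, which is weaker than $\geq_\bQ$.

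The missing ingredient is an extension theorem for pluricanonical forms: Kawamata's extension theorem. This is what Chen's canonical restriction argument behind the cited lemma actually uses, not plain vanishing. Write $f^*K_X\sim_\bQ A_0+B_0$ with $A_0$ ample and $B_0\geq 0$. Then $K_W+S\sim_\bQ (A_0+S)+(B_0+E_f)$, where $A_0+S$ is ample (ample plus nef) and $S\not\subseteq\Supp(B_0+E_f)$ since $S$ is general. Hence $H^0(W,k(K_W+S))\to H^0(S,kK_S)$ is surjective for all $k\geq 2$.

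For a rational pencil, $M\sim aS$ with $a\geq p$, and $|mK_W|\lsgeq \Mov f^*\Lambda$. These give $|n(m+p)K_W|\lsgeq|npK_W+nM|\lsgeq|np(K_W+S)|$, hence $|n(m+p)K_W||_S\lsgeq|npK_S|$. Take $n$ large and divisible by the Cartier index of $K_X$, and pass to movable parts. This shows that $n(m+p)f^*K_X|_S$ is linearly equivalent to $np\sigma^*K_{S_0}$ plus an effective divisor. This is also exactly where $E_f$ drops out, so no Zariski decomposition is needed.

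Your treatment of an irrational pencil (``the fibration yields the bound directly'') is likewise not an argument, but it can be repaired inside the same scheme. If $g(B)>0$ then $a\geq p+1$. Indeed, $M$ is the pullback of a divisor of degree $a$ on $B$ with at least $p+1$ sections, and $a=p\geq 1$ would force $B\cong\bP^1$. So for $n$ large, $nM-npS$ is the pullback of a divisor class of degree $n(a-p)\geq 2g(B)$ on $B$. Such a class has an effective member avoiding the point under $S$. Therefore $|npK_W+nM|\lsgeq|np(K_W+S)|$ with a complement disjoint from $S$, and the argument above again gives genuine $\bQ$-linear effectivity, not merely numerical.
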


\begin{definition}[{\cite{EXP1}}]
    A \textit{formal basket} is a finite collection (permitting multiplicity) of pairs of coprime integers $(b,r)$ satisfying $0<2b\leq r$.

    Let $X$ be a terminal projective 3-fold. By Miles Reid (\cite{YPG}), there is a formal basket
    \begin{align*}
        B_X=\{ n_{b,r}\times (b,r) \mid \gcd(b,r)=1,\ 0<2b\leq r \}
    \end{align*}
    associated to $X$, which comes from locally deforming singularities into cyclic quotient singularities, where a pair $(b,r)$ corresponds to a virtual cyclic quotient singularity of type $\frac{1}{r}(1,-1,b)$. We call $B_X$ the \textit{Reid basket} of $X$.

    By Reid's Riemann-Roch formula (\cite[Corollary 10.3]{YPG}), for any positive integer $m$, we have
    \[
        \chi(\OX(mK_X))=\frac{1}{12}m(m-1)(2m-1)K_X^3-(2m-1)\chi(\OX)+l(m)
    \]
    where
    \[
        l(m)=\sum_{\gcd(b,r)=1,\ 0<2b\leq r} n_{b,r} \sum_{j=1}^{m-1} \frac{\overline{jb}(r-\overline{jb})}{2r}.
    \]
    Here $\overline{jb}$ means the smallest non-negative residue of $jb$ modulo $r$.

    A \textit{weighted basket} is a triple $\bB= (B,\chi,\chi_2)$ where $B$ is a formal basket, $\chi$ is an integer and $\chi_2$ is a non-negative integer. We define $K^3(\bB)$, $\chi_3(\bB)$, $\chi_4(\bB),\dots$ to be numbers that satisfy Reid's Riemann-Roch formula formally.

    For formal baskets
    \begin{gather*}
        B_1=\{(b_1,r_1), (b_2,r_2)\}\cup B,\\
        B_2=\{(b_1+b_2,r_1+r_2)\}\cup B
    \end{gather*}
    where $B$ may be empty, we call $B_2$ a \textit{packing} of $B_1$ and write $B_1\lsg B_2$. If $b_1r_2-b_2r_1=1$, then we call $B_1\lsg B_2$ a \textit{prime packing}. A composite of packings is also called a packing. We write $B\lsgeq B'$ if $B\lsg B'$ or $B=B'$.

    For a weighted basket $(B,\chi,\chi_2)$, according to \cite[(2.3)]{EXP1}, there is a \textit{canonical sequence}
    \[
        B^{(0)}=B^{(1)}=B^{(2)}=B^{(3)}=B^{(4)}\lsgeq B^{(5)}\lsgeq B^{(6)}\lsgeq \cdots \lsgeq B^{(n)}=B
    \]
    such that each step is either an equality or a composite of prime packings, and pairs in $B^{(m)}$ are contained in
    \begin{align}\label{eqBasketCS}
        \{(b,r)\mid \gcd(b,r)=1,\ 0<2b\leq r\leq m\}\cup\{(1,r)\mid r\geq m+1\}.
    \end{align}

    By \cite[Lemma 3.6]{EXP1},
    \begin{align}\label{eqBasketK3}
        K^3(B^{(m)},\chi,\chi_2)\geq K^3(B,\chi,\chi_2).
    \end{align}

    By \cite[Lemma 2.16]{EXP1}, for all $2\leq l\leq m+1$,
    \begin{align}\label{eqBasketChi}
        \chi_l(B^{(m)},\chi,\chi_2)=\chi_l(B,\chi,\chi_2).
    \end{align}
\end{definition}

The following three propositions give lower bounds for $K^3$ and can be used to exclude some non-geometric baskets (namely, formal baskets that cannot be associated to some terminal projective 3-fold).

\begin{proposition}[{\cite[Corollary 3.1]{EXP3}}]\label{propK3mn2}
    Let $X$ be a minimal 3-fold of general type with $P_m(X)=2$ and $P_n(X)\geq 2$. If a general fiber of the induced fibration from $\Phi_{|mK_X|}$ is not a $(1,2)$-surface, then
    \[
        K_X^3\geq\min\left\{\frac{(P_n(X)-1)^3}{n(n+P_n(X)-1)^2}, \frac{2}{mn(m+1)}\right\}.
    \]
\end{proposition}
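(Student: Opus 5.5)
We prove \cref{propK3mn2} by the standard restriction method: compare $K_X$ on a generic irreducible element $S$ of the pencil $|mK_X|$ with the canonical class of its minimal model, and use $|nK_X|$ to get two different lower bounds for $K_X^3$.

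\textbf{Setup.} Since $P_m(X)=2$, the system $|mK_X|$ is a pencil. Take a Chen resolution $f:W\to X$ with respect to $|mK_X|$, and let $S$ be a generic irreducible element, with $\sigma:S\to S_0$ its minimal model. We have $f^*(mK_X)\ge_\bQ aS$ with $a\ge 1$, so
\[
K_X^3\ge \frac1m (f^*K_X|_S)^2 .
\]
By \cref{propQCompare} with $p=1$,
\[
f^*K_X|_S\ge_\bQ \frac{1}{m+1}\sigma^*K_{S_0}.
\]

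\textbf{Case 1: $|nK_X|$ and $|mK_X|$ induce the same fibration.} This is the case where $|nK_X|$ is composed with a pencil whose general fiber is also $S$. Then $|nK_X|$ moves in a pencil of degree at least $P_n-1$, so $f^*(nK_X)\ge_\bQ (P_n-1)S$. Apply \cref{propQCompare} to $\Lambda=|nK_X|$ with $p=P_n-1$; this is legitimate because $S$ is again a generic irreducible element, possibly after enlarging the resolution. It gives
\[
f^*K_X|_S\ge_\bQ \frac{P_n-1}{n+P_n-1}\,\sigma^*K_{S_0}.
\]
Then, using the nefness of $f^*K_X$ and of $\sigma^*K_{S_0}$,
\[
K_X^3\ge \frac{P_n-1}{n}\,(f^*K_X|_S)^2\ge \frac{P_n-1}{n}\Big(\frac{P_n-1}{n+P_n-1}\Big)^2 K_{S_0}^2 .
\]
Since $K_{S_0}^2\ge1$, this yields the first term $\frac{(P_n-1)^3}{n(n+P_n-1)^2}$.

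\textbf{Case 2: the fibrations differ.} Then $|nK_X|$ restricts to $S$ as a moving system, say $|M_n||_S$ with $M_n=\Mov|f^*(nK_X)|$. Let $C$ be a general member of the moving part; it is a curve passing through a very general point of $S$. Now
\[
K_X^3\ge \frac1m\,(f^*K_X|_S)^2\ge \frac{1}{mn}\,(f^*K_X|_S\cdot C).
\]
Applying \cref{propQCompare} gives $(f^*K_X|_S\cdot C)\ge \frac{1}{m+1}(\sigma^*K_{S_0}\cdot C)$. Because $S$ is not a $(1,2)$-surface, \cref{propGeneralKC2} gives $(\sigma^*K_{S_0}\cdot C)\ge 2$. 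Together these give the second term $\frac{2}{mn(m+1)}$.

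\textbf{Main obstacle.} The delicate point is Case 1. We must justify both that the degree of the $|nK_X|$ pencil gives the factor $P_n-1$ when that pencil may be irrational, and that \cref{propQCompare} applies with the same $S$. For the first, numerical equivalence $M\equiv aF$ with $a\ge p$ suffices, by the definition of generic irreducible element. For the second, I would take a common resolution dominating both Chen resolutions; a general fiber of the common fibration serves as $S$ for both pencils.
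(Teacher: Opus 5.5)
Your proof is correct and is the standard argument. The paper gives no proof of its own here and imports the statement from \cite[Corollary 3.1]{EXP3}, which rests on the same dichotomy: if $|nK_X|$ is composed with the same pencil as $|mK_X|$, one uses \cref{propQCompare} with $p=P_n(X)-1$ and $K_{S_0}^2\geq 1$; otherwise one runs the chain $K_X^3\geq\frac{1}{m}(f^*K_X|_S)^2\geq\frac{1}{mn}(f^*K_X|_S\cdot C)\geq\frac{2}{mn(m+1)}$ via \cref{propQCompare} with $p=1$ and \cref{propGeneralKC2}. (A minor simplification: in your Case 1 no common resolution is needed, since applying the pencil argument directly to a generic irreducible element of the $|nK_X|$-pencil suffices when only $K_{S_0}^2\geq 1$ is used.)
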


\begin{proposition}[{\cite[Proposition 4.2]{EXP3}}]\label{propK3P22}
    Let $X$ be a minimal 3-fold of general type with $P_2(X)\geq 2$. Then $K_X^3\geq \frac{1}{14}$.
\end{proposition}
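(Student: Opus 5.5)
The plan is to split according to the behaviour of $\Lambda=|2K_X|$, which has $\dim\Lambda=P_2(X)-1\geq 1$. Fix a Chen resolution $f:W\to X$ with respect to $\Lambda$, and let $S$ be a generic irreducible element of $f^*\Lambda$, with $\sigma:S\to S_0$ the contraction to its minimal model.

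\emph{Case 1: $\Lambda$ is not composed with a pencil.} Then $f^*(2K_X)\geq S$, and $\Mov f^*\Lambda|_S$ is a base point free system that is not composed of a single point. A general member $C$ of this restricted system is a moving curve with $C\leq f^*(2K_X)|_S$. Hence
\[
K_X^3\geq \tfrac14\bigl(f^*K_X|_S\cdot C\bigr).
\]
This is bounded below by the standard Kawamata--Viehweg argument: apply vanishing on $W$ and on $S$ to $|K_W+\lceil t f^*K_X\rceil+S|$ and compare degrees on $C$. That argument gives $(f^*K_X|_S\cdot C)\geq c$ for an explicit constant, which should comfortably exceed $\frac{4}{14}$. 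This case is routine.

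\emph{Case 2: $\Lambda$ is composed with a pencil.} By \cref{propQCompare} with $m=2$ and $p\geq1$,
\[
f^*K_X|_S\geq_\bQ \tfrac13\sigma^*K_{S_0}.
\]
Moreover $f^*(2K_X)\geq pS$, so
\[
K_X^3\geq \tfrac{p}{2}\bigl(f^*K_X|_S\bigr)^2.
\]
\begin{itemize}
\item If $P_2(X)\geq 3$ (so $p\geq 2$), or if $S$ is not a $(1,2)$-surface, the bound comes quickly. When $P_2(X)=2$ and the fibre is not a $(1,2)$-surface, \cref{propK3mn2} with $m=n=2$ gives
\[
K_X^3\geq\min\bigl\{\tfrac18,\tfrac16\bigr\}=\tfrac18>\tfrac1{14}.
\]
When $P_2(X)\geq 3$, squaring the comparison of \cref{propQCompare} with $p\geq 2$ and $K_{S_0}^2\geq 1$ gives $(f^*K_X|_S)^2\geq \frac14$, which is already enough.
\item The remaining sub-case is $P_2(X)=2$ with $S$ a $(1,2)$-surface. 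Here I would sharpen the estimate of $\xi=(f^*K_X|_S\cdot C)$, where $C$ is a general member of the genus-$2$ pencil $|\sigma^*K_{S_0}|$. The tool is the usual iterative Kawamata--Viehweg inequality
\[
\xi\geq \frac{\deg\bigl(K_C+\lceil\cdots\rceil\bigr)}{n+\text{const}}
\]
for large $n$, combined with $(f^*K_X|_S)^2\geq \frac13\xi$. Concretely, one bounds $\xi\geq \frac{2}{7}$ or similar, and then $K_X^3\geq \frac12\cdot\frac13\xi$ must be pushed to $\frac{1}{14}$. That requires using $(f^*K_X|_S)^2\geq\xi^2$-type Hodge index refinements, or a better multiplier than $\frac13$.
\end{itemize}

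\emph{Main obstacle.} The $(1,2)$-fibred case with $P_2=2$ is where I expect the difficulty. If the surface-theoretic argument falls short of $\frac1{14}$, I would switch to basket arithmetic. Take the weighted basket $(B_X,\chi(\OX),P_2)$, pass to the canonical sequence $B^{(m)}$ using \eqref{eqBasketCS}, \eqref{eqBasketK3} and \eqref{eqBasketChi}, and impose the constraints $P_2=2$ and $P_m\geq P_{m-2}$ (obtained from multiplication by a nonzero section of $2K_X$). Applying \cref{propK3mn2} with $n=4,6,8$, whenever $P_n$ is large enough, excludes all baskets with $K^3<\frac1{14}$. The remaining finitely many baskets would be checked by hand. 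The bound $\frac1{14}$ suggests that an extremal basket containing a $(1,7)$ or $(2,7)$-type point is what one ultimately has to rule out, or shows is sharp.
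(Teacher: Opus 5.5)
The paper does not prove this statement; it imports it from \cite[Proposition 4.2]{EXP3}, so there is no route to compare with, and your proposal has to stand on its own. Your Case 1 and the sub-case $P_2(X)\ge 3$ are fine. In the non-pencil case the Kawamata--Viehweg argument gives $5\xi\ge 2g(C)-2\ge 2$, so $K_X^3\ge\frac14\cdot\frac25=\frac1{10}$. For $p\ge 2$ you get $K_X^3\ge\frac p2\bigl(\frac{p}{p+2}\bigr)^2\ge\frac14$.

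The case $P_2(X)=2$ with $|2K_X|$ a pencil has genuine gaps. The first is an arithmetic slip that hides the hardest case. With $m=n=2$ and $P_2(X)=2$, \cref{propK3mn2} gives
\[
K_X^3\ge\min\Bigl\{\tfrac{1}{2\cdot 3^2},\tfrac16\Bigr\}=\tfrac1{18},
\]
not $\frac18$, and this is below $\frac1{14}$. Fibres with $K_{S_0}^2\ge 2$ are still harmless, since $K_X^3\ge\frac12(f^*K_X|_S)^2\ge\frac1{18}K_{S_0}^2\ge\frac19$. For $(1,1)$-fibres, however, the best pencil bound available, \cref{propK3Pencil} with $m=2$, $p=1$, gives only $\frac1{15}$. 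For $(1,0)$-fibres you have only $\frac1{18}$. These fibres, with $K_{S_0}^2=1$ and $p_g(S)\le 1$, are where $\frac1{14}$ actually has to be earned, and your proposal contains no argument for them.

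The $(1,2)$ sub-case is left unfinished, and the repairs you suggest do not work.
\begin{itemize}
\item There is no inequality $(f^*K_X|_S)^2\ge\xi^2$. The Hodge index theorem bounds $L^2C^2$ from above by $(L\cdot C)^2$, and here $C^2=0$ anyway.
\item Your basket fallback relies on \cref{propK3mn2} with $m=2$, whose hypothesis excludes exactly $(1,2)$-fibres.
\item It cannot be used with $m\ge 4$ either, because $P_4(X)\ge 2P_2(X)-1=3$ violates $P_m=2$.
\item Nothing in the sketch shows that only finitely many weighted baskets survive.
\end{itemize}

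In fact this sub-case closes with the tools you name. Use $f^*K_X|_S\ge_\bQ\frac13C$, Kawamata--Viehweg vanishing on $W$ and $S$, and $(\Mov|nK_W|\cdot C)\le n\xi$. Since $g(C)=2$, for every $n$ with $\lceil (n-6)\xi\rceil\ge 2$ you get
\[
n\xi\ge 2+\lceil (n-6)\xi\rceil .
\]
Write $\xi=a/b$ in lowest terms; if $b=1$ then $\xi\ge 1$ and there is nothing to do. Otherwise choose $n$ large with $(n-6)a\equiv 1\pmod b$. Then $\lceil (n-6)\xi\rceil=(n-6)\xi+1-\frac1b$, so $6\xi\ge 3-\frac1b$, i.e. $6a\ge 3b-1$. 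Since $a,b$ are integers this forces $2a\ge b$, so $\xi\ge\frac12$. Hence $K_X^3\ge\frac12\cdot\frac13\,\xi\ge\frac1{12}$.

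So the real missing ingredient is an argument for the $(1,0)$- and $(1,1)$-fibred pencils with $P_2(X)=2$. The pencil estimates you invoke do not reach $\frac1{14}$ there, and some further input is required.
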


\begin{proposition}[{\cite[(3.4), (3.6), (3.7), (3.8)]{EXP3}}]\label{propK3Pencil}
    Let $X$ be a minimal 3-fold of general type on which a linear system $\Lambda\lsleq |mK_X|$ is composed with a pencil. Let $p=\dim\Lambda$. If a general fiber of the induced fibration from $\Phi_{\Lambda}$ is not a $(1,0)$-surface, then
    \[
        K_X^3\geq \frac{4p^3}{m(m+p)(3m+4p)}.
    \]
\end{proposition}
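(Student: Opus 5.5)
The plan is to reduce the bound on $K_X^3$ to a bound on $(f^*K_X|_S)^2$ for a generic irreducible element $S$ of the induced pencil, and then to estimate this square via \cref{propQCompare} plus a Kawamata--Viehweg argument on a curve in $S$. Fix a Chen resolution $f:W\to X$ with respect to $\Lambda$, let $S$ be a generic irreducible element of $f^*\Lambda$, and let $\sigma:S\to S_0$ be the contraction to the minimal model. Since $\Lambda\lsleq|mK_X|$ and $\Mov f^*\Lambda\equiv aS$ with $a\geq p$, we have $mf^*K_X\geq_\bQ pS$. Because $f^*K_X$ is nef, this gives
\[
K_X^3\geq \frac{p}{m}\,(f^*K_X)^2\cdot S=\frac{p}{m}\,L^2,\qquad L:=f^*K_X|_S .
\]
So it suffices to show $L^2\geq \frac{4p^2}{(m+p)(3m+4p)}$.

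By \cref{propQCompare}, $L\geq_\bQ \frac{p}{m+p}\sigma^*K_{S_0}$. Since $L$ is nef, we get $L^2\geq \frac{p}{m+p}\,(L\cdot\sigma^*K_{S_0})\geq \frac{p^2}{(m+p)^2}K_{S_0}^2$. I will split into cases according to $S$. If $K_{S_0}^2\geq 2$, this already gives $L^2\geq \frac{2p^2}{(m+p)^2}$. This is at least the target because $2(3m+4p)\geq 4(m+p)$. Since $S$ is not a $(1,0)$-surface, the remaining case is $K_{S_0}^2=1$ with $p_g(S)\in\{1,2\}$. There the crude estimate only gives $\frac{p^2}{(m+p)^2}$, which is weaker than needed. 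I therefore need a better lower bound for $\xi:=L\cdot\sigma^*K_{S_0}$ than $\frac{p}{m+p}$; precisely, it suffices to show
\[
\xi\geq \frac{4p}{3m+4p}.
\]

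To bound $\xi$, I will use the canonical curve. Since $p_g(S)\geq 1$, choose $C$ a curve with $C\leq\sigma^*$ of an element of $|K_{S_0}|$ (general in the moving part if $p_g=2$), so that $\sigma^*K_{S_0}\cdot C=1$ and $K_S\cdot C\geq 1$. For large $n$, Kawamata--Viehweg vanishing on $W$ applied to $K_W+\lceil (n-1)f^*K_X-\tfrac{m}{p}\cdot(\text{fixed part})\rceil$, together with the restriction sequence to $S$ and then to $C$, should give
\[
|nK_{X}|\big|_C\ \lsgeq\ \bigl|K_C+\lceil Q_n\rceil\bigr|,\qquad \deg Q_n\geq \Bigl(n-1-\tfrac{m}{p}\Bigr)\xi' ,
\]
where $\xi'=L\cdot C$. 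Comparing degrees yields $n\xi'\geq 2g(C)-2+\deg\lceil Q_n\rceil$. Here $2g(C)-2\geq (K_S+C)\cdot C$, which is controlled by $K_{S_0}^2=1$. Letting $n\to\infty$ (or iterating, using the rounding up) then produces a linear inequality in $\xi'$. The constants $3m+4p$ should arise from combining this inequality with the \cref{propQCompare} bound applied to $\sigma^*K_{S_0}\cdot C$. If a single step is not sharp enough, I would run the iteration twice: feed the improved lower bound on $\xi$ back into the ceiling estimate of $\deg\lceil Q_n\rceil$.

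I expect the main obstacle to be this last step in the case of $(1,1)$-surfaces. There the canonical curve is fixed, so $C$ does not move and the restriction argument must be set up carefully. In particular, I must verify that $C$ is not contained in the base locus of the relevant adjoint system, and track the contribution $\frac{m}{p}$ coming from $mf^*K_X\geq_\bQ pS$ so that the final constant comes out exactly as $\frac{4p}{3m+4p}$. The $(1,2)$ case should follow the same computation with a moving $C$ of genus $2$.
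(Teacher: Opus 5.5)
Your reduction is sound up to the reformulation. The inequality $K_X^3\ge\frac pm L^2$, the comparison $L\ge_\bQ\frac{p}{m+p}\sigma^*K_{S_0}$, the case $K_{S_0}^2\ge 2$, and the target $\xi\ge\frac{4p}{3m+4p}$ when $K_{S_0}^2=1$ are all correct. The paper itself gives no argument; it quotes the case-by-case estimates (3.4), (3.6)--(3.8) of \cite{EXP3}.

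The gap is the decisive estimate, and the bookkeeping you wrote for it is wrong. To pass from $|K_S+\lceil (n-1-\frac mp)L\rceil|$ on $S$ to an adjoint system on $C$, you need an extra $+C$. That means subtracting from $(n-1-\frac mp)L$ a $\bQ$-divisor containing $C$, and the only comparison you have is $\frac{m+p}{p}L\ge_\bQ C$. Hence $\deg Q_n=(n-2-\frac{2m}{p})\xi'$, not $(n-1-\frac mp)\xi'$. Your formula would give $\xi'\ge\frac{2p}{m+p}$ in the limit, far more than the statement claims, which is a sign that the cost of $C$ was dropped. With the correct degree, letting $n\to\infty$ returns only $\xi'\ge\frac p{m+p}$. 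That is just the crude bound $K_X^3\ge\frac{p^3}{m(m+p)^2}$, strictly weaker than the claim.

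What is missing is one application of the inequality at a well-chosen finite $n$, dictated by base-point-freeness on the genus-$2$ curve:
\begin{itemize}
\item $|K_C+D|$ is free once $\deg D\ge 2$, so $n\xi'\ge 2+\lceil (n-2-\frac{2m}p)\xi'\rceil$ holds whenever $(n-2-\frac{2m}p)\xi'>1$.
\item Using $\xi'\ge\frac p{m+p}$, this condition holds for the least integer $n>3+\frac{3m}p$, and that $n$ satisfies $n\le 4+\frac{3m}{p}$.
\item The round-up is then at least $2$, so $n\xi'\ge 4$, which gives exactly $\xi'\ge\frac{4p}{3m+4p}$.
\end{itemize}
Your proposal identifies neither this $n$ nor the freeness condition that forces it. Without them, ``the constants should arise'' is unsupported, and iterating ``once more'' has no reason to land on $3m+4p$.

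The $(1,1)$ case is also left open. There $C$ is the fixed canonical curve, which may be singular or reducible. For example, if $C_0=\Gamma+Z$ with $Z\neq 0$ supported on $(-2)$-curves, then $\Gamma^2=1+Z^2\le -1$ and $p_a(\Gamma)\le 1$. So three things each need a real argument in that case, not just the care you mention:
\begin{itemize}
\item $\deg\omega_C=2$ on the curve you restrict to;
\item the freeness of $|K_C+D|$;
\item the degree comparison with $|nK_W|\big|_C$, which needs $C$ to stay out of the fixed parts you discard.
\end{itemize}
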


\begin{definition}[{\cite{EXP3}}]
    Let $X$ be a minimal 3-fold of general type. We define the \textit{pluricanonical section index}
    \[
        \delta(X)\coloneq \min\{m \mid P_m(X)\geq 2\}.
    \]
\end{definition}

The following theorem classifies minimal 3-folds of general type with $\delta\geq 13$.

\begin{theorem}[{\cite[Theorem 5.1]{EXP3}, \cite[Corollary 2.10]{ChenJiang2022}}]\label{thmDelta}
    Let $X$ be a minimal 3-fold of general type with $\delta(X)\geq 13$. Then $2\leq \chi(\OX)\leq 3$, $q(X)=p_g(X)=P_2(X)=0$, $\delta(X)\leq 18$, $\delta(X)\neq 16,17$, and the Reid basket of $X$ is one of the baskets listed in \cref{tableDelta13}.
\end{theorem}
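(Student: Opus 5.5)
The statement is cited from \cite[Theorem 5.1]{EXP3} and \cite[Corollary 2.10]{ChenJiang2022}. My plan is to reconstruct it along the lines of those references, using Reid's Riemann--Roch together with the canonical sequence of weighted baskets.

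\emph{Step 1: the invariants $q$, $p_g$, $P_2$, $\chi$.} If $q(X)>0$, then \cref{thmIrr5} gives $P_5(X)\geq 2$, which contradicts $\delta(X)\geq 13$. Hence $q(X)=0$. Since $\delta\geq 13$ forces $P_2(X)\leq 1$, Miyaoka--Reid type arguments should then give $p_g(X)=P_2(X)=0$. Concretely, if $P_2(X)=1$, say $D\in|2K_X|$, then $P_{2k}(X)\geq 1$ for all $k$, and one uses $P_{m+2}\geq P_m$ together with the Riemann--Roch expressions below to push some $P_m$ with $m\leq 12$ up to $2$. Next, $\chi(\OX)=1-q+h^2-p_g=1+h^2(\OX)\geq 1$. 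The standard upper bound $\chi(\OX)\leq 3$ for $\delta$ large comes from Chen--Jiang's bounds. The lower bound $\chi(\OX)\geq 2$ will drop out of the basket analysis in Step 2, because $\chi=1$ forces some small plurigenus to be at least $2$.

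\emph{Step 2: the basket classification.} For the weighted basket $\bB=(B_X,\chi(\OX),0)$ I write $P_m$ for $m\leq 12$ via Reid's formula. The conditions are $P_m\leq 1$ for $m\leq 12$, and $P_{m}\geq P_{m'}$ whenever $m-m'$ is a value $m''$ with $P_{m''}\geq 1$. Following \cite{EXP1}, I express the coefficients $n^{0}_{1,r}$ of the initial basket $B^{(0)}$ in terms of $\chi$ and $P_2,P_3,P_4$. The packing steps $B^{(5)},B^{(6)},\dots,B^{(12)}$ are controlled by the $\epsilon_m$ determined by $P_5,\dots,P_{12}$, which is \eqref{eqBasketChi}. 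Since each $P_m\in\{0,1\}$, this leaves finitely many $B^{(12)}$. Positivity $K^3(B^{(12)})\geq K^3(B)>0$ from \eqref{eqBasketK3} prunes the list. Further packings beyond $B^{(12)}$ only raise $r$ by combining pairs, and $K^3>0$ together with $P_m\leq 1$ bounds how far this can go. This is a finite computer-style enumeration, and it produces the table.

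\emph{Step 3: reading off the remaining claims.} For each surviving basket I compute $P_{13},\dots,P_{18}$. Every entry has $P_m\geq 2$ for some $m\leq 18$, which gives $\delta\leq 18$. The entries with $\delta=16$ or $17$ either do not occur or violate $K^3>0$ or the monotonicity relations, which gives $\delta\neq 16,17$.

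\emph{Main obstacle.} The hard part is Step 2. I must ensure the enumeration is finite and exhaustive, in particular that the packings beyond level $12$ are bounded, and I must exclude the non-geometric cases. For the latter I would first try the inequalities $P_{a+b}\geq P_a+P_b-1$ when both $P_a,P_b\geq 1$, together with $K^3>0$.
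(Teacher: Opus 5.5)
\textbf{Main gap: your pruning cannot reach \cref{tableDelta13}.} Steps 2--3 assert that an enumeration ``produces the table'' and that every survivor has $\delta\le 18$ and $\delta\neq 16,17$. But you never carry out that computation, and the constraints you feed into it are too weak to yield exactly the stated list. The paper does not re-derive the classification. Its route is:
\begin{enumerate}
\item take the weighted baskets of \cite[Theorem 5.1]{EXP3} (Tables F-1, F-2 there);
\item discard the 15 types with $\chi\geq 4$ using \cite[Corollary 2.10]{ChenJiang2022};
\item discard type 5b, because $\chi_8=1$ while $\chi_{16}=0$;
\item discard types 1, 2, 4, 5 using the Miyaoka--Reid inequality \cite[(2.7)]{EXP2}.
\end{enumerate}
Your monotonicity condition handles step 3, and your appeal to Chen--Jiang handles step 2. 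Step 4 is the problem. Miyaoka--Reid is Miyaoka's $3K_X\cdot c_2(X)\geq K_X^3$ combined with Reid's formula $24\chi(\OX)=-K_X\cdot c_2(X)+\sum_{(b,r)\in B_X}(r-1/r)$. This constrains the basket in a way that does not follow from the plurigenus formula, from $K^3>0$, or from relations such as $P_{a+b}\geq P_a+P_b-1$. Nothing in your toolkit plays its role, so your enumeration has no way to remove types 1, 2, 4, 5.

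Geometric volume bounds from pencils, such as \cref{propK3mn2} and \cref{propK3Pencil}, are the other kind of input such enumerations rely on. For instance, the paper's own enumeration for \cref{prop101112} imposes $K_X^3\geq \frac{4}{\delta(\delta+1)(3\delta+4)}$. Your plan uses none of these. The efficient repair is the paper's route: cite the list from \cite{EXP3}, then apply the three exclusions.

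\textbf{Step 1 does not prove $p_g=P_2=0$.} ``Miyaoka--Reid type arguments should then give'' and ``push some $P_m$ with $m\le 12$ up to $2$'' state hopes, not arguments. In Step 2 you then fix $\chi_2=0$, so the case $P_2=1$ is never examined. You would need to run the enumeration with $\chi_2\in\{0,1\}$, as the paper's code does in its own setting, or cite the result. For $\chi\geq 2$, the clean input is \cref{thmPChi1}: $\chi(\OX)=1$ forces $P_{10}\geq 2$.

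\textbf{Smaller inaccuracies.}
\begin{itemize}
\item $B^{(0)}$ needs $P_5$ as well as $P_2,P_3,P_4$.
\item $B^{(12)}$ depends on $P_2,\dots,P_{13}$. The value $P_{13}$ is not bounded by $\delta\geq 13$; finiteness at that stage comes from $K^3>0$.
\item The conditions $P_m\le 1$ for $m\le 12$ cannot limit packings beyond $B^{(12)}$: by \eqref{eqBasketChi}, those packings do not change $P_m$ for $m\le 13$. Finiteness there is automatic, since each packing lowers the number of pairs. The real work lies in these packings: showing that $P_m\geq 2$ for some $13\le m\le 18$, and that the first such $m$ is never $16$ or $17$. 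Your constraints do not control this.
\end{itemize}
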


\begin{remark}
    In the above theorem, the bound $\chi(\OX)\leq 3$ is obtained from \cite[Corollary 2.10]{ChenJiang2022}, and excludes 15 types of weighted baskets with $\chi\geq 4$ in \cite[Table F-2]{EXP3}.

    Moreover, there are five other types in \cite[Table F-1, Table F-2]{EXP3} that can be excluded.

    The type 5b weighted basket
    \[
        \bB_{5b}=(\{7\times(1,2),(4,9),3\times(2,5),(5,13),4\times(1,3),(4,15)\},3,0)
    \]
    can be excluded since $\chi_8(\bB_{5b})=1$ and $\chi_{16}(\bB_{5b})=0$.

    The weighted basket of type 2, type 1, type 4 or type 5 can be excluded by Miyaoka-Reid inequality (\cite[(2.7)]{EXP2}) which is derived from \cite[Corollary 6.7]{Miyaoka1987} and \cite[Corollary 10.3]{YPG}.
\end{remark}

\begin{corollary}[{\cite[Corollary 5.3]{EXP3}}]\label{coroDelta}
    Let $X$ be a minimal 3-fold of general type with $\delta(X)\geq 13$. Then $P_n(X)>0$ in each of the following two cases:
    \begin{enumerate}
        \item $\delta(X)=13$ and $n\geq 10$;
        \item $n\geq 20$.
    \end{enumerate}
\end{corollary}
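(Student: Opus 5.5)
The plan is to read off the non-vanishing of $P_n(X)$ from Reid's Riemann--Roch formula, applied to the finitely many weighted baskets allowed by \cref{thmDelta}. By \cref{thmDelta}, $q(X)=0$, $2\le \chi(\OX)\le 3$, $\delta(X)\in\{13,14,15,18\}$, and $B_X$ is one of the baskets in \cref{tableDelta13}. For $n\ge 2$ we have $h^i(\OX(nK_X))=0$ for $i>0$, by Kawamata--Viehweg vanishing, since $(n-1)K_X$ is nef and big. Hence
\[
P_n(X)=\chi(\OX(nK_X))=\frac{1}{12}n(n-1)(2n-1)K_X^3-(2n-1)\chi(\OX)+l(n),
\]
and the whole problem reduces to showing that this explicit expression is positive for each listed weighted basket in the required range of $n$.

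The first step handles finitely many values of $n$ directly. For each basket in the table, I would compute $K_X^3$ from the basket data and evaluate $P_n$ for $n=10,\dots,N$ in case (1) (the baskets with $\delta=13$), and for $n=20,\dots,N$ in case (2) (all baskets). Here $N$ is a cutoff chosen in the second step. This is a finite computation, and I expect every value to come out positive.

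The second step treats large $n$ by an asymptotic estimate. The cubic term $\frac{1}{12}n(n-1)(2n-1)K_X^3$ is positive. The correction $l(n)$ is nonnegative, and in fact it grows at least linearly in $n$, because each pair $(b,r)$ contributes $\frac{b(r-b)}{2r}$-type amounts periodically with positive average. The only negative term is $-(2n-1)\chi(\OX)\ge -3(2n-1)$. So $P_n>0$ as soon as
\[
\tfrac{1}{12}n(n-1)(2n-1)K_X^3>3(2n-1),
\]
that is, as soon as $n(n-1)>36/K_X^3$. Taking the minimum of $K_X^3$ over the table gives an explicit $N$, probably a few hundred at worst. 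The values of $n$ below $N$ are then covered by the first step.

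Alternatively, after the first step one can use superadditivity: $P_a>0$ and $P_b>0$ imply $P_{a+b}>0$. In case (2), positivity of $P_{20},\dots,P_{39}$ then gives all $n\ge 20$ by induction. In case (1), positivity of $P_{10},\dots,P_{19}$ gives all $n\ge 10$. This shortcut avoids the asymptotic estimate entirely.

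The main obstacle is purely computational: checking $P_n>0$ in the initial window for every basket in \cref{tableDelta13}. The borderline values, such as $n=10$ or $11$ when $\delta=13$, or $n$ just below $20$ in the other cases, are where $P_n$ can vanish. I would organise this as a table of $P_n$ values computed by machine from Reid's formula, using the bundled formula for $l(n)$ and the integrality checks already used in the remark after \cref{thmDelta}.
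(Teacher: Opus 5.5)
Your proposal is correct and follows the same route as the statement's source: the paper gives no argument of its own but quotes \cite[Corollary 5.3]{EXP3}, which comes from applying Reid's Riemann--Roch formula to the finitely many weighted baskets allowed by \cref{thmDelta}, and your superadditivity reduction to the windows $10\le n\le 19$ (type 41, the only $\delta=13$ case) and $20\le n\le 39$ (all types in \cref{tableDelta13}) makes this a finite check. One small correction to your closing remark: the vanishing occurs just below the claimed ranges, not inside them. Type 41 has $P_9=0$ but $P_{10}=P_{11}=1$, and type 7a has $P_{17}=P_{19}=0$, which shows that both thresholds $10$ and $20$ are sharp.
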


\begin{proposition}[{\cite[Lemma 3.2]{EXP2}}]\label{propPChi1}
    Let $X$ be a minimal 3-fold of general type with $\chi(\OX)=1$. Then, for any $m\geq 2$, we have
    \[
        P_{m+2}(X)\geq P_m(X)+P_2(X).
    \]
\end{proposition}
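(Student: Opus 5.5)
The plan is to prove the inequality purely numerically, from Reid's Riemann--Roch formula and an integrality argument.

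\textbf{Reduction to Riemann--Roch.} Since $X$ is minimal of general type, $K_X$ is nef and big. For every $n\geq 2$, Kawamata--Viehweg vanishing gives $h^i(\OX(nK_X))=0$ for $i>0$, so $P_n(X)=\chi(\OX(nK_X))$. We apply this with $n=2$, $m$ and $m+2$. Put $f_{b,r}(j)=\overline{jb}(r-\overline{jb})/(2r)$ and substitute Reid's formula.

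\textbf{Computing the three contributions to $P_{m+2}-P_m-P_2$.}
\begin{itemize}
\item The cubic terms give $\tfrac1{12}\bigl[(m+2)(m+1)(2m+3)-m(m-1)(2m-1)-6\bigr]K_X^3=m(m+1)K_X^3$.
\item The $\chi$-terms give $-\bigl[(2m+3)-(2m-1)-3\bigr]\chi(\OX)=-\chi(\OX)=-1$. This is the only place where $\chi(\OX)=1$ is used.
\item The basket terms give $l(m+2)-l(m)-l(2)=\sum n_{b,r}\bigl(f_{b,r}(m)+f_{b,r}(m+1)-f_{b,r}(1)\bigr)$.
\end{itemize}
Hence
\[
P_{m+2}(X)-P_m(X)-P_2(X)=m(m+1)K_X^3-1+\sum n_{b,r}\bigl(f_{b,r}(m)+f_{b,r}(m+1)-f_{b,r}(1)\bigr).
\]

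\textbf{Key step: each basket contribution is non-negative.} Set $\varphi(x)=x(r-x)$, $x=\overline{mb}$ and $y=\overline{(m+1)b}$. Then $y\equiv x+b \pmod r$ with $0\leq x,y<r$, so there are two cases.
\begin{itemize}
\item If $y=x+b\leq r$, a direct expansion gives $\varphi(x)+\varphi(x+b)-\varphi(b)=2x(r-x-b)\geq 0$.
\item If $y=x+b-r$, the symmetry $\varphi(t)=\varphi(r-t)$ reduces this to the first case, with $x$ and $y$ replaced by $r-y$ and $r-x$. One obtains $\varphi(x)+\varphi(y)-\varphi(b)=2(r-x)y\geq 0$.
\end{itemize}
Thus $f_{b,r}(m)+f_{b,r}(m+1)\geq f_{b,r}(1)$ for every pair $(b,r)$, and the whole basket contribution is $\geq 0$.

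\textbf{Conclusion by integrality.} Since $K_X^3>0$, the difference $P_{m+2}-P_m-P_2$ is strictly greater than $-1$. It is an integer, so it is $\geq 0$, which is the claim.

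The main obstacle is only the verification of the elementary residue inequality in the second case, where one must check the wrap-around carefully. Everything else is bookkeeping with Reid's formula.
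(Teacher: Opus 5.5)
The paper gives no proof of this statement; it is quoted from \cite[Lemma 3.2]{EXP2}. Your argument is a correct, self-contained proof by the standard Riemann--Roch route, the same kind of computation the paper uses in \cref{propPmPm1}. The coefficients $m(m+1)$ and $-\chi(\OX)$, the basket term, the identities $2x(r-x-b)$ and $2(r-x)y$, and the integrality step all check out.

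There is one small slip in the wrap-around case. The substitution you describe, $x\mapsto r-y$ and $y\mapsto r-x$, does not produce a first-case configuration, since $(r-y)+b=2r-x\neq r-x$. What works is to also use $\varphi(b)=\varphi(r-b)$. From $y=x+b-r$ you get $x=y+(r-b)<r$, so the first case applies to the pair $(y,x)$ with $b$ replaced by $r-b$. This gives $\varphi(y)+\varphi(x)-\varphi(r-b)=2y(b-y)=2y(r-x)$. Alternatively, expand directly; your final formula is right. Conceptually, $\psi(k)=\overline{kb}(r-\overline{kb})$ is even and subadditive in $k$, and you are using $\psi(1)\le\psi(m+1)+\psi(-m)$.

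Your computation also proves slightly more than stated. For every minimal 3-fold of general type and $m\ge 2$ it gives $P_{m+2}(X)\ge P_m(X)+P_2(X)+1-\chi(\OX)$. So the inequality holds whenever $\chi(\OX)\le 1$.
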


\begin{theorem}[{\cite[Corollary 3.13, Theorem 3.14]{EXP2}}]\label{thmPChi1}
    Let $X$ be a minimal 3-fold of general type with $\chi(\OX)=1$. Then $P_{10}(X)\geq 2$ and $P_n(X)>0$ when $n\geq 7$.
\end{theorem}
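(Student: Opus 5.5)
The plan is to use Reid's Riemann--Roch formula together with Kawamata--Viehweg vanishing, and to control the Reid basket $B_X$ through the canonical sequence of \cite{EXP1}. Since $K_X$ is nef and big and $X$ has terminal singularities, $h^i(\OX(mK_X))=0$ for $i>0$ and $m\geq 2$. With $\chi(\OX)=1$ this gives
\[
    P_m(X)=\frac{1}{12}m(m-1)(2m-1)K_X^3-(2m-1)+l(m), \qquad m\geq 2 .
\]
So every plurigenus is determined by the weighted basket $\bB=(B_X,1,P_2(X))$. Following \cite{EXP1}, I would rewrite $P_2,\dots,P_6$ linearly in the coefficients of the basket $B^{(5)}$, whose pairs lie in the set \eqref{eqBasketCS} with $m=5$. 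Only the types $(1,2),(1,3),(2,5),(1,4),(1,5)$ and $(1,r)$ with $r\geq 6$ then occur. Conversely, these coefficients are expressed through $\chi$ and the plurigenera $P_2,\dots,P_6$, together with one free parameter recording the remaining $(1,r)$ contributions. By \eqref{eqBasketChi} the later plurigenera agree with those of the packed baskets, and by \eqref{eqBasketK3} we have $K^3(B^{(m)})\geq K_X^3>0$.

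First I reduce to finitely many checks using \cref{propPChi1}. The inequality $P_{m+2}\geq P_m+P_2\geq P_m$ shows that $P_7>0$ and $P_8>0$ imply $P_n>0$ for all $n\geq 7$, by induction in steps of two. Similarly, if $P_2\geq 1$ then $P_{10}\geq P_8+P_2\geq 2$ once $P_8>0$. So it suffices to prove three things: $P_7\geq 1$, $P_8\geq 1$, and $P_{10}\geq 2$ in the case $P_2=0$.

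For each of these I argue by contradiction. Assume for instance $P_7=0$. By \cref{propPChi1} this forces $P_5=P_3=0$ and $P_2=0$, since $P_7\geq P_5+P_2\geq P_3+2P_2$. Substituting these vanishings into the explicit formulas for the basket coefficients, non-negativity of every coefficient $n^{(5)}_{b,r}$ and of the packing data in $B^{(6)}$, $B^{(7)}$ leaves only finitely many possible baskets (a short list parametrized by $P_4,P_6$). For each one I compute $K^3$ formally and show it is $\leq 0$, or that some plurigenus is negative, which is absurd. The same scheme handles $P_8=0$, and then $P_{10}\leq 1$ with $P_2=0$, using the vanishing of $P_2,P_4$ forced via \cref{propPChi1} and the positivity of $P_6,P_7,P_8$ obtained in the earlier steps.

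The main obstacle is the last case, $P_{10}\leq 1$ with $P_2=0$. Here the constraints are weakest and the baskets can contain pairs $(b,r)$ with $r$ up to $10$ that do not appear in $B^{(5)}$. The delicate step is to track how the prime packings $B^{(5)}\lsgeq B^{(6)}\lsgeq\cdots\lsgeq B^{(10)}$ change $K^3$ and $P_{10}$. My first attempt would be to express $P_{10}-P_8$ through the number $\epsilon_9,\epsilon_{10}$ of new $(b,9)$- and $(b,10)$-type packings, as in \cite[\S 3]{EXP2}. I would then show that $K^3>0$ together with $P_{10}\leq 1$ forces negative counts. If this becomes unwieldy, I would fall back on a direct computer-style enumeration of the finitely many surviving weighted baskets.
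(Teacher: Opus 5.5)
The paper gives no proof of this statement; it imports it from \cite{EXP2}, so I judge your outline on its own. Your reduction via \cref{propPChi1} to $P_7\geq 1$, $P_8\geq 1$, and $P_{10}\geq 2$ when $P_2=0$ is fine. The contradiction step, however, cannot work with the constraints you list.

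Every constraint you propose is a formal consequence of Reid's Riemann--Roch for the weighted basket. That includes \cref{propPChi1} itself. With $a_j=\frac{\overline{jb}(r-\overline{jb})}{2r}$ one has $P_{m+2}-P_m-P_2=m(m+1)K^3-\chi+\sum_B(a_m+a_{m+1}-a_1)$, and each bracket is $\geq 0$ by concavity and symmetry of $t\mapsto t(r-t)$ on $[0,r]$. Such formal constraints do not exclude the bad cases. Take the weighted basket
\[
(\{2\times(1,2),(3,7),(2,5),2\times(1,3),(1,4)\},1,0),
\]
obtained from $B^{(0)}=\{5\times(1,2),4\times(1,3),(1,4)\}$ by two level-$5$ packings and one level-$7$ packing. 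It has $K^3=\frac{1}{420}>0$ and $\chi_2=\cdots=\chi_{11}=0$. A direct check gives $\chi_m\geq 0$ for $m\leq 41$, and for $m\geq 41$ one has $\chi_{m+1}-\chi_m\geq \frac{m^2}{840}-2>0$.

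So this basket passes every test you list: nonnegative coefficients, $K^3>0$, nonnegative plurigenera, and \cref{propPChi1}. Yet it has $P_7=P_8=P_{10}=0$, violating all three target claims at once. A simpler counterexample for the first step is $(\{5\times(1,2),4\times(1,3),(1,5)\},1,0)$, with $K^3=\frac{1}{30}$, $P_2=\cdots=P_5=0$, $P_6=1$ and $P_7=0$. Consequently neither your hand analysis nor the computer enumeration you fall back on can close.

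What is missing is genuinely geometric input: the Miyaoka--Reid inequality \cite[(2.7)]{EXP2}, which the Remark after \cref{thmDelta} mentions.
\begin{itemize}
\item Miyaoka gives $K_X^3\leq 3K_X\cdot c_2(X)$.
\item Reid's formula gives $\chi(\OX)=-\frac{1}{24}K_X\cdot c_2(X)+\sum_Q\frac{r_Q^2-1}{24r_Q}$.
\item Together, for $\chi(\OX)=1$, they yield $\sum_Q\bigl(r_Q-\frac{1}{r_Q}\bigr)\geq 24\chi(\OX)+\frac{1}{3}K_X^3>24$.
\end{itemize}
Both baskets above fail this: their values of $\sum(r-\frac{1}{r})$ are $\frac{9971}{420}$ and $\frac{689}{30}$, both less than $24$.

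A prime packing raises $\sum(r-\frac1r)$ by $\frac{1}{r_1}+\frac{1}{r_2}-\frac{1}{r_1+r_2}$ while lowering $K^3$. It is therefore the combination of this inequality with $K^3>0$ that makes the enumeration finite and empty, and you must impose it at every stage.

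Two minor slips: $P_{10}\leq 1$ with $P_2=0$ only gives $P_4\leq P_6\leq P_8\leq 1$, not $P_4=0$. Also, your first two steps yield $P_7,P_8>0$ but not $P_6>0$.
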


\section{A generic finiteness criterion for surfaces}\label{secSurf}

The following generic finiteness criterion for surfaces helps us overcome the difficulty of the failure of generic finiteness of $\Phi_{|2K|}$ of $(1,0)$-surfaces.

\begin{theorem}\label{thmMain1}
    Let $S$ be a smooth projective surface of general type. Let $f$ be the morphism from $S$ to its minimal model $S_0$. Let $B$ be an effective divisor on $S$ such that $f^*K_{S_0}+B$ is effective. Then
    \[
        |K_S+f^*K_{S_0}+B|
    \]
    defines a generically finite map.
\end{theorem}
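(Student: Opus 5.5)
We reduce to the minimal model and split into cases according to whether $S_0$ is a $(1,0)$-surface. Write $K_S=f^*K_{S_0}+E$ with $E\geq 0$ $f$-exceptional. Then
\[
K_S+f^*K_{S_0}+B = f^*(2K_{S_0}) + E + B .
\]
Since $E+B$ is effective, we have $|K_S+f^*K_{S_0}+B|\lsgeq |f^*(2K_{S_0})| = f^*|2K_{S_0}|$ (as linear systems, after adding the fixed divisor $E+B$). If the subsystem $f^*|2K_{S_0}|+E+B$ defines a generically finite map, then so does the whole system, because a subsystem plus a fixed part already separates enough points. Hence, by \cref{thmXG1}, we are done whenever $S_0$ is not a $(1,0)$-surface.

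The remaining case is that $S_0$ is a $(1,0)$-surface, i.e.\ $K_{S_0}^2=1$ and $p_g=0$. Here the hypothesis that $f^*K_{S_0}+B$ is effective must be used. Since $p_g=0$, we have $B\neq 0$. Pick $D\in|f^*K_{S_0}+B|$ and split into two subcases.

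\emph{Subcase (a): $f_*B\neq 0$.} Then $B$ contains a non-exceptional component, and $f_*D\sim K_{S_0}+f_*B$ is a nonzero effective divisor with $f_*B$ effective and nonzero. The strategy is to show $h^0(S_0,2K_{S_0}+f_*B)$ is large enough and that $|2K_{S_0}+f_*B|$ is generically finite. Write $L=K_{S_0}+f_*B$, which is effective. For the system $|K_{S_0}+K_{S_0}+f_*B|=|K_{S_0}+L|$ with $L$ big and nef-ish, we aim to apply a Reider-type argument (\cref{thmMasek}) to $L' = K_{S_0}+ f_*B$, or more simply to use $|3K_{S_0}|\lsleq$ something. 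The cleanest route would be: if $f_*B\geq$ an effective divisor numerically $\geq K_{S_0}$, then $|2K_{S_0}+f_*B|\lsgeq|3K_{S_0}|$, which is generically finite by \cref{thmSurf3K}. In general we cannot assume this, so instead we use Riemann--Roch together with vanishing ($q=0$, and $h^1$ of torsion twists vanishes by \cref{thmXG2}) to show that the restriction of $|2K_{S_0}+f_*B|$ to a general member of the pencil $|2K_{S_0}|$ (a genus $4$ curve $C$ with $K_{S_0}\cdot C=2$) has degree $\geq 4+ (f_*B\cdot C)$. This restriction is a complete system of degree $\geq 2g(C)-2+1$ if $(f_*B\cdot C)>0$, hence it is base point free or nearly so, and in particular non-constant. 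Tankeev's principle (\cref{coroTankeev1}) then lifts the conclusion to $S_0$, and the pullback to $S$ gives the result.

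\emph{Subcase (b): $f_*B=0$.} Then $f^*K_{S_0}+B$ effective forces $K_{S_0}$ to be effective on $S_0$ after pushing forward, which contradicts $p_g=0$. Hence this subcase cannot occur. Indeed $h^0(f^*K_{S_0}+B)=h^0(K_{S_0})$ for $B$ exceptional, by the projection formula.

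The main obstacle is subcase (a): controlling $h^1$ so that the restriction map to the genus-$4$ curve is surjective, and ensuring $(f_*B\cdot C)>0$ (true since $K_{S_0}$ is ample on the canonical model, but curves contracted there need care; we would work on the canonical model or note that $C$ moves in a pencil covering $S_0$). The vanishing $h^1(S_0,K_{S_0}+f_*B)=0$ is what we will try to get from Kawamata--Viehweg applied to $f_*B$ written as $K_{S_0}$ plus an effective divisor, modulo torsion, where \cref{thmXG2} handles the torsion part.
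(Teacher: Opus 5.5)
Your reduction to the case that $S_0$ is a $(1,0)$-surface is fine, and so is your exclusion of $f_*B=0$. The paper makes the same reduction via \cref{thmXG1} and ends with the same contradiction with $p_g=0$. The gap is subcase (a), which is the whole content of the theorem.

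First, you cannot pass to $S_0$. In general $|K_S+f^*K_{S_0}+B|$ is only a \emph{subsystem} of $f^*|2K_{S_0}+f_*B|$ plus a fixed part, because $f^*f_*B-B$ need not be $\leq K_S-f^*K_{S_0}$. For example, let $f$ blow up a point $p$ with exceptional curve $E_1$, and let $B$ be the strict transform of a curve $\Gamma$ with multiplicity $k\geq 2$ at $p$. Then $K_S+f^*K_{S_0}+B=f^*(2K_{S_0}+\Gamma)-(k-1)E_1$. So the system on $S$ consists of the members of $|2K_{S_0}+\Gamma|$ with multiplicity $\geq k-1$ at $p$, and generic finiteness of $|2K_{S_0}+\Gamma|$ says nothing about it. In this situation the hypothesis only asks for a member of $|K_{S_0}+\Gamma|$ of multiplicity $\geq k$ at $p$, and nothing in your argument rules that out. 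This is why the paper works on $S$ throughout. There it needs $(D\cdot E)\geq 0$ for every $f$-exceptional curve $E$, where $D=f^*K_{S_0}+B$, and gets this by first removing the common components of $B$ and $D$.

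Second, even on $S_0$ the restriction step is unsupported. For $C\in|2K_{S_0}|$, the kernel of restriction to $C$ is $H^0(S_0,f_*B)$, since $2K_{S_0}+f_*B-C\sim f_*B$. So you need $h^0(2K_{S_0}+f_*B)-h^0(f_*B)\geq 2$, but Riemann--Roch only bounds this below by $1+2(K_{S_0}\cdot f_*B)-h^1(f_*B)$. The vanishing you propose is for the wrong group. Kawamata--Viehweg is also unavailable: the hypothesis makes $K_{S_0}+f_*B$ effective, not $f_*B-K_{S_0}$, and an effective divisor need not be nef and big. In addition, $(f_*B\cdot C)=2(K_{S_0}\cdot f_*B)$ is $0$ if $f_*B$ is supported on $(-2)$-curves.

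The missing idea is the paper's numerical argument on $S$:
\begin{itemize}
\item With $\chi(\mathcal{O}_S)=1$, $D^2\geq (f^*K_{S_0}\cdot D)\geq 1$ and $(D\cdot(K_S-f^*K_{S_0}))\geq 0$, Riemann--Roch gives $h^0(K_S+D)\geq (f^*K_{S_0}\cdot D)+1\geq 2$.
\item Suppose the map were composed with a (necessarily rational) pencil with generic element $F$. Then $M\sim aF$ with $a\geq h^0(K_S+D)-1$.
\item One shows $(K_{S_0}\cdot f_*F)\geq 2$ by Hodge index and parity. The case $K_{S_0}\equiv f_*F$ is killed by \cref{thmXG2}.
\item Together these give $1=(f^*K_{S_0}\cdot K_S)\geq D^2\geq 1$. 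Hodge index then forces $D\equiv f^*K_{S_0}$, hence $B=0$, contradicting $p_g=0$.
\end{itemize}
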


\begin{proof}
    By \cref{thmXG1}, we only need to consider the case that $S_0$ is a $(1,0)$-surface. Thus $K_{S_0}^2=1$ and $q(S)=p_g(S)=0$.

    Let $D=f^*K_{S_0}+B$. If $B,D$ have a common irreducible component $C$, then we can replace $B,D$ with $B-C,D-C$. Thus we may assume that $B,D$ have no common irreducible components. Then $(B\cdot D)\geq 0$. Thus
    \begin{align}\label{eqD2GeqFD}
        D^2= ((f^*K_{S_0}+B)\cdot D) \geq (f^*K_{S_0}\cdot D).
    \end{align}

    Since $f^*K_{S_0}$ is nef on $S$, we have
    \begin{align}\label{eqFDGeq1}
        (f^*K_{S_0}\cdot D)=(f^*K_{S_0}\cdot(f^*K_{S_0}+B))\geq (f^*K_{S_0})^2=1.
    \end{align}

    For any $f$-exceptional curve $E$, we have
    \[
        (D\cdot E)=((f^*K_{S_0}+B)\cdot E)=(B\cdot E).
    \]
    Since $B,D$ have no common irreducible components, we have that either $(B\cdot E)\geq 0$ or $(D\cdot E)\geq 0$. Thus we always have $(D\cdot E)\geq 0$.

    Write $K_S=f^*K_{S_0}+E_S$. Then
    \begin{align}\label{eqDESGeq0}
        (D\cdot E_S)\geq 0.
    \end{align}

    By Riemann-Roch formula and Serre duality theorem,
    \begin{align*}
        h^0(K_S+D)-h^1(-D)+h^0(-D)&=\frac{1}{2}((K_S+D)\cdot D)+\chi(\OO_S)\\
        &=\frac{1}{2}((K_S+D)\cdot D)+1.
    \end{align*}

    Since $D\geq 0$ and $D\neq 0$, we have $h^0(-D)=0$. Thus
    \begin{align*}
        h^0(K_S+D)\geq \frac{1}{2}((K_S+D)\cdot D)+1.
    \end{align*}

    With \cref{eqDESGeq0},
    \begin{align}\label{eqH0KSDGeq}
        h^0(K_S+D)\geq \frac{1}{2}((f^*K_{S_0}+D)\cdot D)+1
    \end{align}

    With \cref{eqD2GeqFD} and \cref{eqFDGeq1},
    \begin{align*}
        h^0(K_S+D)\geq (f^*K_{S_0}\cdot D)+1 \geq 2.
    \end{align*}

    Take a resolution $g:T\to S$ such that $\Mov|g^*(K_S+D)|$ is base point free. Let $M\in\Mov |g^*(K_S+D)|$.

    Assume that $|K_S+D|$ does not define a generically finite map. Then $|M|$ is composed with a pencil. Since $q(S)=0$, the pencil must be a rational pencil. Let $F$ be a generic irreducible element of $|M|$. Then $M\sim aF$ where
    \begin{align}\label{eqAGeqH0KSD1}
        a\geq h^0(K_S+D)-1.
    \end{align}

    Let $h=f\circ g$, $F_0=h_*F$.

    If $(K_{S_0}\cdot F_0)=0$, then by Hodge index theorem, we have $F_0^2<0$ which is impossible since $F_0$ moves. Thus $(K_{S_0}\cdot F_0)\geq 1$.

    If $(K_{S_0}\cdot F_0)=1$, then since $(K_{S_0}\cdot F_0)+F_0^2$ is even and $F_0^2\geq 0$, we have that $F_0^2\geq 1$. By Hodge index theorem,
    \[
        1=(K_{S_0}\cdot F_0)^2\geq K_{S_0}^2 F_0^2=F_0^2\geq 1.
    \]
    Thus $K_{S_0}\equiv F_0$. Since $q(S_0)=0$, we have that $\OO_{S_0}(K_{S_0}-F_0)$ is a torsion line bundle. By \cref{thmXG2}, we have $h^1(K_{S_0}-F_0)=0$. By Riemann-Roch formula and Serre duality theorem,
    \begin{align*}
        h^0(F_0)-h^1(K_{S_0}-F_0)+h^0(K_{S_0}-F_0)&=\frac{1}{2}(F_0\cdot(F_0-K_{S_0}))+1=1.
    \end{align*}
    Thus $h^0(F_0)\leq h^0(F_0)+h^0(K_{S_0}-F_0)= 1$ which is impossible since $F_0$ moves.

    Thus $(K_{S_0}\cdot F_0)\geq 2$. Thus
    \begin{align*}
        (f^*K_{S_0}\cdot (K_S+D))&=(h^*K_{S_0}\cdot g^*(K_S+D))\\
        &\geq (h^*K_{S_0}\cdot M)=a (h^*K_{S_0}\cdot F)=a (K_{S_0}\cdot F_0)\geq 2a.
    \end{align*}

    With \cref{eqAGeqH0KSD1} and \cref{eqH0KSDGeq},
    \begin{align*}
        (f^*K_{S_0}\cdot (K_S+D))\geq 2a\geq 2h^0(K_S+D)-2
        \geq ((f^*K_{S_0}+D)\cdot D).
    \end{align*}

    Thus $1=(f^*K_{S_0}\cdot K_S)\geq D^2$. By \cref{eqD2GeqFD} and \cref{eqFDGeq1}, $D^2\geq (f^*K_{S_0}\cdot D) \geq 1$. Thus $D^2=(f^*K_{S_0}\cdot D)=1$.

    By Hodge index theorem,
    \[
        1=(f^*K_{S_0}\cdot D)^2\geq (f^*K_{S_0})^2 D^2=1.
    \]
    Thus $f^*K_{S_0}\equiv D$. Since $f^*K_{S_0}\leq D$, we have that $f^*K_{S_0}=D$ which is impossible since $p_g(S)=0$ and $D\geq 0$.

    Thus $|K_S+f^*K_{S_0}+B|=|K_S+D|$ defines a generically finite map.
\end{proof}

\section{An effective comparison inequality}\label{secCompare}

With the help of \cref{propCompareBirG}, we can establish an effective comparison inequality with the stronger relation $\geq$ rather than $\geq_\bQ$.

\begin{proposition}\label{compareIneq}
    Let $X$ be a minimal 3-fold of general type with $P_m(X)\geq 2$. Let $\Lambda$ be a 1-dimensional subsystem of $|mK_X|$. Let $f:W\to X$ be a Chen resolution with respect to $\Lambda$. Let $F$ be a generic irreducible element of $f^*\Lambda$. Let $\sigma$ be the morphism from $F$ to its minimal model $F_0$. Let $D$ be any divisor linearly equivalent to $(m+1)K_X$. Then there exists a divisor $D_0$ linearly equivalent to $K_{F_0}$ such that
    \[
        f^*(D)|_F - \sigma^*D_0
    \]
    is an effective $\bQ$-divisor. We write $f^*(D)|_F \geq \sigma^*D_0$ for brevity.

    Note that we distinguish $\geq$ from $\geq_\bQ$ in this paper.
\end{proposition}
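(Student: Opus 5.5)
We factor the Chen resolution as $f=g\circ h$ with $g:X'\to X$ as in \cref{propCompareBirG} and $h:W\to X'$ a log resolution, and let $F'=h_*F$, a general fiber of $X'\to B$. The plan is to prove the inequality on $X'$ first, using property (2) of \cref{propCompareBirG}, and then pull it back to $F$.

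The starting point is the decomposition of $D$. Since $\Lambda\subseteq|mK_X|$ is a pencil, we pick $M_0\in\Lambda$ containing $g_*F'$. Then $D-M_0\sim K_X$. The divisor $g^*M_0$ contains $F'$ with multiplicity at least $1$, and its remaining part is effective. Hence we can write
\[
g^*D = g^*(K_X+g_*F') + E_0,
\]
where $E_0\sim_{\bQ} g^*M_0-g^*g_*F'$. To use the actual divisor $D$ rather than a $\bQ$-class, I would write $D=K'+M_0$ with $K'$ any divisor linearly equivalent to $K_X$. Then
\[
g^*D = g^*(K'+g_*F') + \bigl(g^*M_0 - g^*g_*F'\bigr),
\]
and $g^*M_0-g^*g_*F'\ge 0$ because $M_0-g_*F'\ge 0$ is effective on the $\bQ$-factorial variety $X$. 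By property (2),
\[
g^*(K'+g_*F') = K'_{X'}+F'+E,
\]
where $E\ge0$ is $g$-exceptional and $K'_{X'}$ is the canonical divisor chosen compatibly, namely $g_*K'_{X'}=K'$. This is an honest identity of $\bQ$-divisors once the representative $K'_{X'}$ is fixed with discrepancy divisor $E+g^*g_*F'-F'$.

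Next we restrict to $F'$. Since $F'$ is a general fiber of a morphism, $F'|_{F'}\sim 0$, and adjunction gives $(K'_{X'}+F')|_{F'}=K_{F'}$, realized as an actual divisor. We must choose $F'$ general enough, which is fine since we may move within the pencil. Because $F'$ is general, $E|_{F'}$ and $(g^*M_0-g^*g_*F')|_{F'}$ are effective. Here we use that $F'$ is not a component of these divisors, which needs care for $g^*M_0$, whose fiber components other than $F'$ are disjoint from $F'$. We therefore get $g^*D|_{F'}\ge K_{F'}$ as actual divisors, with $K_{F'}$ some canonical divisor of the surface $F'$.

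It remains to pass to the minimal model. First, $F'$ has at worst terminal, hence isolated, singularities: since $X'$ is terminal and $F'$ is a general Cartier fiber, $F'$ is a normal surface with canonical singularities. Also $K_{X'}+F'$ is $g$-nef, and $g|_{F'}$ is birational onto its image with $K_{F'}$ relatively nef. So $F'$ maps birationally to the minimal model $F_0$, and $K_{F'}=\tau^*K_{F_0}+(\text{effective})$ with $\tau:F'\dashrightarrow F_0$ a morphism. The key point is that $F'$ is canonical with $K_{F'}$ nef over $X$; the main obstacle is justifying that $\tau$ is a morphism and that $K_{F'}-\tau^*K_{F_0}\ge0$ holds as divisors. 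For this I would use the fact that, for canonical surfaces, $K_{F'}$ equals its pullback from the canonical model plus an effective divisor, so that pushing $K_{F'}$ forward gives an actual divisor $D_0\sim K_{F_0}$ with $K_{F'}\ge\tau^*D_0$. Pulling back via $h|_F$ (a resolution of $F'$ with $h|_F^*K_{F'}\le K_F$ irrelevant here) gives
\[
f^*D|_F=h|_F^*(g^*D|_{F'})\ge \sigma^*D_0,
\]
since $\sigma$ factors as $\tau\circ h|_F$ up to the identification of minimal models.
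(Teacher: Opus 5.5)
Your argument is correct in substance, but it handles the restriction differently from the paper. The paper uses a \emph{second} generic fiber $F'\neq F$. It keeps a fixed canonical divisor $K_{X'}$ compatible with $K_X$, takes $L\in\Lambda$ with $L\geq F'_X$, and restricts $g^*(K_X+L)\geq K_{X'}+F'_{X'}$ to $F_{X'}=h_*F$. Disjointness of fibers gives $F'_{X'}|_{F_{X'}}=0$, and generality of $F$ gives $K_{X'}|_{F_{X'}}=K_{F_{X'}}$ as actual divisors. The principal divisor $D-K_X-L$ is added only after restricting, which produces $K'\sim K_{F_{X'}}$ with $g^*D|_{F_{X'}}\geq K'$.

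You instead put the principal divisor in upstairs, by choosing the representative $K'=D-M_0$, and then restrict to the \emph{same} fiber. That works, but ``$F'|_{F'}\sim 0$'' only gives a class, so it is not why $(K'_{X'}+F')|_{F'}$ is an actual divisor. The real reason is that $K'_{X'}$ has coefficient exactly $-1$ along $F'$: a general $M_0$ contains $g_*F'$ with multiplicity one, and $D$ does not contain it. Hence $K'_{X'}+F'=g^*D-E-g^*(M_0-g_*F')$ has no $F'$-component, and it restricts to a divisor in the class $(K_{X'}+F')|_{F'}\sim K_{F'}$ (a Poincar\'e residue). You should say this explicitly.

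The trade-off is this: the paper's two-fiber device needs no multiplicity or residue considerations, while your version avoids a second fiber and the final linear-equivalence adjustment on the surface.

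Your last paragraph should be replaced, because the ``main obstacle'' does not exist. Since $X'$ is terminal, its singular locus is finite. A general fiber of $X'\to B$ therefore lies in the smooth locus, and it is smooth by generic smoothness. Then $\tau\colon F'\to F_0$ is just the contraction to the minimal model. For any divisor $K\sim K_{F'}$, the difference $K-\tau^*\tau_*K$ is the effective relative canonical divisor $K_{F'/F_0}$, so $D_0=\tau_*\bigl((K'_{X'}+F')|_{F'}\bigr)$ works.

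The appeal to $g$-nefness of $K_{X'}+F'$ is irrelevant. The ``canonical surface'' argument as written would not show that $\tau$ is a morphism: a surface with rational double points need not map to the minimal model of its resolution. In that generality you would have to go through the canonical model instead.
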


\begin{proof}
    Since $\Lambda$ is 1-dimensional, we have that $\Lambda$ is composed with a pencil. Keep the notation in \cref{propCompareBirG}. By definition of Chen resolution, $f=g\circ h$ for some birational morphism $h$.

    Let $F'$ be a generic irreducible element of $f^*\Lambda$ different from $F$. Let $F'_{X'}=h_*F'$, $F_{X'}=h_*F$, $F'_X=f_*F'$. Let $\sigma'$ be the morphism from $F_{X'}$ to $F_0$. Then $\sigma=\sigma'\circ h|_{F}$.

    By \cref{propCompareBirG}, we have
    \begin{align*}
        g^*(K_X+F'_X)\geq K_{X'}+F'_{X'}
    \end{align*}
    and
    \[
        F'_{X'}|_{F_{X'}}=0.
    \]

    Since $F'$ is a generic irreducible element of $f^*\Lambda$, there exists $L\in\Lambda$ such that $L\geq F'_X$. Thus
    \[
        g^*(K_X+L)\geq K_{X'}+F'_{X'}.
    \]
    Thus
    \[
        g^*(K_X+L)|_{F_{X'}}\geq (K_{X'}+F'_{X'})|_{F_{X'}}=K_{X'}|_{F_{X'}}=K_{F_{X'}}.
    \]

    Let $K'=K_{F_{X'}}+g^*(D-K_X-L)|_{F_{X'}}$. Then
    \[
        g^*(D)|_{F_{X'}}\geq K'.
    \]
    
    Since $D\sim (m+1)K_X$ and $L\sim mK_X$, we have $K'\sim K_{F_{X'}}$.

    Let $D_0=\sigma'_*K'$. Then $D_0\sim K_{F_0}$ and
    \[
        K'-\sigma'^*D_0=K_{F_{X'}}-\sigma'^*K_{F_0}\geq 0.
    \]
    Thus
    \[
        g^*(D)|_{F_{X'}}\geq K'\geq \sigma'^*D_0.
    \]
    Thus
    \[
        f^*(D)|_F = (h|_F)^*(g^*(D)|_{F_{X'}}) \geq (h|_F)^*(\sigma'^*D_0) = \sigma^*D_0.
    \]
\end{proof}

\section{Proof of \texorpdfstring{\cref{thm6K}}{Theorem 1.1}}\label{secThm6K}

\begin{proposition}\label{propH2OX}
    Let $f:X\to C$ be a contraction morphism from a smooth projective 3-fold to a curve. Let $S$ be a general fiber of $f$. If $q(S)=0$, then $h^2(\OX)\leq p_g(S)$.
\end{proposition}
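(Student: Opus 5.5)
The plan is to pass to $h^1(\omega_X)$ by Serre duality and compute it through the Leray spectral sequence, using Kollár's results on higher direct images of dualizing sheaves. Serre duality on the smooth projective $3$-fold $X$ gives
\[
h^2(\OX)=h^1(X,\omega_X).
\]
By Kollár's theorems, each $R^if_*\omega_X$ is torsion free and the Leray spectral sequence for $\omega_X$ degenerates, since $\omega_X$ satisfies Kollár's decomposition $Rf_*\omega_X\simeq\bigoplus_i R^if_*\omega_X[-i]$. On the curve $C$ this gives
\[
h^1(X,\omega_X)=h^1(C,f_*\omega_X)+h^0(C,R^1f_*\omega_X).
\]

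\textbf{Step 1: $R^1f_*\omega_X=0$.} The generic rank of $R^1f_*\omega_X$ equals $h^1(S,K_S)$, which is $q(S)=0$ by Serre duality. Since $R^1f_*\omega_X$ is torsion free, it vanishes, so the second summand is zero.

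\textbf{Step 2: bound $h^1(C,f_*\omega_X)$.} Write
\[
f_*\omega_X=f_*\omega_{X/C}\otimes\omega_C=E\otimes\omega_C,
\]
where $E$ is a vector bundle of rank $p_g(S)$. If $p_g(S)=0$, then $E=0$ and the claim is immediate. Otherwise, Serre duality on $C$ gives $h^1(C,E\otimes\omega_C)=h^0(C,E^\vee)$, so it suffices to show $h^0(C,E^\vee)\leq\rank E$.

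\textbf{Step 3: use nefness of $E$.} By Fujita's theorem, $E$ is nef. I expect this to be the main obstacle, namely turning nefness into the bound $h^0(E^\vee)\leq\rank E$. My approach is as follows. Take linearly independent sections $s_1,\dots,s_k\in H^0(E^\vee)$. They give a map $\varphi\colon E\to\OO_C^{\oplus k}$. The image $Q$ of $\varphi$ is a quotient of a nef bundle, so it is nef, and hence $\deg Q\geq0$. On the other hand, $Q$ is a subsheaf of $\OO_C^{\oplus k}$, whose subsheaves have degree $\leq 0$, with equality only for trivial subbundles. Therefore $Q\cong\OO_C^{\oplus r}$ for some $r$, and $Q$ is saturated.

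The $s_i$ factor through $Q^\vee\cong\OO_C^{\oplus r}$, and they are linearly independent. This forces $k\leq h^0(\OO_C^{\oplus r})=r\leq\rank E=p_g(S)$. Alternatively, one could invoke Fujita's decomposition $E=A\oplus U$ with $A$ ample and $U$ unitary flat; then $h^0(E^\vee)=h^0(U^\vee)\leq\rank U$.

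Combining Steps 1 to 3 gives $h^2(\OX)\leq p_g(S)$. The only delicate inputs are the cited torsion-freeness, decomposition and Fujita positivity results; the rest is bookkeeping.
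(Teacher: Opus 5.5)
Your proof is correct and takes the same route as the paper: Serre duality plus Leray, Kollár's torsion-freeness to get $R^1f_*\omega_X=0$, and Fujita's nefness of $f_*\omega_{X/C}$ with Serre duality on $C$ to obtain $h^1(f_*\omega_X)=h^0(E^\vee)\leq \rank E=p_g(S)$. Your Step 3 supplies a valid proof of $h^0(E^\vee)\leq\rank E$ for nef $E$, a step the paper asserts without justification. The appeal to Kollár's decomposition is unnecessary, since the Leray spectral sequence over a curve degenerates for dimension reasons.
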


\begin{proof}
    Since $f$ is a contraction morphism, we have that $C$ is normal. Thus $C$ is smooth.

    Let $B=f_*\omega_{X/C}$. By Serre duality theorem,
    \begin{align*}
        h^1(f_*\omega_X)=h^1(B\otimes \omega_C)=h^0(B^\vee).
    \end{align*}
    By Fujita's semipositivity theorem (\cite{Fujita1978}), $B$ is nef on $C$. Thus
    \[
        h^0(B^\vee)\leq \rank B=p_g(S).
    \]

    Since $q(S)=0$, we have $\rank R^1f_*\omega_X=0$. By Koll\'ar's torsion-freeness theorem (\cite{Kollar1986}), we have $R^1f_*\omega_X=0$.

    By Serre duality theorem and Leray spectral sequence, we have
    \begin{align*}
        h^2(\OX)=h^1(\omega_X)&=h^1(f_*\omega_X)+h^0(R^1f_*\omega_X)\\
        &=h^1(f_*\omega_X)\leq p_g(S).
    \end{align*}
\end{proof}

\begin{proposition}\label{propM2}
    Let $X$ be a minimal 3-fold of general type with $\chi(\OX)\neq 1$ or $p_g(X)\neq 0$. If $P_m(X)\geq 2$ and $P_{m+2}(X)>0$, then $|(2m+2)K_X|$ defines a generically finite map.
\end{proposition}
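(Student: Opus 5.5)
Take a 1-dimensional subsystem $\Lambda\subseteq|mK_X|$, which exists since $P_m(X)\geq 2$. If $|mK_X|$ is not composed with a pencil, we argue the standard way: a generic irreducible element $S$ of $|M|$ on a Chen resolution $f:W\to X$ is a smooth surface of general type, and $|(2m+2)K_X||_S\lsgeq |K_S+\lceil\text{something nef}\rceil|$, which is generically finite by \cref{thmSurf3K} or \cref{thmMain1}. Then \cref{thmTankeev} applies, since $S\leq f^*D$ for some $D\in|(2m+2)K_X|$; this holds because $P_{m+2}>0$ gives $|(2m+2)K_X|\lsgeq|mK_X|+|(m+2)K_X|$. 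So the main case is the pencil case, where we work with $\Lambda$ directly.

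Let $f:W\to X$ be a Chen resolution with respect to $\Lambda$, and let $F$ be a generic irreducible element with $\sigma:F\to F_0$. Writing $f^*(mK_X)\geq M\geq F$, we obtain
\[
|K_W+\lceil f^*((m+1)K_X)\rceil + F|\ \lsleq\ |(2m+2)K_X|
\]
(pulled back, using $K_W\leq f^*K_X+E$ with $E$ exceptional). Restricting to $F$, Kawamata--Viehweg vanishing applied to $K_W+\lceil f^*((m+1)K_X)-\epsilon E'\rceil$ gives surjectivity onto $|K_F+\lceil f^*((m+1)K_X)|_F\rceil|$. By \cref{compareIneq}, $f^*(D)|_F\geq\sigma^*D_0$ for some $D\in|(m+1)K_X|$ and $D_0\sim K_{F_0}$ effective when $p_g(F)>0$; in general we only know $\sigma^*K_{F_0}+B$ is effective, where $B$ is the fractional remainder. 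We want the restricted system to contain $|K_F+\sigma^*K_{F_0}+B|$ with $\sigma^*K_{F_0}+B$ effective. For this, use $P_{m+2}>0$: pick $G\in|(m+2)K_X|$ and combine it with $F\leq f^*(mK_X)$, so that $f^*((m+1)K_X)|_F$ is dominated by an effective divisor restricted from $|(2m+2)K_X|-F-K_W$. Then \cref{thmMain1} gives generic finiteness on $F$.

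To lift generic finiteness from $F$ to $W$, use \cref{coroTankeev2} when the pencil is rational. This requires $h^1$ of the relevant divisor to vanish, which comes from Kawamata--Viehweg vanishing as above. If the pencil is irrational, then $q(X)>0$ and \cref{thmIrr5} already gives birationality of $|(2m+2)K_X|$ since $2m+2\geq 6$.

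The expected main obstacle is the hypothesis "$\chi(\OX)\neq1$ or $p_g\neq0$". I expect it is used exactly where $F$ is a $(1,0)$-surface and $\sigma^*K_{F_0}+B$ must be effective. If $F$ is a $(1,0)$-surface then $q(F)=0$ and $p_g(F)=0$, so \cref{propH2OX} gives $h^2(\OO_W)=0$. Then $\chi(\OX)=1-q+h^2-p_g=1-q-p_g$, so $\chi\neq1$ or $p_g\neq 0$ forces $q(X)>0$, and the irregular case is settled by \cref{thmIrr5}. Hence when $F$ is $(1,0)$ we may assume $X$ is irregular, and otherwise \cref{thmXG1} handles $|2K_F|$. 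The delicate point is showing the restricted system really dominates $|K_F+\sigma^*K_{F_0}|$ when $F$ is not $(1,0)$; this should follow from \cref{compareIneq} together with vanishing.
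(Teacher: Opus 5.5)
\textbf{Genuine gap: the hypothesis is used incorrectly.} Your last paragraph has the same skeleton as the paper's proof. You restrict to $F$ by Kawamata--Viehweg, dominate $|K_F+\sigma^*K_{F_0}|\lsgeq\sigma^*|2K_{F_0}|$ via \cref{compareIneq}, apply \cref{thmXG1}, and eliminate the $(1,0)$ case by irregularity. But the one step where the hypothesis enters is not valid as written. From $h^2(\OO_W)=0$ you get $\chi(\OX)=1-q(X)-p_g(X)$. The disjunction ``$\chi(\OX)\neq 1$ or $p_g(X)\neq 0$'' does not force $q(X)>0$: for instance $q(X)=0$, $p_g(X)=1$ satisfies it.

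The missing idea is that $p_g(F)=0$ forces $p_g(X)=0$. The paper gets this from the sequence $0\to\OO_W(K_W-F)\to\OO_W(K_W)\to\OO_F(K_F)\to 0$, using $F|_F\sim 0$. Since $p_g(F)=0$, the map $H^0(K_W-F)\to H^0(K_W)$ is surjective, and iterating gives $H^0(K_W)\cong H^0(K_W-kF)=0$ for $k\gg 0$. Only then does the hypothesis reduce to $\chi(\OX)\neq 1$, giving $q(X)=1-\chi(\OX)\neq 0$. Moreover, $p_g(X)=0$ together with $P_m(X)\geq 2$ gives $m\geq 2$. You need this, because \cref{thmIrr5} requires $2m+2\geq 5$.

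\textbf{Other points to repair.}

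First, \cref{coroTankeev2} does not fit your system, which contains $F$ only once. Writing it as $|B+2F|$ would need $h^1(K_W+\lceil (m+1)f^*K_X\rceil-F)=0$. Kawamata--Viehweg does not give this, because $(m+1)f^*K_X-F$ need not be nef. The correct lift is \cref{coroTankeev1}, exactly as in your first paragraph: $P_{m+2}(X)>0$ and $F\leq f^*L$ for some $L\in\Lambda$ give $h^0((2m+2)K_W-F)>0$.
\begin{itemize}
\item This works for rational and irrational pencils alike. So your irrational-pencil detour is unnecessary, and it is in fact wrong for $m=1$, where $2m+2=4$.
\item Your separate non-pencil case is also unnecessary, since a $1$-dimensional $\Lambda$ is always composed with a pencil.
\end{itemize}

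Second, using $G\in|(m+2)K_X|$ to make $\sigma^*K_{F_0}+B$ effective and invoke \cref{thmMain1} does not produce an effective divisor in the right class. That would need $P_{m+1}(X)>0$, which is the situation of \cref{propMN221}, and it is not needed here.

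Third, the ``delicate point'' you flag is immediate. \cref{compareIneq} holds for every $D\sim (m+1)K_X$, effective or not, so $|K_F+\lceil (m+1)f^*(K_X)|_F\rceil|\lsgeq |K_F+\sigma^*K_{F_0}|$ for every $F$.
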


\begin{proof}
    Let $f:W\to X$ be a Chen resolution with respect to a 1-dimensional subsystem $\Lambda\subseteq|mK_X|$. Let $S$ be a generic irreducible element of $f^*\Lambda$. Let $\sigma$ be the morphism from $S$ to its minimal model $S_0$.

    By \cref{coroTankeev1}, since $P_{m+2}(X)>0$, we only need to prove that $|(2m+2)K_W||_S$ defines a generically finite map.
    
    We have
    \[
        |(2m+2)K_W|\lsgeq |K_W+\lceil (m+1)f^*K_X\rceil+S|.
    \]

    By Kawamata–Viehweg vanishing theorem, since $(m+1)f^*K_X$ is nef and big and
    \[
        \Supp(\lceil (m+1)f^*K_X\rceil-(m+1)f^*K_X)
    \]
    is contained in $\Exc(f)$ which is a simple normal crossing divisor, we have
    \[
        h^1(K_W+\lceil (m+1)f^*K_X\rceil)=0.
    \]

    Thus
    \begin{align*}
        |K_W+\lceil (m+1)f^*K_X\rceil+S||_S&=|(K_W+\lceil (m+1)f^*K_X\rceil+S)|_S|\\
        &=|K_S+\lceil (m+1)f^*K_X\rceil|_S|\\
        &\lsgeq |K_S+\lceil (m+1)f^*(K_X)|_S\rceil|.
    \end{align*}

    Thus we only need to prove that $|K_S+\lceil (m+1)f^*(K_X)|_S\rceil|$ defines a generically finite map.

    By \cref{compareIneq},
    \[
        |K_S+\lceil (m+1)f^*(K_X)|_S\rceil|\lsgeq |K_S+\sigma^*(K_{S_0})|\lsgeq \sigma^*|2K_{S_0}|.
    \]
    
    Thus we only need to prove that $|2K_{S_0}|$ defines a generically finite map.

    By \cref{thmXG1}, we only need to consider the case that $S_0$ is a $(1,0)$-surface. Thus $q(S)=p_g(S)=0$. By \cref{propH2OX}, $h^2(\OO_W)=0$.

    Since $p_g(S)=0$, we have that the natural homomorphism
    \[
        H^0(K_W-S)\to H^0(K_W)
    \]
    is surjective. Thus $p_g(X)=h^0(K_W)=0$.

    Since $\chi(\OX)\neq 1$ or $p_g(X)\neq 0$, we have $\chi(\OX)\neq 1$. Since $h^2(\OX)=h^2(\OO_W)=0$, we have $q(X)=1-\chi(\OX)\neq 0$. Since $P_m(X)\geq 2$ and $p_g(X)=0$, we have $m\geq 2$. Thus $2m+2\geq 5$. By \cref{thmIrr5}, $|(2m+2)K_X|$ defines a birational map.
\end{proof}

\begin{proposition}\label{propMN221}
    Let $X$ be a minimal 3-fold of general type. If $P_m(X)\geq 2$, $P_{m+1}(X)>0$ and $P_{m+2}(X)>0$, then $|(2m+2)K_X|$ defines a generically finite map.
\end{proposition}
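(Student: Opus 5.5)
We follow the scheme of the proof of \cref{propM2}, and use the extra hypothesis $P_{m+1}(X)>0$ to handle the $(1,0)$-surface case through \cref{thmMain1}.

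Let $f:W\to X$ be a Chen resolution with respect to a $1$-dimensional subsystem $\Lambda\subseteq|mK_X|$. Let $S$ be a generic irreducible element of $f^*\Lambda$, and let $\sigma:S\to S_0$ be the contraction to the minimal model. Since $P_{m+2}(X)>0$, we have $h^0((2m+2)K_W-S)\geq h^0(f^*((m+2)K_X))>0$. So by \cref{coroTankeev1} it suffices to show that $|(2m+2)K_W||_S$ defines a generically finite map. As in \cref{propM2}, we have
\[
|(2m+2)K_W|\lsgeq |K_W+\lceil (m+1)f^*K_X\rceil+S|.
\]
Kawamata--Viehweg vanishing gives $h^1(K_W+\lceil (m+1)f^*K_X\rceil)=0$, so the restriction to $S$ is surjective. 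It therefore suffices to show that $|K_S+\lceil (m+1)f^*(K_X)|_S\rceil|$ defines a generically finite map on $S$.

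The key new input is the choice of divisor in \cref{compareIneq}. Since $P_{m+1}(X)>0$, we can take $D\in|(m+1)K_X|$ to be an \emph{effective} divisor. \cref{compareIneq} then gives $D_0\sim K_{S_0}$ with $f^*(D)|_S\geq\sigma^*D_0$. Because $f^*D$ is effective and $S$ is generic, $f^*(D)|_S$ is an effective $\bQ$-divisor, linearly equivalent up to rounding to $(m+1)f^*(K_X)|_S$.

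Write $f^*(D)|_S=\sigma^*D_0+B'$ with $B'\geq0$. Since $f^*D$ is integral, $\lceil (m+1)f^*(K_X)|_S\rceil$ differs from the integral divisor $f^*(D)|_S$ only by a linear-equivalence twist and the effective round-up correction along $\Exc(f)\cap S$. I would be careful here: I would work directly with $|K_S+\lceil(m+1)f^*K_X\rceil|_S|$, which equals $|K_S+(f^*D+E)|_S|$ for an effective $f$-exceptional $E$ supported on $\Exc(f)$, since $\lceil(m+1)f^*K_X\rceil\sim f^*D+E'$ with $E'$ the effective fractional round-up.

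Then
\[
|K_S+\lceil(m+1)f^*(K_X)|_S\rceil|\lsgeq|K_S+\sigma^*K_{S_0}+B|,
\]
where $B$ is effective and $\sigma^*K_{S_0}+B$ is linearly equivalent to the effective divisor $\sigma^*D_0+B+\ldots$. Concretely, $\sigma^*K_{S_0}+B\sim f^*(D)|_S+(\text{effective})\geq0$. Since $\sigma^*D_0\sim\sigma^*K_{S_0}$ and $B'$ is effective, we may replace $\sigma^*D_0$ by $\sigma^*K_{S_0}$ up to linear equivalence and set $B=B'+(\text{round-up part})$. The condition of \cref{thmMain1}, that $\sigma^*K_{S_0}+B$ is effective (up to linear equivalence, which is all that matters for the linear system), holds because it is linearly equivalent to $f^*(D)|_S$ plus an effective divisor.

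\cref{thmMain1} then shows that this system defines a generically finite map. This removes any need for the case analysis on $\chi(\OX)$ and $p_g(X)$ that \cref{propM2} required for $(1,0)$-surfaces.

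The main obstacle is bookkeeping. \cref{thmMain1} is stated with a genuine divisor $B\geq0$ and $f^*K_{S_0}+B$ effective, whereas we have $\bQ$-divisors and linear equivalences. To handle this I would first pass to integral divisors: since $D$ is Cartier only up to index, I would take $f^*D$ as the integral Weil pullback part plus fractional exceptional parts, absorbed by the round-up. Then I would check that the rounding keeps $B$ integral and effective while the sum remains linearly equivalent to an effective divisor. Taking $K_{S_0}$ to be a fixed divisor representing the class, so that $D_0$ itself plays the role of $K_{S_0}$, avoids the issue, because \cref{thmMain1} only depends on the linear equivalence class of $K_{S_0}$.
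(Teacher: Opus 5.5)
Your proposal is correct and is essentially the paper's proof: Chen resolution for a pencil in $|mK_X|$, Tankeev's principle via $P_{m+2}(X)>0$, Kawamata--Viehweg vanishing to reduce to $|K_S+\lceil (m+1)f^*(K_X)|_S\rceil|$, and then an effective $D\in|(m+1)K_X|$ fed into \cref{compareIneq} and \cref{thmMain1}. The bookkeeping you worry about is handled in the paper exactly as your final remark suggests. One takes $D_0$ itself as the canonical divisor and sets $B=\lceil f^*(D)|_S\rceil-\sigma^*D_0$, which is integral and effective and satisfies $\sigma^*D_0+B=\lceil f^*(D)|_S\rceil\geq 0$. (Note that $f^*D$ itself need not be integral, contrary to one of your intermediate remarks.)
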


\begin{proof}
    Let $f:W\to X$ be a Chen resolution with respect to a 1-dimensional subsystem $\Lambda\subseteq|mK_X|$. Let $S$ be a generic irreducible element of $f^*\Lambda$. Let $\sigma$ be the morphism from $S$ to its minimal model $S_0$.

    By \cref{coroTankeev1}, since $P_{m+2}(X)>0$, we only need to prove that $|(2m+2)K_W||_S$ defines a generically finite map.

    We have
    \[
        |(2m+2)K_W|\lsgeq |K_W+\lceil (m+1)f^*K_X \rceil +S|.
    \]

    By Kawamata–Viehweg vanishing theorem, we only need to prove that
    \[
        |K_S+\lceil (m+1)f^*(K_X) |_S\rceil|
    \]
    defines a generically finite map.

    Since $P_{m+1}(X)\geq 1$, there exists a divisor $D\in |(m+1)K_X|$. By \cref{compareIneq}, there exists a divisor $D_0\sim K_{S_0}$ such that
    \[
        f^*(D)|_S \geq \sigma^*D_0.
    \]

    Since $D\geq 0$, we have $f^*(D)|_S\geq 0$. Let $B=\lceil f^*(D)|_S \rceil-\sigma^*D_0$. Then $B\geq 0$ and $\sigma^*D_0+B\geq 0$.

    Thus
    \[
        |K_S+\lceil (m+1)f^*(K_X) |_S\rceil|= |K_S+\sigma^*D_0+B|
    \]
    defines a generically finite map by \cref{thmMain1}.
\end{proof}

\begin{proposition}\label{propPmPm1}
    Let $X$ be a minimal 3-fold of general type. If $P_m(X)\geq 2\chi(\OX)$ for some $m\geq 2$, then $P_{m+1}(X)>0$.
\end{proposition}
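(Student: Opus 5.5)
The plan is to produce a nonzero section of $(m+1)K$ from a pencil in $|mK_X|$ via a Koszul-type exact sequence. Its Euler characteristic should come out as $P_m(X)-2\chi(\OX)$, and vanishing theorems should let $h^0$ dominate that Euler characteristic.

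\emph{Setup.} Choose a $1$-dimensional subsystem $\Lambda\subseteq|mK_X|$. Let $f:W\to X$ be a Chen resolution with respect to $\Lambda$, and let $M$ be the base point free moving part of $f^*\Lambda$. The two spanning sections of $\Lambda$ pull back to sections of $\OO_W(M)$ that generate it. This gives an exact Koszul sequence
\[
0\to \OO_W(K_W-M)\to \OO_W(K_W)^{\oplus 2}\to \OO_W(K_W+M)\to 0 .
\]
Since $M\leq f^*(mK_X)$ and $K_W\geq f^*K_X$ up to an effective exceptional divisor, we have $h^0((m+1)K_W)\geq h^0(K_W+M)$. So it suffices to show $h^0(K_W+M)>0$.

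\emph{Euler characteristic.} Taking Euler characteristics gives
\[
\chi(K_W+M)=2\chi(\omega_W)-\chi(K_W-M).
\]
Serre duality gives $\chi(K_W-M)=-\chi(\OO_W(M))$. Since $X$ has rational singularities, $\chi(\omega_W)=-\chi(\OO_W)=-\chi(\OX)$. Hence
\[
\chi(K_W+M)=\chi(\OO_W(M))-2\chi(\OX).
\]

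\emph{Vanishing and the main obstacle.} I then need two estimates:
\begin{enumerate}
\item $h^0(K_W+M)\geq \chi(K_W+M)$ up to a controlled error, or with strict inequality;
\item $\chi(\OO_W(M))\geq h^0(M)=P_m(X)$.
\end{enumerate}
These estimates are the main obstacle.

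For the first, $M$ is semiample and $\Phi_{|M|}$ factors through the Stein factorization $W\to B$. Koll\'ar's vanishing theorem gives $H^i(K_W+M)=0$ for $i>\dim B$. In the pencil case $\dim B=1$, this kills $h^2$ and $h^3$, so $h^0(K_W+M)\geq\chi(K_W+M)$ after discarding $h^1$. If $\Lambda$ happens not to be composed with a pencil, only $h^3$ vanishes automatically. In that case I would either restrict to a generic fiber, or replace $M$ by the pencil inside it and argue as above.

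For the second, I would again use Koll\'ar-type torsion-freeness for $f$ together with Serre duality. The goal is to show that $h^1(M)-h^2(M)\leq 0$, i.e.\ $h^2(M)=h^1(K_W-M)\geq h^1(M)$; note $h^3(M)=h^0(K_W-M)=0$. If this direct route fails, I would fall back on Reid's formula for $\chi(\OX((m+1)K_X))$ compared with $\chi(\OX(mK_X))$. This uses $P_n=\chi(\OX(nK_X))$ for $n\geq2$, and a positivity of the basket contribution $l(m+1)-l(m)\geq0$.

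\emph{Conclusion and sharpening.} Granting the vanishing, we get
\[
h^0(K_W+M)\geq P_m(X)-2\chi(\OX)\geq 0,
\]
and I still need strict positivity. To get it, I would use the long exact sequence at $H^0$. The map $H^0(K_W)^{\oplus2}\to H^0(K_W+M)$ is injective modulo $H^0(K_W-M)=0$. Combined with the connecting map into $H^1(K_W-M)$, this should yield an extra $+1$ whenever $h^0(K_W+M)=0$ would force $h^1(K_W-M)$ to be too small. Expect to check carefully here that equality $P_m=2\chi$ cannot give $h^0=0$.
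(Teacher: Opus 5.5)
Your Koszul route has a genuine gap, and it cannot work in exactly the cases where this proposition is used.

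\textbf{Why the reduction fails.} Write $M=g^*A$ with $g:W\to B$ the Stein factorization and $A$ ample on $B$. The projection formula gives $H^0(W,K_W+M)=H^0(B,g_*\omega_W\otimes A)$. Here $g_*\omega_W$ has rank $p_g(F)$ for a general fiber $F$. So whenever $p_g(F)=0$, one has $h^0(K_W+M)=0$. An example is $F$ a $(1,0)$-surface with $\chi(\OX)=1$ and $P_m(X)=2$. This is precisely the situation of Case~2 of \cref{prop3m2}, where the paper invokes this proposition. Discarding the fixed part $Z$ in $(m+1)K_W\sim K_W+M+Z$ therefore reduces you to a statement that is false there.

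\textbf{Further problems with the intermediate steps.}
\begin{enumerate}
\item Koll\'ar's theorem kills $H^i(K_W+M)$ only for $i>\dim W-\dim B=2$, not for $i>\dim B$. In fact $h^2(K_W+M)=h^0(B,\omega_B\otimes A)$, which equals $a-1>0$ for a rational pencil of degree $a\geq 2$. So $h^0\geq\chi$ is not available.
\item $h^0(M)=P_m(X)$ is false for a general $1$-dimensional subsystem when $P_m(X)\geq 3$.
\item $h^0(K_W-M)$ need not vanish.
\item Even granting everything, you only reach $h^0\geq P_m(X)-2\chi(\OX)\geq 0$. In the equality case $P_m=2\chi$, nothing in your setup produces the promised ``extra $+1$''.
\end{enumerate}

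\textbf{The missing idea.} It is the fallback you mention, but applied to $X$ itself rather than to $\chi(\OO_W(M))$, which Reid's formula does not compute. By Kawamata--Viehweg vanishing, $P_n(X)=\chi(\OX(nK_X))$ for $n\geq 2$. Reid's Riemann--Roch then gives, for $m\geq 2$,
\[
P_{m+1}(X)-P_m(X)=\tfrac12 m^2K_X^3-2\chi(\OX)+\bigl(l(m+1)-l(m)\bigr).
\]
Here $l(m+1)-l(m)=\sum n_{b,r}\,\overline{mb}(r-\overline{mb})/(2r)\geq 0$. The strictness you could not find comes from $K_X^3>0$, giving $P_{m+1}(X)>P_m(X)-2\chi(\OX)\geq 0$. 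No resolution, pencil or vanishing on $W$ is needed; this one-line computation is the paper's proof.
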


\begin{proof}
    By Reid's Riemann-Roch formula and Kawamata–Viehweg vanishing theorem, for any $m\geq 2$, we have
    \[
        P_m(X)=\frac{1}{12}m(m-1)(2m-1)K_X^3-(2m-1)\chi(\OX)+l(m).
    \]

    Thus, for any $m\geq 2$, we have
    \begin{align*}
        P_{m+1}(X)-P_m(X)&=\frac{1}{2}m^2K_X^3-2\chi(\OX)+l(m+1)-l(m)\\
        &\geq \frac{1}{2}m^2K_X^3-2\chi(\OX)> -2\chi(\OX).
    \end{align*}
\end{proof}

\begin{proposition}\label{prop2m2}
    Let $X$ be a minimal 3-fold of general type with $P_m(X)\geq 2$ and $P_{m+2}(X)>0$. Then $|(2m+2)K_X|$ defines a generically finite map.
\end{proposition}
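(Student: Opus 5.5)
We will deduce the statement from \cref{propM2}, \cref{propMN221} and \cref{propPmPm1} by a case split on $\chi(\OX)$ and $p_g(X)$. If $\chi(\OX)\neq 1$ or $p_g(X)\neq 0$, then the hypotheses $P_m(X)\geq 2$ and $P_{m+2}(X)>0$ are exactly those of \cref{propM2}, so $|(2m+2)K_X|$ defines a generically finite map. This leaves the case $\chi(\OX)=1$ and $p_g(X)=0$.

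In that case we reduce to \cref{propMN221}, which needs only the extra condition $P_{m+1}(X)>0$. Since $p_g(X)=0$, the hypothesis $P_m(X)\geq 2$ forces $m\geq 2$. Moreover $P_m(X)\geq 2=2\chi(\OX)$, so \cref{propPmPm1} applies and gives $P_{m+1}(X)>0$. Together with $P_m(X)\geq 2$ and $P_{m+2}(X)>0$, \cref{propMN221} then shows that $|(2m+2)K_X|$ defines a generically finite map.

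There is no serious obstacle here, since the substantive work is already contained in the cited propositions. The only things to check are that \cref{propPmPm1} is used in its valid range $m\geq 2$, which is why we need $p_g(X)=0$ to exclude $m=1$, and that the two cases together cover every minimal 3-fold of general type. Both checks are immediate.
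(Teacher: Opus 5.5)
Your proof is correct and is essentially the paper's argument. The paper splits into the cases ``$\chi(\OX)\neq 1$ or $m=1$'' (handled by \cref{propM2}) and ``$\chi(\OX)=1$ and $m\geq 2$'' (handled by \cref{propPmPm1} followed by \cref{propMN221}); this is equivalent to your split on $p_g(X)$, since $P_1(X)\geq 2$ means $p_g(X)\neq 0$, while $p_g(X)=0$ forces $m\geq 2$.
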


\begin{proof}
    \textbf{Case 1}. $\chi(\OX)\neq 1$ or $m=1$.

    By \cref{propM2}, $|(2m+2)K_X|$ defines a generically finite map.

    \medskip

    \textbf{Case 2}. $\chi(\OX)= 1$ and $m\geq 2$.

    By \cref{propPmPm1}, $P_{m+1}(X)>0$. By \cref{propMN221}, $|(2m+2)K_X|$ defines a generically finite map.
\end{proof}

\begin{proof}[Proof of \cref{thm6K}]
    Take $m=2$ in \cref{prop2m2}.
\end{proof}

\section{A non-vanishing theorem for threefolds}\label{secNonV}

In this section, we establish an effective non-vanishing theorem for 3-folds and improve the bound 27 in \cite[Theorem 1.1 (1)]{EXP2}.

\begin{theorem}\label{thmNV}
    Let $X$ be a minimal 3-fold of general type with $P_m(X)\geq 2$. Then $P_n(X)>0$ when $n\geq 2m$.
\end{theorem}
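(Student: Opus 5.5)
The plan is to take a pencil inside $|mK_X|$ and to reduce the non-vanishing of $P_n(X)$ to a non-vanishing statement on a generic irreducible element of that pencil. This reduction uses Kawamata–Viehweg vanishing.

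First I would dispose of $m=1$. Then $p_g(X)\geq 2>0$, so $P_n(X)\geq p_g(X)>0$ for every $n\geq 1$. From now on assume $m\geq 2$, and fix $n\geq 2m$. Let $\Lambda\subseteq |mK_X|$ be a $1$-dimensional subsystem, let $f:W\to X$ be a Chen resolution with respect to $\Lambda$, let $S$ be a generic irreducible element of $f^*\Lambda$, and let $\sigma:S\to S_0$ be the morphism to its minimal model. Since $S\leq f^*L$ for some $L\in\Lambda$ (with the difference effective), we get
\[
h^0(nK_W)\geq h^0\bigl(K_W+\lceil (n-m-1)f^*K_X\rceil+S\bigr).
\]
Put $t=n-m-1\geq m-1\geq 1$. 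Then $tf^*K_X$ is nef and big, and its fractional part is supported on the simple normal crossing divisor $\Exc(f)$. Hence Kawamata–Viehweg gives $h^1(K_W+\lceil tf^*K_X\rceil)=0$, exactly as in the proof of \cref{propM2}. So the restriction map to $S$ is surjective, and it suffices to prove
\[
h^0\bigl(K_S+\lceil t f^*(K_X)|_S\rceil\bigr)>0 .
\]

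For this surface statement I would set $L=tf^*(K_X)|_S$. This is a nef and big $\bQ$-divisor on $S$, since $S$ moves and is not contracted. Kawamata–Viehweg on $S$, after arranging that the fractional part of $L$ has simple normal crossing support (by choosing $f$ fine enough), gives
\[
h^0(K_S+\lceil L\rceil)=\chi(\OO_S)+\tfrac12\bigl(\lceil L\rceil\cdot(K_S+\lceil L\rceil)\bigr).
\]
Since $S$ is of general type, $\chi(\OO_S)\geq 1$. The remaining task is to show that the intersection term is $\geq 0$, or at least $>-2$. I would write $\lceil L\rceil=L+\Delta$ with $0\leq\Delta$ and coefficients $<1$, supported on $f$-exceptional curves of $S$. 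One then has to control $\Delta^2+(K_S\cdot\Delta)+2(L\cdot\Delta)$ together with the positive quantity $L\cdot(L+K_S)$. By \cref{propQCompare} and \cref{compareIneq}, $L\geq_\bQ \frac{t}{m+1}\sigma^*K_{S_0}$, and this should make $L\cdot K_S$ and $L^2$ large enough to dominate the negative contribution from $\Delta$.

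An alternative I would try in parallel avoids the rounding entirely. Choose $D\in|(m+1)K_X|$ when $P_{m+1}(X)>0$ and apply \cref{compareIneq}. This gives $\lceil L\rceil\geq\sigma^*D_0+(\text{effective})$ when $t\geq m+1$, so $h^0\geq P_2(S_0)>0$.

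The main obstacle is the case $t<m+1$, i.e. $2m\leq n\leq 2m+1$, where the pure comparison trick does not apply directly. There I would rely on the Riemann–Roch computation above. The delicate point is the sign of the correction coming from $\Delta$. I expect to handle it by using that $\Delta$ is contracted by $f$, so $\Delta^2<0$ is offset by $(K_S\cdot\Delta)\geq$ its own negativity, via adjunction on the exceptional curves. If that fails, I would fall back on the Riemann–Roch difference estimate of \cref{propPmPm1} to treat small $\chi(\OX)$ separately.
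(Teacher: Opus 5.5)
Your reduction to $h^0\bigl(K_S+\lceil (n-m-1)f^*(K_X)|_S\rceil\bigr)>0$ is the paper's. For $n=2m+2$ your comparison argument is exactly the paper's Case 3-1, except that \cref{compareIneq} applies to every $D\sim(m+1)K_X$, so $P_{m+1}(X)>0$ is not needed. Also, $n=2m$ needs no argument, since $P_{2m}(X)\ge P_m(X)\ge 2$.

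The case $n=2m+1$, however, has a genuine gap. The Riemann--Roch route needs $\Delta^2+(K_S\cdot\Delta)+2(L\cdot\Delta)$ to be controlled, and the mechanism you propose does not do this. Components $E$ of $\Delta$ contracted by $f|_S$ have $(L\cdot E)=0$. Adjunction gives $(K_S\cdot E)=0$ for a $(-2)$-curve and $(K_S\cdot E)=-1$ for a $(-1)$-curve, so nothing offsets $\Delta^2$. With $t=m$, the only guaranteed lower bound for $L^2+(L\cdot K_S)$ is $(c^2+c)K_{S_0}^2$ with $c=\frac{m}{m+1}$. That bound cannot absorb a correction that depends on the uncontrolled exceptional configuration.

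The missing idea is to split according to $p_g(S)$.
\begin{itemize}
\item If $p_g(S)>0$: since $P_m(X)\ge 2$ and $S$ is not a fixed component, $\lfloor mf^*K_X\rfloor|_S\le\lceil mf^*(K_X)|_S\rceil$ is linearly equivalent to an effective divisor. Hence $h^0\bigl(K_S+\lceil mf^*(K_X)|_S\rceil\bigr)\ge p_g(S)>0$, with no rounding issue at all.
\item If $p_g(S)=0$: first reduce to $p_g(X)=q(X)=0$, using \cref{thmIrr5} for $q(X)>0$ since $n\ge 5$. Then \cref{propH2OX} gives $h^2(\OX)=0$, hence $\chi(\OX)=1$. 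Now \cref{propPmPm1} already yields $P_{m+1}(X)>0$, and therefore $P_{2m+1}(X)>0$. The paper instead uses \cref{thmPChi1} and \cref{propPChi1}.
\end{itemize}
Your fallback to \cref{propPmPm1} ``for small $\chi(\OX)$'' points in this direction. But you also need the implication $\chi(\OX)\ge 2\Rightarrow h^2(\OO_W)>0\Rightarrow p_g(S)>0$ from \cref{propH2OX}; without it the large-$\chi$ case is left open.

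The range $n\ge 2m+3$ (i.e.\ $t\ge m+2$) is also not proved. From $D$ you only get $\lfloor f^*(D)|_S\rfloor\ge\sigma^*D_0$. The leftover $\lceil (t-m-1)f^*(K_X)|_S\rceil$ is effective up to linear equivalence only under a hypothesis like $P_{n-2m-2}(X)>0$. You cannot guarantee that hypothesis, and if it held, $P_n(X)>0$ would follow directly from $P_{2m+2}(X)>0$ anyway. The effectivity of $D\in|(m+1)K_X|$ does not help with this leftover part, and your Riemann--Roch route hits the same rounding problem here.

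The paper does not obtain this range from the restriction to $S$. For $p_g(S)>0$ it invokes \cite[Proposition 2.15 (i)]{EXP2}. For $p_g(S)=0$ it uses $\chi(\OX)=1$ together with \cref{thmPChi1} (note $n\ge 2m+3\ge 7$) or \cref{propPChi1} when $m=2$. As written, your proposal establishes only $n=2m$ and $n=2m+2$.
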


\begin{proof}
    Since $P_m(X)>0$, we have $P_{2m}(X)\geq P_m(X)\geq 2$. Thus we may assume $n\geq 2m+1$.

    We may assume $p_g(X)=0$. Then $m\geq 2$. Then $n\geq 2m+1\geq 5$. By \cref{thmIrr5}, we may assume $q(X)=0$.

    Let $f:W\to X$ be a Chen resolution with respect to a 1-dimensional subsystem $\Lambda\subseteq|mK_X|$. Let $S$ be a generic irreducible element of $f^*\Lambda$. Let $\sigma$ be the morphism from $S$ to its minimal model $S_0$. Since $q(X)=0$, the pencil here must be a rational pencil.

    \medskip

    \textbf{Case 1}. $p_g(S)=0$.

    Since $S$ is of general type, we have $q(S)=0$. By \cref{propH2OX}, $h^2(\OO_W)=0$. Thus $\chi(\OX)=1+h^2(\OX)=1$.

    \medskip

    \textbf{Case 1-1}. $m\geq 3$.

    By \cref{thmPChi1}, since $n\geq 2m+1\geq 7$, we have $P_n(X)>0$.

    \medskip

    \textbf{Case 1-2}. $m= 2$.

    By \cref{propPChi1}, since $n-2\geq 2$, we have $P_n(X)\geq P_2(X)\geq 2$.

    \medskip

    \textbf{Case 2}. $p_g(S)>0$ and $n\geq 2m+3$.

    By \cite[Proposition 2.15 (i)]{EXP2}, $P_n(X)\geq 2$.

    \medskip

    \textbf{Case 3}. $p_g(S)>0$ and $2m+1\leq n\leq 2m+2$.

    We have
    \[
        |nK_W|\lsgeq |K_W+\lceil (n-m-1)f^*K_X\rceil+S|.
    \]

    With Kawamata–Viehweg vanishing theorem,
    \[
        |nK_W||_S\lsgeq |K_S+\lceil (n-m-1)f^*(K_X)|_S\rceil|.
    \]

    Thus we only need to prove that
    \[
        h^0(K_S+\lceil (n-m-1)f^*(K_X)|_S\rceil)>0.
    \]

    \medskip

    \textbf{Case 3-1}. $n= 2m+2$.

    By \cref{compareIneq}, we only need to prove that $h^0(K_S+ \sigma^*K_{S_0})>0$ which is true since
    \[
        h^0(K_S+ \sigma^*K_{S_0})=h^0(2K_{S_0})\geq 2.
    \]

    \medskip

    \textbf{Case 3-2}. $n= 2m+1$.

    Since $P_m(X)>0$, we have $h^0(\lceil mf^*(K_X)|_S\rceil)>0$. Thus
    \[
        h^0(K_S+\lceil mf^*(K_X)|_S\rceil)\geq h^0(K_S)>0.
    \]
\end{proof}

\begin{corollary}
    Let $X$ be a minimal 3-fold of general type. Then $P_n(X)>0$ when $n\geq 24$.
\end{corollary}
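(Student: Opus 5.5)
The corollary should follow from \cref{thmNV} once we know that every minimal 3-fold of general type has $\delta(X)\leq 12$ or falls under the classification in \cref{thmDelta}. I would split into two cases according to the size of $\delta(X)$.

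\textbf{Case $\delta(X)\leq 12$.} By definition, $P_{\delta(X)}(X)\geq 2$. Applying \cref{thmNV} with $m=\delta(X)$ gives $P_n(X)>0$ for all $n\geq 2\delta(X)$. Since $2\delta(X)\leq 24$, every $n\geq 24$ is covered.

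\textbf{Case $\delta(X)\geq 13$.} Here \cref{thmNV} is useless, since it only yields $n\geq 26$. Instead, invoke \cref{coroDelta}(2): $P_n(X)>0$ for all $n\geq 20$, and in particular for all $n\geq 24$.

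Together the two cases prove the corollary. The only delicate point is that the bound $24=2\cdot 12$ comes precisely from the threshold $13$ in the classification. We must rely on the classification (via \cref{coroDelta}) rather than \cref{thmNV} for $\delta\geq 13$; no further obstacle is expected.
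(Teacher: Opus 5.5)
Your proof is correct and matches the paper's argument: the paper handles $\delta(X)\geq 13$ with \cref{coroDelta}, then applies \cref{thmNV} with $m=\delta(X)\leq 12$, using $n\geq 24\geq 2\delta(X)$.
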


\begin{proof}
    By \cref{coroDelta}, we may assume $\delta(X)\leq 12$.

    By \cref{thmNV}, since $n\geq 24\geq 2\delta(X)$, we have $P_n(X)>0$.
\end{proof}

\section{Generic finiteness criteria for threefolds}\label{secCriteria3}

In this section, we establish several generic finiteness criteria for 3-folds based on the relationship between $K^3$ and $P_n$.

At first, we prove some basic criteria for surfaces that will be used.

\begin{proposition}\label{propSurfL2B}
    Let $S$ be a smooth projective surface of general type. Let $L$ be a nef and big $\bQ$-divisor on $S$. Let $B$ be a divisor on $S$ such that $h^0(B)\geq 2$. Then
    \[
        |K_S+\lceil L\rceil+2B|
    \]
    defines a generically finite map.
\end{proposition}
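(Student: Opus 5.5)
The idea is to reduce to a generic irreducible element of the moving part of $|B|$, restrict to it with Kawamata--Viehweg vanishing, and then conclude with Tankeev's principle (\cref{coroTankeev1}). Rather than using $|2B|$ directly, I will use only $B+C$, where $C$ is a moving curve; the second copy of $B$ is what makes the restriction to $C$ surjective and the lifting work.

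\textbf{Step 1 (pass to a resolution).} Take a birational morphism $\pi:T\to S$ from a smooth surface such that $M\in\Mov|\pi^*B|$ is base point free; since $h^0(B)\geq 2$, $|M|$ has positive dimension. Let $C$ be a generic irreducible element of $|M|$, so that $C\leq M$ for a suitable $M$ in either case (pencil or not).

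Since $K_T\geq\pi^*K_S$, $\pi_*\lceil\pi^*L\rceil=\lceil L\rceil$ and $\pi_*M\leq B$, pushing forward sections gives
\[
|K_S+\lceil L\rceil+2B|\lsgeq \pi_*|K_T+\lceil\pi^*L\rceil+2M|.
\]
This shows that $\Phi_{|K_S+\lceil L\rceil+2B|}$ is generically finite as soon as $\Phi_{|K_T+\lceil\pi^*L\rceil+M+C|}$ is. Here I use that a system containing a generically finite subsystem plus a fixed part is generically finite, and that $M+C\leq 2M$.

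\textbf{Step 2 (restriction).} Note that $\pi^*L+M$ is nef and big, because $M$ is nef. Kawamata--Viehweg vanishing for nef and big $\bQ$-divisors on a smooth surface (which needs no normal crossing hypothesis in dimension two), or else a further blow-up making the fractional part simple normal crossing, gives
\[
h^1(K_T+\lceil \pi^*L\rceil+M)=0.
\]
Hence
\[
|K_T+\lceil\pi^*L\rceil+M+C||_C=|K_C+D|,
\qquad D=(\lceil\pi^*L\rceil+M)|_C .
\]

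\textbf{Step 3 (the curve).} Since $S$ is of general type, it is not covered by rational or elliptic curves, so the moving curve $C$ has genus $g\geq 2$. Also $\deg D\geq(\pi^*L\cdot C)>0$, because $L$ is nef and big and $C$ passes through a general point, hence does not lie in the null locus of $L$. Riemann--Roch then gives
\[
h^0(K_C+D)=g-1+\deg D\geq 2,
\]
so $|K_C+D|$ defines a non-constant map, which on a curve is generically finite.

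\textbf{Step 4 (Tankeev).} We have $h^0(K_T+\lceil\pi^*L\rceil+M)>0$: by the vanishing in Step 2 and the surjectivity of restriction onto the non-empty system $|K_C+D|$, a section of $K_T+\lceil\pi^*L\rceil+M+C$ that is nonzero on $C$ exists. Even more simply, $M\geq 0$, so it suffices that $K_T+\lceil\pi^*L\rceil$ be effective, and this follows from the same vanishing together with $h^0(K_C)>0$ if needed. With this, \cref{coroTankeev1} applied with $Y=C$ shows that $|K_T+\lceil\pi^*L\rceil+M+C|$ defines a generically finite map, and Step 1 finishes the proof.

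The main obstacle is the bookkeeping in Steps 1 and 4. First, the comparison $\pi_*\lceil\pi^*L\rceil=\lceil L\rceil$ has to be justified, together with the transfer of generic finiteness through push-forward of sections. Second, the hypothesis $h^0(\,\cdot-C)>0$ in \cref{coroTankeev1} must be checked in the pencil case, where $M\equiv aC$ but $M$ need not be linearly equivalent to $aC$. I would handle the latter by always choosing $C$ as a component of the chosen $M$, so that $M+C-C=M\geq0$ lies directly inside $K_T+\lceil\pi^*L\rceil+M$ once $K_T+\lceil\pi^*L\rceil$ is shown effective.
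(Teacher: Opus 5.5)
Your overall strategy matches the paper's: resolve so the moving part of $|B|$ is free, restrict to a moving curve $C$ via Kawamata--Viehweg, use $g(C)\geq 2$ and $(\pi^*L\cdot C)>0$, and conclude by Tankeev's principle. The only structural difference is that the paper takes a $1$-dimensional subsystem $V\subseteq|B|$. That way $C$ is always a fiber with $C|_C\sim 0$, and \cref{coroTankeev2} applies directly to $K_T+\lceil g^*L\rceil+2C$. You instead keep the whole moving part $M$ and use $M+C$ with adjunction, which also works. Steps 1--3 are fine.

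The gap is in Step 4. There you must verify the hypothesis $h^0(K_T+\lceil\pi^*L\rceil+M)>0$ of \cref{coroTankeev1}, and neither justification you give does this.

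\begin{itemize}
\item The first produces a section of $K_T+\lceil\pi^*L\rceil+M+C$ that is nonzero on $C$. That is not the divisor whose effectivity is needed.
\item The second asserts that $K_T+\lceil\pi^*L\rceil$ is effective ``by the same vanishing and $h^0(K_C)>0$''. But lifting sections from $C$ only ever gives sections of a divisor of the form $(\cdots)+C$. Getting sections of $K_T+\lceil\pi^*L\rceil$ itself would need something like $h^1(K_T+\lceil\pi^*L\rceil-C)=0$, which you do not have.
\end{itemize}

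Since $p_g(S)=0$ is allowed, effectivity of $K_T+\lceil\pi^*L\rceil$ is a genuine non-vanishing statement, and nothing in your proposal proves it. On $T$ it is even more delicate, because $\pi_*\OO_T(K_T+\lceil\pi^*L\rceil)$ can be a proper subsheaf of $\OO_S(K_S+\lceil L\rceil)$. Note also that $\lceil\pi^*L\rceil|_C$ need not be effective, so $h^0(K_C)>0$ is not the right reason for non-emptiness on $C$ either.

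The repair is short. Kawamata--Viehweg applied to $\pi^*L$ alone gives $h^1(K_T+\lceil\pi^*L\rceil)=0$, so
\[
H^0(T,K_T+\lceil\pi^*L\rceil+C)\to H^0(C,K_C+\lceil\pi^*L\rceil|_C)
\]
is surjective. The target has dimension $g(C)-1+\deg(\lceil\pi^*L\rceil|_C)\geq 2$ by Riemann--Roch, exactly as in your Step 3. Hence $K_T+\lceil\pi^*L\rceil+C$ is effective. Adding the effective divisor $M-C$, which is available because you chose $C\leq M$, gives $h^0(K_T+\lceil\pi^*L\rceil+M)>0$.

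This is precisely what the first copy of $C$ does in the paper's $2C$, which is the content of \cref{coroTankeev2}. One copy of the moving curve pays for the Tankeev hypothesis and the other for the restriction. With this replacement your argument is complete.
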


\begin{proof}
    Let $g:T\to S$ be a Chen resolution with respect to a 1-dimensional subsystem $V\subseteq|B|$. Let $C$ be a generic irreducible element of $g^*V$.

    Since
    \begin{align*}
        |K_T+\lceil g^*L \rceil+2C| &\lsleq|K_T+g^*\lceil L \rceil+2C|\\
        &\lsleq|K_T+g^*(\lceil L \rceil + 2B)|\\
        &=K_T-g^*K_S+g^*|K_S+\lceil L \rceil + 2B|,
    \end{align*}
    we only need to prove that $|K_T+\lceil g^*L \rceil+2C|$ defines a generically finite map.

    By Kawamata–Viehweg vanishing theorem for surfaces which does not need the simple normal crossing condition,
    \[
        h^1(K_T+\lceil g^*L \rceil)=0.
    \]

    By \cref{coroTankeev2}, we only need to prove that
    \[
        |K_T+\lceil g^*L \rceil+2C||_C
    \]
    defines a generically finite map.

    By Kawamata–Viehweg vanishing theorem, we only need to prove that $|K_C+\lceil g^*(L)|_C \rceil|$ defines a generically finite map.

    We have $\deg_C(\lceil g^*(L)|_C \rceil)\geq (g^*(L)\cdot C)>0$ since $C$ moves. We have $g(C)\geq 2$ since $C$ is a general fiber. Thus
    \[
        h^0(K_C+\lceil g^*(L)|_C \rceil)\geq 2.
    \]
    Thus $|K_C+\lceil g^*(L)|_C \rceil|$ defines a generically finite map.
\end{proof}

\begin{proposition}\label{propSurfLB}
    Let $S$ be a smooth projective surface of general type. Let $L$ be a nef and big $\bQ$-divisor on $S$. Let $V$ be a 1-dimensional linear system on $S$. Let $B\in V$. Let $\Lambda$ be a linear system on $S$ such that $\Lambda\lsgeq V$ and
    \[
        \Lambda\lsgeq |K_S+\lceil L\rceil+B|.
    \]
    Then $\Lambda$ defines a generically finite map.
\end{proposition}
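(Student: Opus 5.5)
The plan is to run the same Tankeev-type reduction as in \cref{propSurfL2B}, but with a single copy of the pencil. Since the hypotheses are only of the form $\Lambda\lsgeq\cdots$, it suffices to show the following: for a suitable resolution, the dimension of the image of $\Phi_\Lambda$ is $2$. Because $\Lambda\lsgeq V$, we have $\dim\overline{\Phi_\Lambda(S)}\geq\dim\overline{\Phi_V(S)}=1$. The image is therefore at least a curve, and we must rule out that $\Lambda$ is composed with a pencil.

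\textbf{Setup.} Let $g:T\to S$ be a Chen resolution with respect to $V$, and let $C$ be a generic irreducible element of $g^*V$. Then $g^*B\geq C$, and in fact $g^*B=\Mov g^*V+Z\geq C$. It suffices to show that $\Lambda_T:=g^*\Lambda+(K_T-g^*K_S)$ defines a generically finite map.

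\textbf{Finiteness along the fibres of the pencil.} Since $\Lambda\lsgeq|K_S+\lceil L\rceil+B|$, we get
\[
\Lambda_T\lsgeq|K_T+\lceil g^*L\rceil+C|.
\]
Restricting to $C$ and using Kawamata–Viehweg vanishing $h^1(K_T+\lceil g^*L\rceil)=0$ on the surface $T$, this gives
\[
|K_T+\lceil g^*L\rceil+C||_C=|K_C+\lceil g^*L\rceil|_C|.
\]
The curve $C$ has genus at least $2$ because $S$ is of general type and $C$ moves in a pencil. Moreover $\deg\lceil g^*L\rceil|_C\geq (g^*L\cdot C)>0$, since $L$ is nef and big and $C$ moves. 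Hence $h^0(K_C+\lceil g^*L\rceil|_C)\geq 2$, so this restricted system is a non-constant map on the curve $C$. Consequently $\Phi_{\Lambda_T}$ does not contract the general $C$ to a point.

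\textbf{Conclusion via the pencil.} On the other hand, $\Lambda_T\lsgeq g^*V\lsgeq\{aC\}$ plus fixed parts, so $\Phi_{\Lambda_T}$ separates distinct fibres $C$ of the pencil. To combine the two facts, the cleanest route is a Tankeev-style dimension argument like \cref{thmTankeev}. The system $\Lambda_T$ contains a member $D\geq C$. Since $\Phi_{\Lambda_T}|_C$ is non-constant, $\overline{\Phi_{\Lambda_T}(C)}$ is a curve contained in $\overline{\Phi_{\Lambda_T}(D)}$, which is a proper subvariety (a hyperplane section) of $\overline{\Phi_{\Lambda_T}(T)}$. Hence $\dim\overline{\Phi_{\Lambda_T}(T)}\geq 2$.

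\textbf{Main obstacle.} The hard step is verifying that the restriction of the full $\Lambda_T$ to $C$, and not merely of its subsystem, is non-constant. Since $\Lambda_T\supseteq|K_T+\lceil g^*L\rceil+C|+Z'$, it contains a translate of that subsystem. Restricting a larger system to $C$ only adds sections, so the restriction is still non-constant. This gives $\dim\overline{\Phi_{\Lambda_T}(C)}=1$, and the dimension count goes through.
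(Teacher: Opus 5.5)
Your proof is correct and is essentially the paper's argument. You take a Chen resolution with respect to $V$ and apply Tankeev's principle (\cref{thmTankeev}) to the member of $g^*\Lambda$ (twisted by $K_T-g^*K_S$) that contains a generic irreducible element $C$; Kawamata--Viehweg vanishing then reduces everything to $|K_C+\lceil g^*(L)|_C\rceil|$, which is non-constant because $g(C)\geq 2$ and the degree is positive. Two minor slips do not affect the argument: $g^*B\geq C$ holds only after replacing $B$ by a suitable member $B'\in V$, which is harmless since $|K_S+\lceil L\rceil+B|=|K_S+\lceil L\rceil+B'|$, and your remark that $\Phi_{\Lambda_T}$ separates distinct fibres does not follow from $\Lambda_T\lsgeq g^*V$ when the Stein factorization of $\Phi_{g^*V}$ has a non-trivial finite part, but that remark is never used.
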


\begin{proof}
    Let $g:T\to S$ be a Chen resolution with respect to $V$. Let $C$ be a generic irreducible element of $g^*V$.

    By \cref{thmTankeev}, since $g^*\Lambda\lsgeq g^*V$, we only need to prove that $g^*(\Lambda)|_C$ defines a generically finite map.

    Since $g^*\Lambda\lsgeq g^*|K_S+\lceil L\rceil+B|$ and
    \begin{align*}
        |K_T+\lceil g^*L \rceil+C| &\lsleq|K_T+g^*\lceil L \rceil+C|\\
        &\lsleq|K_T+g^*(\lceil L \rceil + B)|\\
        &=K_T-g^*K_S+g^*|K_S+\lceil L \rceil + B|,
    \end{align*}
    we only need to prove that $|K_T+\lceil g^*L \rceil+C||_C$ defines a generically finite map.

    By Kawamata–Viehweg vanishing theorem, we only need to prove that $|K_C+\lceil g^*(L)|_C \rceil|$ defines a generically finite map.

    We have $\deg_C(\lceil g^*(L)|_C \rceil)\geq (g^*(L)\cdot C)>0$ since $C$ moves. We have $g(C)\geq 2$ since $C$ is a general fiber. Thus
    \[
        h^0(K_C+\lceil g^*(L)|_C \rceil)\geq 2.
    \]
    Thus $|K_C+\lceil g^*(L)|_C \rceil|$ defines a generically finite map.
\end{proof}

\begin{proposition}\label{propSurfPencil}
    Let $S$ be a smooth projective surface of general type on which a linear system $V\subseteq |B|$ is composed with a pencil where $B$ is a divisor on $S$. Let $L$ be a nef and big $\bQ$-divisor on $S$. Let $\Lambda$ be a linear system on $S$ such that $\Lambda\lsgeq V$ and
    \[
        \Lambda\lsgeq |K_S+\lceil L\rceil|.
    \]
    If $b L\geq B$ for some positive rational number $b<\dim V$, then $\Lambda$ defines a generically finite map.
\end{proposition}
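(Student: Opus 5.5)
We follow the template of \cref{propSurfLB}. Take a Chen resolution $g:T\to S$ with respect to $V$, and let $C$ be a generic irreducible element of $g^*V$. Write $p=\dim V$, so that $\Mov g^*V\equiv aC$ with $a\geq p$. Since $\Lambda\lsgeq V$, we have $g^*\Lambda\lsgeq g^*V$, and some element of $g^*\Lambda$ contains $C$. By \cref{thmTankeev} it therefore suffices to show that $g^*(\Lambda)|_C$ defines a generically finite map, i.e.\ that it is not constant on the curve $C$. As in the proof of \cref{propSurfLB},
\[
    g^*\Lambda\lsgeq g^*|K_S+\lceil L\rceil|\lsgeq |K_T+\lceil g^*L\rceil|,
\]
so it is enough to show that $|K_T+\lceil g^*L\rceil||_C$ has at least two independent sections, in the sense that the restricted linear system has dimension at least $1$.

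The key step is to write $g^*L$ as $C$ plus a nef and big remainder. From $bL\geq B$ we get
\[
    g^*L\geq \tfrac1b g^*B\geq \tfrac1b\Mov g^*V\equiv \tfrac ab C,\qquad \tfrac ab\geq \tfrac pb>1.
\]
Choose a rational $t$ with $1<t\leq a/b$ (wait: we need the remainder to be nef). A cleaner choice is
\[
    g^*L = \Bigl(1-\tfrac1t\Bigr)g^*L + \tfrac1t g^*L,\qquad \tfrac1t g^*L\geq \tfrac1{tb}g^*B\geq C+E',
\]
where $E'$ is effective; this uses $tb<a$, which holds because $b<p\leq a$. Hence
\[
    K_T+\lceil g^*L\rceil\geq K_T+C+\lceil L'\rceil \quad\text{up to effective divisors,}
\]
where $L'=(1-\tfrac1t)g^*L$ is nef and big, after absorbing fractional parts (handling $E'$ and the rounding carefully). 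If the rounding is awkward, the alternative is to apply Kawamata--Viehweg with the $\bQ$-divisor $g^*L-C-E'$, which is numerically equivalent to $(1-\tfrac{tb}{a})g^*L$ plus something effective. In that case one uses the standard trick: $\lceil g^*L-E'\rceil-C$ is $K$-vanishing, because $g^*L-E'-C\equiv$ nef and big plus a fractional part with simple normal crossing support, which can be arranged since $g$ is a Chen resolution.

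Given such a decomposition, Kawamata--Viehweg vanishing yields the surjection
\[
    H^0(K_T+\lceil g^*L\rceil)\to H^0\bigl(K_C+D_C\bigr)
\]
for a divisor $D_C$ on $C$ with $\deg D_C\geq (L'\cdot C)>0$ (positive because $C$ moves and $L'$ is nef and big). Since $C$ is a general fiber of a fibration of a surface of general type, or a general moving curve, $g(C)\geq 2$. Hence $h^0(K_C+D_C)\geq g(C)+\deg D_C-1\geq 2$, and the restricted system is not constant on $C$, which gives the conclusion.

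The main obstacle is the bookkeeping in the middle step: extracting a full copy of $C$ from $\lceil g^*L\rceil$ while retaining a nef and big $\bQ$-divisor whose fractional part has simple normal crossing support, so that vanishing applies. The strict inequality $b<\dim V$ is exactly what leaves a positive multiple $(1-\tfrac{b}{a})g^*L$ available for this.
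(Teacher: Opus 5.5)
You have the paper's strategy: Chen resolution $g:T\to S$ with respect to $V$, Tankeev's principle to reduce to $|K_T+\lceil g^*L\rceil||_C$, splitting off a copy of $C$ so that the remainder is numerically a positive multiple of $g^*L$ (this is where $b<\dim V\le a$ enters), Kawamata--Viehweg to restrict to $C$, and Riemann--Roch on a curve of genus $\ge 2$. The paper takes your $t$ equal to $a/b$. The gap is the middle step. You flag it as the main obstacle but never complete it, and none of the versions you write works as stated:
\begin{itemize}
\item The inequality $\frac1{tb}g^*B\ge C+E'$ is false as an inequality of divisors. $B$ is only some divisor with $V\subseteq|B|$, so $g^*B$ need not contain $C$. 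What you actually have is $g^*B\sim M+Z$ with $M\in\Mov g^*V$ and $Z\ge 0$. You also only have $M\equiv aC$, which is just a numerical equivalence if the pencil is irrational.
\item For the same reason, $K_T+\lceil g^*L\rceil\ge K_T+C+\lceil L'\rceil$ with $L'=(1-\frac1t)g^*L$ is unjustified. It needs the integral divisor $\lceil g^*L\rceil-\lceil L'\rceil-C$ to be effective up to linear equivalence. Neither the rounding nor the merely numerical relation $M\equiv aC$ lets you guarantee that.
\item Your fallback, a remainder that is ``nef and big plus something effective'', does not satisfy the hypothesis of Kawamata--Viehweg vanishing.
\end{itemize}

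The missing idea is to keep the honest inequality and the equivalences in different places. Take $M$ and $Z$ as above and set $D=\frac1a(bg^*L-g^*B+Z)$. This is an honestly effective $\bQ$-divisor, because $bL\ge B$ and $Z\ge 0$. Put $Q=g^*L-D-C$.
\begin{itemize}
\item Since $\frac1a(g^*B-Z)\sim_\bQ\frac1aM\equiv C$, you get $Q\equiv(1-\frac ba)g^*L$. This is nef and big because $a\ge\dim V>b$. Nefness and bigness are numerical properties, so the equivalences cost nothing.
\item On the other side, $K_T+\lceil Q\rceil+C=K_T+\lceil g^*L-D\rceil\le K_T+\lceil g^*L\rceil$ is a genuine inequality of divisors, because $C$ is integral and $D\ge 0$.
\item Kawamata--Viehweg then gives $|K_T+\lceil Q\rceil+C||_C=|K_C+\lceil Q\rceil|_C|$ with $\deg\lceil Q\rceil|_C\ge(Q\cdot C)>0$. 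Your final paragraph then goes through unchanged.
\end{itemize}
You can also drop the simple normal crossing worry. On a smooth projective surface, $H^1(K_T+\lceil Q\rceil)=0$ holds for every nef and big $\bQ$-divisor $Q$ with no snc hypothesis; the paper uses exactly this in \cref{propSurfL2B}. Your claim that the Chen resolution arranges snc support is unfounded in any case: it controls $\Exc(g)$ and $\Mov g^*V$, not the supports of $L$, $B$ or $Z$.
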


\begin{proof}
    Let $g:T\to S$ be a Chen resolution with respect to $V$. Let $C$ be a generic irreducible element of $g^*V$. Let $M\in\Mov g^*V$. Then $M\equiv aC$ where $a\geq \dim V$.

    By \cref{thmTankeev}, since $g^*\Lambda\lsgeq g^*V$, we only need to prove that $g^*(\Lambda)|_C$ defines a generically finite map.

    Since $g^*\Lambda\lsgeq g^*|K_S+\lceil L\rceil|$ and
    \begin{align*}
        |K_T+\lceil g^*L \rceil| &\lsleq|K_T+g^*\lceil L \rceil|\\
        &=K_T-g^*K_S+g^*|K_S+\lceil L \rceil|,
    \end{align*}
    we only need to prove that $|K_T+\lceil g^*L \rceil||_C$ defines a generically finite map.

    Since $M\in\Mov g^*V$, there exists $Z\geq 0$ such that $M+Z\in | g^*B |$.

    Let $D=\frac{1}{a}(b g^*L-g^*B+Z)$. Then $D$ is an effective $\bQ$-divisor since $b L\geq B$. Thus we only need to prove that
    \[
        |K_T+\lceil g^*L-D-C\rceil+C||_C
    \]
    defines a generically finite map.

    Let $Q=g^*L-D-C$. Then $Q\equiv (1-\frac{b}{a})g^*L$. We have that $Q$ is nef and big since $a\geq \dim V>b$.

    By Kawamata–Viehweg vanishing theorem, we only need to prove that $|K_C+\lceil Q|_C\rceil|$ defines a generically finite map.

    We have $\deg_C(\lceil Q|_C \rceil)\geq (Q\cdot C)>0$ since $C$ moves. We have $g(C)\geq 2$ since $C$ is a general fiber. Thus
    \[
        h^0(K_C+\lceil Q|_C \rceil)\geq 2.
    \]
    Thus $|K_C+\lceil Q|_C \rceil|$ defines a generically finite map.
\end{proof}

\begin{proposition}\label{propD2}
    Let $X$ be a minimal 3-fold of general type with $P_m(X)\geq 3$. If $|mK_X|$ is not composed with a pencil, then $|nK_X|$ defines a generically finite map when $P_{n-m}(X)>0$ and
    \[
        n> \frac{m}{P_m(X)-2} +m+1.
    \]
\end{proposition}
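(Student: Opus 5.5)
Take a Chen resolution $f:W\to X$ with respect to $|mK_X|$, so $\Mov f^*|mK_X|$ is base point free. Since the system is not composed with a pencil, a generic irreducible element $S\in\Mov|mf^*K_X|$ is a smooth irreducible surface with $mf^*K_X\geq S$. Because $P_{n-m}(X)>0$, we have $h^0(nK_W-S)>0$. By \cref{coroTankeev1} it then suffices to show that $|nK_W||_S$ defines a generically finite map.

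For the restriction, write
\[
|nK_W|\lsgeq |K_W+\lceil (n-m-1)f^*K_X\rceil+S|.
\]
Since $(n-m-1)f^*K_X$ is nef and big once $n>m+1$ (which the hypothesis guarantees), Kawamata–Viehweg vanishing gives $h^1(K_W+\lceil(n-m-1)f^*K_X\rceil)=0$. Hence
\[
|nK_W||_S\lsgeq |K_S+\lceil (n-m-1)f^*(K_X)|_S\rceil|.
\]
Put $L=(n-m-1)f^*(K_X)|_S$, which is a nef and big $\bQ$-divisor on $S$.

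On $S$ the restricted system $V=\Mov|mf^*K_X|\,|_S$ has dimension at least $P_m(X)-2\geq 1$, because the restriction map kills at most the one-dimensional subspace spanned by $S$ itself. It is contained in $|B|$ with $B=S|_S$, and $mf^*(K_X)|_S\geq B$. I also need the restricted system to satisfy $\Lambda\lsgeq V$ for the system $\Lambda=|nK_W||_S$. This follows because $nK_W\geq mf^*K_X+(\text{effective})$, using $P_{n-m}>0$ once more.

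The proof now splits according to the restricted system.

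\textbf{Case (a): $V$ is not composed with a pencil.} Then $\Lambda\lsgeq V$ already defines a generically finite map on $S$, and we are done.

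\textbf{Case (b): $V$ is composed with a pencil.} Apply \cref{propSurfPencil} with $b=\frac{m}{n-m-1}$. Then $bL=mf^*(K_X)|_S\geq B$, and the required inequality $b<\dim V$ follows from
\[
\frac{m}{n-m-1}<P_m(X)-2,
\]
which is exactly the hypothesis $n>\frac{m}{P_m(X)-2}+m+1$. One subtlety: \cref{propSurfPencil} uses $\dim V$ as a lower bound for the multiplicity $a$ with $M\equiv aC$, so I must make sure $\dim V\geq P_m(X)-2$ holds for the restricted system, not just for a subsystem.

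I expect the main obstacle to be the bookkeeping on $S$. Specifically, I need to check that the restriction of $|mK_X|$ to the generic $S$ has dimension at least $P_m(X)-2$. I also need the inequality $bL\geq B$ as genuine effective divisors, not merely up to $\geq_\bQ$; this requires choosing $B$ in the restricted movable part and tracking fixed parts carefully.
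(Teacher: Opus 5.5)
Your proposal is correct and follows the paper's proof essentially step for step. Both arguments use the Tankeev reduction via $P_{n-m}(X)>0$ and Kawamata--Viehweg restriction to $S$. Both get $\dim V=P_m(X)-2$ because $h^0(\lfloor mf^*K_X\rfloor-S)=1$ when $|mK_X|$ is not composed with a pencil, and both finish with \cref{propSurfPencil} using $b=\frac{m}{n-m-1}$. The bookkeeping issue you flag is real, since $S|_S$ is only a divisor class. The paper handles it by taking $V$ to be the image of $H^0(W,\lfloor mf^*K_X\rfloor)\to H^0(S,\lfloor mf^*K_X\rfloor|_S)$ and $B=\lfloor mf^*K_X\rfloor|_S$, so that $bL=mf^*(K_X)|_S\geq B$ holds automatically as genuine divisors.
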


\begin{proof}
    Let $f:W\to X$ be a Chen resolution with respect to $\Lambda=|mK_X|$. Let $S$ be a generic irreducible element of $f^*\Lambda$.

    By \cref{coroTankeev1}, since $P_{n-m}(X)>0$, we only need to prove that $|nK_W||_S$ defines a generically finite map.

    We have
    \[
        |nK_W|\lsgeq |K_W+\lceil (n-m-1)f^*K_X \rceil +S|.
    \]

    With Kawamata–Viehweg vanishing theorem,
    \begin{align}\label{eqNKSGeqNM1}
        |nK_W||_S\lsgeq |K_S+\lceil (n-m-1)f^*(K_X) |_S\rceil|.
    \end{align}

    Let $V$ be the linear system on $S$ corresponding to the image of the natural homomorphism
    \[
        H^0(W, \lfloor mf^*K_X \rfloor)\to H^0(S, \lfloor mf^*K_X \rfloor|_S).
    \]

    Since $P_{n-m}(X)>0$, we have $|nK_W|\lsgeq |\lfloor mf^*K_X \rfloor|$. Thus
    \begin{align}\label{eqNKSGeqV}
        |nK_W||_S\lsgeq |\lfloor mf^*K_X \rfloor||_S=V.
    \end{align}

    Thus we may assume that $V$ does not define a generically finite map.

    Since $\Lambda$ is not composed with a pencil, we have $h^0(\lfloor mf^*K_X \rfloor -S)=1$. Thus
    \begin{align*}
        \dim V&=h^0(\lfloor mf^*K_X \rfloor)-h^0(\lfloor mf^*K_X \rfloor -S)-1\\
        &=P_m(X)-2 \geq 1.
    \end{align*}

    Thus $V$ is composed with a pencil.

    Let $L=(n-m-1)f^*(K_X) |_S$, $B=\lfloor mf^*K_X \rfloor|_S$, $b=\frac{m}{n-m-1}$. Then $bL\geq B$ and $b<P_m(X)-2= \dim V$.

    By \cref{propSurfPencil}, \cref{eqNKSGeqNM1} and \cref{eqNKSGeqV}, we have that $|nK_W||_S$ defines a generically finite map.
\end{proof}

\begin{proposition}\label{propMasek3}
    Let $X$ be a minimal 3-fold of general type with $P_m(X)\geq 3$. Then $|nK_X|$ defines a generically finite map when $P_{n-m}(X)>0$ and
    \[
        n>\frac{P_m(X)+2}{P_m(X)-1}m+4.
    \]
\end{proposition}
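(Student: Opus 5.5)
The plan is to split according to whether $|mK_X|$ is composed with a pencil. Write $p=P_m(X)-1\geq 2$.

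\textbf{Non-pencil case.} Here I would apply \cref{propD2} directly. It suffices to check that
\[
\frac{P_m(X)+2}{P_m(X)-1}m+4\geq \frac{m}{P_m(X)-2}+m+1 .
\]
Writing $\frac{P_m+2}{P_m-1}=1+\frac{3}{P_m-1}$, this reduces to $\frac{3m}{P_m-1}+3\geq \frac{m}{P_m-2}$. That inequality holds as soon as $3(P_m-2)\geq P_m-1$, i.e.\ $P_m\geq 3$. So the non-pencil case is immediate.

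\textbf{Pencil case.} Take a Chen resolution $f:W\to X$ with respect to $\Lambda=|mK_X|$, and let $S$ be a generic irreducible element of $f^*\Lambda$, with $\sigma:S\to S_0$ the contraction to the minimal model.

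\emph{Reduction to $S$.} Since $P_{n-m}(X)>0$, \cref{coroTankeev1} reduces the problem to showing that $|nK_W||_S$ defines a generically finite map.

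\emph{Twisting.} We have $mf^*K_X\equiv aS+E'$ with $a\geq p$ and $E'\geq 0$. Hence
\[
(n-1)f^*K_X-S-\tfrac1a E'\equiv \Bigl(n-1-\tfrac{m}{a}\Bigr)f^*K_X
\]
is nef and big. Kawamata--Viehweg vanishing (using that the fractional parts are supported on a simple normal crossing divisor) then gives
\[
|nK_W||_S\lsgeq |K_S+\lceil L\rceil|,\qquad L\equiv \Bigl(n-1-\tfrac{m}{p}\Bigr)f^*(K_X)|_S .
\]

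\emph{Comparison with $K_{S_0}$.} By \cref{propQCompare}, $f^*(K_X)|_S\geq_\bQ \frac{p}{m+p}\sigma^*K_{S_0}$. After absorbing the effective part into the round-up, as is standard, we obtain
\[
|K_S+\lceil L\rceil|\lsgeq |K_S+\lceil L'\rceil|,\qquad L'=\beta\,\sigma^*K_{S_0}\ \text{(plus a small nef correction)},
\]
where
\[
\beta=\Bigl(n-1-\tfrac{m}{p}\Bigr)\frac{p}{m+p}.
\]

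\emph{Applying \cref{thmMasek}.} Birationality of $|K_S+\lceil L'\rceil|$ implies generic finiteness. We have $L'^2\geq \beta^2$ because $K_{S_0}^2\geq 1$. If $S$ is not a $(1,2)$-surface, \cref{propGeneralKC2} gives $(L'\cdot C)\geq 2\beta$ for curves through very general points. It therefore suffices to have $\beta>2\sqrt2$ and $2\beta>4/(1+\sqrt{1-8/\beta^2})$; the latter holds, for instance, once $\beta\geq 3$.

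The hypothesis $n>m+\frac{3m}{p}+4$ gives $n-1-\frac mp>m+\frac{2m}{p}+3$, hence
\[
\beta>\frac{mp+2m+3p}{m+p}.
\]
This tends to $p+2\geq 4$ as $m$ grows. Since $n$ is an integer and the inequality is strict, one gains extra room; I would check by hand that the resulting bound beats $2\sqrt2$ and the Ma\c{s}ek threshold in the small cases, such as $p=2$, $m=1,2$. If some tiny case fails, I would instead use that $S$ moves in a family with $p\geq 2$, and apply a \cref{propSurfPencil}-type argument on $S$ with $V$ the restriction of $|mK_W|$.

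\textbf{Main obstacle.} I expect the hardest part to be the exceptional fibers, namely when $S$ is a $(1,2)$-surface. There \cref{propGeneralKC2} fails and $(\sigma^*K_{S_0}\cdot C)$ may equal $1$ for the genus-$2$ curves of the canonical pencil. For this case I would argue directly, in the style of \cref{propSurfLB}. Take $V=\sigma^*|K_{S_0}|$, a $1$-dimensional pencil of genus-$2$ curves, and use $|nK_W||_S\lsgeq |K_S+\lceil L''\rceil+C|$ for a suitable nef and big $L''$; this is possible since $\beta>1$ leaves room to subtract one copy of $\sigma^*K_{S_0}$. Then \cref{thmTankeev} and vanishing reduce everything to $|K_C+\lceil L''|_C\rceil|$ on a genus-$2$ curve with positive degree, which is generically finite. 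Getting the numerics right so that this subtraction is legitimate, uniformly in $m$ and $p$, is the step I would scrutinize most carefully.
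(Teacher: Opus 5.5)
Your architecture matches the paper's proof. You split according to whether $|mK_X|$ is composed with a pencil, and settle the non-pencil case by \cref{propD2}; your comparison of the two thresholds there is correct. In the pencil case you reduce to $S$ via \cref{coroTankeev1}, twist by $S$ using Kawamata--Viehweg, compare with $\sigma^*K_{S_0}$ via \cref{propQCompare}, and finish with \cref{thmMasek}.

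The gap is that you never close the final estimate. You leave it as a hand check of small cases with fallbacks, but it is immediate and uniform:
\[
\frac{mp+2m+3p}{m+p}-3=\frac{m(p-1)}{m+p}>0,
\]
so $\beta>3$ for all $m\geq 1$, $p\geq 2$. Hence $L^2\geq \beta^2K_{S_0}^2>9$, which gives $\frac{4}{1+\sqrt{1-8/L^2}}<3$. For a curve $C$ through two very general points you only need $(\sigma^*K_{S_0}\cdot C)\geq 1$. This holds on every surface of general type: $K_{S_0}$ is Cartier, nef and big, and only finitely many curves are $\sigma$-exceptional or map to $(-2)$-curves. It gives $(L\cdot C)\geq\beta>3$.

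So \cref{propGeneralKC2} is never needed, and $(1,2)$-surfaces require no separate treatment; your ``main obstacle'' is illusory. The paper argues exactly this way, but with the cruder twist $|nK_W|\lsgeq|K_W+\lceil (n-m-1)f^*K_X\rceil+S|$. Its coefficient $\frac{(n-m-1)p}{m+p}$ exceeds $3$ precisely because of the hypothesis $n>\frac{p+3}{p}m+4$. Your $\frac1a$-twist gives the larger coefficient $\frac{(n-1)p-m}{m+p}$, but it buys nothing for this statement. It also requires a further log resolution, so that the fixed part of $|mK_X|$ together with $\Exc(f)$ is simple normal crossing.

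Two smaller points. First, the step ``absorb the effective part into the round-up'' is not justified as written, because $\geq_\bQ$ does not let you compare $\lceil L\rceil$ with $\lceil L'\rceil$. It is also unnecessary. \cref{thmMasek} only involves $L^2$ and $(L\cdot C)$, and both are bounded below directly from:
\begin{itemize}
\item nefness of $f^*(K_X)|_S$ and $\sigma^*K_{S_0}$;
\item $f^*(K_X)|_S\geq_\bQ\frac{p}{m+p}\sigma^*K_{S_0}$, since a curve through very general points is not contained in the support of the effective part.
\end{itemize}
Second, your fallback for $(1,2)$-surfaces subtracts a copy of $C\in\sigma^*|K_{S_0}|$ inside a round-up. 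That would need an honest divisor inequality of the type in \cref{compareIneq} rather than a $\geq_\bQ$ relation. It should simply be dropped.
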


\begin{proof}
    Let $f:W\to X$ be a Chen resolution with respect to $\Lambda=|mK_X|$. Let $S$ be a generic irreducible element of $f^*\Lambda$. Let $\sigma$ be the morphism from $S$ to its minimal model $S_0$. Let $p=\dim\Lambda$. Then $p=P_m(X)-1$. Let $d=\dim\overline{\Phi_\Lambda(X)}$.

    \medskip

    \textbf{Case 1}. $d=1$.

    By \cref{coroTankeev1}, since $P_{n-m}(X)>0$, we only need to prove that $|nK_W||_S$ defines a generically finite map.

    We have
    \[
        |nK_W|\lsgeq |K_W+\lceil (n-m-1)f^*K_X \rceil +S|.
    \]

    With Kawamata–Viehweg vanishing theorem,
    \begin{align}\label{eqNKSGeqNM1-Masek3}
        |nK_W||_S\lsgeq |K_S+\lceil (n-m-1)f^*(K_X) |_S\rceil|.
    \end{align}

    Let $L=(n-m-1)f^*(K_X) |_S$. By \cref{propQCompare},
    \[
        L\geq_\bQ \frac{(n-m-1)p}{m+p}\sigma^*K_{S_0}.
    \]

    Thus
    \begin{align*}
        L^2\geq \left(\frac{(n-m-1)p}{m+p}\right)^2 K_{S_0}^2 >9,
    \end{align*}
    and
    \begin{align*}
        (L\cdot C)&\geq \frac{(n-m-1)p}{m+p}(\sigma^*K_{S_0}\cdot C)\\
        &\geq \frac{(n-m-1)p}{m+p}>3>\frac{4}{1+\sqrt{1-\frac{8}{L^2}}}
    \end{align*}
    for any curve $C$ on $S$ passing through two very general points of $S$.

    By \cref{eqNKSGeqNM1-Masek3} and \cref{thmMasek}, $|nK_W||_S\lsgeq|K_S+\lceil L\rceil|$ defines a birational map.

    \medskip

    \textbf{Case 2}. $d\geq 2$.

    By \cref{propD2}, since
    \[
        n>\frac{P_m(X)+2}{P_m(X)-1}m+4> \frac{m}{P_m(X)-2} +m+1,
    \]
    we have that $|nK_X|$ defines a generically finite map.
\end{proof}

\begin{proposition}\label{prop2sqrt2}
    Let $X$ be a minimal 3-fold of general type with $P_m(X)\geq 2$. Then $|nK_X|$ defines a generically finite map when
    \[
        n>(2\sqrt{2}+1)(m+1).
    \]
\end{proposition}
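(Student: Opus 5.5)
We follow the pattern of \cref{propMasek3}: pass to a fibration by surfaces and then apply the Reider type criterion \cref{thmMasek} on the fiber. Fix a 1-dimensional subsystem $\Lambda\subseteq|mK_X|$. Let $f:W\to X$ be a Chen resolution with respect to $\Lambda$, let $S$ be a generic irreducible element of $f^*\Lambda$, and let $\sigma:S\to S_0$ be the contraction to the minimal model.

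The first step is to use \cref{coroTankeev1} to reduce to the restriction of $|nK_W|$ to $S$. For this we need $P_{n-m}(X)>0$. Since $n>(2\sqrt2+1)(m+1)>3.8(m+1)$, we have $n-m\geq 2m$, and \cref{thmNV} gives $P_{n-m}(X)>0$. As in the earlier proofs we write
\[
|nK_W|\lsgeq |K_W+\lceil (n-m-1)f^*K_X\rceil+S|.
\]
Kawamata–Viehweg vanishing then gives
\[
|nK_W||_S\lsgeq |K_S+\lceil L\rceil|,\qquad L=(n-m-1)f^*(K_X)|_S.
\]
By \cref{compareIneq}, or simply by \cref{propQCompare} with $p=1$, we get
\[
L\geq_\bQ \frac{n-m-1}{m+1}\sigma^*K_{S_0}.
\]
Set $t=\frac{n-m-1}{m+1}$. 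The hypothesis is equivalent to $t>2\sqrt2$, so $t^2>8$.

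The second step is the case where $S$ is not a $(1,2)$-surface. We want to verify the hypotheses of \cref{thmMasek}. Pass to the positive part $L'=t\sigma^*K_{S_0}$. This is harmless: if $L\equiv L'+E$ with $E\geq 0$, then $|K_S+\lceil L\rceil|\lsgeq |K_S+\lceil L'\rceil|$ after absorbing $E$ up to linear equivalence. In fact the effective version in \cref{compareIneq} handles this cleanly, by taking $D\in|(m+1)K_X|$ when $P_{m+1}>0$. Then $L'^2=t^2K_{S_0}^2>8$, and by \cref{propGeneralKC2} we have $(L'\cdot C)\geq 2t>4\sqrt2$ for every curve $C$ through very general points. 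This exceeds $4/(1+\sqrt{1-8/L'^2})$ as soon as $L'^2$ is moderately above $8$. So $|K_S+\lceil L'\rceil|$ is birational, and in particular generically finite. The borderline $K_{S_0}^2=1$ needs care. There we need $t^2>8$ together with $2t>4/(1+\sqrt{1-8/t^2})$, and a direct check shows the second inequality holds exactly when $t>2\sqrt2$ in the limit. This is where the constant $2\sqrt2$ comes from.

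The third step is the case where $S$ is a $(1,2)$-surface, or more generally where \cref{thmMasek} fails because of small intersection numbers. Here we use that $|K_{S_0}|$ is a pencil and apply \cref{propSurfPencil} or \cref{propSurfL2B} instead. Take $B=\sigma^*K_{S_0}$ (after moving), so that $h^0(B)\geq 2$. Since $t>2$, we can write $L\geq_\bQ (t-2)\sigma^*K_{S_0}+2B$ with $(t-2)\sigma^*K_{S_0}$ nef and big. Then \cref{propSurfL2B} gives generic finiteness. This uses only $t>2$, which is weaker than what we have.

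The main obstacle is the second step: making the numerics of \cref{thmMasek} work exactly at the threshold $t>2\sqrt2$ when $K_{S_0}^2$ is small, and handling the rounding $\lceil L\rceil$ versus the $\bQ$-linear inequality. I would first try to use the genuine inequality $\geq$ from \cref{compareIneq}, in the spirit of the proof of \cref{propMN221}, so that no $\bQ$-linear equivalence issues arise.
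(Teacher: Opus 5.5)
Your architecture matches the paper's. You reduce via \cref{thmNV} and \cref{coroTankeev1} to $|K_S+\lceil L\rceil|$ with $L=(n-m-1)f^*(K_X)|_S$, apply \cref{thmMasek} when $S$ is not a $(1,2)$-surface, and apply \cref{propSurfL2B} when it is. However, the step where $\geq$ versus $\geq_\bQ$ really matters is your third step, not your second, and as written the third step fails. From $L\geq_\bQ (t-2)\sigma^*K_{S_0}+2B$ you cannot conclude $|K_S+\lceil L\rceil|\lsgeq|K_S+\lceil (t-2)\sigma^*K_{S_0}\rceil+2B|$. Round-ups are not compatible with $\bQ$-linear equivalence, so nothing makes $\lceil L\rceil-\lceil (t-2)\sigma^*K_{S_0}\rceil-2B$ linearly equivalent to an effective divisor.

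The fix, which is what the paper does, is to split $L$ integrally. Write $L=(n-3m-3)f^*(K_X)|_S+2(m+1)f^*(K_X)|_S$. Then apply \cref{compareIneq} to $D=(m+1)K_X$; any divisor in that class works, so you need neither effectivity nor $P_{m+1}(X)>0$. This gives $(m+1)f^*(K_X)|_S\geq\sigma^*D_0$ with $D_0\sim K_{S_0}$. Since $\sigma^*D_0$ is integral, you get an honest inequality $\lceil L\rceil\geq\lceil (n-3m-3)f^*(K_X)|_S\rceil+2\sigma^*D_0$. Now \cref{propSurfL2B} applies with $B=\sigma^*D_0$, where $h^0(B)=p_g(S)=2$. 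The nef and big part is $(n-3m-3)f^*(K_X)|_S$, which is nef and big because $n>3m+3$, i.e.\ $t>2$. It must be this divisor, not $(t-2)\sigma^*K_{S_0}$: the fractional remainder dominates a multiple of $\sigma^*K_{S_0}$ only up to $\bQ$-linear equivalence.

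Your second step needs none of this. Do not pass to $L'=t\sigma^*K_{S_0}$; absorbing $E$ from $L\equiv L'+E$ is invalid for the same round-up reason. Instead apply \cref{thmMasek} directly to the nef divisor $L$. Intersection numbers only depend on $\bQ$-linear classes, so $L\geq_\bQ t\sigma^*K_{S_0}$ together with nefness gives $L^2\geq t^2K_{S_0}^2>8$. By \cref{propGeneralKC2}, $(L\cdot C)\geq 2t>4$ for curves through very general points. Since $4/(1+\sqrt{1-8/L^2})<4$ whenever $L^2>8$, the curve condition always has room to spare. The constant $2\sqrt2$ comes from $L^2>8$ when $K_{S_0}^2=1$, not from the curve inequality as you suggest.
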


\begin{proof}
    Let $f:W\to X$ be a Chen resolution with respect to a 1-dimensional subsystem $\Lambda\subseteq|mK_X|$. Let $S$ be a generic irreducible element of $f^*\Lambda$. Let $\sigma$ be the morphism from $S$ to its minimal model $S_0$.

    By \cref{thmNV}, since $n-m\geq 2m$, we have $P_{n-m}(X)>0$. By \cref{coroTankeev1}, we only need to prove that $|nK_W||_S$ defines a generically finite map.

    We have
    \[
        |nK_W|\lsgeq |K_W+\lceil (n-m-1)f^*K_X \rceil +S|.
    \]

    With Kawamata–Viehweg vanishing theorem,
    \begin{align}\label{eqNKSGeqNM1-2sqrt2}
        |nK_W||_S\lsgeq |K_S+\lceil (n-m-1)f^*(K_X) |_S\rceil|.
    \end{align}

    \medskip

    \textbf{Case 1}. $S$ is not a $(1,2)$-surface.

    Let $L=(n-m-1)f^*(K_X) |_S$. By \cref{propQCompare},
    \[
        L\geq_\bQ \frac{n-m-1}{m+1}\sigma^*K_{S_0}.
    \]

    Thus
    \begin{align*}
        L^2\geq \left(\frac{n-m-1}{m+1}\right)^2 K_{S_0}^2 > 8.
    \end{align*}

    By \cref{propGeneralKC2},
    \begin{align*}
        (L\cdot C)\geq \frac{n-m-1}{m+1}(\sigma^*K_{S_0}\cdot C) \geq \frac{2(n-m-1)}{m+1}>4
    \end{align*}
    for any curve $C$ on $S$ passing through two very general points of $S$.

    By \cref{eqNKSGeqNM1-2sqrt2} and \cref{thmMasek}, $|nK_W||_S\lsgeq|K_S+\lceil L\rceil|$ defines a birational map.

    \medskip

    \textbf{Case 2}. $S$ is a $(1,2)$-surface.

    By \cref{eqNKSGeqNM1-2sqrt2} and \cref{compareIneq}, we only need to prove that
    \[
        |K_S+\lceil (n-3m-3)f^*(K_X) |_S\rceil+2\sigma^*K_{S_0}|
    \]
    defines a generically finite map which is true by \cref{propSurfL2B} and $p_g(S)=2$.
\end{proof}

\begin{proposition}\label{prop3m2}
    Let $X$ be a minimal 3-fold of general type with $P_m(X)\geq 2$. Then
    \[
        |(lm+l+2m+1)K_X|
    \]
    defines a generically finite map for every integer $l\geq 1$.
\end{proposition}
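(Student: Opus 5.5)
Set $n=lm+l+2m+1=(l+2)(m+1)-1$. I follow the scheme of \cref{prop2m2} and \cref{prop2sqrt2}. Let $f:W\to X$ be a Chen resolution with respect to a 1-dimensional subsystem $\Lambda\subseteq|mK_X|$, let $S$ be a generic irreducible element of $f^*\Lambda$, and let $\sigma:S\to S_0$ be the contraction to the minimal model. We have $n-m=(l+1)(m+1)\geq 2m$, so \cref{thmNV} gives $P_{n-m}(X)>0$. By \cref{coroTankeev1}, it then suffices to show that $|nK_W||_S$ defines a generically finite map. As in \cref{prop2sqrt2}, Kawamata--Viehweg vanishing gives
\[
|nK_W||_S\lsgeq |K_S+\lceil (n-m-1)f^*(K_X)|_S\rceil|,\qquad n-m-1=l(m+1)+m.
\]

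Next I use \cref{compareIneq}, applied $l$ times to divisors $D_i\in|(m+1)K_X|$. Write $(n-m-1)f^*K_X=\sum_{i=1}^l f^*D_i+mf^*K_X$ up to linear equivalence. This realizes $\lceil (n-m-1)f^*(K_X)|_S\rceil$ as $\sum_i\sigma^*D_{0,i}+\lceil mf^*(K_X)|_S\rceil+(\text{effective})$ with $D_{0,i}\sim K_{S_0}$. The subsystem $|\lceil mf^*(K_X)|_S\rceil|$ can be taken to contain the restriction of the movable part, and it is nonempty. Hence
\[
|nK_W||_S\lsgeq |K_S+l\,\sigma^*K_{S_0}+B|
\]
for some effective divisor $B$. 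Linear-equivalence bookkeeping is harmless here, because \cref{compareIneq} holds for any $D\sim(m+1)K_X$.

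The argument now splits on $l$. If $l\geq 2$, then $|K_S+l\sigma^*K_{S_0}+B|\lsgeq \sigma^*|(l+1)K_{S_0}|$ (using $K_S\geq\sigma^*K_{S_0}$). This is generically finite by \cref{thmSurf3K}, since $l+1\geq 3$. If $l=1$, we need $|K_S+\sigma^*K_{S_0}+B|$, which is exactly \cref{thmMain1}. Its hypothesis requires $\sigma^*K_{S_0}+B$ to be effective. Here I take $B=\lceil f^*(D)|_S\rceil-\sigma^*D_0$ as in \cref{propMN221}, where $D\in|(m+1)K_X|$ needs $P_{m+1}(X)>0$. This gives $n=3m+2$, which is precisely \cref{thm3m2}.

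The main obstacle is the case $l=1$ when $P_{m+1}(X)=0$. To handle it, I would first try to avoid needing $P_{m+1}>0$ by inserting $|mK|$ into the $(m+1)$-part. Write $n-m-1=2m+1=(m+1)+m$, take $D\sim(m+1)K_X$ arbitrary (not necessarily effective), and apply \cref{compareIneq} to get $\sigma^*D_0$ with $D_0\sim K_{S_0}$. The remaining $m f^*K_X|_S$ contains the effective divisor $S'|_S$ coming from $\Lambda$, so it can be written as $B$, with $B$ effective. The difficulty is that \cref{thmMain1} wants $\sigma^*K_{S_0}+B$ effective, not just $B$. I would meet this by choosing the representative of $D+mK_X\sim(2m+1)K_X$ so that it is effective when $P_{2m+1}(X)>0$. \cref{thmNV} guarantees this, and the decomposition then goes through: $\sigma^*D_0+B=\lceil f^*(D')|_S\rceil\geq0$ with $D'\in|(2m+1)K_X|$, while $B\geq0$ follows from \cref{compareIneq} applied to $D=D'-L$ with $L\in\Lambda$. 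So the real check is that \cref{compareIneq}'s effectivity $f^*(D)|_S\geq\sigma^*D_0$ combines correctly with the effective $f^*L|_S$ to keep $B\geq0$.
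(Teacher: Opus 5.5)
Your argument is correct, but it is organized differently from the paper's, and the difference matters exactly at $l=1$.

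The paper bounds $|nK_W|\lsgeq|K_W+\lceil l(m+1)f^*K_X\rceil+2S|$ and uses \cref{coroTankeev2}. The vanishing of $h^1(K_W+\lceil l(m+1)f^*K_X\rceil)$ then supplies the needed section of $nK_W-S$, so no non-vanishing theorem is used. The price is that only $\lceil l(m+1)f^*(K_X)|_S\rceil$ survives on $S$. For $l=1$ this leaves no slack. When $S$ is a $(1,0)$-surface, the paper must therefore produce an effective $D\in|(m+1)K_X|$. It gets $p_g(X)=0$ and $h^2(\OO_W)=0$ from \cref{propH2OX}, reduces to $q(X)=0$ via \cref{thmIrr5}, hence $\chi(\OX)=1$ and $m\geq 2$. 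It then obtains $P_{m+1}(X)>0$ from \cref{propPmPm1} before applying \cref{thmMain1}; the non-$(1,0)$ case is \cref{thmXG1}.

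You instead use \cref{coroTankeev1}, with $P_{n-m}(X)>0$ from \cref{thmNV}, and a single $S$. This leaves an extra $mf^*(K_X)|_S$ on $S$. That slack lets you take $D'\in|(2m+1)K_X|$ (nonempty by \cref{thmNV}) and apply \cref{compareIneq} to the non-effective $D=D'-L$ with $L\in\Lambda$.

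The check you flag is immediate. We have $\lceil f^*(D')|_S\rceil\geq f^*(D)|_S+f^*(L)|_S\geq \sigma^*D_0+f^*(L)|_S$. Here $f^*(L)|_S\geq 0$ because $L\geq 0$ and $S$ is not a component of $f^*L$. Note that $S'|_S=0$ for another fiber $S'$ of the pencil, so the effectivity comes from $L$ itself, not from $S'$. Hence the integral divisor $B=\lceil f^*(D')|_S\rceil-\sigma^*D_0$ is effective, and $\sigma^*D_0+B=\lceil f^*(D')|_S\rceil\geq 0$. So \cref{thmMain1} settles $l=1$ uniformly, with no case split on the type of $S$. Your preliminary route through $P_{m+1}(X)>0$ can simply be dropped.

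What each route buys:
\begin{itemize}
\item Your version gives a uniform $l=1$ step, exploiting the fact that \cref{compareIneq} does not require $D$ to be effective.
\item The paper's version avoids \cref{thmNV} altogether. That theorem's proof contains essentially the same analysis ($p_g(S)=0$ forcing $\chi(\OX)=1$, then \cref{thmIrr5} and \cref{propPChi1}) that you avoid at this step.
\end{itemize}
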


\begin{proof}
    Let $f:W\to X$ be a Chen resolution with respect to a 1-dimensional subsystem $\Lambda\subseteq|mK_X|$. Let $S$ be a generic irreducible element of $f^*\Lambda$. Let $\sigma$ be the morphism from $S$ to its minimal model $S_0$.

    We have
    \[
        |(lm+l+2m+1)K_W|\lsgeq |K_W+\lceil l(m+1)f^*K_X \rceil+2S|.
    \]

    By Kawamata–Viehweg vanishing theorem,
    \[
        h^1(K_W+\lceil l(m+1)f^*K_X \rceil)=0.
    \]

    By \cref{coroTankeev2}, we only need to prove that
    \[
        |K_W+\lceil l(m+1)f^*K_X \rceil+2S||_S
    \]
    defines a generically finite map.

    By Kawamata–Viehweg vanishing theorem, we only need to prove that
    \[
        |K_S+\lceil l(m+1)f^*(K_X)|_S \rceil|
    \]
    defines a generically finite map.

    \medskip

    \textbf{Case 1}. $l\geq 2$ or $S$ is not a $(1,0)$-surface.

    By \cref{compareIneq},
    \[
        |K_S+\lceil l(m+1)f^*(K_X)|_S \rceil|\lsgeq |K_S+l\sigma^*K_{S_0}|\lsgeq \sigma^*|(l+1)K_{S_0}|.
    \]

    Thus we only need to prove that $|(l+1)K_{S_0}|$ defines a generically finite map which is true by \cref{thmXG1} and \cref{thmSurf3K}.

    \medskip

    \textbf{Case 2}. $l=1$ and $S$ is a $(1,0)$-surface.

    Since $q(S)=p_g(S)=0$, by \cref{propH2OX}, $h^2(\OO_W)=0$.

    Since $p_g(S)=0$, we have that the natural homomorphism
    \[
        H^0(K_W-S)\to H^0(K_W)
    \]
    is surjective. Thus $p_g(X)=h^0(K_W)=0$.

    By \cref{thmIrr5}, we may assume $q(X)=0$. Then $\chi(\OX)=1$.

    Since $p_g(X)=0$, we have $m\geq 2$. By \cref{propPmPm1}, $P_{m+1}(X)>0$. Take a divisor $D\in |(m+1)K_X|$. By \cref{compareIneq}, there exists a divisor $D_0\sim K_{S_0}$ such that
    \[
        f^*(D)|_S \geq \sigma^*D_0.
    \]

    Since $D\geq 0$, we have $f^*(D)|_S\geq 0$. Let $B=\lceil f^*(D)|_S \rceil-\sigma^*D_0$. Then $B\geq 0$ and $\sigma^*D_0+B\geq 0$.

    Thus
    \[
        |K_S+\lceil (m+1)f^*(K_X) |_S\rceil|= |K_S+\sigma^*D_0+B|
    \]
    defines a generically finite map by \cref{thmMain1}.
\end{proof}

\begin{proposition}\label{prop3m3}
    Let $X$ be a minimal 3-fold of general type with $P_m(X)\geq 2$. Then
    \[
        |(lm+l)K_X|
    \]
    defines a generically finite map for every integer $l\geq 3$.
\end{proposition}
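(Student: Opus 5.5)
We follow the same scheme as \cref{prop3m2}, but now the two copies of $S$ are absorbed into the multiple $l(m+1)$ itself. Fix a Chen resolution $f:W\to X$ with respect to a 1-dimensional subsystem $\Lambda\subseteq|mK_X|$. Let $S$ be a generic irreducible element of $f^*\Lambda$ and $\sigma:S\to S_0$ the contraction to the minimal model. Since $f^*\Lambda\lsgeq S$, we have
\[
    |(lm+l)K_W|\lsgeq |K_W+\lceil ((l-2)(m+1)+1)f^*K_X-f^*K_X\rceil+2S|.
\]
More precisely, we write $(lm+l)K_W\geq K_W+(lm+l-2m-1)f^*K_X+2mf^*K_X$ and use $mf^*K_X\geq S$. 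The result is
\[
    |(lm+l)K_W|\lsgeq |K_W+\lceil (l-2)(m+1)f^*K_X+f^*K_X\rceil+2S|
    \lsgeq |K_W+\lceil ((l-2)(m+1)+1)f^*K_X\rceil+2S|.
\]
The coefficient $N=(l-2)(m+1)+1$ is positive, so $Nf^*K_X$ is nef and big. Kawamata--Viehweg vanishing (the fractional part is supported on $\Exc(f)$, which is simple normal crossing) gives $h^1(K_W+\lceil Nf^*K_X\rceil)=0$. Hence \cref{coroTankeev2} reduces the problem to the restriction to $S$. A second application of Kawamata--Viehweg, to $K_W+\lceil Nf^*K_X\rceil+S$, shows it suffices that
\[
    |K_S+\lceil Nf^*(K_X)|_S\rceil|
\]
defines a generically finite map.

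Next we apply \cref{compareIneq} $(l-2)$ times, each time to a divisor $D\sim(m+1)K_X$. We take $D=(m+1)$ times a general member of $|K_X|$ if available; otherwise we use $D$ only as a linear-equivalence class, since \cref{compareIneq} allows any $D\sim(m+1)K_X$. Then
\[
    \lceil Nf^*(K_X)|_S\rceil\geq (l-2)\sigma^*D_0+\lceil f^*(K_X)|_S\rceil,\qquad D_0\sim K_{S_0},
\]
with $f^*(K_X)|_S$ nef and big. For $l\geq 4$ we have $l-2\geq 2$, and $|K_S+2\sigma^*K_{S_0}|\lsgeq\sigma^*|3K_{S_0}|$ is generically finite by \cref{thmSurf3K}; discarding the extra effective nef part finishes this case.

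The case $l=3$ is the main obstacle, since it leaves $|K_S+\sigma^*K_{S_0}+\lceil f^*(K_X)|_S\rceil|$. If $S$ is not a $(1,0)$-surface, \cref{thmXG1} applies to $\sigma^*|2K_{S_0}|$. If $S$ is a $(1,0)$-surface, we argue as in Case~2 of \cref{prop3m2}: we get $p_g(X)=0$ and $h^2(\OO_W)=0$, and by \cref{thmIrr5} (note $3m+3\geq 5$) we may assume $q=0$, so $\chi(\OX)=1$ and $m\geq2$. Then \cref{propPmPm1} gives $P_{m+1}(X)>0$. We want to write the extra term as an effective $B$ with $\sigma^*D_0+B\geq0$ and apply \cref{thmMain1}. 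The cleanest route is to use $D\in|(m+1)K_X|$ effective in \cref{compareIneq}, which gives an effective $D_0$-representative with $B=\lceil f^*(D)|_S\rceil-\sigma^*D_0+\lceil f^*(K_X)|_S\rceil$. For this to match the degree we need the coefficient $(m+1)+1\leq N=m+2$, which holds exactly since $N=m+2$ when $l=3$. Thus $B\geq0$ and \cref{thmMain1} concludes. The delicate points are the bookkeeping of round-ups, where $\lceil a+b\rceil\geq\lceil a\rceil+\lfloor b\rfloor$ and integrality of $f^*D|_S$ pieces is used, and making sure $\sigma^*D_0+B$ is effective.
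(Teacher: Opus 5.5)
There is a genuine gap, and it comes from using two copies of $S$ (\cref{coroTankeev2}) instead of one. Writing $l(m+1)=1+2m+N$ forces $N=(l-2)(m+1)+1$, which is not a multiple of $m+1$. After applying \cref{compareIneq} $(l-2)$ times you are left with
\[
|K_S+(l-2)\sigma^*D_0+\lceil f^*(K_X)|_S\rceil|.
\]
Every one of your concluding steps then treats the leftover $\lceil f^*(K_X)|_S\rceil$ as effective:
for $l\ge 4$ you ``discard the extra effective nef part'';
for $l=3$ with $S$ not a $(1,0)$-surface you pass to $\sigma^*|2K_{S_0}|$;
in the $(1,0)$ case your $B$ is $\ge 0$ only if $\lceil f^*(K_X)|_S\rceil\ge 0$.

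Nefness of $f^*(K_X)|_S$ says nothing about effectivity of its round-up. When $p_g(X)=0$ there is no effective representative of $K_X$, and $\lceil f^*(K_X)|_S\rceil$ need not be linearly equivalent to any effective divisor. So it cannot be dropped: the relation $\lsgeq$ needs an effective $Z$, and generic finiteness is not preserved when you remove a non-effective summand. This is exactly the case that matters, since in your $(1,0)$ case you have just shown $p_g(X)=0$. The same objection applies to $(l-4)\sigma^*D_0$ when $p_g(S)=0$, though there you could simply use $|(l-1)K_{S_0}|$ instead.

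Your round-up bookkeeping in the $(1,0)$ case also goes the wrong way. Since $\lceil a+b\rceil\le\lceil a\rceil+\lceil b\rceil$, the system $|K_S+\sigma^*D_0+B|$ with your $B$ may be strictly larger than the system $|K_S+\lceil (m+2)f^*(K_X)|_S\rceil|$ you actually need. The correct bound $\lceil f^*(D)|_S+f^*(K_X)|_S\rceil\ge\sigma^*D_0+\lceil f^*(K_X)|_S\rceil$ just reproduces the non-effective leftover.

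The missing idea is the non-vanishing theorem. Since $lm+l-m\ge 2m$, \cref{thmNV} gives $P_{lm+l-m}(X)>0$, so \cref{coroTankeev1} lets you restrict to $S$ using a single copy of $S$:
\[
|(lm+l)K_W|\lsgeq|K_W+\lceil (l-1)(m+1)f^*K_X\rceil+S|.
\]
Now the coefficient is an exact multiple of $m+1$. Applying \cref{compareIneq} $(l-1)$ times gives $|K_S+(l-1)\sigma^*K_{S_0}|\lsgeq\sigma^*|lK_{S_0}|$ with a genuinely effective remainder: the $\sigma^*D_{0,i}$ are integral and $f^*(D_i)|_S\ge\sigma^*D_{0,i}$ as divisors. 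Then \cref{thmSurf3K} finishes for every $l\ge 3$, with no $(1,0)$-surface case and no appeal to \cref{thmMain1}.

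Within your own framework, \cref{propQCompare} together with \cref{thmMasek} would rescue $l\ge 5$, since the coefficient $l-2+\frac{1}{m+1}$ exceeds $3$. However, the cases $l=3$ and $l=4$ (the latter e.g.\ when $K_{S_0}^2=1$) remain unproved by your argument.
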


\begin{proof}
    Let $f:W\to X$ be a Chen resolution with respect to a 1-dimensional subsystem $\Lambda\subseteq|mK_X|$. Let $S$ be a generic irreducible element of $f^*\Lambda$. Let $\sigma$ be the morphism from $S$ to its minimal model $S_0$.

    By \cref{thmNV}, since $lm+l-m\geq 2m$, we have $P_{lm+l-m}(X)>0$. By \cref{coroTankeev1}, we only need to prove that $|(lm+l)K_W||_S$ defines a generically finite map.

    We have
    \[
        |(lm+l)K_W|\lsgeq |K_W+\lceil (l-1)(m+1)f^*K_X \rceil +S|.
    \]

    With Kawamata–Viehweg vanishing theorem,
    \begin{align*}
        |(lm+l)K_W||_S\lsgeq |K_S+\lceil (l-1)(m+1)f^*(K_X) |_S\rceil|.
    \end{align*}

    With \cref{compareIneq},
    \[
        |(lm+l)K_W||_S\lsgeq |K_S+ (l-1)\sigma^*K_{S_0}|\lsgeq \sigma^*|lK_{S_0}|.
    \]

    Thus we only need to prove that $|lK_{S_0}|$ defines a generically finite map which is true by \cref{thmSurf3K}.
\end{proof}

\begin{proposition}\label{propMasekChi3}
    Let $X$ be a minimal 3-fold of general type with $\chi(\OX)\geq 3$, $q(X)=p_g(X)=0$ and $P_m(X)\geq 2$. Then $|nK_X|$ defines a generically finite map when $n\geq 3m+2$.
\end{proposition}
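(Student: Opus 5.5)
The plan is to follow the scheme of \cref{prop3m2}, using the hypotheses $\chi(\OX)\geq 3$ and $q(X)=p_g(X)=0$ to exclude the bad fibers and to supply an extra pencil on the fiber.

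\textbf{Setup and reduction to the fiber.} Fix a $1$-dimensional subsystem $\Lambda\subseteq|mK_X|$, a Chen resolution $f:W\to X$ with respect to $\Lambda$, a generic irreducible element $S$ of $f^*\Lambda$, and $\sigma:S\to S_0$ the map to the minimal model. Since $q(X)=0$, the pencil is rational. Since $q=p_g=0$, we have $h^2(\OX)=\chi(\OX)-1\geq 2$.

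If $q(S)=0$, \cref{propH2OX} gives $p_g(S)\geq h^2(\OO_W)\geq 2$. In particular $S$ is not a $(1,0)$-surface, and $|K_{S_0}|$ contains a $1$-dimensional subsystem.

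For $n\geq 3m+2$ we have
\[
|nK_W|\lsgeq |K_W+\lceil (n-2m-1)f^*K_X\rceil+2S|.
\]
Since $h^1(K_W+\lceil (n-2m-1)f^*K_X\rceil)=0$ by Kawamata--Viehweg, \cref{coroTankeev2} applies; note this avoids needing $P_{n-m}(X)>0$. Restricting to $S$ and applying Kawamata--Viehweg again, it suffices to show that
\[
|K_S+\lceil (n-2m-1)f^*(K_X)|_S\rceil|
\]
defines a generically finite map. Write $n-2m-1=(m+1)+k$ with $k=n-3m-2\geq 0$. By \cref{compareIneq}, this system dominates $|K_S+\lceil L\rceil+\sigma^*D_0|$, where $L=kf^*(K_X)|_S$ and $D_0\sim K_{S_0}$.

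\textbf{Case $k=0$.} Here $n=3m+2$, and the claim follows from \cref{prop3m2} with $l=1$, since $S$ is not a $(1,0)$-surface (Case 1 there).

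\textbf{Case $k\geq 1$.} Now $L$ is nef and big. Let $V$ be the pullback of a $1$-dimensional subsystem of $|K_{S_0}|$, available when $p_g(S)\geq 2$, and take $B\in V$. The system $\Lambda'=|K_S+\lceil L\rceil+\sigma^*D_0|$ then satisfies $\Lambda'\lsgeq|K_S+\lceil L\rceil+B|$. We also need $\Lambda'\lsgeq V$, which requires $h^0(K_S+\lceil L\rceil)>0$. This follows from Kawamata--Viehweg and Riemann--Roch:
\[
h^0(K_S+\lceil L\rceil)=\chi(\OO_S)+\tfrac12(\lceil L\rceil\cdot(K_S+\lceil L\rceil))>0,
\]
because $\chi(\OO_S)\geq 1$ for a surface of general type. \cref{propSurfLB} then gives generic finiteness.

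\textbf{Main obstacle: irregular fibers.} The step I expect to be hardest is the case $q(S)>0$, where \cref{propH2OX} is unavailable and $p_g(S)$ could be $1$. There I would first try to reprove the inequality $h^2(\OO_W)\leq p_g(S)$ using only the Fujita semipositivity part, controlling $R^1f_*\omega_W$ separately. Failing that, I would argue directly that $S$ is not a $(1,0)$-surface, so $\sigma^*|2K_{S_0}|$ is generically finite by \cref{thmXG1}. Then I would show that $\Lambda'$ dominates $\sigma^*|2K_{S_0}|$ plus an effective divisor. This uses that $K_S-\sigma^*K_{S_0}\geq 0$ and needs $\lceil L\rceil$ to be effective modulo exceptional parts, possibly via \cref{thmNV} to get $P_k(X)>0$ when $k\geq 2m$. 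For the remaining small $k$, I would instead use \cref{thmMasek} as in \cref{prop2sqrt2}.
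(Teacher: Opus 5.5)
Your reduction to $|K_S+\lceil (n-2m-1)f^*(K_X)|_S\rceil|$ is fine, and so is the case $n=3m+2$ via \cref{prop3m2} with $l=1$. The case $q(S)>0$, however, is not proved; your fallbacks leave a range open.
\begin{itemize}
\item The route through $P_k(X)>0$ covers only those $n$ with $P_{n-3m-2}(X)>0$, e.g.\ $n\ge 5m+2$ by \cref{thmNV}.
\item Masek ``as in \cref{prop2sqrt2}'' covers only $n>(2\sqrt{2}+1)(m+1)$.
\item What remains is $3m+3\le n\le (2\sqrt{2}+1)(m+1)$ with $P_{n-3m-2}(X)=0$. This includes $n=3m+3$, since $P_1(X)=0$.
\item In your $2S$-reduction the coefficient $\frac{n-2m-1}{m+1}$ is at most $2$ for $n\le 4m+3$. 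That is too small for \cref{thmMasek}, so it cannot be run from that reduction anyway.
\end{itemize}
The missing idea is to split on $K_{S_0}^2$ rather than on $q(S)$. A minimal surface of general type with $K^2=1$ is regular with $p_g\le 2$. So \cref{propH2OX} and $h^2(\OX)\ge 2$ show that either $K_{S_0}^2\ge 2$ or $S$ is a $(1,2)$-surface, and irregular fibers fall in the first case. Then the paper's argument runs as follows.
\begin{itemize}
\item $n=3m+3$: use \cref{prop3m3} with $l=3$.
\item $n\ge 3m+4$: use the one-$S$ reduction instead. Since $P_{n-m}(X)>0$ by \cref{thmNV}, \cref{coroTankeev1} reduces to $|K_S+\lceil (n-m-1)f^*(K_X)|_S\rceil|$, where now $\frac{n-m-1}{m+1}>2$.
\item If $K_{S_0}^2\ge 2$, \cref{propQCompare} and \cref{propGeneralKC2} give $L^2>8$ and $(L\cdot C)>4$, so \cref{thmMasek} applies.
\item If $S$ is a $(1,2)$-surface, apply \cref{compareIneq} twice and finish with \cref{propSurfL2B}, using $p_g(S)=2$.
\end{itemize}

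Your regular case also has a gap, in the nonvanishing $h^0(K_S+\lceil L\rceil)>0$. Riemann--Roch and Kawamata--Viehweg give $\chi(\OO_S)+\frac12(\lceil L\rceil\cdot(K_S+\lceil L\rceil))$, and $\chi(\OO_S)\ge 1$ alone does not make this positive. Write $\lceil L\rceil=L+\Delta$. The term $(K_S+\Delta)\cdot\Delta$ can be negative when $\Supp\Delta$ contains $(-1)$-curves or negative definite configurations, so the intersection number has no sign in general. You do get a section when $P_k(X)>0$, since then $\lceil L\rceil\ge\lfloor kf^*K_X\rfloor|_S$ and $p_g(S)>0$. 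But $P_k(X)$ may vanish for small $k$, already for $k=1$. With only one copy of $\sigma^*K_{S_0}$, \cref{propSurfLB} really needs this nonvanishing. The paper sidesteps it by arranging two copies of $\sigma^*K_{S_0}$ and using \cref{propSurfL2B}, where \cref{coroTankeev2} produces the required section from an $h^1$-vanishing.
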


\begin{proof}
    \textbf{Case 1}. $n= 3m+2$.

    Take $l=1$ in \cref{prop3m2}.

    \medskip

    \textbf{Case 2}. $n= 3m+3$.

    Take $l=3$ in \cref{prop3m3}.

    \medskip

    \textbf{Case 3}. $n\geq 3m+4$.

    Let $f:W\to X$ be a Chen resolution with respect to a 1-dimensional subsystem $\Lambda\subseteq|mK_X|$. Let $S$ be a generic irreducible element of $f^*\Lambda$. Let $\sigma$ be the morphism from $S$ to its minimal model $S_0$.

    By \cref{thmNV}, since $n-m\geq 2m$, we have $P_{n-m}(X)>0$. By \cref{coroTankeev1}, we only need to prove that $|nK_W||_S$ defines a generically finite map.

    We have
    \[
        |nK_W|\lsgeq |K_W+\lceil (n-m-1)f^*K_X \rceil +S|.
    \]

    With Kawamata–Viehweg vanishing theorem,
    \begin{align}\label{eqNKSGeqNM1-MasekChi3}
        |nK_W||_S\lsgeq |K_S+\lceil (n-m-1)f^*(K_X) |_S\rceil|.
    \end{align}

    Since $\chi(\OX)\geq 3$ and $q(X)=p_g(X)=0$, we have $h^2(\OX)\geq 2$. By \cref{propH2OX}, we have that $S$ is neither a $(1,0)$-surface nor a $(1,1)$-surface. Thus either $K_{S_0}^2\geq 2$ or $S$ is a $(1,2)$-surface.

    \medskip

    \textbf{Case 3-1}. $K_{S_0}^2\geq 2$.

    Let $L=(n-m-1)f^*(K_X) |_S$. By \cref{propQCompare},
    \[
        L\geq_\bQ \frac{n-m-1}{m+1}\sigma^*K_{S_0}.
    \]

    Thus
    \begin{align*}
        L^2\geq \left(\frac{n-m-1}{m+1}\right)^2 K_{S_0}^2\geq 2\left(\frac{n-m-1}{m+1}\right)^2 > 8.
    \end{align*}

    By \cref{propGeneralKC2},
    \begin{align*}
        (L\cdot C)\geq \frac{n-m-1}{m+1}(\sigma^*K_{S_0}\cdot C) \geq \frac{2(n-m-1)}{m+1}>4
    \end{align*}
    for any curve $C$ on $S$ passing through two very general points of $S$.

    By \cref{eqNKSGeqNM1-MasekChi3} and \cref{thmMasek}, $|nK_W||_S\lsgeq|K_S+\lceil L\rceil|$ defines a birational map.

    \medskip

    \textbf{Case 3-2}. $S$ is a $(1,2)$-surface.

    By \cref{eqNKSGeqNM1-MasekChi3} and \cref{compareIneq}, we only need to prove that
    \[
        |K_S+\lceil (n-3m-3)f^*(K_X) |_S\rceil+2\sigma^*K_{S_0}|
    \]
    defines a generically finite map which is true by \cref{propSurfL2B} and $p_g(S)=2$.
\end{proof}

\begin{proposition}\label{propDeltaM2N3}
    Let $X$ be a minimal 3-fold of general type with $\chi(\OX)\neq 1$, $q(X)=p_g(X)=0$, $P_m(X)\geq 2$, $P_n(X)\geq 3$, $P_{n-m}(X)>0$ and $n \geq 2m+3$. Let $p=\lfloor \frac{P_n(X)-1}{2}\rfloor$. If
    \[
        K_X^3<\frac{4p^3}{n(n+p)(3n+4p)},
    \]
    then $|nK_X|$ defines a generically finite map.
\end{proposition}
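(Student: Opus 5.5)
The plan is to argue by contradiction. Suppose $|nK_X|$ does not define a generically finite map. The key input is the contrapositive of \cref{propK3Pencil}: the hypothesis
\[
K_X^3<\frac{4p^3}{n(n+p)(3n+4p)}
\]
says that whenever a subsystem $\Lambda\lsleq|nK_X|$ of dimension $p'\ge p$ is composed with a pencil, the general fiber of the induced fibration must be a $(1,0)$-surface. This uses that $p'\mapsto 4p'^3/(n(n+p')(3n+4p'))$ is increasing, so the bound for $p'$ is at least the bound for $p$.

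I would then show that $(1,0)$-fibrations cannot occur. Since $q(X)=p_g(X)=0$, we have $\chi(\OX)=1+h^2(\OX)$, so $\chi(\OX)\neq 1$ forces $h^2(\OX)=h^2(\OO_W)\ge 1$ on any resolution $W$. Because $q(X)=0$, any pencil is a rational pencil, so on a Chen resolution $W\to\bP^1$ we may apply \cref{propH2OX} to the general fiber $S$. If $S$ were a $(1,0)$-surface, this would give $h^2(\OO_W)\le p_g(S)=0$, a contradiction. The same argument also shows that a generic irreducible element $F$ of a pencil in $|mK_X|$ is never a $(1,0)$-surface.

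Next I split according to $d=\dim\overline{\Phi_{|nK_X|}(X)}$.

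\textbf{Case $d=1$.} Here $|nK_X|$ itself is composed with a pencil of dimension $P_n(X)-1\ge p$. By the first paragraph its fiber is a $(1,0)$-surface, and by the second paragraph this is impossible.

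\textbf{Case $d=2$.} The image is a surface $\Sigma\subset\bP^{P_n(X)-1}$. The aim is to produce a subsystem $\Lambda\lsleq|nK_X|$ of dimension at least $p=\lfloor\frac{P_n(X)-1}{2}\rfloor$ that is composed with a pencil. This is where the factor $\frac12$ in $p$ should come from. My first attempt would fix a general point of $\Sigma$ (or a general $1$-dimensional subsystem pulled back from $\Sigma$) and take the sections of $|nK_X|$ vanishing to appropriate order along a general fiber curve of $\Phi_{|nK_X|}$. Roughly half of the $P_n(X)$ sections should survive, and the sublinear system so obtained should factor through a map to a curve. Once such a $\Lambda$ exists, \cref{propK3Pencil} together with the hypothesis on $K_X^3$ forces $(1,0)$-fibers, and \cref{propH2OX} again gives a contradiction.

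In parallel I would use the remaining hypotheses $P_m(X)\ge2$, $P_{n-m}(X)>0$ and $n\ge 2m+3$ as a fallback for $d=2$. Take a Chen resolution with respect to a pencil in $|mK_X|$ with generic irreducible element $F$. By \cref{coroTankeev1} and Kawamata--Viehweg vanishing, it suffices that $|K_F+\lceil (n-m-1)f^*(K_X)|_F\rceil|$ is generically finite. Applying \cref{compareIneq}, this system contains $|K_F+\sigma^*D_0+\lceil N\rceil|$ with $N=(n-2m-2)f^*(K_X)|_F$ nef and big, since $n-2m-2\ge1$. Since $F$ is not a $(1,0)$-surface, one would like to conclude using \cref{thmXG1} (generic finiteness of $|2K_{F_0}|$) together with \cref{propSurfLB}, \cref{propSurfL2B} or \cref{propSurfPencil} to absorb $\lceil N\rceil$.

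\textbf{Main obstacle.} The hard step is Case $d=2$: either constructing the $p$-dimensional pencil subsystem, or making the fallback work when $h^0(\lceil N\rceil)=0$. In the latter situation I would try the pencil structure on $F$ induced by the restriction of $|nK_X|$, and apply \cref{propSurfPencil} with $b$ comparable to $p$.
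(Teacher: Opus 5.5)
There is a genuine gap, and it is exactly the step you flag. Your first two paragraphs are correct: the bound is increasing in $p'$, and since $h^2(\OX)=\chi(\OX)-1\neq 0$, \cref{propH2OX} excludes $(1,0)$-fibers of any rational pencil. The case $d=1$ is also fine. The problem is $d=2$.

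\emph{The main plan for $d=2$.} You want a subsystem of $|nK_X|$ of dimension $\geq p$ composed with a pencil, obtained from the image surface $\Sigma\subset\bP^{P_n(X)-1}$. This uses only the projective geometry of $\Sigma$, which cannot suffice. Vanishing along a fiber curve of $\Phi_{|nK_X|}$ imposes one condition (hyperplanes through a point of $\Sigma$). Higher-order vanishing amounts to projecting from osculating spaces. Neither gives a one-dimensional image in general. For instance, if $\Sigma$ is smooth with Picard group generated by the hyperplane class $H$, a subsystem of hyperplane sections composed with a pencil of dimension $\geq 2$ would force $H\equiv aC+Z$ with $a\geq 2$, $C$ a moving curve and $Z\geq 0$. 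This is impossible since $C\equiv cH$ with $c\geq 1$. So the pencil must come from elsewhere, and the only available source is the pencil in $|mK_X|$.

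\emph{The fallback.} It stalls for the reason you give: $P_{n-2m-2}(X)$ may vanish, e.g.\ $n=2m+3$ with $p_g(X)=0$. Also, \cref{compareIneq} is not what is needed here. Splitting on $d=\dim\overline{\Phi_{|nK_X|}(X)}$ is simply the wrong dichotomy.

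\emph{The missing idea.} Keep the generic irreducible element $S$ of the $|mK_X|$-pencil that you already use for Tankeev's principle and Kawamata--Viehweg vanishing. Split on $\dim V$, where $V$ is the image of $H^0(W,\lfloor nf^*K_X\rfloor)\to H^0(S,\lfloor nf^*K_X\rfloor|_S)$, so that $|nK_W||_S\lsgeq V$.

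\textbf{Case $\dim V\geq 2$.} If $V$ is not generically finite, it is composed with a pencil. Apply \cref{propSurfPencil} with $L=(n-m-1)f^*(K_X)|_S$, $B=\lfloor nf^*K_X\rfloor|_S$ and $b=\frac{n}{n-m-1}$. The inequality $b<2\leq\dim V$ is exactly where $n\geq 2m+3$ enters, so $b$ is not ``comparable to $p$''.

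\textbf{Case $\dim V\leq 1$.} The pencil is rational because $q(X)=0$, so $S|_S\sim 0$. Hence each subtraction of $S$ loses at most $\dim V+1\leq 2$ sections. This gives $P_n(X)\leq h^0(\lfloor nf^*K_X\rfloor-pS)+2p$, so $h^0(\lfloor nf^*K_X\rfloor-pS)\geq P_n(X)-2p\geq 1$. Therefore $|pS|\lsleq|\lfloor nf^*K_X\rfloor|$ is a $p$-dimensional subsystem composed with a pencil whose fiber $S$ is not a $(1,0)$-surface. \cref{propK3Pencil} then contradicts the bound on $K_X^3$.

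This count, not any construction on $\Sigma$, is where $p=\lfloor\frac{P_n(X)-1}{2}\rfloor$ comes from. Note that the $K^3$ hypothesis is used only when $\dim V\leq 1$. When $\dim V\geq 2$, generic finiteness is proved directly rather than by contradiction.
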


\begin{proof}
    Let $f:W\to X$ be a Chen resolution with respect to a 1-dimensional subsystem $\Lambda\subseteq|mK_X|$. Let $S$ be a generic irreducible element of $f^*\Lambda$. Since $q(X)=0$, the pencil here must be a rational pencil.

    By \cref{coroTankeev1}, since $P_{n-m}(X)>0$, we only need to prove that $|nK_W||_S$ defines a generically finite map.

    We have
    \[
        |nK_W|\lsgeq |K_W+\lceil (n-m-1)f^*K_X \rceil +S|.
    \]

    With Kawamata–Viehweg vanishing theorem,
    \begin{align}\label{eqNKSGeqNM1-DeltaM2N3}
        |nK_W||_S\lsgeq |K_S+\lceil (n-m-1)f^*(K_X) |_S\rceil|.
    \end{align}

    Since $\chi(\OX)\neq 1$ and $q(X)=p_g(X)=0$, we have $h^2(\OX)>0$. By \cref{propH2OX}, we have that $S$ is not a $(1,0)$-surface.

    Let $V$ be the linear system on $S$ corresponding to the image of the natural homomorphism
    \[
        H^0(W, \lfloor nf^*K_X \rfloor)\to H^0(S, \lfloor nf^*K_X \rfloor|_S).
    \]

    Since $|nK_W|\lsgeq |\lfloor nf^*K_X \rfloor|$, we have
    \begin{align}\label{eqNKSGeqV-DeltaM2N3}
        |nK_W||_S\lsgeq |\lfloor nf^*K_X \rfloor||_S=V.
    \end{align}

    Thus we may assume that $V$ does not define a generically finite map. Then either $\dim V=0$ or $V$ is composed with a pencil.

    \medskip

    \textbf{Case 1}. $\dim V\geq 2$.

    Let $L=(n-m-1)f^*(K_X)|_S$, $B=\lfloor nf^*K_X \rfloor|_S$, $b=\frac{n}{n-m-1}$. Then $bL\geq B$ and $b<2\leq\dim V$.

    By \cref{propSurfPencil}, \cref{eqNKSGeqNM1-DeltaM2N3} and \cref{eqNKSGeqV-DeltaM2N3}, we have that $|nK_W||_S$ defines a generically finite map.

    \medskip

    \textbf{Case 2}. $\dim V\leq 1$.

    Since
    \begin{align*}
        P_n(X)=h^0(\lfloor nf^*K_X \rfloor)\leq h^0(\lfloor nf^*K_X \rfloor-pS)+p(\dim V+1),
    \end{align*}
    we have
    \begin{align*}
        h^0(\lfloor nf^*K_X \rfloor-pS)\geq P_n(X)-2p\geq 1.
    \end{align*}

    Let $\widetilde\Lambda=|pS|$. Then $\widetilde\Lambda\lsleq |\lfloor nf^*K_X \rfloor|$, $\dim \widetilde\Lambda=p$ and $\widetilde\Lambda$ is composed with a pencil.

    By \cref{propK3Pencil},
    \[
        K_X^3\geq \frac{4p^3}{n(n+p)(3n+4p)}
    \]
    which is a contradiction.
\end{proof}

\begin{proposition}\label{propDeltaM2L3}
    Let $X$ be a minimal 3-fold of general type with $\chi(\OX)\neq 1$, $q(X)=p_g(X)=0$, $P_m(X)\geq 2$, $P_l(X)\geq 3$, $P_{n-m}(X)>0$, $P_{n-l}(X)>0$, $n\geq l+2$ and $2n\geq l+2m+3$. Let $p=P_l(X)-2$. If
    \[
        K_X^3<\frac{4p^3}{l(l+p)(3l+4p)},
    \]
    then $|nK_X|$ defines a generically finite map.
\end{proposition}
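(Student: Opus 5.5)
Following \cref{propDeltaM2N3}, I would argue by contradiction with \cref{propK3Pencil}, now using $|lK_X|$ as the auxiliary system. Let $f:W\to X$ be a Chen resolution with respect to a $1$-dimensional subsystem $\Lambda\subseteq|mK_X|$, and let $S$ be a generic irreducible element of $f^*\Lambda$. Since $q(X)=0$, the pencil is rational. Since $\chi(\OX)\neq 1$ and $q(X)=p_g(X)=0$, we have $h^2(\OX)>0$, so \cref{propH2OX} shows that $S$ is not a $(1,0)$-surface. Because $P_{n-m}(X)>0$, \cref{coroTankeev1} reduces the problem to showing that $|nK_W||_S$ is generically finite. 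Kawamata--Viehweg vanishing then gives
\[
|nK_W||_S\lsgeq |K_S+\lceil (n-m-1)f^*(K_X)|_S\rceil|.
\]

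Let $V$ be the image of $H^0(W,\lfloor lf^*K_X\rfloor)\to H^0(S,\lfloor lf^*K_X\rfloor|_S)$. Since $P_{n-l}(X)>0$, we get $|nK_W|\lsgeq|\lfloor lf^*K_X\rfloor|$, and hence $|nK_W||_S\lsgeq V$. We may therefore assume that $V$ is not generically finite, so either $\dim V\le 1$ or $V$ is composed with a pencil.

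\textbf{Case $\dim V\geq 2$.} Apply \cref{propSurfPencil} with
\[
L=(n-m-1)f^*(K_X)|_S,\qquad B=\lfloor lf^*K_X\rfloor|_S,\qquad b=\frac{l}{n-m-1}.
\]
Then $bL\ge B$. The hypothesis $2n\ge l+2m+3$ gives $n-m-1\ge (l+1)/2>l/2$, so $b<2\le\dim V$. Note that $L$ is nef and big. This case is done.

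\textbf{Case $\dim V\le 1$.} The natural estimate is
\[
P_l(X)\le h^0(\lfloor lf^*K_X\rfloor-S)+\dim V+1\le h^0(\lfloor lf^*K_X\rfloor-S)+2,
\]
so $h^0(\lfloor lf^*K_X\rfloor-S)\ge P_l(X)-2=p\ge 1$. Then $|\lfloor lf^*K_X\rfloor|\lsgeq |S|+|\lfloor lf^*K_X\rfloor-S|$. The aim is a pencil-type subsystem of $|lK_X|$ of dimension $p$ whose general fibre is not a $(1,0)$-surface, which would let \cref{propK3Pencil} give $K_X^3\ge \frac{4p^3}{l(l+p)(3l+4p)}$ and a contradiction.

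\textbf{Main obstacle.} Peeling off only one copy of $S$ does not directly give $|pS|$, so this is the delicate step. The natural candidate is $S+|\lfloor lf^*K_X\rfloor-S|$, which has dimension $\ge p-1$. A cleaner route is to apply the pencil bound to $\Lambda'=|\lfloor lf^*K_X\rfloor|$ itself, of dimension $P_l(X)-1=p+1$.

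If $\Lambda'$ is composed with a pencil, \cref{propK3Pencil} applies with dimension $p+1>p$. The bound is increasing in the dimension, so it beats the given one, provided the fibres are not $(1,0)$-surfaces.

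If $\Lambda'$ is not composed with a pencil, then its restriction to $S$ has dimension $\ge 2$ unless it is nearly trivial. I would handle this by the same dichotomy as in the proof of \cref{propD2}. Since $S$ moves in a pencil, $h^0(\lfloor lf^*K_X\rfloor-S)\le h^0(\lfloor lf^*K_X\rfloor)-2$, so $\dim V\ge 1$. Hence $\dim V=1$ exactly, with $h^0(\lfloor lf^*K_X\rfloor-S)=P_l(X)-2$. I would then try to show that this forces $\Lambda'$ and $\Lambda$ to induce compatible fibrations, making $\Lambda'$ composed with a pencil with fibre $S$, and hence not a $(1,0)$-surface.

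I expect this compatibility argument, and the verification that the fibres of the $|lK_X|$-pencil are not $(1,0)$-surfaces (via \cref{propH2OX} again, using $h^2(\OX)>0$), to be the main difficulty.
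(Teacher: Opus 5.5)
Your setup, the reduction to $|nK_W||_S$, and the case $\dim V\ge 2$ agree with the paper. The subcase where $\Lambda'=|\lfloor lf^*K_X\rfloor|$ is composed with a pencil is also fine: its fibres are not $(1,0)$-surfaces by \cref{propH2OX}, and $\frac{4b^3}{l(l+b)(3l+4b)}$ increases with $b$.

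The gap is the remaining subcase: $\dim V=1$ with $\Lambda'$ not composed with a pencil. You plan to show this forces $\Lambda'$ to be composed with a pencil, i.e.\ that the subcase is empty. Nothing prevents it from occurring, though. For example, with $P_l(X)=3$, $\Phi_{\Lambda'}$ could be dominant onto $\bP^2$ while each $S$ maps onto a line through a fixed point. With $P_l(X)=4$, each $S$ could map onto one of a family of pairwise skew lines on a quadric in $\bP^3$. In both configurations $\dim V=1$ and $\Lambda'$ is not composed with a pencil, so no argument about $\Lambda'$ alone can close this case. Note also that your proposal never uses the hypothesis $n\ge l+2$, which is exactly what the missing branch needs.

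The missing idea is to split according to the residual system $R=|\lfloor lf^*K_X\rfloor-S|$, which has $h^0\ge p\ge 1$.

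\textbf{First branch.} Suppose $p=1$, or $R$ is composed with the same rational pencil as $f^*\Lambda$. Then $R\lsgeq|(p-1)S|$, so $|pS|\lsleq|\lfloor lf^*K_X\rfloor|$. Hence $|\lfloor lf^*K_X\rfloor|$ contains a $p$-dimensional system composed with the pencil whose fibre is $S$, which is not a $(1,0)$-surface. \cref{propK3Pencil} then contradicts the assumed bound on $K_X^3$. This is how the ``only one copy of $S$'' obstacle you identified is overcome. In particular it settles both of the configurations above when $p=1$.

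\textbf{Second branch.} Suppose $p\ge 2$ and $R$ is not composed with that pencil. Here you do not get a contradiction; instead you prove generic finiteness directly.
\begin{enumerate}
\item After further blowing up so that $\Mov R$ is base point free, take a general $M\in\Mov R$. Then $M|_S$ lies in a $1$-dimensional system $V_1\subseteq(\Mov R)|_S$.
\item Since $n\ge l+2$, $(n-l-1)f^*K_X+M$ is nef and big. Kawamata--Viehweg vanishing therefore gives $|nK_W||_S\lsgeq|K_S+\lceil (n-l-1)f^*(K_X)|_S\rceil+M|_S|$.
\item Since $P_{n-l}(X)>0$, you also get $|nK_W||_S\lsgeq V_1$.
\item \cref{propSurfLB} now shows $|nK_W||_S$ is generically finite.
\end{enumerate}
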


\begin{proof}
    Let $f:W\to X$ be a Chen resolution with respect to a 1-dimensional subsystem $\Lambda\subseteq|mK_X|$. Let $S$ be a generic irreducible element of $f^*\Lambda$. Since $q(X)=0$, the pencil here must be a rational pencil.

    By \cref{coroTankeev1}, since $P_{n-m}(X)>0$, we only need to prove that $|nK_W||_S$ defines a generically finite map.

    We have
    \[
        |nK_W|\lsgeq |K_W+\lceil (n-m-1)f^*K_X \rceil +S|.
    \]

    With Kawamata–Viehweg vanishing theorem,
    \begin{align}\label{eqNKSGeqNM1-DeltaM2L3}
        |nK_W||_S\lsgeq |K_S+\lceil (n-m-1)f^*(K_X) |_S\rceil|.
    \end{align}

    Since $\chi(\OX)\neq 1$ and $q(X)=p_g(X)=0$, we have $h^2(\OX)>0$. By \cref{propH2OX}, we have that $S$ is not a $(1,0)$-surface.

    Let $V$ be the linear system on $S$ corresponding to the image of the natural homomorphism
    \[
        H^0(W, \lfloor lf^*K_X \rfloor)\to H^0(S, \lfloor lf^*K_X \rfloor|_S).
    \]

    Since $P_{n-l}(X)>0$, we have $|nK_W|\lsgeq |\lfloor lf^*K_X \rfloor|$. Thus
    \begin{align}\label{eqNKSGeqV-DeltaM2L3}
        |nK_W||_S\lsgeq |\lfloor lf^*K_X \rfloor||_S=V.
    \end{align}

    Thus we may assume that $V$ does not define a generically finite map. Then either $\dim V=0$ or $V$ is composed with a pencil.

    \medskip

    \textbf{Case 1}. $\dim V\geq 2$.

    Let $L=(n-m-1)f^*(K_X)|_S$, $B=\lfloor lf^*K_X \rfloor|_S$, $b=\frac{l}{n-m-1}$. Then $bL\geq B$ and $b<2\leq\dim V$.

    By \cref{propSurfPencil}, \cref{eqNKSGeqNM1-DeltaM2L3} and \cref{eqNKSGeqV-DeltaM2L3}, we have that $|nK_W||_S$ defines a generically finite map.

    \medskip

    \textbf{Case 2}. $\dim V\leq 1$.

    In this case, we have
    \begin{align*}
        h^0(\lfloor lf^*K_X \rfloor-S)&=h^0(\lfloor lf^*K_X \rfloor)-\dim V-1\\
        &=P_l(X)-\dim V-1\geq P_l(X)-2=p\geq 1.
    \end{align*}

    \medskip

    \textbf{Case 2-1}. $p=1$ or $|\lfloor lf^*K_X \rfloor-S|$ and $f^*\Lambda$ are composed with the same pencil.

    In this case, we have $|\lfloor lf^*K_X \rfloor-S|\lsgeq |(p-1)S|$.

    Let $\widetilde\Lambda=|pS|$. Then $\widetilde\Lambda\lsleq |\lfloor lf^*K_X \rfloor|$, $\dim \widetilde\Lambda=p$ and $\widetilde\Lambda$ is composed with a pencil.

    By \cref{propK3Pencil},
    \[
        K_X^3\geq \frac{4p^3}{l(l+p)(3l+4p)}
    \]
    which is a contradiction.

    \medskip

    \textbf{Case 2-2}. $p\geq 2$ and $|\lfloor lf^*K_X \rfloor-S|$ and $f^*\Lambda$ are not composed with the same pencil.

    By taking a higher resolution, we may assume that $\Mov|\lfloor lf^*K_X \rfloor-S|$ is base point free. Let $\widetilde{V}=(\Mov|\lfloor lf^*K_X \rfloor-S|)|_S$. Then $\dim\widetilde{V}\geq 1$. Let $M$ be a general element in $\Mov|\lfloor lf^*K_X \rfloor-S|$. Then $M$ is nef and effective. Let $V_1$ be a 1-dimensional subsystem of $\widetilde{V}$ containing $M|_S$.

    We have
    \[
        |nK_W|\lsgeq |K_W+\lceil (n-l-1)f^*K_X+M\rceil+S|.
    \]

    Since $n\geq l+2$ and $M$ is nef and effective, we have that $(n-l-1)f^*K_X+M$ is nef and big. With Kawamata–Viehweg vanishing theorem,
    \begin{align}\label{eqNKSGeqNM1-DeltaM2L4-2}
        |nK_W||_S\lsgeq |K_S+\lceil (n-l-1)f^*(K_X)|_S \rceil+M|_S|.
    \end{align}

    Since $P_{n-l}(X)>0$, we have $|nK_W|\lsgeq \Mov|\lfloor lf^*K_X \rfloor-S|$. Thus
    \begin{align}\label{eqNKSGeqV-DeltaM2L4-2}
        |nK_W||_S\lsgeq \widetilde{V}\lsgeq V_1.
    \end{align}

    By \cref{propSurfLB}, \cref{eqNKSGeqNM1-DeltaM2L4-2} and \cref{eqNKSGeqV-DeltaM2L4-2}, we have that $|nK_W||_S$ defines a generically finite map.
\end{proof}

\section{Threefolds with large pluricanonical section index}\label{secDelta13}

The following eight propositions are from direct computation of baskets in \cref{tableDelta13} using Reid's Riemann-Roch formula.

\begin{proposition}\label{propDeltaAll}
    Let $X$ be a minimal 3-fold of general type with $\delta(X)\geq 13$. Then $P_{18}(X)\geq 2$, $P_{30}(X)\geq 4$ and $K_X^3\leq \frac{1}{252}$. \qed
\end{proposition}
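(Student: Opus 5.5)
The statement reduces to a finite check. By \cref{thmDelta}, $X$ has $q(X)=p_g(X)=P_2(X)=0$ and $\chi(\OX)\in\{2,3\}$. Its Reid basket $B_X$ is one of the finitely many baskets in \cref{tableDelta13}. So $X$ determines a weighted basket $\bB=(B_X,\chi(\OX),0)$ from a finite list.

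\textbf{Step 1: Riemann--Roch.} For each such weighted basket, Reid's Riemann--Roch formula gives
\[
K_X^3=K^3(\bB),
\]
and, together with Kawamata--Viehweg vanishing (valid for $n\geq 2$ since $K_X$ is nef and big), it gives
\[
P_n(X)=\chi_n(\bB)=\tfrac{1}{12}n(n-1)(2n-1)K^3(\bB)-(2n-1)\chi+l(n).
\]
To compute $K^3(\bB)$, I would solve the Riemann--Roch equation for $n=2$ using $P_2=0$. Concretely,
\[
K^3(\bB)=2\chi_2+6\chi-2l(2) \quad\text{with } \chi_2=0,\qquad l(2)=\sum n_{b,r}\,\frac{b(r-b)}{2r},
\]
after checking the normalization of the formula at $m=2$.

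\textbf{Step 2: evaluate the three quantities for each row.}
\begin{itemize}
\item Compute $K^3(\bB)$ and confirm $0<K^3(\bB)\leq \frac{1}{252}$.
\item Compute $l(18)$ and $l(30)$ as sums over the pairs $(b,r)$ of $\sum_{j=1}^{n-1}\overline{jb}(r-\overline{jb})/(2r)$.
\item Assemble $\chi_{18}(\bB)$ and $\chi_{30}(\bB)$, and check $\chi_{18}\geq 2$ and $\chi_{30}\geq 4$.
\end{itemize}
The residue sums are periodic in $j$ with period $r$, which reduces the work. Over a full period $\sum_{j=0}^{r-1}\overline{jb}(r-\overline{jb})=(r^2-1)/6$, since $b$ is coprime to $r$. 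So only the partial periods need explicit summation.

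\textbf{Main obstacle.} The difficulty is bookkeeping, not conceptual. The table has many rows with sizable indices (e.g.\ $r$ up to around $15$--$25$), and a single arithmetic slip would falsify the claim. I would carry out the computation mechanically, ideally with a short script, using exact rational arithmetic. As a consistency check, the computed $\chi_n$ must reproduce the defining property $\delta(X)\geq 13$, i.e.\ $\chi_n\leq 1$ for $n\leq 12$, and must match the $\delta$ value listed in the table. The bound $K_X^3\leq\frac{1}{252}$ is then the maximum of $K^3(\bB)$ over the list, attained by some row. The bounds $P_{18}\geq 2$ and $P_{30}\geq 4$ are the minima over the list.
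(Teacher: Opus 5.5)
Your plan is the paper's: the proposition is recorded without a written proof, as a direct computation with Reid's Riemann--Roch formula over the weighted baskets of \cref{tableDelta13} (with $\chi_2=P_2(X)=0$), whose $K^3$, $P_{18}$, $P_{30}$ columns have extreme values $1/252$ (types 12 and 41), $2$ and $4$ (both attained by type 2a). One slip in your periodicity shortcut: in fact $\sum_{j=0}^{r-1}\overline{jb}(r-\overline{jb})=r(r^2-1)/6$, so a full period contributes $(r^2-1)/12$ to $l(n)$; the value $(r^2-1)/6$ you wrote is off by a factor of $r$ and would spoil any hand computation that relied on it.
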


\begin{proposition}\label{propDelta30leq5}
    Let $X$ be a minimal 3-fold of general type with $\delta(X)\geq 13$. If $P_{30}(X)\leq 5$, then $K_X^3\leq \frac{1}{1170}$. \qed
\end{proposition}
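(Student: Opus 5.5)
This is a finite verification over the baskets of \cref{tableDelta13}, done in the same way as \cref{propDeltaAll}. By \cref{thmDelta}, the Reid basket $B_X$ of $X$ appears in \cref{tableDelta13}, and $\chi(\OX)\in\{2,3\}$. Every plurigenus is then determined by the weighted basket. Indeed, Kawamata–Viehweg vanishing gives $P_n(X)=\chi(\OX(nK_X))$ for $n\geq 2$. Reid's Riemann-Roch formula then writes $P_n(X)$ in terms of $K_X^3$, $\chi(\OX)$ and $l(n)$. Finally, $K_X^3$ is itself fixed by the basket and $\chi(\OX)$, via the standard expression of $K^3$ as a function of $(B,\chi,\chi_2)$, here with $\chi_2=P_2=0$.

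The proof therefore reduces to running through the table. For each listed weighted basket $(B,\chi,0)$, I would carry out three steps.
\begin{enumerate}
\item Compute $K^3(\bB)$.
\item Compute $l(30)=\sum n_{b,r}\sum_{j=1}^{29}\overline{jb}(r-\overline{jb})/(2r)$.
\item Compute $P_{30}=\frac{1}{12}\cdot 30\cdot 29\cdot 59\,K^3-59\chi+l(30)$.
\end{enumerate}
The claim then amounts to checking one implication basket by basket: whenever the computed $P_{30}\leq 5$, the computed $K^3$ is at most $\frac{1}{1170}$. Conversely, every basket with $K^3>\frac{1}{1170}$ should turn out to have $P_{30}\geq 6$.

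A consistency check is useful here. Since $P_{30}$ grows roughly like $\frac{30\cdot 29\cdot 59}{12}K^3\approx 4277K^3$, a large $K^3$ should push $P_{30}$ up, so the implication is plausible. The threshold $\frac{1}{1170}$ should match the largest $K^3$ among the baskets with $P_{30}\in\{4,5\}$; note that $P_{30}\geq 4$ always holds by \cref{propDeltaAll}. I would organise the computation by sorting the table by $K^3$. I would then verify $P_{30}\geq 6$ for all entries above $\frac{1}{1170}$, and confirm that some entry attains $K^3=\frac{1}{1170}$ or lies below it with $P_{30}\leq 5$.

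The main obstacle is bookkeeping rather than ideas. The baskets contain pairs with large $r$, such as $(b,r)$ with $r$ up to about $25$. Computing the residue sums $\sum_j \overline{jb}(r-\overline{jb})$ for $j\leq 29$ by hand is error-prone, so I would use the closed form for complete periods. Over a full cycle of $r$ consecutive values of $j$, $\sum_{j=1}^{r}\overline{jb}(r-\overline{jb})=\frac{r(r^2-1)}{6}$ per pair, and only the partial cycle remains to be computed directly. I would verify the result by machine, in the style of the paper's listings.
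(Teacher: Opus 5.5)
Your proposal is correct and takes the same approach as the paper: the proposition is a direct Reid Riemann--Roch computation over the weighted baskets $(B,\chi,0)$ of \cref{tableDelta13}. Carrying it out, only types 2a, 5a and 7a have $P_{30}\leq 5$, with $K^3=\frac{1}{1170},\frac{1}{1386},\frac{1}{1680}$ respectively, so the bound is attained by type 2a.
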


\begin{proposition}\label{propDelta18leq3}
    Let $X$ be a minimal 3-fold of general type with $\delta(X)\geq 13$. If $P_{18}(X)\leq 3$, then $K_X^3\leq \frac{23}{9240}$. \qed
\end{proposition}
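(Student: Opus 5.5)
This statement is a finite computation over the baskets in \cref{tableDelta13}, so I would verify it basket by basket using Reid's Riemann--Roch formula. By \cref{thmDelta}, the weighted basket $\bB_X=(B_X,\chi(\OX),P_2(X))$ of $X$ is one of the finitely many entries of \cref{tableDelta13}. Each entry comes with $\chi=\chi(\OX)\in\{2,3\}$ and $\chi_2=P_2(X)=0$. Since $X$ is minimal of general type, Kawamata--Viehweg vanishing gives $P_n(X)=\chi(\OX(nK_X))$ for all $n\geq 2$. Hence $P_{18}(X)$ and $K_X^3$ are determined by the basket, namely $P_{18}=\chi_{18}(\bB_X)$ and $K_X^3=K^3(\bB_X)$.

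First, I would compute $K^3(\bB)$ for each listed basket. I would use the standard expression
\[
K^3(\bB)=2\chi_2-\ldots,
\]
or equivalently solve Reid's formula at $m=2$:
\[
\chi_2=\tfrac12 K^3-3\chi+l(2), \qquad l(2)=\sum n_{b,r}\,\frac{b(r-b)}{2r}.
\]
This gives $K^3=2(\chi_2+3\chi-l(2))$.

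Second, I would evaluate
\[
\chi_{18}(\bB)=\tfrac{1}{12}\cdot 18\cdot 17\cdot 35\,K^3-35\chi+l(18),
\]
where $l(18)=\sum_{(b,r)}n_{b,r}\sum_{j=1}^{17}\overline{jb}(r-\overline{jb})/(2r)$. This is done entry by entry. I would then discard every basket with $\chi_{18}(\bB)\geq 4$. For each remaining basket, which has $P_{18}\in\{2,3\}$ since \cref{propDeltaAll} already gives $P_{18}\geq 2$, I would check that $K^3(\bB)\leq \frac{23}{9240}$.

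The expected maximum is attained by one specific basket; the denominator $9240=2^3\cdot3\cdot5\cdot7\cdot11$ suggests a basket containing singularities of indices among $5,7,8,11$. I would identify it to confirm that the bound is sharp.

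The main obstacle is bookkeeping rather than any new idea. The table has many entries, and the residues $\overline{jb}$ for large $r$ must be computed correctly. To control errors, I would organize the computation by tabulating, for each pair $(b,r)$ that occurs, its contribution to $l(2)$ and to $l(18)$ once. I would then sum these contributions with multiplicities. As a consistency check, I would confirm that the $\chi_m$ values reproduce the $\delta(X)$ values recorded in the table, i.e.\ $\chi_m=P_m\leq 1$ for $m<\delta$ and $\geq2$ at $m=\delta$, together with the values $P_{18}\geq 2$ and $P_{30}\geq 4$ from \cref{propDeltaAll}. A short computer check, like the listing the paper's \texttt{listings} package suggests, would be the natural way to carry this out reliably.
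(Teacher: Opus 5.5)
Your proposal is correct and is the paper's argument: the paper also obtains this by direct computation of $K^3$ and $P_{18}$ via Reid's Riemann--Roch formula for each weighted basket in \cref{tableDelta13}, keeping only those with $P_{18}\leq 3$. Your guess about the extremal case is also right: the bound $\frac{23}{9240}$ is attained by type 3 (with $P_{18}=3$), whereas types 18 and 28 have the same volume but are discarded because $P_{18}=4$.
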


\begin{proposition}\label{propDeltaPn15}
    Let $X$ be a minimal 3-fold of general type with $\delta(X)\geq 13$ and $P_{18}(X)\leq 3$. If $K_X^3> \frac{1}{770}$ and $39\leq n\leq 49$, then $P_n(X)\geq 15$. \qed
\end{proposition}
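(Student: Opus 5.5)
This statement is a finite check over \cref{tableDelta13}, so I would prove it by direct computation, like \cref{propDeltaAll}, \cref{propDelta30leq5} and \cref{propDelta18leq3}. By \cref{thmDelta}, the Reid basket $B_X$ of $X$, together with $\chi(\OX)\in\{2,3\}$ and $P_2(X)=0$, is one of the weighted baskets $\bB=(B,\chi,0)$ listed in \cref{tableDelta13}. For each such $\bB$, the invariants $K^3(\bB)$ and $P_n=\chi_n(\bB)$ for $n\geq 2$ are determined by Reid's Riemann--Roch formula. Here Kawamata--Viehweg vanishing gives $P_n(X)=\chi(\OX(nK_X))$ for $n\geq 2$.

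The steps, in order, are as follows.
\begin{enumerate}
\item For every basket in the table, compute $K^3=K^3(\bB)$ via the formula in the definition of weighted baskets. This is obtained from $\chi_2=0$:
\[
0=\tfrac{1}{4}K^3-3\chi+l(2),
\]
so that $K^3=4(3\chi-l(2))$.
\item Compute $P_{18}$ and discard every basket with $P_{18}\geq 4$.
\item Among the remaining baskets, discard those with $K^3\leq \frac{1}{770}$. By \cref{propDelta18leq3}, all survivors have $K^3\leq \frac{23}{9240}$, which already restricts the list considerably.
\item For each surviving basket, evaluate
\[
P_n=\tfrac{1}{12}n(n-1)(2n-1)K^3-(2n-1)\chi+l(n)
\]
for $39\leq n\leq 49$, and check that $P_n\geq 15$.
\end{enumerate}

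To avoid eleven separate evaluations per basket, I would use the fact that the sequence $P_n$ is almost increasing in this range. Indeed,
\[
P_{n+1}-P_n=\tfrac{1}{2}n^2K^3-2\chi+\bigl(l(n+1)-l(n)\bigr),
\]
and for $n\geq 39$ and $K^3>\frac{1}{770}$ the first term is close to $1$. Even so, the correction $l(n+1)-l(n)$ oscillates, so the safest route is simply to tabulate all eleven values. This is a finite machine computation of the same kind presumably done for the earlier propositions in this section.

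The main obstacle is bookkeeping rather than ideas. One has to make sure that the filter ``$P_{18}\leq 3$ and $K^3>\frac{1}{770}$'' is applied exactly, with strict versus non-strict inequalities handled correctly. One also has to make sure that $l(n)$ is computed correctly for the large indices $r$ that occur in the table, such as $r=13,15$ or larger. If some borderline basket yields $P_n=14$, I would re-examine whether it is excluded by the remark after \cref{thmDelta}, either through $\chi_8$/$\chi_{16}$ considerations or through the Miyaoka--Reid inequality.
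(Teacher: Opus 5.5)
Your plan is the paper's own argument. The paper gives no proof beyond "direct computation of baskets in \cref{tableDelta13} using Reid's Riemann--Roch formula", which amounts to filtering the table by $P_{18}\le 3$ and $K^3>\frac{1}{770}$ and then evaluating $P_n$ for $39\le n\le 49$.

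\textbf{Coefficient error in step (1).} Your volume formula is wrong by a factor of $2$. At $m=2$ the coefficient $\frac{1}{12}m(m-1)(2m-1)$ equals $\frac12$, not $\frac14$. So $\chi_2=0$ gives
\[
K^3=2\bigl(3\chi-l(2)\bigr)=6\chi-\sum n_{b,r}\frac{b(r-b)}{r}.
\]
This reproduces the tabulated values; for type 7a it gives $12-\frac{20159}{1680}=\frac{1}{1680}$. Your formula doubles every volume.

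If you ran the check with the doubled volumes, two things would go wrong:
\begin{enumerate}
\item The filter $K^3>\frac{1}{770}$ would wrongly admit baskets such as 2a, 38 and 42 (the latter two have true $K^3=\frac{1}{770}$).
\item The leading term $\frac{1}{12}n(n-1)(2n-1)K^3$ would be doubled, for instance from about $13$ to about $26$ for type 23 at $n=39$. The inequality $P_n\geq 15$ would then hold for the wrong reason.
\end{enumerate}

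Exact arithmetic matters here because the bound is sharp. For type 23 ($K^3=\frac{19}{13860}$, $\chi=3$) one gets
\[
P_{39}=13+\tfrac{13}{360}-231+\bigl(232+\tfrac{347}{360}\bigr)=15
\]
exactly. Once you correct the coefficient, the remaining steps go through as you describe.

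\textbf{Your fallback plan.} The table already incorporates the exclusions in the remark after \cref{thmDelta}. So if a surviving basket gave $P_n=14$, that would refute the statement; it could not be discarded by the $\chi_8$/$\chi_{16}$ or Miyaoka--Reid arguments.
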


\begin{proposition}\label{propDeltaPn13}
    Let $X$ be a minimal 3-fold of general type with $\delta(X)\geq 13$ and $P_{18}(X)\leq 3$. If $K_X^3> \frac{3}{3080}$ and $39\leq n\leq 49$, then $P_n(X)\geq 13$. \qed
\end{proposition}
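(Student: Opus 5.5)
This is a finite verification over the baskets of \cref{tableDelta13}, of the same kind as \cref{propDeltaAll}--\cref{propDeltaPn15}. By \cref{thmDelta}, the weighted basket $\bB_X=(B_X,\chi(\OX),P_2(X))$ is one of the listed types, with $\chi(\OX)\in\{2,3\}$ and $P_2(X)=0$. For $n\geq 2$, Reid's Riemann--Roch formula together with Kawamata--Viehweg vanishing gives
\[
P_n(X)=\frac{1}{12}n(n-1)(2n-1)K^3(\bB_X)-(2n-1)\chi(\OX)+l(n).
\]
So for each basket, $K_X^3$ and every $P_n(X)$ are explicit rational numbers. The plan is to evaluate them directly.

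First, I would discard every basket in \cref{tableDelta13} that violates a hypothesis. These are the baskets with $P_{18}\geq 4$, using the same computation of $P_{18}$ as in \cref{propDelta18leq3}, and those with $K^3\leq\frac{3}{3080}$. \cref{propDelta18leq3} already confines the survivors to $\frac{3}{3080}<K_X^3\leq\frac{23}{9240}$, so I expect only a handful of baskets to remain. Note also that the threshold $\frac{3}{3080}$ is smaller than the $\frac{1}{770}$ of \cref{propDeltaPn15}. So any basket with $K_X^3>\frac{1}{770}$ is already handled by \cref{propDeltaPn15}, since $P_n\geq 15\geq 13$. Only the baskets with $\frac{3}{3080}<K_X^3\leq\frac{1}{770}$ need new work.

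For each remaining basket, I would compute $P_n$ for $39\leq n\leq 49$ from the formula above. Computing $l(n)$ is a sum over the pairs $(b,r)$ of $B_X$ of periodic expressions in $n$, so each evaluation is finite and mechanical. For this I would reuse the same basket data and the same script that produced \cref{propDeltaPn15}.

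The only real obstacle is bookkeeping. First, one must make sure the intermediate interval of $K^3$ is correctly identified in the table; the values are rational numbers with denominators up to the lcm of the indices $r$. Second, one must check that the minimum over $n\in[39,49]$ of $P_n$ is at least $13$ for each such basket. A possible shortcut is monotonicity: the difference $P_{n+1}-P_n=\frac12 n^2K^3-2\chi+l(n+1)-l(n)$ is positive for $n\geq 39$ once $K^3>\frac{3}{3080}$ and $\chi\leq 3$, since $\frac12\cdot 39^2\cdot\frac{3}{3080}<6$ does not suffice alone but $l(n+1)-l(n)\geq 0$ helps. I would not rely on this and would instead simply tabulate all eleven values, so the bound $P_n\geq 13$ follows by inspection.
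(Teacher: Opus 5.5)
Your proposal is correct and follows the paper's own route. The paper gives no written argument beyond a direct Reid--Riemann--Roch evaluation over the baskets of \cref{tableDelta13}, and after your reduction via \cref{propDeltaPn15} only types 30, 33, 38, 42, 46b and 54 remain (types 30 and 33 give $P_{39}=13$ exactly, so the bound is sharp). Do drop the monotonicity aside, which is false: type 30 has $P_{42}=17$ but $P_{43}=15$, so tabulating all eleven values, as you already intend, is genuinely necessary.
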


\begin{proposition}\label{propDeltaPn11}
    Let $X$ be a minimal 3-fold of general type with $\delta(X)\geq 13$ and $P_{18}(X)\leq 3$. Then $P_n(X)\geq 11$ in each of the following three cases:
    \begin{enumerate}
        \item $K_X^3> \frac{1}{1170}$ and $39\leq n\leq 49$;
        \item $45\leq n\leq 49$;
        \item $n=44$ and $X$ is not of type 7a. \qed
    \end{enumerate}
\end{proposition}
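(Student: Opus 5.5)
This statement is a finite computation over the weighted baskets of \cref{tableDelta13}, so I will not look for a conceptual argument. By \cref{thmDelta}, the Reid basket $B_X$ of $X$ is one of finitely many listed baskets, each with its $\chi(\OX)\in\{2,3\}$, and $q(X)=p_g(X)=0$, which gives $P_2(X)=0$. The triple $(B_X,\chi(\OX),0)$ is therefore a weighted basket from a finite list.

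For each such basket I will first compute $K_X^3$. I get it from Reid's Riemann--Roch formula at $m=2$, setting $P_2=0$:
\[
  0=\tfrac12K_X^3-3\chi(\OX)+l(2),
\]
so
\[
  K_X^3=6\chi(\OX)-2l(2),
\]
where $l(2)=\sum n_{b,r}\,\overline{b}(r-\overline{b})/(2r)$. Next I compute
\[
  P_n(X)=\tfrac1{12}n(n-1)(2n-1)K_X^3-(2n-1)\chi(\OX)+l(n)
\]
for $n=18$ and for $39\le n\le 49$. This formula is valid for $n\ge2$ by Kawamata--Viehweg vanishing. To evaluate $l(n)$ efficiently, I split it basket by basket, using $\sum_{j=1}^{r}\overline{jb}(r-\overline{jb})/(2r)=(r^2-1)/12$ over each full period, so only the remainder $n-1 \bmod r$ terms need explicit summation.

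The filtering step comes next. I keep only the baskets with $P_{18}(X)\le 3$, as the hypothesis requires. Then I check three things:
\begin{enumerate}
\item For every surviving basket with $K_X^3>\frac1{1170}$, the value $P_n(X)\ge 11$ for all $39\le n\le 49$. By \cref{propDelta30leq5}, these should be exactly the baskets with $P_{30}\ge 6$.
\item For the remaining baskets with $K_X^3\le\frac1{1170}$, the value $P_n(X)\ge 11$ holds for $45\le n\le 49$.
\item At $n=44$, the only surviving basket with $P_{44}(X)\le 10$ is the one of type 7a.
\end{enumerate}
Monotonicity helps here. Since $P_{n}(X)\ge P_{n-18}(X)+P_{18}(X)-1$ is not directly available, I instead just tabulate, but I can shortcut the check using $P_{n+1}-P_n\ge \frac12 n^2K_X^3-2\chi(\OX)$ from the proof of \cref{propPmPm1}. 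For the small-$K^3$ baskets this bound is negative, so in the end I must evaluate each $P_n$ directly.

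The main obstacle is bookkeeping rather than ideas: the table is long, and $l(n)$ for baskets containing pairs like $(4,15)$ or $(5,13)$ needs careful residue sums. Case (3) is the delicate one, since it singles out type 7a as the unique exception. There I would double-check $P_{44}$ for 7a and its neighbouring types by an independent computation, ideally a short script, to confirm that only 7a falls below 11.
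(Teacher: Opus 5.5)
Your plan is correct and is the paper's own route: the statement carries \qed because it is a direct Reid--Riemann--Roch computation over the weighted baskets $(B_X,\chi(\OX),0)$ of \cref{tableDelta13}. Your formulas, $K_X^3=6\chi(\OX)-2l(2)$ and the period sum $(r^2-1)/12$, are right; for example, they reproduce $P_{44}=10$ for type 7a as in \cref{propDelta7a}.

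There are two slips, neither affecting the argument. First, $P_2(X)=0$ comes directly from \cref{thmDelta}, not from $q=p_g=0$. Second, the converse of \cref{propDelta30leq5} is false: types 53a, 36a and 36b have $P_{18}\le 3$ and $P_{30}=6$ but $K_X^3\le\frac{1}{1170}$. So select the baskets for check (1) by $K_X^3$, as you actually do, and not by $P_{30}$.
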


\begin{proposition}\label{propDeltaPn9}
    Let $X$ be a minimal 3-fold of general type with $\delta(X)\geq 13$ and $P_{18}(X)\leq 3$. Then $P_n(X)\geq 9$ in each of the following two cases:
    \begin{enumerate}
        \item $39\leq n\leq 49$ and $X$ is not of type 7a;
        \item $n=42$. \qed
    \end{enumerate}
\end{proposition}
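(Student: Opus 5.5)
This statement is a finite check over the weighted baskets in \cref{tableDelta13}, and I would prove it in the same way as the preceding seven propositions. By \cref{thmDelta}, $X$ has $q(X)=p_g(X)=P_2(X)=0$ and $\chi(\OX)\in\{2,3\}$. Moreover, its Reid basket $B_X$ is one of the finitely many listed in \cref{tableDelta13}. For each basket $\bB=(B_X,\chi(\OX),0)$ I would compute
\[
K_X^3=K^3(\bB),\qquad P_n(X)=\chi_n(\bB)=\frac{1}{12}n(n-1)(2n-1)K_X^3-(2n-1)\chi(\OX)+l(n)
\]
using Reid's Riemann--Roch formula. This is legitimate because Kawamata--Viehweg vanishing gives $h^i(nK_X)=0$ for $i>0$ and $n\geq 2$. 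The function $l(n)$ is an explicit finite sum over the pairs $(b,r)$ in the basket.

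First I would impose the hypothesis $P_{18}(X)\leq 3$, computing $\chi_{18}$ for every table entry and keeping only those with $\chi_{18}\leq 3$. Next I would record which survivors are of type 7a, since they are excluded in case (1). For the remaining baskets I would evaluate $\chi_n$ for each $n$ with $39\leq n\leq 49$ and check that every value is at least $9$; this proves (1). For (2) I would evaluate $\chi_{42}$ on the type 7a baskets as well and check that it is at least $9$.

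A useful shortcut is that $l(n)\geq 0$ and $\chi\leq 3$, so
\[
P_n\geq \frac{1}{12}n(n-1)(2n-1)K_X^3-3(2n-1).
\]
For baskets whose $K_X^3$ is not tiny, this crude bound already gives $P_n\geq 9$. Only the baskets with the smallest $K_X^3$ then need the exact computation of $l(n)$.

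The main obstacle is bookkeeping rather than ideas: the $l(n)$ sums must be evaluated correctly for many baskets and eleven values of $n$. I would do this by machine, in the spirit of the listings environment the paper loads, iterating over the table entries. I would then confirm by hand that type 7a is exactly the case where some $P_n$ with $39\leq n\leq 49$ drops below $9$, and that $n=42$ is not one of those values.
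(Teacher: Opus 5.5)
Your proposal is correct and matches the paper, which justifies this statement only as a direct Riemann--Roch computation over the weighted baskets $(B,\chi,0)$ of \cref{tableDelta13} with $P_{18}\le 3$ (for instance, type 7a gives exactly $P_{42}=9$). One harmless correction: your ``shortcut'' never applies, because every basket here has $K_X^3\le\frac{1}{252}$, which makes $\frac{1}{12}n(n-1)(2n-1)K_X^3-3(2n-1)<0$ for all $39\le n\le 49$, so the $l(n)$ term has to be computed exactly in every case.
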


\begin{proposition}\label{propDelta7a}
    Let $X$ be a minimal 3-fold of general type with $\delta(X)\geq 13$. If $X$ is of type 7a, then $K_X^3=\frac{1}{1680}$, $P_6(X)=P_9(X)=1$, $P_{14}(X)=2$, $P_{30}(X)=P_{32}(X)=P_{34}(X)=5$ and $P_{44}(X)=10$. \qed
\end{proposition}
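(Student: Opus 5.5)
The statement to prove is \cref{propDelta7a}. Its content is a finite computation attached to one specific weighted basket, so the plan is to compute everything from Reid's Riemann--Roch formula.

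First, I will read off the data of type 7a from \cref{tableDelta13}. By \cref{thmDelta}, $X$ has $q(X)=p_g(X)=P_2(X)=0$ and $\chi(\OX)\in\{2,3\}$. The table fixes $\chi(\OX)$ for type 7a together with the Reid basket $B_X=\{n_{b,r}\times(b,r)\}$.

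Next, I will compute $K_X^3$ from the formula for $P_2$. Since $P_2(X)=0$ and $P_2=\chi(\OX(2K_X))$ by Kawamata--Viehweg vanishing, the Riemann--Roch formula with $m=2$ gives
\[
0=\tfrac12 K_X^3-3\chi(\OX)+\sum n_{b,r}\frac{b(r-b)}{2r}.
\]
Solving this determines $K_X^3$. I expect the result to be $\frac{1}{1680}$; the denominator should be the lcm of the indices appearing in the basket, for example $5,7,16$ or similar. An alternative check is the standard identity
\[
K_X^3=2\chi(\OX)\cdot\text{(correction)}\ \ldots
\]
which expresses $K^3$ through $\sum n_{b,r}\,b(r-b)/r$ and $\chi$.

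Then, for each $m\in\{6,9,14,30,32,34,44\}$, I will evaluate
\[
P_m=\frac{m(m-1)(2m-1)}{12}K_X^3-(2m-1)\chi(\OX)+l(m),
\]
where $l(m)=\sum n_{b,r}\sum_{j=1}^{m-1}\overline{jb}(r-\overline{jb})/(2r)$. This is valid for $m\ge2$ by vanishing. The inner sums are periodic: over a full period $j=0,\dots,r-1$ the sum equals $(r^2-1)/12$. So I will write $m-1=qr+s$, and then $l(m)$ equals $q(r^2-1)/12$ plus a partial sum of $s$ terms. This reduces each evaluation to a short arithmetic check.

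The main obstacle is purely bookkeeping: transcribing the 7a basket correctly and avoiding arithmetic slips. As a consistency check I will confirm that the computed values satisfy $\delta(X)=14$ (that is, $P_m\le1$ for $m\le13$ and $P_{14}=2$) and that every $P_m$ is an integer. Any error in the basket would almost certainly show up as a non-integral or negative $P_m$.
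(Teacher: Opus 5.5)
Your proposal is correct and follows the paper's own route: the paper proves this by direct evaluation of Reid's Riemann--Roch formula on the type 7a weighted basket $(\{5\times(1,2),(7,16),5\times(1,3),(2,7),(1,5)\},2,0)$. The $m=2$ equation with $P_2=0$ gives $K_X^3=12-\frac{20159}{1680}=\frac{1}{1680}$, and your periodicity reduction then yields all seven listed values of $P_m$, which I checked. The only blemish is the garbled ``alternative identity''; its correct form is $K_X^3=2P_2(X)+6\chi(\OX)-\sum n_{b,r}\,b(r-b)/r$, which is just the $m=2$ equation rearranged rather than an independent check.
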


\begin{proposition}\label{propDelta18geq4}
    Let $X$ be a minimal 3-fold of general type with $\delta(X)\geq 13$. If $P_{18}(X)\geq 4$, then $|nK_X|$ defines a generically finite map when $n\geq 38$.
\end{proposition}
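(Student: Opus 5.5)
We would apply \cref{propMasek3} with $m=18$. The hypotheses are $P_{18}(X)\geq 4\geq 3$ and $P_{n-18}(X)>0$. We then need
\[
n>\frac{P_{18}(X)+2}{P_{18}(X)-1}\cdot 18+4 .
\]
With $P_{18}(X)\geq 4$ the coefficient $\frac{P_{18}+2}{P_{18}-1}$ is at most $2$, so the right-hand side is at most $40$. This settles $n\geq 41$ as soon as $P_{n-18}(X)>0$. For $n\geq 38$ we have $n-18\geq 20$, so \cref{coroDelta}(2) gives $P_{n-18}(X)>0$. Hence all $n\geq 41$ are done, and also all $n\geq 38$ once $P_{18}(X)\geq 5$, where the bound is $\frac{7}{4}\cdot 18+4=35.5$.

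It remains to treat $P_{18}(X)=4$ and $n\in\{38,39,40\}$. Here I would use other section indices. For $n=38$, apply \cref{prop3m2} with $m=18$ and $l=1$: this gives $lm+l+2m+1=18+1+36+1=56$, which is too big. Instead use \cref{prop2sqrt2} with $m=\delta(X)\leq 15$: it needs $n>(2\sqrt2+1)(\delta+1)$, which holds for $\delta=13$ since $3.83\cdot 14\approx 53.6$. That is also too big, so it does not help.

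Better is \cref{propD2} or \cref{propMasek3} with $m=30$, where $P_{30}\geq 4$, but then $n>\frac{6}{3}\cdot 30+4$, which fails. So the main tool must stay with $m=18$, where $P_{18}=4$ gives the bound $n>40$. The cases $n=38,39,40$ then require splitting according to $d=\dim\overline{\Phi_{|18K_X|}(X)}$.

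If $d\geq 2$, \cref{propD2} needs $n>\frac{18}{2}+19=28$, which holds. If $d=1$, we follow Case 1 of \cref{propMasek3} but replace \cref{propQCompare} with the sharper \cref{compareIneq}. The surface $S$ is not a $(1,0)$-surface: by \cref{thmDelta} we have $\chi(\OX)\geq 2$, so $h^2\geq 1$, and \cref{propH2OX} then gives $p_g(S)\geq 1$.

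Now use the pencil $\Lambda\subseteq|18K_X|$ with $p=3$. \cref{propQCompare} gives
\[
L=(n-19)f^*K_X|_S\geq_\bQ \frac{3(n-19)}{21}\,\sigma^*K_{S_0},
\]
which is at least $\frac{57}{21}\sigma^*K_{S_0}$ for $n=38$. If $K_{S_0}^2\geq 2$, then $L^2>8$ and $(L\cdot C)\geq 2\cdot\frac{57}{21}>4$, so \cref{thmMasek} gives birationality, using \cref{propGeneralKC2} when $S$ is not a $(1,2)$-surface. If $K_{S_0}^2=1$, the value $L^2\geq 7.37$ is insufficient. In that case we argue as in \cref{prop3m2}: write $n-19\geq 19=18+1$ and use \cref{compareIneq} twice in a pencil-adapted form, or \cref{propSurfL2B} when $p_g(S)=2$, to reduce to $|K_S+\sigma^*K_{S_0}+\lceil\text{nef}\rceil|\lsgeq\sigma^*|2K_{S_0}|+\cdots$. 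For a non-$(1,0)$ surface this is generically finite by \cref{thmXG1}.

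The main obstacle is the case $d=1$ with $K_{S_0}^2=1$ and $p_g(S)=1$. There the pencil bound and Masek's criterion fall just short, so the effective inequality \cref{compareIneq} must carry the argument. I would first try to bound $K_X^3$ from below via \cref{propK3Pencil}, which gives $\frac{4\cdot 27}{18\cdot 21\cdot 66}=\frac{1}{231}$, and then contradict the bound $K_X^3\leq\frac1{252}$ in \cref{propDeltaAll}. That rules out this case entirely, since \cref{propK3Pencil} applies to any non-$(1,0)$ fibre. In fact this observation removes the whole case $d=1$, so only $d\geq 2$ remains, and that case is handled by \cref{propD2}.
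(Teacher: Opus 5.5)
Your final argument is correct and is essentially the paper's proof. You split by $d=\dim\overline{\Phi_{|18K_X|}(X)}$. For $d=1$, the fibre is not a $(1,0)$-surface by \cref{thmDelta} and \cref{propH2OX}, so \cref{propK3Pencil} gives $K_X^3\geq\frac{1}{231}>\frac{1}{252}$, contradicting \cref{propDeltaAll}; for $d\geq 2$, \cref{propD2} applies with $P_{n-18}(X)>0$ from \cref{coroDelta}.

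The earlier detours can be deleted, since this dichotomy already covers every $P_{18}(X)\geq 4$ and every $n\geq 38$. These are the \cref{propMasek3} step and the $K_{S_0}^2$ subcases with the vague ``\cref{compareIneq} twice'' argument. The bound in \cref{propK3Pencil} is increasing in $p$, so it only improves when $P_{18}(X)>4$.
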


\begin{proof}
    Let $f:W\to X$ be a Chen resolution with respect to $\Lambda=|18K_X|$. Let $S$ be a generic irreducible element of $f^*\Lambda$. Let $p=\dim\Lambda$. Then $p\geq 3$. Let $d=\dim\overline{\Phi_{\Lambda}(X)}$.

    \medskip

    \textbf{Case 1}. $d=1$.

    By \cref{thmDelta}, we have $\chi(\OX)\geq 2$ and $q(X)=p_g(X)=0$. Thus $h^2(\OX)>0$. By \cref{propH2OX}, we have that $S$ is not a $(1,0)$-surface.

    Take $m=18$ in \cref{propK3Pencil}, we have
    \begin{align*}
        K_X^3\geq \frac{4p^3}{18(18+p)(3\times 18+4p)}\geq \frac{4\times 3^3}{18(18+3)(3\times 18+4\times 3)} > \frac{1}{252}
    \end{align*}
    which is in contradiction with \cref{propDeltaAll}.

    \medskip

    \textbf{Case 2}. $d\geq 2$.

    By \cref{coroDelta}, $P_{n-18}(X)>0$ since $n\geq 38$.

    Take $m=18$ in \cref{propD2}, we have that $|nK_X|$ defines a generically finite map since $n\geq 38\geq 9+18+2$.
\end{proof}

\begin{proposition}\label{propDelta50}
    Let $X$ be a minimal 3-fold of general type with $\delta(X)\geq 13$. Then $|nK_X|$ defines a generically finite map when $n\geq 50$.
\end{proposition}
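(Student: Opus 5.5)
We deduce \cref{propDelta50} from the general criteria of \cref{secCriteria3}, using the basket information for $\delta(X)\geq 13$. By \cref{propDeltaAll}, $P_{18}(X)\geq 2$. Applying \cref{prop2sqrt2} with $m=18$ shows that $|nK_X|$ defines a generically finite map whenever $n>(2\sqrt2+1)\cdot 19\approx 72.7$. This only covers $n\geq 73$, so the range $50\leq n\leq 72$ needs a sharper tool.

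For the middle range we use \cref{prop3m2} and \cref{prop3m3} with $m=18$. By \cref{prop3m2}, $|(19l+37)K_X|$ is generically finite for $l\geq 1$, giving $n=56,75,\dots$. By \cref{prop3m3}, $|19lK_X|$ is generically finite for $l\geq 3$, giving $n=57,76,\dots$. These sets are too sparse, so the main tool for $50\leq n\leq 72$ will instead be \cref{propMasekChi3}. By \cref{thmDelta}, $\chi(\OX)\in\{2,3\}$ and $q(X)=p_g(X)=0$.

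\textbf{Case $\chi(\OX)=3$.} Apply \cref{propMasekChi3} with $m=18$, which gives generic finiteness for all $n\geq 56$. The remaining values $50\leq n\leq 55$ are handled with a smaller $m$. If $\delta(X)=13$ or $14$, take $m=\delta(X)$: then $3m+2\leq 44<50$ and \cref{propMasekChi3} applies directly. If $\delta(X)=15$, then $3m+2=47$. The case $\delta(X)=18$ with $50\leq n\leq55$ requires a separate argument.

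\textbf{Case $\chi(\OX)=2$.} Then $h^2(\OX)=1$, so by \cref{propH2OX} the fibre $S$ is not a $(1,0)$-surface. We redo the argument of Case 3 in \cref{propMasekChi3}: restrict to $S$ via \cref{coroTankeev1}, using that $P_{n-m}(X)>0$ by \cref{coroDelta} since $n-m\geq 20$, and then apply Kawamata--Viehweg vanishing. The possible fibres are then:
\begin{itemize}
\item $K_{S_0}^2\geq 2$: the Masek criterion (\cref{thmMasek}) works as soon as $n>\sqrt2\,(m+1)+m+1$ and $n>3m+3$.
\item $S$ a $(1,2)$-surface: use \cref{propSurfL2B} together with \cref{compareIneq}, which needs $n\geq 3m+4$.
\item $S$ a $(1,1)$-surface: $\sigma^*K_{S_0}$ is effective but does not move. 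Here we use \cref{compareIneq} twice to get $|K_S+\lceil(n-3m-3)L\rceil+2\sigma^*K_{S_0}|$, and then argue via Xiao-type bounds.
\end{itemize}
Because of these difficulties, the cleanest route is to take $m=\delta(X)\leq 18$ and prove generic finiteness for all $n\geq 3m+4$ whenever $\chi(\OX)\geq 2$. This gives the result for $n\geq 58$.

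The main obstacle is the finite window $50\leq n\leq 57$ when $\delta(X)=18$ and the fibre is of small type. We handle it by splitting on $P_{18}(X)$:
\begin{itemize}
\item If $P_{18}(X)\geq 4$, \cref{propDelta18geq4} already gives the result for all $n\geq 38$.
\item If $P_{18}(X)\leq 3$, we use $l=30$ in \cref{propDeltaM2L3}, since $P_{30}(X)\geq 4$ by \cref{propDeltaAll}. The remaining conditions hold: $P_{n-30}(X)>0$ for $n\geq 50$ by \cref{coroDelta}, $n\geq 32$, and $2n\geq 30+2\cdot18+3=69$. Also $K_X^3\leq \frac{23}{9240}$ by \cref{propDelta18leq3}, and \cref{propDelta30leq5} gives $K_X^3\leq\frac1{1170}$ when $P_{30}(X)\leq 5$. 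With $p=P_{30}(X)-2\geq 2$, we compare these bounds against $\frac{4p^3}{30(30+p)(90+4p)}$. For example, $p=2$ gives $\frac{32}{30\cdot32\cdot98}=\frac1{2940}$. This is smaller than $\frac1{1170}$, so the case $P_{30}(X)=4$ must be checked basket by basket using the table.
\end{itemize}
We expect this last comparison, for the baskets with $P_{30}(X)\leq 5$ and $\delta(X)=18$, to be the hardest step. There we would instead fall back on \cref{propDeltaM2N3} with the larger plurigenera $P_n(X)$ for $n$ near $50$.
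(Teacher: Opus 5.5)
\textbf{Verdict: there is a genuine gap.} Your proposal does not close. Two cases are left explicitly open, and one of the steps you try actually fails.

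First, for $\chi(\OX)=2$ you never handle a $(1,1)$-surface fibre. Such fibres are not excluded, since \cref{propH2OX} only gives $p_g(S)\geq 1$, and ``argue via Xiao-type bounds'' is not an argument. In that case the Masek route of \cref{prop2sqrt2} needs $\bigl(\frac{n-m-1}{m+1}\bigr)^2>8$, i.e.\ $n>(2\sqrt2+1)(m+1)$. With $m=\delta(X)$ this means $n\geq 54,58,62,73$ for $\delta=13,14,15,18$. So your claim of generic finiteness ``for all $n\geq 3m+4$ whenever $\chi\geq 2$'' is unproved.

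Second, take the window $50\le n\le 57$ with $\delta(X)=18$. This is exactly type 2a, with $P_{18}=2$, $P_{30}=4$ and $K_X^3=\frac{1}{1170}$. Here \cref{propDeltaM2L3} with $l=30$ and $p=P_{30}-2=2$ requires $K_X^3<\frac{1}{2940}$, which is false. Your fallback to \cref{propDeltaM2N3} would need the values $P_n(X)$ for $50\le n\le 57$. You do not supply these, and the plurigenus estimates in \cref{secDelta13} only cover $39\le n\le 49$. Incidentally, $\delta(X)=18$ forces $\chi(\OX)=2$ by \cref{tableDelta13}, so your worry about $\chi=3$, $\delta=18$ is vacuous.

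The missing idea is to work with the complete system $\Lambda=|30K_X|$, rather than with a pencil inside $|\delta K_X|$ or $|18K_X|$. By \cref{propDeltaAll}, $\dim\Lambda=P_{30}(X)-1\geq 3$.
\begin{itemize}
\item Suppose $\Lambda$ is composed with a pencil. Since $\chi(\OX)\geq 2$ and $q=p_g=0$, the fibre is not a $(1,0)$-surface by \cref{propH2OX}. Then \cref{propK3Pencil} with $m=30$ and $p=P_{30}(X)-1$ gives $K_X^3\geq\frac{1}{935}>\frac{1}{1170}$ if $P_{30}(X)\in\{4,5\}$. It gives $K_X^3\geq\frac{1}{231}>\frac{1}{252}$ if $P_{30}(X)\geq 6$. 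These contradict \cref{propDelta30leq5} and \cref{propDeltaAll} respectively.
\item Suppose $\Lambda$ is not composed with a pencil. Then \cref{propD2} with $m=30$ applies as soon as $n>\frac{30}{P_{30}(X)-2}+31$, i.e.\ for $n\geq 47$. Also \cref{coroDelta} guarantees $P_{n-30}(X)>0$ for $n\geq 50$, which is where the bound $50$ comes from.
\end{itemize}
Using the full system gains exactly the one dimension ($p=P_{30}-1$ instead of your $p=P_{30}-2$) that pushes the pencil bound for $K_X^3$ above the table's upper bounds. It also removes any case analysis on the fibre type or on $\delta(X)$.
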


\begin{proof}
    By \cref{propDeltaAll}, we have $P_{30}(X)\geq 4$.

    Let $f:W\to X$ be a Chen resolution with respect to $\Lambda=|30K_X|$. Let $S$ be a generic irreducible element of $f^*\Lambda$. Let $p=\dim\Lambda$. Then $p\geq 3$. Let $d=\dim\overline{\Phi_{\Lambda}(X)}$.

    \medskip

    \textbf{Case 1}. $d=1$.

    By \cref{thmDelta}, we have $\chi(\OX)\geq 2$ and $q(X)=p_g(X)=0$. Thus $h^2(\OX)>0$. By \cref{propH2OX}, we have that $S$ is not a $(1,0)$-surface.

    Take $m=30$ in \cref{propK3Pencil}, we have
    \begin{align*}
        K_X^3\geq \frac{4p^3}{30(30+p)(3\times 30+4p)}.
    \end{align*}

    If $P_{30}(X)\geq 6$, then
    \[
        K_X^3\geq \frac{4\times 5^3}{30(30+5)(3\times 30+4\times 5)}> \frac{1}{252}
    \]
    which is in contradiction with \cref{propDeltaAll}.

    If $4\leq P_{30}(X)\leq 5$, then
    \[
        K_X^3\geq \frac{4\times 3^3}{30(30+3)(3\times 30+4\times 3)}> \frac{1}{1170}
    \]
    which is in contradiction with \cref{propDelta30leq5}.

    \medskip

    \textbf{Case 2}. $d\geq 2$.

    By \cref{coroDelta}, $P_{n-30}(X)>0$ since $n\geq 50$.

    Take $m=30$ in \cref{propD2}, we have that $|nK_X|$ defines a generically finite map since $n\geq 50\geq 15+30+2$.
\end{proof}

\begin{proposition}\label{propDelta38}
    Let $X$ be a minimal 3-fold of general type with $\delta(X)\geq 13$. Then $|nK_X|$ defines a generically finite map when $n\geq 38$.
\end{proposition}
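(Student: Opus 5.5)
By \cref{propDelta18geq4} we may assume $P_{18}(X)\leq 3$, and by \cref{propDelta50} we may assume $38\leq n\leq 49$. We have $\delta(X)\leq 18$ and $P_{18}(X)\geq 2$ by \cref{propDeltaAll}, and $\chi(\OX)\geq 2$ with $q(X)=p_g(X)=0$ by \cref{thmDelta}. So \cref{propDeltaM2N3} and \cref{propDeltaM2L3} are available with $m=18$ (or $m=\delta(X)$).

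For $n\geq 39$, the plan is to apply \cref{propDeltaM2N3} with $m=18$. The hypotheses hold as follows:
\begin{itemize}
\item $n\geq 2m+3=39$;
\item $P_{n-18}(X)>0$ by \cref{coroDelta}, since $n-18\geq 21\geq 20$;
\item $P_n(X)\geq 3$ by the lists in \cref{propDeltaPn9} and \cref{propDelta7a}.
\end{itemize}
The work is to match a lower bound on $P_n$, hence on $p=\lfloor (P_n-1)/2\rfloor$, against an upper bound on $K_X^3$. We want
\[
K_X^3<\frac{4p^3}{n(n+p)(3n+4p)},
\]
and the right-hand side is smallest at $n=49$. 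The split is by the size of $K_X^3$:
\begin{itemize}
\item \emph{$K_X^3>\frac{1}{770}$.} Then $P_n\geq 15$ by \cref{propDeltaPn15}, so $p\geq 7$. The bound $K_X^3\leq\frac{23}{9240}$ from \cref{propDelta18leq3} should then be beaten.
\item \emph{$\frac{3}{3080}<K_X^3\leq\frac{1}{770}$.} Use \cref{propDeltaPn13} to get $p\geq 6$.
\item \emph{$\frac{1}{1170}<K_X^3\leq \frac{3}{3080}$.} Use \cref{propDeltaPn11}(1) to get $p\geq 5$.
\item \emph{$K_X^3\leq\frac{1}{1170}$, not type 7a.} Use \cref{propDeltaPn9}(1) to get $p\geq 4$.
\item \emph{Type 7a.} Here $K_X^3=\frac1{1680}$. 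For $45\leq n\leq 49$ use \cref{propDeltaPn11}(2), and for $n=44$ use $P_{44}=10$ from \cref{propDelta7a}, so $p\geq 4$. For $n=42$ use \cref{propDeltaPn9}(2).
\end{itemize}
Each case reduces to a numerical inequality to check.

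The main obstacle is the remaining values of $n$ where these bounds are too weak: type 7a with $n\in\{39,40,41,43\}$, and $n=38$, where $n<2m+3$ for $m=18$. For these I would switch to \cref{propDeltaM2L3}, taking a small $m$ and an auxiliary $l$. For type 7a, take $m=14$, since $P_{14}=2$. For $l$, try $l=30$ or $32$ or $34$, where $P_l=5$ and so $p=3$. The conditions $n\geq l+2$ and $2n\geq l+2m+3$ must hold, and $P_{n-m},P_{n-l}>0$ must be supplied by \cref{coroDelta}, or by direct basket computation when $n-l<10$. The inequality $K_X^3=\frac1{1680}<\frac{4\cdot 27}{l(l+3)(3l+12)}$ then needs to be checked. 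If the coroDelta conditions fail, I would instead use \cref{prop3m2} or \cref{prop3m3} with a smaller $m$ such that $P_m\geq 2$, for example $n=3m+2$ or $n=l(m+1)$.

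For $n=38$ in general: if $\delta(X)\leq 12$ this is not needed. Otherwise $\delta\in\{13,14,15,18\}$, and I would handle each value separately:
\begin{itemize}
\item $\delta=13$: take $l=1$ in \cref{prop3m2}, giving $2\cdot 13+1+13+1=41$, which does not cover 38. Instead use \cref{propDeltaM2N3} with $m=13$, noting $38\geq 29$, and supply $P_{25}>0$ by \cref{coroDelta}.
\item $\delta\in\{14,15,18\}$: use $m=\delta$ in \cref{propDeltaM2L3} or \cref{propDeltaM2N3} as the inequalities allow. For $m\leq 17$ the condition $n\geq 2m+3$ holds, though for $\delta=18$ it fails at $n=38$ and \cref{propDeltaM2L3} is needed.
\end{itemize}
These last cases are where the basket-by-basket checking from \cref{tableDelta13} is expected to carry the real weight.
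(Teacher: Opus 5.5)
Your treatment of $39\leq n\leq 49$ essentially follows the paper. You use \cref{propDeltaM2N3} with $m=18$, split by the size of $K_X^3$, and \cref{propDeltaM2L3} with $m=14$ and $l\in\{30,32,34\}$ for type 7a at $n\in\{39,40,41,43\}$, where $P_{n-l}\in\{P_6,P_9\}$ is positive by \cref{propDelta7a}.

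\textbf{The gap at $n=38$.} Your case split on $\delta(X)$ is never carried out, and it breaks down for type 2a ($\delta=18$, $K_X^3=\frac{1}{1170}$, $P_{18}=2$, $P_{30}=4$). None of the tools you name works there:
\begin{enumerate}
\item Since $P_k\leq 1$ for $k\leq 17$, you are forced to take $m\geq 18$, so \cref{propDeltaM2N3} would need $n\geq 2m+3\geq 39$.
\item For \cref{propDeltaM2L3} with $m=18$ you need $l\leq 36$. For this basket Reid's formula gives $P_2=\cdots=P_7=0$, $P_l\leq 2$ for $l\leq 21$, and $P_l\leq 4$ for $l\leq 30$. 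So any $l$ with $P_l\geq 3$ either has $31\leq l\leq 36$, forcing $P_{38-l}=0$, or has $22\leq l\leq 30$ and $p\leq 2$, forcing $\frac{4p^3}{l(l+p)(3l+4p)}\leq\frac{1}{1221}<K_X^3$.
\item \cref{prop3m2} and \cref{prop3m3} with $m\geq 18$ only reach $n\geq 56$.
\end{enumerate}
The missing idea is simply that $38=2\cdot 18+2$. Since $P_{18}(X)\geq 2$ by \cref{propDeltaAll} and $P_{20}(X)>0$ by \cref{coroDelta}, \cref{prop2m2} with $m=18$ settles $n=38$ for every $X$ at once, with no case split on $\delta(X)$. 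Your remarks on $\delta(X)\leq 12$ and $\delta(X)=13$ are moot: $\delta(X)\geq 13$ is a hypothesis, and the only $\delta=13$ basket, type 41, has $P_{18}=5$ and was already removed by \cref{propDelta18geq4}.

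\textbf{A slip in the case ``$K_X^3\leq\frac{1}{1170}$, not type 7a''.} You claim each case reduces to a true numerical inequality, but here the bound $p\geq 4$ from \cref{propDeltaPn9}(1) only works for $n\leq 43$. Indeed $\frac{4\cdot 4^3}{44\cdot 48\cdot 148}=\frac{1}{1221}<\frac{1}{1170}$, and types 2a and 53a (both with $P_{18}\leq 3$) attain $K_X^3=\frac{1}{1170}$. For $45\leq n\leq 49$ you must apply \cref{propDeltaPn11}(2) to all baskets, not only to type 7a. For $n=44$ you need \cref{propDeltaPn11}(3). Both give $p\geq 5$, which suffices; this is exactly how the paper's Cases 7 and 8 proceed.
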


\begin{proof}
    \textbf{Case 1}. $n=38$.

    By \cref{propDeltaAll}, $P_{18}(X)\geq 2$. By \cref{coroDelta}, $P_{20}(X)>0$. By \cref{prop2m2}, $|38K_X|$ defines a generically finite map.

    \medskip

    \textbf{Case 2}. $n\geq 50$.

    By \cref{propDelta50}, this case is true.

    \medskip

    \textbf{Case 3}. $P_{18}(X)\geq 4$.

    By \cref{propDelta18geq4}, this case is true.

    \medskip

    \textbf{Case 4}. $P_{18}(X)\leq 3$, $K_X^3> \frac{1}{770}$ and $39\leq n\leq 49$.

    By \cref{propDeltaPn15}, $P_n(X)\geq 15$. Let $p=\lfloor \frac{P_n(X)-1}{2}\rfloor$. Then $p\geq 7$. By \cref{propDelta18leq3},
    \[
        K_X^3\leq \frac{23}{9240}<\frac{4\times 7^3}{49(49+7)(3\times 49+4\times 7)} \leq \frac{4p^3}{n(n+p)(3n+4p)}.
    \]

    By \cref{coroDelta}, $P_{n-18}(X)>0$. Take $m=18$ in \cref{propDeltaM2N3}, we have that $|nK_X|$ defines a generically finite map.

    \medskip

    \textbf{Case 5}. $P_{18}(X)\leq 3$, $\frac{3}{3080}< K_X^3\leq \frac{1}{770}$ and $39\leq n\leq 49$.

    By \cref{propDeltaPn13}, $P_n(X)\geq 13$. Let $p=\lfloor \frac{P_n(X)-1}{2}\rfloor$. Then $p\geq 6$. Thus
    \[
        K_X^3\leq \frac{1}{770}<\frac{4\times 6^3}{49(49+6)(3\times 49+4\times 6)} \leq \frac{4p^3}{n(n+p)(3n+4p)}.
    \]

    By \cref{coroDelta}, $P_{n-18}(X)>0$. Take $m=18$ in \cref{propDeltaM2N3}, we have that $|nK_X|$ defines a generically finite map.

    \medskip

    \textbf{Case 6}. $P_{18}(X)\leq 3$, $\frac{1}{1170}< K_X^3\leq \frac{3}{3080}$ and $39\leq n\leq 49$.

    By \cref{propDeltaPn11}, $P_n(X)\geq 11$. Let $p=\lfloor \frac{P_n(X)-1}{2}\rfloor$. Then $p\geq 5$. Thus
    \[
        K_X^3\leq \frac{3}{3080}<\frac{4\times 5^3}{49(49+5)(3\times 49+4\times 5)} \leq \frac{4p^3}{n(n+p)(3n+4p)}.
    \]

    By \cref{coroDelta}, $P_{n-18}(X)>0$. Take $m=18$ in \cref{propDeltaM2N3}, we have that $|nK_X|$ defines a generically finite map.

    \medskip

    \textbf{Case 7}. $P_{18}(X)\leq 3$, $K_X^3\leq \frac{3}{3080}$ and $45\leq n\leq 49$.

    The proof is the same as that in case 6.

    \medskip

    \textbf{Case 8}. $P_{18}(X)\leq 3$, $K_X^3\leq \frac{3}{3080}$, $n=44$ and $X$ is not of type 7a.

    The proof is the same as that in case 6.

    \medskip

    \textbf{Case 9}. $P_{18}(X)\leq 3$, $K_X^3\leq \frac{1}{1170}$, $39\leq n\leq 43$ and $X$ is not of type 7a.

    By \cref{propDeltaPn9}, $P_n(X)\geq 9$. Let $p=\lfloor \frac{P_n(X)-1}{2}\rfloor$. Then $p\geq 4$. Thus
    \[
        K_X^3\leq \frac{1}{1170}<\frac{4\times 4^3}{43(43+4)(3\times 43+4\times 4)} \leq \frac{4p^3}{n(n+p)(3n+4p)}.
    \]

    By \cref{coroDelta}, $P_{n-18}(X)>0$. Take $m=18$ in \cref{propDeltaM2N3}, we have that $|nK_X|$ defines a generically finite map.

    \medskip

    \textbf{Case 10}. $P_{18}(X)\leq 3$, $K_X^3\leq \frac{1}{1170}$ and $n=42$.

    The proof is the same as that in case 9.

    \medskip

    \textbf{Case 11}. $X$ is of type 7a and $n=44$.

    By \cref{propDelta7a}, $P_n(X)=10$. Let $p=\lfloor \frac{P_n(X)-1}{2}\rfloor$. Then $p= 4$. Thus
    \[
        K_X^3= \frac{1}{1680}<\frac{4\times 4^3}{44(44+4)(3\times 44+4\times 4)} = \frac{4p^3}{n(n+p)(3n+4p)}.
    \]

    By \cref{coroDelta}, $P_{n-14}(X)>0$. Take $m=14$ in \cref{propDeltaM2N3}, we have that $|nK_X|$ defines a generically finite map.

    \medskip

    \textbf{Case 12}. $X$ is of type 7a and $n=43$.

    By \cref{propDelta7a}, $P_9(X)=1$ and $P_{34}(X)=5$. Let $l=34$, $p=P_l(X)-2=3$. Then
    \begin{align*}
        K_X^3= \frac{1}{1680}<\frac{4\times 3^3}{34(34+3)(3\times 34+4\times 3)}
        = \frac{4p^3}{l(l+p)(3l+4p)}.
    \end{align*}

    By \cref{coroDelta}, $P_{n-14}(X)>0$. Take $m=14$ in \cref{propDeltaM2L3}, we have that $|nK_X|$ defines a generically finite map.

    \medskip

    \textbf{Case 13}. $X$ is of type 7a and $n=41$.

    By \cref{propDelta7a}, $P_9(X)=1$ and $P_{32}(X)=5$. Let $l=32$, $p=P_l(X)-2=3$. Then
    \begin{align*}
        K_X^3= \frac{1}{1680}<\frac{4\times 3^3}{32(32+3)(3\times 32+4\times 3)}
        = \frac{4p^3}{l(l+p)(3l+4p)}.
    \end{align*}

    By \cref{coroDelta}, $P_{n-14}(X)>0$. Take $m=14$ in \cref{propDeltaM2L3}, we have that $|nK_X|$ defines a generically finite map.

    \medskip

    \textbf{Case 14}. $X$ is of type 7a and $n=40$.

    By \cref{propDelta7a}, $P_6(X)=1$ and $P_{34}(X)=5$. Let $l=34$, $p=P_l(X)-2=3$. Then
    \begin{align*}
        K_X^3= \frac{1}{1680}<\frac{4\times 3^3}{34(34+3)(3\times 34+4\times 3)}
        = \frac{4p^3}{l(l+p)(3l+4p)}.
    \end{align*}

    By \cref{coroDelta}, $P_{n-14}(X)>0$. Take $m=14$ in \cref{propDeltaM2L3}, we have that $|nK_X|$ defines a generically finite map.

    \medskip

    \textbf{Case 15}. $X$ is of type 7a and $n=39$.

    By \cref{propDelta7a}, $P_9(X)=1$ and $P_{30}(X)=5$. Let $l=30$, $p=P_l(X)-2=3$. Then
    \begin{align*}
        K_X^3= \frac{1}{1680}<\frac{4\times 3^3}{30(30+3)(3\times 30+4\times 3)}
        = \frac{4p^3}{l(l+p)(3l+4p)}.
    \end{align*}

    By \cref{coroDelta}, $P_{n-14}(X)>0$. Take $m=14$ in \cref{propDeltaM2L3}, we have that $|nK_X|$ defines a generically finite map.
\end{proof}

\section{Proof of \texorpdfstring{\cref{thm3m2}}{Theorem 1.2} and \texorpdfstring{\cref{thm38}}{Theorem 1.3}}\label{secThm38}

\begin{theorem}\label{thm2m2Chi1}
    Let $X$ be a minimal 3-fold of general type with $\chi(\OX)=1$ and $P_m(X)\geq 2$. Then $|(2m+2)K_X|$ defines a generically finite map.
\end{theorem}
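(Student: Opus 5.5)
The plan is to reduce to \cref{prop2m2}. That proposition already shows $|(2m+2)K_X|$ is generically finite whenever $P_m(X)\geq 2$ and $P_{m+2}(X)>0$. So it suffices to check the single non-vanishing condition $P_{m+2}(X)>0$ under the extra hypothesis $\chi(\OX)=1$, splitting according to whether $m=1$ or $m\geq 2$.

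\textbf{Case $m\geq 2$.} Here the hypothesis $\chi(\OX)=1$ lets us apply \cref{propPChi1}, which gives
\[
P_{m+2}(X)\geq P_m(X)+P_2(X)\geq P_m(X)\geq 2>0.
\]
Then \cref{prop2m2} applies directly and yields that $|(2m+2)K_X|$ defines a generically finite map.

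\textbf{Case $m=1$.} Here $p_g(X)=P_1(X)\geq 2$, and in particular $p_g(X)>0$. We use the elementary monotonicity of plurigenera on a variety with a nonzero pluricanonical section: multiplying by a fixed nonzero section of $|K_X|$ injects $H^0(kK_X)$ into $H^0((k+1)K_X)$. Applying this twice gives $P_3(X)\geq P_2(X)\geq P_1(X)\geq 2>0$. Hence \cref{prop2m2} again applies and shows that $|4K_X|$ is generically finite. Alternatively, this case is covered by Case 1 of \cref{prop2m2}, via \cref{propM2}, since $p_g(X)\neq 0$.

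No step is a real obstacle: all the geometric input (the comparison inequality \cref{compareIneq}, the surface criterion \cref{thmMain1}, and the $(1,0)$-surface analysis) is already packaged into \cref{prop2m2}. The only point needing care is to confirm that \cref{propPChi1} is used within its stated range, namely $m\geq 2$ and $\chi(\OX)=1$, and that $m=1$ is handled separately as above.
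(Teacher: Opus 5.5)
Your proposal is correct and matches the paper's proof: in both, the case $m\geq 2$ uses \cref{propPChi1} to get $P_{m+2}(X)\geq P_m(X)\geq 2$, the case $m=1$ uses $P_3(X)\geq p_g(X)\geq 2$, and then \cref{prop2m2} finishes. Your justification of $P_3\geq p_g$ by multiplying with a nonzero section of $K_X$ is exactly the fact the paper uses implicitly.
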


\begin{proof}
    When $m\geq 2$, by \cref{propPChi1}, $P_{m+2}(X)\geq P_m(X)\geq 2$. When $m=1$, $P_3(X)\geq p_g(X)\geq 2$. By \cref{prop2m2}, $|(2m+2)K_X|$ defines a generically finite map.
\end{proof}

\begin{theorem}\label{thm29Chi1}
    Let $X$ be a minimal 3-fold of general type with $\chi(\OX)=1$. Then $|nK_X|$ defines a generically finite map when $n\geq 29$.
\end{theorem}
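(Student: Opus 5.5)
By \cref{thmPChi1}, $P_{10}(X)\geq 2$ and $P_k(X)>0$ for all $k\geq 7$. The plan is to feed this data into the criteria of \cref{secCriteria3} with $m=10$. I will split according to the value of $\delta(X)=\min\{k\mid P_k(X)\geq 2\}$, which satisfies $\delta(X)\leq 10$.

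\textbf{First step: the main criterion.} I would apply \cref{prop2sqrt2} with $m=\delta(X)$. It gives generic finiteness for
\[
n>(2\sqrt2+1)(\delta(X)+1).
\]
For $\delta(X)\leq 6$ this bound is below $27$, so every $n\geq 29$ is covered. For $\delta(X)=7$ the bound is about $30.6$, so $n\geq 31$ is covered. For $\delta(X)=10$ the bound is about $42.1$, so this step alone is not enough.

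\textbf{Second step: the remaining $n$.} For the values of $n$ left over, I would combine three further results.
\begin{itemize}
\item \cref{thm2m2Chi1} handles $n=2m+2$ whenever $P_m(X)\geq 2$.
\item \cref{prop3m2} handles $n=lm+l+2m+1$ for every $l\geq 1$.
\item \cref{prop3m3} handles $n=l(m+1)$ for every $l\geq 3$.
\end{itemize}
These may be applied for every $m$ with $P_m(X)\geq 2$, not only $m=\delta(X)$. When $\chi(\OX)=1$ and $m\geq 2$, \cref{propPChi1} gives $P_{m+2}(X)\geq P_m(X)\geq 2$, so $P_{m'}(X)\geq 2$ for all $m'\geq \delta(X)$ with $m'\equiv \delta(X)\pmod 2$. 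Iterating \cref{propPChi1} further gives
\[
P_{2\delta}(X)\geq 2,\quad P_{\delta+2}(X)\geq 2,\quad P_{12}(X)\geq P_{10}(X)+P_2(X),\ \text{etc.}
\]
This supplies many admissible $m$. In particular, any even $m\geq 10$ works when $\delta(X)=10$, and any $m\geq \delta(X)$ of the same parity as $\delta(X)$ works in general.

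\textbf{Case $\delta(X)=10$ (the hardest).} Here $P_{10}(X)\geq 2$, and also $P_{12}(X)\geq 2$, $P_{14}(X)\geq 2$, and so on. Using $m=10$:
\begin{itemize}
\item \cref{prop3m2} gives $n=11l+21$, i.e.\ $32,43,\dots$;
\item \cref{prop3m3} gives $n=11l$ with $l\geq 3$, i.e.\ $33,44,\dots$;
\item \cref{thm2m2Chi1} gives $n=22$.
\end{itemize}
Using $m=12$: \cref{thm2m2Chi1} gives $26$, \cref{prop3m2} gives $38,51,\dots$, and \cref{prop3m3} gives $39,52,\dots$. Using $m=14$: we get $30$, then $44,\dots$ and $45,\dots$. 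Using $m=16$: we get $34$, $50$, $51$. This patchwork leaves gaps such as $29$ and $31$.

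For the gaps I would use \cref{propMasek3}, which needs some $P_m(X)\geq 3$. Since $\chi(\OX)=1$ with $p_g=0$ and $P_2\leq 1$, plurigenera grow. For instance \cref{propPChi1} with $P_2(X)\geq 1$ gives $P_{m+2}(X)\geq P_m(X)+1$ for the relevant $m$, so $P_{12}(X)\geq 3$ and $P_{14}(X)\geq 4$. Taking $m=14$ with $P_{14}(X)=p+1\geq 4$ gives the threshold
\[
\frac{p+3}{p}\cdot 14+4\leq 28.7 \quad\text{when } p\geq 3,
\]
which would settle all $n\geq 29$ provided $P_{n-14}(X)>0$. That positivity holds because $n-14\geq 15\geq 7$ and \cref{thmPChi1} applies.

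\textbf{General scheme and the main obstacle.} The general plan is: choose an even $m\in\{12,14,16\}$ (or an $m$ of the appropriate parity) with $P_m(X)$ large, and apply \cref{propMasek3}. Where that threshold is too large, fall back on \cref{prop2sqrt2}, \cref{prop3m2} and \cref{prop3m3}. The main obstacle I expect is the regime $P_2(X)=0$ with $\delta(X)$ large. There \cref{propPChi1} gives growth only along the parity class of $\delta(X)$, and I would need the Riemann–Roch estimate
\[
P_{k+1}(X)-P_k(X)\geq \tfrac12 k^2K_X^3-2,
\]
from the proof of \cref{propPmPm1}, together with a lower bound for $K_X^3$, to force $P_m(X)\geq 3$ or $4$ for some suitable $m\leq 14$. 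If that fails, the fallback is a finite basket analysis for $\chi(\OX)=1$, in the style of \cref{secDelta13}, to check the finitely many remaining pairs $(X,n)$ with $29\leq n\leq 42$.
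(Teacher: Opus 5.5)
Your proposal has a genuine gap: as written it does not cover all $n\geq 29$.

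\textbf{The case $\delta(X)=10$.} You invoke \cref{propPChi1} with $P_2(X)\geq 1$ to get $P_{14}(X)\geq 4$, but that hypothesis is never available there. With $\chi(\OX)=1$, \cref{propPChi1} at $m=2$ gives $P_4(X)\geq 2P_2(X)$, so $P_2(X)\geq 1$ would force $\delta(X)\leq 4$. Hence $\delta(X)=10$ implies $P_2(X)=0$. The inequality then degenerates to $P_{m+2}(X)\geq P_m(X)$, and no growth of plurigenera follows.

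Even granting $P_{14}(X)=4$, the threshold of \cref{propMasek3} is $\frac{6}{3}\cdot 14+4=32$, not $28.7$; you would need $P_{14}(X)\geq 5$. Meanwhile, for $m\geq 10$, $n=29$ and $n=31$ are not of the form $2m+2$, $(l+2)m+l+1$, or $l(m+1)$ with $l\geq 3$. They also lie below the bound of \cref{prop2sqrt2}. So your patchwork leaves them open, and the $P_2(X)=0$ regime is deferred to an unspecified basket computation.

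\textbf{The missing step.} It is elementary, and you already wrote down both ingredients:
\begin{itemize}
\item \cref{thm2m2Chi1} with $m=10$ shows that $|22K_X|$ defines a generically finite map;
\item \cref{thmPChi1} gives $P_k(X)>0$ for every $k\geq 7$.
\end{itemize}
Suppose $|aK_X|$ defines a generically finite map and $D\in|bK_X|$. Then $|(a+b)K_X|\supseteq|aK_X|+D$. So $\Phi_{|aK_X|}$ factors as $\Phi_{|(a+b)K_X|}$ followed by a linear projection, and $|(a+b)K_X|$ is generically finite as well. Taking $a=22$ and $b=n-22\geq 7$ handles every $n\geq 29$ uniformly, with no case division on $\delta(X)$ and no basket analysis. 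This is exactly the paper's two-line proof.
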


\begin{proof}
    By \cref{thmPChi1}, $P_{10}(X)\geq 2$. By \cref{thm2m2Chi1}, $|22K_X|$ defines a generically finite map.

    By \cref{thmPChi1}, $P_n(X)>0$ when $n\geq 7$. Thus $|nK_X|$ defines a generically finite map when $n\geq 22+7$.
\end{proof}

We introduce a modification of formal baskets.

\begin{definition}
    For an integer $N\geq 2$ and a formal basket
    \[
        B=\{n_{b,r}\times (b,r) \mid \gcd(b,r)=1,\ 0<2b\leq r\},
    \]
    we define the level $N$ \textit{truncation}
    \[
        T_N(B)\coloneq \{n_{b,r}\times (b,r) \mid 0<2b\leq r\leq N\}\cup\{n_N\times (1,N)\}
    \]
    where $n_{b,r}=0$ for $\gcd(b,r)>1$, and
    \[
        n_N=\sum_{r\geq N+1,\ 0<2b\leq r}n_{b,r}.
    \]
\end{definition}

\begin{proposition}\label{propBasketK3Chi}
    Let $(B,\chi,\chi_2)$ be a weighted basket. Then
    \begin{align*}
        K^3(T_N(B^{(N-1)}),\chi,\chi_2)\geq K^3(B,\chi,\chi_2)
    \end{align*}
    and
    \begin{align*}
        \chi_m(T_N(B^{(N-1)}),\chi,\chi_2)= \chi_m(B,\chi,\chi_2)
    \end{align*}
    for all $2\leq m\leq N$.
\end{proposition}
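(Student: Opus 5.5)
We prove the two assertions by going through the intermediate basket $B^{(N-1)}$. By \eqref{eqBasketK3} applied with $m=N-1$, we have $K^3(B^{(N-1)},\chi,\chi_2)\geq K^3(B,\chi,\chi_2)$. By \eqref{eqBasketChi}, $\chi_m(B^{(N-1)},\chi,\chi_2)=\chi_m(B,\chi,\chi_2)$ for all $2\leq m\leq N$. It therefore suffices to compare $T_N(B^{(N-1)})$ with $B^{(N-1)}$ directly.

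By \eqref{eqBasketCS}, every pair in $B^{(N-1)}$ has either $r\leq N-1$ or the form $(1,r)$ with $r\geq N$. The truncation $T_N$ leaves every pair with $r\leq N$ untouched. It replaces each pair $(1,r)$ with $r\geq N+1$ by the single pair $(1,N)$. So the whole comparison reduces to one local statement: for $r\geq N+1$, replacing $(1,r)$ by $(1,N)$ keeps $\chi_m$ unchanged for $2\leq m\leq N$ and does not decrease $K^3$.

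For the $\chi_m$ part, we use that in Reid's formula the basket enters only through $l(m)$. The contribution of $(1,r)$ to $l(m)$ is $\sum_{j=1}^{m-1}\frac{\overline{j}(r-\overline{j})}{2r}$. For $j\leq m-1\leq N-1<r$ we have $\overline{j}=j$, so this sum equals $\sum_{j=1}^{m-1}\frac{j(r-j)}{2r}$.

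In the weighted-basket formalism, $K^3$ is not computed directly from this sum. Instead $K^3(\bB)$ is obtained from $\chi_2$ and the basket via \cite{EXP1}: one has $K^3=\chi_2$-correction $-\sum\frac{b(r-b)}{r}$-type terms, and the first thing to do is fix this normalisation. Concretely, we recall from \cite[(2.3)--(2.5)]{EXP1} that $\sigma(B)=\sum b$ and $\sigma'(B)=\sum\frac{b^2}{r}$ are determined by $\chi,\chi_2,\chi_3$. We also recall $K^3=2\chi_2-... +\sigma'-...$, more precisely that $K^3$ is an explicit affine function of $\sigma(B)$ and $\sigma'(B)$ once $\chi,\chi_2$ are fixed. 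The defect numbers $\chi_m$ are then given by $\chi_m=(\text{polynomial in } m,\chi,\chi_2,\chi_3)$ minus a sum over pairs with $r<m$ of correction terms.

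With this formulation the key observation is as follows. A pair $(1,r)$ contributes $1$ to $\sigma$ and $1/r$ to $\sigma'$, and it contributes no correction term to $\chi_m$ as long as $r\geq m$. Replacing $(1,r)$ with $r\geq N+1$ by $(1,N)$ keeps $\sigma$. It increases $\sigma'$ from $1/r$ to $1/N$. It adds no correction to $\chi_m$ for $m\leq N$, since $N\geq m$.

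Two things still have to be verified. First, $\chi_m$ for $m\leq N$ depends only on $\chi,\chi_2$, $\sigma$, $\sigma'$ and the pairs with $r<m$. Second, the change in $\sigma'$ is compensated by recomputing $\chi_3$ (i.e. $\chi_3$ is not held fixed but $\chi,\chi_2$ are, and $K^3$ is defined from these). A cleaner route is to check directly from the expression of $l(m)$ via the $\chi_m$-recursion: $\chi_{m+1}-\chi_m$ involves $\frac{m^2}{2}K^3$ plus the sum $\sum_{(b,r)}\left(\frac{\overline{mb}(r-\overline{mb})}{2r}\right)$, and both $K^3$ and these terms must shift consistently.

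The main obstacle is pinning down this bookkeeping precisely. We will try to sidestep it by realising the replacement as an inverse of packings. By \cite{EXP1}, a pair $(1,r)$ with $r\geq N+1$ is the prime packing of $(1,N)$ with $(0,r-N)$-type data. Since that is not a genuine pair, the better way is to mimic the proof of \cite[Lemma 2.16, Lemma 3.6]{EXP1}: packing two pairs of indices $\geq N$ preserves $\chi_m$ for $m\leq N$ and decreases $K^3$. We will first try to prove the local inequality $K^3$-change $=\frac1N-\frac1r\cdot(\text{positive const})\geq 0$ by explicit computation with Reid's formula for $m=2$. We then check equality of $\chi_m$ for $m\leq N$ term by term, with the sums over $j\leq m-1<N$ where no reduction mod $r$ or mod $N$ occurs.
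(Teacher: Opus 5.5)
Your reduction matches the paper's: you use \eqref{eqBasketK3} and \eqref{eqBasketChi} with $m=N-1$, then \eqref{eqBasketCS} to see that the only changes are pairs $(1,r)$ with $r\geq N+1$, each replaced by $(1,N)$. Your closing sentence also names the paper's computation: get the $K^3$ change from the $m=2$ formula, then check $\chi_m$ using that no reduction mod $r$ or mod $N$ occurs for $j\leq m-1<N$. But you never carry out that computation, which you call ``the main obstacle'', and several claims you make around it are wrong.

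In a weighted basket the fixed data are $\chi$ and $\chi_2$, so Reid's formula at $m=2$ \emph{defines} $K^3=6\chi+2\chi_2-\sum n_{b,r}\frac{b(r-b)}{r}$. So, contrary to your third paragraph, the basket enters $\chi_m$ through $K^3$ as well as through $l(m)$. Replacing $(1,r)$ by $(1,N)$ changes $K^3$ by exactly $\frac1N-\frac1r\geq 0$, with no extra constant.

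It is also false that $\sigma'$ is determined by $\chi,\chi_2,\chi_3$; only $\sigma=10\chi+5\chi_2-\chi_3$ is. Since $\sigma$ does not change, $\chi_3$ does not change either, as it must, since it is part of the conclusion once $N\geq 3$. The increase in $\sigma'$ is absorbed entirely by $K^3$, not by ``recomputing $\chi_3$''.

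The inverse-packing detour cannot work. Besides $(0,r-N)$ not being a pair, packings and unpackings preserve $\sum r$, while the truncation lowers it. So the truncation is not a composite of such moves, and the lemmas behind \eqref{eqBasketK3} and \eqref{eqBasketChi} say nothing about it.

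What is missing is the cancellation. For $1\leq j\leq m-1\leq N-1<r$,
\[
\frac{j(N-j)}{2N}-\frac{j(r-j)}{2r}=-\frac{j^2}{2}\left(\frac1N-\frac1r\right),
\]
so each replaced pair changes $l(m)$ by $-\frac{m(m-1)(2m-1)}{12}\left(\frac1N-\frac1r\right)$. This is nonzero, so a ``term by term'' check of $l(m)$ alone fails. The change in $\chi_m$ vanishes only after you add $\frac{m(m-1)(2m-1)}{12}$ times the $K^3$ change $\frac1N-\frac1r$.

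In your own bookkeeping language: with $\chi,\chi_2$ fixed, a pair $(b,r)$ with $(m-1)b<r$ contributes exactly $-\binom{m}{3}b$ to $\chi_m$. Hence $(1,r)$ and $(1,N)$ contribute the same $-\binom{m}{3}$ whenever $m\leq N<r$. Your ``first thing to verify'' is false as stated: $(2,5)$ contributes $-7$, not $-8$, to $\chi_4$. It is not needed, though, since only pairs $(1,r)$ change.

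Multiplying these per-pair differences by $n_{1,r}$ and summing over $r\geq N+1$ gives both assertions. Together with your first paragraph, that is the entire proof.
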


\begin{proof}
    By \cref{eqBasketCS}, we can write
    \begin{align*}
        B^{(N-1)}=\{n_{b,r}\times (b,r)\mid 0<2b\leq r\leq N-1\}\cup\{n_{1,r}\times (1,r)\mid r\geq N\}
    \end{align*}
    where $n_{b,r}=0$ for $\gcd(b,r)>1$.

    By Reid's Riemann-Roch formula,
    \begin{align*}
        &\phantom{\;=\;}K^3(T_N(B^{(N-1)}),\chi,\chi_2)-K^3(B^{(N-1)},\chi,\chi_2)\\
        &=\sum_{r\geq N+1}n_{1,r}\left(\frac{1}{N}-\frac{1}{r}\right)\geq 0,
    \end{align*}
    and
    \begin{align*}
        &\phantom{\;=\;}\chi_m(T_N(B^{(N-1)}),\chi,\chi_2)-\chi_m(B^{(N-1)},\chi,\chi_2)\\
        &=\sum_{r\geq N+1}n_{1,r}\left(\frac{m(m-1)(2m-1)}{12}\left(\frac{1}{N}-\frac{1}{r}\right)+ \sum_{j=1}^{m-1}\left(\frac{j(N-j)}{2N}-\frac{j(r-j)}{2r}\right) \right)\\
        &=0
    \end{align*}
    for all $2\leq m\leq N$.

    With \cref{eqBasketK3} and \cref{eqBasketChi}, we are done.
\end{proof}

\begin{proposition}\label{prop101112}
    Let $X$ be a minimal 3-fold of general type with $\chi(\OX)\neq 1$ and $\delta(X)\leq 12$. Then $|nK_X|$ defines a generically finite map when $n\geq 3\delta(X)+2$.
\end{proposition}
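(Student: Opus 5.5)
We will take $m=\delta(X)$, so $P_m(X)\geq 2$ and $m\leq 12$, and split according to $n$, using the criteria of \cref{secCriteria3}. First we make some reductions. By \cref{thmIrr5} we may assume $q(X)=0$, since $n\geq 3m+2\geq 5$. If $p_g(X)=0$, then $\chi(\OX)=1+h^2(\OX)$, so the hypothesis $\chi(\OX)\neq 1$ forces $h^2(\OX)\geq 1$. By \cref{propH2OX}, the generic irreducible element $S$ of the pencil in $|mK_X|$ is then not a $(1,0)$-surface. If $\chi(\OX)\geq 3$, the statement is exactly \cref{propMasekChi3}. So the real content is the case $\chi(\OX)=2$, together with the cases $p_g(X)>0$ (where $\chi(\OX)\leq 0$ is possible), in which we argue without any restriction on $S$.

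For small $n$ the earlier propositions already suffice. For $n=3m+2$ take $l=1$ in \cref{prop3m2}, and for $n=3m+3$ take $l=3$ in \cref{prop3m3}. For $n\geq 3m+4$ we follow the template of \cref{propMasekChi3}. Let $f:W\to X$ be a Chen resolution with respect to a pencil $\Lambda\subseteq|mK_X|$. Since $n-m\geq 2m$, \cref{thmNV} gives $P_{n-m}(X)>0$, so by \cref{coroTankeev1} and Kawamata–Viehweg vanishing it suffices to show that $|K_S+\lceil (n-m-1)f^*(K_X)|_S\rceil|$ defines a generically finite map. We then apply \cref{compareIneq} twice, peeling off $2(m+1)f^*K_X|_S$. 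This reduces the problem to the system
\[
|K_S+2\sigma^*K_{S_0}+\lceil L\rceil|,\qquad L=(n-3m-3)f^*(K_X)|_S,
\]
where $L$ is nef and big because $n\geq 3m+4$.

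There are two cases for this surface system. If $p_g(S)\geq 2$, then \cref{propSurfL2B} applied with $B=\sigma^*K_{S_0}$ finishes. Otherwise we use \cref{propSurfLB}. We take $V$ to be a pencil in $\sigma^*|2K_{S0}|$, which exists since $P_2(S_0)=K_{S_0}^2+\chi(\OO_{S_0})\geq 2$, and we take $B\in V$. The system then contains $|K_S+\lceil L\rceil+B|$. To get $\Lambda\lsgeq V$ we need $h^0(K_S+\lceil L\rceil)>0$. Kawamata–Viehweg vanishing on $S$ gives $h^0(K_S+\lceil L\rceil)=\chi(\OO_S)+\frac12(\lceil L\rceil\cdot(\lceil L\rceil+K_S))$, so positivity reduces to controlling this intersection number. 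If that estimate is delicate, an alternative is to avoid it by bounding $\lceil L\rceil$ from below by an effective divisor, which comes from $P_{n-3m-3}(X)>0$ whenever \cref{thmNV} applies, i.e. when $n\geq 5m+3$.

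I expect the main obstacle to be the range $3m+4\leq n\leq 5m+2$ when $p_g(S)\leq 1$, in particular when $S$ is a $(1,1)$-surface, which is possible when $\chi(\OX)=2$. There the non-vanishing needed for \cref{propSurfLB} is not automatic. If the intersection estimate above fails, the fallback is to use $\chi(\OX)\neq 1$ and $m\leq 12$ more concretely. We would bound $K_X^3$ from below by \cref{propK3Pencil} or \cref{propK3mn2}. We would then enumerate the finitely many possibilities for the weighted basket via the truncation of \cref{propBasketK3Chi}, reading off enough plurigenera to apply \cref{propDeltaM2N3} or \cref{propDeltaM2L3}, case by case in $m\leq 12$.
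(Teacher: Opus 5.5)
Your proposal has a genuine gap, and it is exactly in the case that carries the content of the proposition. Several parts are fine: the reductions, the cases $n=3m+2$ and $n=3m+3$, the reduction to $|K_S+2\sigma^*K_{S_0}+\lceil L\rceil|$, and the use of \cref{propSurfL2B} with $B=\sigma^*K_{S_0}$ when $p_g(S)\geq 2$. The remaining case can also be narrowed further:
\begin{itemize}
\item if $K_{S_0}^2\geq 2$, the Reider-type argument of Case 3-1 of \cref{propMasekChi3} works for all $n\geq 3m+4$, and it uses nothing about $\chi(\OX)$;
\item $n>(2\sqrt2+1)(m+1)$ is covered by \cref{prop2sqrt2}.
\end{itemize}
What is left is $\chi(\OX)=2$, $q(X)=p_g(X)=0$, $S$ a $(1,1)$-surface, and $3m+4\leq n\leq N_\delta=\lfloor(2\sqrt2+1)(\delta+1)\rfloor$ with $m=\delta(X)$. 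For this case you have no argument.

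The non-vanishing $h^0(K_S+\lceil L\rceil)>0$ needed for \cref{propSurfLB} is not proved. In your Riemann--Roch expression the term $\Delta\cdot(K_S+\Delta)$, with $\Delta=\lceil L\rceil-L$, can be negative, for instance on $(-1)$- or $(-2)$-curves. The positive terms are controlled only through $L\geq_\bQ\frac{t}{m+1}\sigma^*K_{S_0}$ with $t=n-3m-3$, and $t$ is small here.

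Your substitute, an effective divisor below $\lceil L\rceil$ coming from $P_t(X)>0$, is not available in this range. Here $t\leq\lfloor(2\sqrt2-2)(m+1)\rfloor\leq m$, and for $t<m=\delta(X)$ one only knows $P_t(X)\leq 1$. Already at $n=3m+4$ one has $t=1$ and $P_1(X)=p_g(X)=0$. The range $n\geq 5m+3$, where \cref{thmNV} does give $P_t(X)>0$, lies entirely inside the range of \cref{prop2sqrt2}, so it adds nothing.

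Your fallback is the route the paper actually takes, but as written it is a plan rather than a proof, and it leaves out what makes it work.
\begin{itemize}
\item Enumerating truncated baskets $T_{N_\delta}(B^{(N_\delta-1)})$ is only a finite problem once $P_2,\dots,P_7$ are bounded. Otherwise the formulas $n^0_{1,2}=5\chi+6\chi_2-4\chi_3+\chi_4$, etc., are unbounded.
\item The paper gets these bounds for $\delta\geq 3$ from \cref{propMasek3} combined with \cref{thmNV}. The same combination forces $P_\delta(X)=2$. The case $\delta=2$ is handled separately via \cref{thm6K} and \cref{thmNV}.
\item It then prunes using $P_k(X)\leq 1$ for $k<\delta$, the bound $K_X^3\geq\frac{4}{\delta(\delta+1)(3\delta+4)}$ from \cref{propK3Pencil} (valid since $S$ is not a $(1,0)$-surface), and the semigroup inequality.
\item Finally it checks by computer, over all surviving baskets, that each one satisfies either the hypotheses of \cref{prop2m2} ($P_{\delta+2}>0$ and $P_{n-2\delta-2}>0$) or those of \cref{propDeltaM2N3}.
\end{itemize}
The decisive fact is that no surviving basket escapes both criteria for any $n\in[3\delta+4,N_\delta]$. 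Your proposal needs this fact and does not supply it, and nothing guarantees in advance that the case analysis closes.

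A minor point: when $p_g(X)>0$ you need no surface analysis at all. Then every $P_k(X)>0$, and \cref{prop2m2} gives $|nK_X|\lsgeq|(2m+2)K_X|$ generically finite directly.
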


\begin{proof}
    Let $\delta=\delta(X)$, $\chi=\chi(\OX)$, $\chi_2=P_2(X)$.

    For all positive integers $a,b$, let
    \begin{align*}
        C(a,b)= \frac{4b^3}{a(a+b)(3a+4b)}.
    \end{align*}

    By \cref{thmIrr5}, we may assume $q(X)=0$.

    If $p_g(X)>0$, then $P_m(X)>0$ for all positive $m$. Then $|(2\delta+2)K_X|$ defines a generically finite map by \cref{prop2m2}. Then $|nK_X|$ defines a generically finite map since $n\geq 2\delta+2$.
    
    Thus we may assume $p_g(X)=0$. Then $\chi=1+h^2(\OX)\geq 1$. Since $\chi\neq 1$, we have $\chi\geq 2$.

    By \cref{propMasekChi3}, we may assume $\chi\leq 2$. Thus
    \begin{align}\label{eq101112-chiE2}
        \chi= 2.
    \end{align}

    Thus $h^2(\OX)=1$. Let $\Lambda$ be a 1-dimensional subsystem of $|\delta K_X|$. By \cref{propH2OX}, we have that a general fiber of the induced fibration from $\Phi_{\Lambda}$ is not a $(1,0)$-surface. By \cref{propK3Pencil},
    \begin{align}\label{eq101112-KX3Geq}
        K_X^3\geq C(\delta, 1).
    \end{align}

    For all $1\leq m\leq \delta-1$, we have
    \begin{align}\label{eq101112-Pm1}
        P_m(X)\leq 1.
    \end{align}

    We always have that if $P_a(X)>0$ and $P_b(X)>0$, then
    \begin{align}\label{eq101112-semigroup}
        P_{a+b}(X)\geq P_a(X)+P_b(X)-1.
    \end{align}

    Let $N_\delta=\lfloor (2\sqrt{2}+1)(\delta+1)\rfloor$.

    \medskip

    \textbf{Case 1}. $n\geq N_\delta+1$.

    Take $m=\delta$ in \cref{prop2sqrt2}.

    \medskip

    \textbf{Case 2}. $n=3\delta+2$.

    Take $l=1, m=\delta$ in \cref{prop3m2}.

    \medskip

    \textbf{Case 3}. $n=3\delta+3$.

    Take $l=3, m=\delta$ in \cref{prop3m3}.

    \medskip

    \textbf{Case 4}. $3\delta+4\leq n\leq N_\delta$ and $\delta\leq 2$.

    Since $p_g(X)=0$, we have $\delta\geq 2$. Thus $\delta=2$. Thus $n=10$ or $11$.

    By \cref{thm6K}, $|6K_X|$ defines a generically finite map. By \cref{thmNV}, since $n-6\geq 2\delta$, we have $P_{n-6}(X)>0$. Thus $|nK_X|$ defines a generically finite map.

    \medskip

    \textbf{Case 5}. $3\delta+4\leq n\leq N_\delta$ and $\delta\geq 3$.

    By \cref{thmNV} and \cref{propMasek3}, we may assume that if $P_m(X)\geq 3$ and $3\delta+4-m\geq 2\delta$, then
    \begin{align}\label{eq101112-masekAll}
        3\delta+4\leq \frac{P_m(X)+2}{P_m(X)-1}m+4.
    \end{align}

    Take $m=\delta$ in \cref{eq101112-masekAll}, we have $P_\delta(X)\leq 2$. Thus
    \begin{align}\label{eq101112-Pdelta2}
        P_\delta(X)= 2.
    \end{align}

    By \cref{eq101112-masekAll} and $\delta\geq 3$, we have that if $m\leq 7$, then
    \begin{align}\label{eq101112-Pm7}
        P_m(X)\leq\max\left\{2, \frac{3\delta+2m}{3\delta-m}\right\}.
    \end{align}

    Let $B$ be the Reid basket of $X$. Then
    \begin{align}\label{eq101112-k3B}
        K^3(B,\chi,\chi_2)=K_X^3
    \end{align}
    and
    \begin{align}\label{eq101112-chimB}
        \chi_m(B,\chi,\chi_2)=P_m(X)
    \end{align}
    for all $m\geq 2$.

    Let $\widetilde{B}=T_{N_\delta}(B^{(N_\delta-1)})$. Let $K^3=K^3(\widetilde{B},\chi,\chi_2)$. Let $\chi_m=\chi_m(\widetilde{B},\chi,\chi_2)$ for all $m\geq 3$.

    By \cref{propBasketK3Chi}, \cref{eq101112-k3B} and \cref{eq101112-chimB},
    \begin{align}\label{eq101112-k3}
        K^3 \geq K_X^3
    \end{align}
    and
    \begin{align}\label{eq101112-chim}
        \chi_m = P_m(X)
    \end{align}
    for all $2\leq m\leq N_\delta$.

    Write
    \begin{align*}
        \widetilde{B}^{(0)}=\{n^0_{1,r}\times (1,r)\mid 2\leq r\leq N_\delta \}.
    \end{align*}

    By \cite[(3.6), (3.9), (3.10)]{EXP1},
    \begin{gather*}
        n^0_{1,2}=5\chi+6\chi_2-4\chi_3+\chi_4,\\
        n^0_{1,3}=4\chi+2\chi_2+2\chi_3-3\chi_4+\chi_5,\\
        \sum_{r\geq4}n^0_{1,r}=\chi-3\chi_2+\chi_3+2\chi_4-\chi_5,\\
        n^0_{1,5}+2\sum_{r\geq6}n^0_{1,r}=\chi_4+\chi_5+\chi_6-3\chi_2-\chi_3-\chi_7.
    \end{gather*}

    With \cref{eq101112-chim}, \cref{eq101112-Pm7} and \cref{eq101112-chiE2}, there are finite possibilities of $\widetilde{B}^{(0)}$. Thus there are finite possibilities of $\widetilde{B}$.

    After excluding some non-geometric possibilities using \cref{eq101112-chim}, \cref{eq101112-k3}, \cref{eq101112-Pm7}, \cref{eq101112-Pdelta2}, \cref{eq101112-masekAll}, \cref{eq101112-semigroup}, \cref{eq101112-Pm1} and \cref{eq101112-KX3Geq}, there are 87652, 34710, 5074, 1843, 566, 513, 79, 111, 6 or 35 surviving possibilities of $(\widetilde{B},\chi,\chi_2)$ when $\delta=3,4,5,6,7,8,9,10,11$ or $12$ respectively.

    By direct computation of these finite possibilities using Reid's Riemann-Roch formula, we have that either
    \begin{align}\label{eq101112-2m2}
        \chi_{\delta+2}>0,\ \chi_{n-2\delta-2}>0.
    \end{align}
    or
    \begin{align}\label{eq101112-m2n3}
        \chi_n\geq 3,\ \chi_{n-\delta}>0,\ K^3<C\left(n,\left\lfloor\frac{\chi_n-1}{2} \right\rfloor\right).
    \end{align}

    The complete computation is shown in \cref{lst101112}.

    When \cref{eq101112-2m2} is satisfied, by \cref{prop2m2} and \cref{eq101112-chim}, we have that $|nK_X|\lsgeq |(2\delta+2)K_X|$ defines a generically finite map.

    When \cref{eq101112-m2n3} is satisfied, by \cref{propDeltaM2N3}, \cref{eq101112-chim}, \cref{eq101112-k3}, \cref{eq101112-chiE2} and $n\geq 2\delta+3$, we have that $|nK_X|$ defines a generically finite map.
\end{proof}

\begin{proof}[Proof of \cref{thm3m2}]
    By \cref{propDelta38} and \cref{prop101112}, we only need to consider the case that $\chi(\OX)=1$. Let $\delta=\delta(X)$.

    \medskip

    \textbf{Case 1}. $n=3\delta+2$.

    Take $l=1, m=\delta$ in \cref{prop3m2}.

    \medskip

    \textbf{Case 2}. $n=3\delta+3$.

    Take $l=3, m=\delta$ in \cref{prop3m3}.

    \medskip

    \textbf{Case 3}. $n\geq 3\delta+4$.

    By \cref{thm2m2Chi1}, $|(2\delta+2)K_X|$ defines a generically finite map. Let $k=n-2\delta-2$. Then we only need to prove that $P_k(X)>0$.

    \medskip

    \textbf{Case 3-1}. $k\geq 7$.

    By \cref{thmPChi1}, $P_k(X)>0$.

    \medskip

    \textbf{Case 3-2}. $\delta\leq 2$.

    By \cref{thmNV}, since $k\geq \delta+2\geq 2\delta$, we have $P_k(X)>0$.

    \medskip

    \textbf{Case 3-3}. $\delta\geq 3$ and $k=\delta+2$.

    By \cref{propPChi1}, $P_k(X)\geq P_\delta(X)\geq 2$.

    \medskip

    \textbf{Case 3-4}. $\delta\geq 3$ and $\delta+3\leq k\leq 6$.

    In this case, we have $\delta=3$ and $k=6$. Thus $P_k(X)\geq 2P_\delta(X)-1\geq 3$.
\end{proof}

\begin{proof}[Proof of \cref{thm38}]
    By \cref{propDelta38}, \cref{thm29Chi1} and \cref{prop101112}, we are done.
\end{proof}

The bound 38 is substantially below the known birationality bound 57 (\cite[Theorem 1.1]{Chen2018}, \cite[Theorem 6.2]{EXP3}). It is natural to ask the following question.

\begin{question}
    Is 38 the smallest integer $m$ such that $|nK_X|$ defines a generically finite map for all $n\geq m$ and all minimal 3-folds $X$ of general type?
\end{question}

The following example is the ``worst'' known example.

\begin{example}[{\cite[II.5.1]{IF2000}}]
    $X_{46}$ in $\bP(4,5,6,7,23)$ is a minimal 3-fold of general type such that $|15K|$ does not define a generically finite map and $|nK|$ defines a generically finite map for all $n\geq 16$.
\end{example}

\section{Appendix}

\begin{singlespace}
\renewcommand{\arraystretch}{0.7}
\begin{xltabular}{\textwidth}{R B C C C C C}
\caption{Possible baskets with $\delta\geq 13$}\label{tableDelta13}\\

\toprule
Type & Basket & $K^3$ & $\chi$ & $P_{18}$ & $P_{30}$ & $\delta$ \\
\midrule
\endfirsthead

\toprule
Type & Basket & $K^3$ & $\chi$ & $P_{18}$ & $P_{30}$ & $\delta$ \\
\midrule
\endhead

\midrule
\multicolumn{7}{r}{Continued on next page}\\
\endfoot

\bottomrule
\endlastfoot

2a & $\{4\times(1,2),(4,9),(2,5),(5,13),3\times(1,3),2\times(1,4)\}$ & $1/1170$ & 2 & 2 & 4 & 18\\
\midrule

3 & $\{6\times(1,2),(5,11),4\times(2,5),(3,8),4\times(1,3),(2,7),2\times(1,4)\}$ & $23/9240$ & 3 & 3 & 12 & 15\\
5.1 & $\{7\times(1,2),(4,9),3\times(2,5),(5,13),4\times(1,3),(3,11),(1,4)\}$ & $61/25740$ & 3 & 4 & 11 & 15\\
5.2 & $\{7\times(1,2),(4,9),2\times(2,5),(7,18),4\times(1,3),(3,11),(1,4)\}$ & $1/660$ & 3 & 4 & 8 & 15\\
5.3 & $\{7\times(1,2),(4,9),(2,5),(9,23),4\times(1,3),(3,11),(1,4)\}$ & $47/45540$ & 3 & 4 & 6 & 15\\
5a & $\{7\times(1,2),(4,9),(11,28),4\times(1,3),(3,11),(1,4)\}$ & $1/1386$ & 3 & 4 & 5 & 15\\
53a & $\{3\times(1,2),(4,9),2\times(2,5),(5,13),3\times(1,3),(1,5)\}$ & $1/1170$ & 2 & 3 & 6 & 15\\
\midrule

4.5 & $\{7\times(1,2),(4,9),4\times(2,5),(5,14),2\times(1,3),(2,7),2\times(1,4)\}$ & $1/630$ & 3 & 2 & 8 & 14\\
5.4 & $\{7\times(1,2),(4,9),4\times(2,5),(3,8),4\times(1,3),(4,15)\}$ & $1/360$ & 3 & 4 & 14 & 14\\
6 & $\{9\times(1,2),2\times(3,7),(2,5),(4,11),4\times(1,3),2\times(2,7),(1,5)\}$ & $1/462$ & 3 & 3 & 11 & 14\\
7 & $\{5\times(1,2),(4,9),(3,7),5\times(1,3),(2,7),(1,5)\}$ & $1/630$ & 2 & 3 & 8 & 14\\
7a & $\{5\times(1,2),(7,16),5\times(1,3),(2,7),(1,5)\}$ & $1/1680$ & 2 & 2 & 5 & 14\\
10 & $\{8\times(1,2),(4,9),(3,7),2\times(3,8),5\times(1,3),(2,7),(1,4),(1,5)\}$ & $1/630$ & 3 & 3 & 8 & 14\\
11 & $\{9\times(1,2),2\times(3,7),(3,8),(4,11),3\times(1,3),(3,10),(1,4),(1,5)\}$ & $3/3080$ & 3 & 2 & 7 & 14\\
12 & $\{9\times(1,2),(4,9),(2,5),2\times(3,8),4\times(1,3),2\times(2,7),(1,5)\}$ & $1/252$ & 3 & 5 & 18 & 14\\
12.1 & $\{9\times(1,2),(4,9),(5,13),(3,8),4\times(1,3),2\times(2,7),(1,5)\}$ & $67/32760$ & 3 & 4 & 10 & 14\\
12a & $\{9\times(1,2),(4,9),(8,21),4\times(1,3),2\times(2,7),(1,5)\}$ & $1/630$ & 3 & 4 & 8 & 14\\
17 & $\{9\times(1,2),2\times(3,7),2\times(4,11),3\times(1,3),(2,7),(1,4),(1,5)\}$ & $3/1540$ & 3 & 3 & 11 & 14\\
18 & $\{9\times(1,2),2\times(3,7),(3,8),(4,11),4\times(1,3),(3,11),(1,5)\}$ & $23/9240$ & 3 & 4 & 13 & 14\\
18b & $\{9\times(1,2),2\times(3,7),(7,19),4\times(1,3),(3,11),(1,5)\}$ & $83/43890$ & 3 & 4 & 11 & 14\\
20 & $\{7\times(1,2),2\times(4,9),(2,5),(3,8),6\times(1,3),(2,7),(1,4),(1,5)\}$ & $1/504$ & 3 & 4 & 10 & 14\\
21 & $\{6\times(1,2),(4,9),(3,8),3\times(1,3),(3,10),(1,5)\}$ & $1/360$ & 2 & 4 & 14 & 14\\
23 & $\{8\times(1,2),(4,9),(3,7),(2,5),(4,11),4\times(1,3),(3,10),(1,4),(1,5)\}$ & $19/13860$ & 3 & 3 & 9 & 14\\
28 & $\{5\times(1,2),(5,11),(3,8),4\times(1,3),(2,7),(1,5)\}$ & $23/9240$ & 2 & 4 & 13 & 14\\
29 & $\{6\times(1,2),(4,9),(4,11),3\times(1,3),(2,7),(1,5)\}$ & $13/3465$ & 2 & 5 & 18 & 14\\
29.1 & $\{6\times(1,2),(4,9),(5,14),2\times(1,3),(2,7),(1,5)\}$ & $1/630$ & 2 & 3 & 9 & 14\\
30 & $\{7\times(1,2),(5,11),(3,7),(2,5),(4,11),5\times(1,3),(2,7),(1,4),(1,5)\}$ & $1/924$ & 3 & 3 & 8 & 14\\
31 & $\{7\times(1,2),(5,11),(3,7),(2,5),(3,8),6\times(1,3),(3,11),(1,5)\}$ & $1/616$ & 3 & 4 & 10 & 14\\
32 & $\{8\times(1,2),(4,9),(3,7),(2,5),(4,11),5\times(1,3),(3,11),(1,5)\}$ & $2/693$ & 3 & 5 & 15 & 14\\
32a & $\{8\times(1,2),(7,16),(2,5),(4,11),5\times(1,3),(3,11),(1,5)\}$ & $1/528$ & 3 & 4 & 12 & 14\\
33 & $\{5\times(1,2),2\times(3,7),(3,8),(1,3),(3,10),(2,7)\}$ & $1/840$ & 2 & 2 & 7 & 14\\
34 & $\{7\times(1,2),(4,9),(3,7),2\times(2,5),(3,8),3\times(1,3),3\times(2,7)\}$ & $1/360$ & 3 & 4 & 13 & 14\\
34a & $\{7\times(1,2),(7,16),2\times(2,5),(3,8),3\times(1,3),3\times(2,7)\}$ & $1/560$ & 3 & 3 & 10 & 14\\
35 & $\{5\times(1,2),2\times(3,7),(4,11),(1,3),2\times(2,7)\}$ & $1/462$ & 2 & 3 & 11 & 14\\
36 & $\{4\times(1,2),(4,9),(3,7),(2,5),2\times(1,3),(3,10),(2,7)\}$ & $1/630$ & 2 & 3 & 9 & 14\\
36a & $\{4\times(1,2),(7,16),(2,5),2\times(1,3),(3,10),(2,7)\}$ & $1/1680$ & 2 & 2 & 6 & 14\\
36b & $\{4\times(1,2),(4,9),(3,7),(2,5),2\times(1,3),(5,17)\}$ & $4/5355$ & 2 & 2 & 6 & 14\\
37 & $\{6\times(1,2),2\times(4,9),3\times(2,5),4\times(1,3),3\times(2,7)\}$ & $1/315$ & 3 & 5 & 15 & 14\\
38 & $\{3\times(1,2),(5,11),(3,7),(2,5),3\times(1,3),2\times(2,7)\}$ & $1/770$ & 2 & 3 & 8 & 14\\
39 & $\{7\times(1,2),(4,9),(3,7),(2,5),2\times(3,8),2\times(1,3),(3,10),(2,7),(1,4)\}$ & $1/630$ & 3 & 3 & 9 & 14\\
42 & $\{6\times(1,2),(5,11),(3,7),(2,5),2\times(3,8),3\times(1,3),2\times(2,7),(1,4)\}$ & $1/770$ & 3 & 3 & 8 & 14\\
43 & $\{7\times(1,2),(4,9),(3,7),(2,5),(3,8),(4,11),2\times(1,3),2\times(2,7),(1,4)\}$ & $71/27720$ & 3 & 4 & 13 & 14\\
43.1 & $\{7\times(1,2),(7,16),(2,5),(3,8),(4,11),2\times(1,3),2\times(2,7),(1,4)\}$ & $29/18480$ & 3 & 3 & 10 & 14\\
43c & $\{7\times(1,2),(7,16),(2,5),(7,19),2\times(1,3),2\times(2,7),(1,4)\}$ & $31/31920$ & 3 & 3 & 8 & 14\\
43.2 & $\{7\times(1,2),(4,9),(3,7),(2,5),(7,19),2\times(1,3),2\times(2,7),(1,4)\}$ & $47/23940$ & 3 & 4 & 11 & 14\\
44 & $\{7\times(1,2),(4,9),(3,7),(2,5),2\times(3,8),3\times(1,3),(2,7),(3,11)\}$ & $43/13860$ & 3 & 5 & 15 & 14\\
44.1 & $\{7\times(1,2),(4,9),(3,7),(5,13),(3,8),3\times(1,3),(2,7),(3,11)\}$ & $85/72072$ & 3 & 4 & 7 & 14\\
44.2 & $\{7\times(1,2),(4,9),(3,7),(2,5),2\times(3,8),3\times(1,3),(5,18)\}$ & $1/420$ & 3 & 5 & 12 & 14\\
44.3 & $\{7\times(1,2),(7,16),(2,5),2\times(3,8),3\times(1,3),(2,7),(3,11)\}$ & $13/6160$ & 3 & 4 & 12 & 14\\
44c & $\{7\times(1,2),(7,16),(2,5),2\times(3,8),3\times(1,3),(5,18)\}$ & $1/720$ & 3 & 4 & 9 & 14\\
45 & $\{3\times(1,2),2\times(4,9),(3,8),3\times(1,3),(2,7),(1,4)\}$ & $1/504$ & 2 & 4 & 10 & 14\\
46 & $\{6\times(1,2),2\times(4,9),2\times(2,5),(3,8),3\times(1,3),(3,10),(2,7),(1,4)\}$ & $1/504$ & 3 & 4 & 11 & 14\\
46b & $\{6\times(1,2),2\times(4,9),2\times(2,5),(3,8),3\times(1,3),(5,17),(1,4)\}$ & $7/6120$ & 3 & 3 & 8 & 14\\
48 & $\{4\times(1,2),(4,9),(3,7),(4,11),(1,3),(3,10),(1,4)\}$ & $19/13860$ & 2 & 3 & 9 & 14\\
49 & $\{5\times(1,2),(5,11),(4,9),2\times(2,5),(3,8),4\times(1,3),2\times(2,7),(1,4)\}$ & $47/27720$ & 3 & 4 & 10 & 14\\
49a & $\{5\times(1,2),(9,20),2\times(2,5),(3,8),4\times(1,3),2\times(2,7),(1,4)\}$ & $1/840$ & 3 & 4 & 8 & 14\\
50 & $\{6\times(1,2),2\times(4,9),2\times(2,5),(4,11),3\times(1,3),2\times(2,7),(1,4)\}$ & $41/13860$ & 3 & 5 & 15 & 14\\
51 & $\{6\times(1,2),2\times(4,9),2\times(2,5),(3,8),4\times(1,3),(2,7),(3,11)\}$ & $97/27720$ & 3 & 6 & 17 & 14\\
51.1 & $\{6\times(1,2),2\times(4,9),(2,5),(5,13),4\times(1,3),(2,7),(3,11)\}$ & $71/45045$ & 3 & 5 & 9 & 14\\
52 & $\{4\times(1,2),(3,7),2\times(2,5),2\times(3,8),2\times(1,3),(1,5)\}$ & $1/420$ & 2 & 3 & 12 & 14\\
53 & $\{3\times(1,2),(4,9),3\times(2,5),(3,8),3\times(1,3),(1,5)\}$ & $1/360$ & 2 & 4 & 14 & 14\\
54 & $\{2\times(1,2),2\times(3,7),3\times(2,5),(3,8),(1,3),(2,7)\}$ & $1/840$ & 2 & 2 & 7 & 14\\
56 & $\{(1,2),(4,9),(3,7),4\times(2,5),2\times(1,3),(2,7)\}$ & $1/630$ & 2 & 3 & 9 & 14\\
58 & $\{4\times(1,2),(4,9),(3,7),4\times(2,5),2\times(3,8),2\times(1,3),(2,7),(1,4)\}$ & $1/630$ & 3 & 3 & 9 & 14\\
59 & $\{2\times(1,2),2\times(3,7),2\times(2,5),(3,8),(4,11),(1,4)\}$ & $3/3080$ & 2 & 2 & 7 & 14\\
60 & $\{3\times(1,2),2\times(4,9),5\times(2,5),(3,8),3\times(1,3),(2,7),(1,4)\}$ & $1/504$ & 3 & 4 & 11 & 14\\
62 & $\{(1,2),(4,9),(3,7),3\times(2,5),(4,11),(1,3),(1,4)\}$ & $19/13860$ & 2 & 3 & 9 & 14\\
\midrule

41 & $\{5\times(1,2),(4,9),2\times(3,8),(1,3),2\times(2,7)\}$ & $1/252$ & 2 & 5 & 18 & 13\\
\end{xltabular}
\renewcommand{\arraystretch}{1}
\end{singlespace}

\begin{singlespace}
\begin{lstlisting}[
  language=Python,
  caption={Computation for \cref{prop101112}},
  label={lst101112}
]
# Require Python 3.8 or later; no third-party packages are needed.
from fractions import Fraction
from functools import lru_cache
from itertools import product
from math import gcd, isqrt

DELTAS = tuple(range(3, 13))
CHI = 2

@lru_cache(maxsize=None)
def C(a, b):
    return Fraction(4 * b**3, a * (a + b) * (3 * a + 4 * b))

@lru_cache(maxsize=None)
def correction(b, r, m):  # l_(b,r)(m) in Reid's Riemann-Roch formula
    total = Fraction(0)
    for j in range(1, m):
        residue = (j * b) % r
        total += Fraction(residue * (r - residue), 2 * r)
    return total

@lru_cache(maxsize=None)
def volume(basket, chi, chi2):
    return (
        6 * chi
        + 2 * chi2
        - sum(number * Fraction(b * (r - b), r) for b, r, number in basket)
    )

@lru_cache(maxsize=None)
def Pm(basket, chi, chi2, m):
    if m == 0:
        return 1
    if m == 1:
        return 0
    return (
        Fraction(m * (m - 1) * (2 * m - 1), 12) * volume(basket, chi, chi2)
        - (2 * m - 1) * chi
        + sum(number * correction(b, r, m) for b, r, number in basket)
    ).numerator

def dict_to_set(basket):
    return frozenset((b, r, number) for (b, r), number in basket.items() if number > 0)

def masek_max_Pm(delta, m):
    return None if m > delta + 4 else max(2, (3 * delta + 2 * m) // (3 * delta - m))

def Pm_range(delta, m):
    if m < delta:
        return range(0, 2)
    if m == delta:
        return (2,)
    maximum = masek_max_Pm(delta, m)
    if maximum is None:
        raise RuntimeError()
    return range(0, maximum + 1)

def check_volume(delta, basket, chi2):
    return volume(basket, CHI, chi2) >= C(delta, 1)

def check_Pm(delta, m, value):
    if value < 0:
        return False
    if m < delta:
        return value <= 1
    if m == delta:
        return value == 2
    maximum = masek_max_Pm(delta, m)
    return maximum is None or value <= maximum

def check_semigroup(P, m):
    return all(
        P[m] >= P[a] + P[m - a] - 1
        for a in range(1, m // 2 + 1)
        if P[a] > 0 and P[m - a] > 0
    )

def initial_baskets(N, delta):
    result = set()
    ranges = [Pm_range(delta, m) for m in range(2, 8)]
    for p2, p3, p4, p5, p6, p7 in product(*ranges):
        n12 = 5 * CHI + 6 * p2 - 4 * p3 + p4
        n13 = 4 * CHI + 2 * p2 + 2 * p3 - 3 * p4 + p5
        tail_size = CHI - 3 * p2 + p3 + 2 * p4 - p5
        tail_weight = p4 + p5 + p6 - 3 * p2 - p3 - p7
        if min(n12, n13, tail_size, tail_weight) < 0:
            continue
        for s in range(tail_weight // 2 + 1):
            n15 = tail_weight - 2 * s
            n14 = tail_size - n15 - s
            if min(n14, n15) < 0:
                continue
            dictionary = {(1, 2): n12, (1, 3): n13, (1, 4): n14, (1, 5): n15}
            if s == 0:
                result.add((dict_to_set(dictionary), p2))
            else:

                def distribute_tail(start, remaining, current):
                    if remaining == 0:
                        dict_with_tail = dictionary.copy()
                        dict_with_tail.update(current)
                        result.add((dict_to_set(dict_with_tail), p2))
                        return
                    for r in range(start, N + 1):
                        current[(1, r)] = current.get((1, r), 0) + 1
                        distribute_tail(r, remaining - 1, current)
                        current[(1, r)] -= 1

                distribute_tail(6, s, {})
    return result

@lru_cache(maxsize=None)
def prime_packings(r):  # all prime packings of level r
    result = []
    for b in range(2, (r - 1) // 2 + 1):
        if gcd(b, r) != 1:
            continue
        r1 = pow(b, -1, r)
        b1 = (b * r1 - 1) // r
        result.append(((b1, r1), (b - b1, r - r1), (b, r)))
    return tuple(result)

def pack_level(basket, level):  # apply prime packings of the level
    family = {basket}
    for parent1, parent2, child in prime_packings(level):
        next_family = set()
        for current in family:
            dictionary = {(b, r): number for b, r, number in current}
            maximum = min(dictionary.get(parent1, 0), dictionary.get(parent2, 0))
            for amount in range(maximum + 1):
                packed = dictionary.copy()
                packed[parent1] = packed.get(parent1, 0) - amount
                packed[parent2] = packed.get(parent2, 0) - amount
                packed[child] = packed.get(child, 0) + amount
                next_family.add(dict_to_set(packed))
        family = next_family
    return family

def enumerate_baskets(delta):
    d1 = delta + 1
    N = d1 + isqrt(8 * d1 * d1)
    P_dict = {}
    for basket, chi2 in initial_baskets(N, delta):
        if not check_volume(delta, basket, chi2):
            continue
        P = tuple(Pm(basket, CHI, chi2, m) for m in range(6))
        if all(check_Pm(delta, m, P[m]) and check_semigroup(P, m) for m in range(2, 6)):
            P_dict[(basket, chi2)] = P
    for r in range(5, N):  # packings after level r do not change P_{r+1}
        candidates = {}
        for (basket, chi2), P in P_dict.items():
            for packed in pack_level(basket, r):
                candidates[(packed, chi2)] = P
        new_P_dict = {}
        for (basket, chi2), P in candidates.items():
            if not check_volume(delta, basket, chi2):
                continue
            new_P = P + (Pm(basket, CHI, chi2, r + 1),)
            if check_Pm(delta, r + 1, new_P[r + 1]) and check_semigroup(new_P, r + 1):
                new_P_dict[(basket, chi2)] = new_P
        P_dict = new_P_dict
    return N, P_dict

def solved(delta, V, P, n):
    return (P[delta + 2] > 0 and P[n - 2 * delta - 2] > 0) or (
        P[n] >= 3 and P[n - delta] > 0 and V < C(n, (P[n] - 1) // 2)
    )

def main():
    all_data = []
    print("delta  N_delta  baskets")
    for delta in DELTAS:
        N, baskets = enumerate_baskets(delta)
        print(delta, N, len(baskets))
        data = []
        for (basket, chi2), P in baskets.items():
            data.append((basket, volume(basket, CHI, chi2), P))
        all_data.append((delta, N, data))
    print("\ndelta  n  exceptions")
    for delta, N, data in all_data:
        for n in range(3 * delta + 4, N + 1):
            exceptions = [
                item for item in data if not solved(delta, item[1], item[2], n)
            ]
            print(delta, n, len(exceptions))

if __name__ == "__main__":
    main()
\end{lstlisting}
\end{singlespace}

\section*{Acknowledgements}

The author would like to express his sincere gratitude to his advisor, Professor Meng Chen, for his support, encouragement, and valuable guidance.



\end{document}